\documentclass[12pt]{amsart}
\usepackage{latexsym,amsfonts,amsmath,amssymb,amsthm,url,amsbsy,amscd,mathrsfs}
\usepackage[english]{babel}
\usepackage[latin1]{inputenc}
\usepackage{graphicx,psfrag,epsfig}
\usepackage{enumerate}
\usepackage{bbm}

\usepackage[colorlinks=true,urlcolor=blue, citecolor=red,linkcolor=blue,
linktocpage,pdfpagelabels, bookmarksnumbered,bookmarksopen]{hyperref}

\numberwithin{equation}{section}
\newtheorem{theorem}{Theorem}[section]
\newtheorem{proposition}[theorem]{Proposition}
\newtheorem{corollary}[theorem]{Corollary}
\newtheorem{lemma}[theorem]{Lemma}
\newtheorem{definition}[theorem]{Definition}
\theoremstyle{definition}
\newtheorem{remark}[theorem]{Remark}

\usepackage{a4wide}

\usepackage{color}
\definecolor{darkred}{rgb}{0.8,0,0}
\definecolor{darkblue}{rgb}{0,0,0.7}
\definecolor{darkgreen}{rgb}{0,0.4,0}

\newcommand{\EEE}{\color{black}} 
\newcommand{\eps}{\varepsilon}
\newcommand{\id}{{ I}}
\renewcommand{\d}{{\rm d}}
\newcommand{\dd}{\d}

\newcommand{\R}{{\mathbb R}}

\newcommand{\N}{{\mathbb N}}

\newcommand{\PP}{{\mathscr P}}

\newcommand{\II}{{\mathcal I}}
\newcommand{\BB}{{\mathcal B}}

\newcommand{\C}{{\mathbb C}}
\newcommand{\FF}{{\mathcal F}}

\newcommand{\EE}{{\mathcal E}}

\newcommand{\VV}{{\mathcal V}}
\renewcommand{\ln}{\log}

\newcommand{\Leb}[1]{{\mathcal L}^{#1}}
\newcommand{\un}{{\rm 1\kern -2.5pt l}}

\newcommand{\de}{\partial}

\renewcommand{\div}{\mathrm{div}}

\newcommand{\AC}{{\mathrm{AC}}}
\newcommand{\mom}{{\rm m}_2}

\newcommand{\Space}{\PP_2^M(\R^d)}

\newcommand{\bC}{{\mathbbm{1}}}

\usepackage[normalem]{ulem}     

\newcommand{\RRR}{\color{black}}

\usepackage{hyperref}
\AtBeginDocument{
  \label{CorrectFirstPageLabel}
  
}

\newenvironment{proofad1}{\removelastskip\par\medskip
\noindent{\textit {\bf Proof of Theorem  \ref{th:main}}.}
\rm}{\penalty-20\null\hfill$\square$\par\medbreak} 

\newenvironment{proofad2}{\removelastskip\par\medskip
\noindent{\textbf {\bf Proof of Theorem  \ref{th:main2}}.}
\rm}{\penalty-20\null\hfill$\square$\par\medbreak} 

\usepackage{etoolbox}

\makeatletter

\renewcommand{\tocsection}[3]{%
  \indentlabel{\@ifnotempty{#2}{\bfseries\ignorespaces#1 #2\quad}}\bfseries#3}
\renewcommand{\tocsubsection}[3]{%
  \indentlabel{\@ifnotempty{#2}{\ignorespaces#1 #2\quad}}#3}

\def\@tocline#1#2#3#4#5#6#7{\relax
  \ifnum #1>\c@tocdepth 
  \else
    \par \addpenalty\@secpenalty\addvspace{0.03cm}%
    \begingroup \hyphenpenalty\@M
    \@ifempty{#4}{%
      \@tempdima\csname r@tocindent\number#1\endcsname\relax
    }{%
      \@tempdima#4\relax
    }%
    \parindent\z@ \leftskip#3\relax \advance\leftskip\@tempdima\relax
    \rightskip\@pnumwidth plus4em \parfillskip-\@pnumwidth
    #5\leavevmode\hskip-\@tempdima
      \ifcase #1
       \or\or \hskip 2em \or \hskip 3em \else \hskip 4em \fi%
      #6\nobreak\relax
    \dotfill\hbox to\@pnumwidth{\@tocpagenum{#7}}\par
    \nobreak
    \endgroup
  \fi}
  
\makeatother

\begin{document}

\title[]{
Smoothing effect and uniqueness for aggregation diffusion models}
\keywords{Aggregation diffusion equation, Keller-Segel model, Chemotaxis, Minimizing movements scheme, Gradient flow, Wasserstein distance, Smoothing effect}
\subjclass[2020]{35A01, 35A02, 35A15, 35Q92}
\begin{abstract}
We consider aggregation-diffusion models for a density of mass in $\R^d$, $d\ge 2$, where the diffusion can be either linear or of porous medium type, 
while the aggregation effect is governed by the attractive Newtonian or Bessel potential. 
It is well known that the dynamics can be interpreted as Wasserstein gradient flow of the natural free energy of the system, and that,
 under suitable assumptions on the diffusion exponent and  the mass of the initial datum, \EEE solutions exist globally in time.  
 In these regimes, we perform the analysis of the discrete variational approach by means of the JKO scheme.
   We establish a sharp $L^\infty$ smoothing effect, which proves to be of same rate as that of the porous medium equation. 
  Thanks to this estimate, we obtain uniqueness of global  gradient flow solutions \EEE for initial data of finite energy and, in the diffusion dominated regime, 
   for measure data having finite second moment.  
  We prove the energy dissipation equality and characterize the gradient flow in terms of  suitable  evolution variational inequalities. 
In the fair competition regime, we show uniform extinction of solutions for large time under smallness conditions on the mass.  
  
\end{abstract}
\author{S. Lisini}
\address{Stefano Lisini -- Universit\`a degli Studi di Pavia, Dipartimento di Matematica "F. Casorati", via Ferrata 5, I-27100 Pavia, Italy}
\email{stefano.lisini@unipv.it}
\author{E. Mainini}
\address{Edoardo Mainini -- Universit\`a degli Studi di Genova, 
Dipartimento di Ingegneria meccanica, energe\-tica, gestionale e dei trasporti (DIME), via all'Opera Pia 15, I-16145 Genova, Italy}
\email{edoardo.mainini@unige.it}
%
%
%

\thanks{}

\maketitle
		\tableofcontents

\section{Introduction}

We consider the following aggregation diffusion model in space dimension $d\ge 2$:
\begin{equation}\label{PE}
\begin{cases}
\partial_t u= \Delta u^m-\chi\mathrm{div}(u\nabla v ) &\mbox{in }\mathbb{R}^d\times (0,+\infty), \vspace{0.2cm}\\ \vspace{0.2cm}
-\Delta v +\alpha v=u &\mbox{in }\mathbb{R}^d\times(0,+\infty),\\
u(\cdot,0)=u_0(\cdot),&\mbox{in }\mathbb{R}^d,
\end{cases}
\end{equation}
where $\chi>0$ and $\alpha \geq 0$ are given constants. We let 
\begin{equation}\label{mc}
	m\geq m_c:=2-\frac{2}{d},
\end{equation}
and $m_c$ is called critical exponent.  Therefore the diffusion can be linear (only in space dimension $2$) or nonlinear of porous medium type. The second equation is understood in terms of fundamentals solution, 
i.e., $v$ is the Bessel potential of $u$ if $\alpha>0$, and the Newtonian potential of $u$ if $\alpha=0$.
The initial datum $u_0$ is a nonnegative measure in $\R^d$ with total mass $M\in(0,+\infty)$  and with finite second moment.
   
       \subsection{Overview on aggregation diffusion models}
Aggregation diffusion models provide macroscopic description of particles/agents  that evolve according to linear or nonlinear diffusion processes, in competition with the effect of mutual attraction forces described by a mean-field potential. 
Among the most notable examples of diffusion models with nonlocal drift of the form \eqref{PE} is the renowned Patlak-Keller-Segel model in chemotaxis \cite{A, H1, H2, KS1, KS2, P, Pe, Pe2}, describing the dynamics of bacteria whose motion is oriented by a chemical signal. In its basic formulation, it takes the form \eqref{PE} with $m=1$, $d=2$, where $u$ represents the density of bacteria, while $v$ represents the density of the chemoattractant produced by the bacteria, and $\chi$ denotes the sensitivity constant measuring the aggregation strength.
Nonlinear diffusion of porous medium type often appears for modeling localized repulsion taking into account volume effects and non interpenetration of cells \cite{BCL, CC, LSV, O, TBL}.
 In the Newtonian interaction case $\alpha=0$, the system \eqref{PE} also appears in the theory of gravitational collapse and it is known as the Smoluchowski-Poisson model \cite{BN,C0,CLL,CM}.

  \medskip
For a given mass $M>0$, we denote by $\Space$ the space of Borel non-negative measures with total mass $M$ and finite second moment.
The internal energy functional
$$\FF_m: \Space\to (-\infty,+\infty],$$
 accounting for diffusion, is defined by
\[
\FF_1(u)=\int_{\R^d}u(x)\log u(x)\,\d x,\quad\qquad \FF_m(u)=\frac{1}{m-1}\int_{\R^d}u^m(x)\,\d x,\quad m>1,
\]
if $u$ is absolutely continuous with respect to the Lebesgue measure on $\R^d$, 
where we identify a measure with its density, and
 $\FF_m(u)=+\infty$ if $u$ is not absolutely continuous with respect to the Lebesgue measure on $\R^d$. 
 We set $$D(\mathcal F_m):=\left\{u\in\Space: \mathcal F_m(u)<+\infty\right\}.$$
For $\alpha\geq0$,  the interaction energy functional
$$\BB_\alpha: \Space\to (-\infty,+\infty]$$ is defined by
\[
\BB_\alpha(u)=\frac12\int_{\R^d\times\R^d}B_\alpha(x-y)\,\d u(x)\,\d u(y),
\]
where
$B_\alpha$ denotes the Bessel kernel in $\mathbb{R}^d$, i.e., the fundamental solution for the operator $-\Delta+\alpha I$, reduced to the Newtonian kernel if $\alpha=0$.  
The free energy of the system \eqref{PE} is  the functional  $$\EE_m: \Space\to (-\infty,+\infty]$$
 defined by 
\begin{equation}\label{E_m}
	\EE_m(u)=\begin{cases}
		\FF_m(u)-\chi\BB_\alpha(u) &\mbox{if }u\in D(\mathcal F_m)\\
		+\infty 	& \mbox{otherwise.}
		\end{cases}
\end{equation} 
We notice that, since $m\ge m_c$, the finiteness of $\mathcal E_m(u)$ for every $u\in D(\mathcal F_m)$ is a consequence of the Hardy-Littlewood-Sobolev inequalities (recalled in Section \ref{Sec:notation}). 
In particular, 
 \begin{equation}\label{DE_m}
 	D(\mathcal F_m)=D(\mathcal E_m):=\left\{u\in\Space: \mathcal E_m(u)<+\infty\right\}.
 \end{equation}
It is well known that the system \eqref{PE} has a gradient flow structure. It can be interpreted as gradient flow of the energy functional $\mathcal E_m$ in the space $\Space$ endowed with the Wasserstein distance.
This is due to the fact that \eqref{PE} has the form of a continuity equation where the velocity vector field is the gradient of a scalar function, the pressure, which is in turn the functional derivative of $\mathcal E_m$. That is, \eqref{PE} is in the form, for $m>1$,
\[
\partial_tu=\mathrm{div}(u\nabla \mathfrak p_u),\qquad \mathfrak p_u=\frac{\delta \mathcal E_m}{\delta u}(u)=\frac m{m-1}u^{m-1}-\chi B_\alpha\ast u.
\]
In the case $m=1$, the first term in the right hand side is replaced by $1+\log u$.
The free energy $\EE_m$ is decreasing along the flow according to the formal dissipation identity
\[
\frac{d}{dt}\mathcal E_m(u(t))=-\int_{\R^d}\left|\nabla \mathfrak p_u(t)\right|^2\,u(t)\,\d x.
\]
Taking advantage to this interpretation, one way to prove existence of solutions for the system \eqref{PE} is the Jordan-Kinderlehrer-Otto  (JKO) scheme, introduced in the  the seminal paper \cite{JKO}. 
In the case of the Keller-Segel model, this approach has been introduced in \cite{BCC} and then exploited in \cite{B,B2,  BCKKLL, BL}.
It is also standardly applied to many other models featuring nonlocal drift terms with singular kernels, including for instance vortex dynamics in superconductivity \cite{AMS,AS}, the Poisson-Nernst-Planck system modeling ionic transport \cite{KMX}, and Riesz potential interactions \cite{HMVV,LMS}.


	\medskip
Standard scaling considerations  about the energy show that the critical exponent
 $m_c=2-2/d$
governs the competition between diffusion and aggregation, 
 and show that diffusion and aggregation effects are in balance in the case $m=m_c$, which is the so-called {\it fair competition regime}.
 On the other hand, if $m>m_c$, the diffusion dominates and concentrating the mass is never energetically favorable, therefore we are in the {\it diffusion-dominated regime}. The balance of the effects for $m=m_c$ is also seen from the PDE by taking into account that the mass invariant rescaling $u_\lambda(x,t)=\lambda^du(\lambda x,\lambda^d t)$ preserves the equation, up to rescaling $\alpha$, precisely when $m=m_c$: in this case, the competition is ruled by the value of the mass, and it is well known that there exists a critical mass $M_c$ whose value is
 \begin{equation}\label{defMc}
 \displaystyle M_c:=\frac{8\pi}{\chi}\quad \text{ if }d=2,\qquad\qquad
M_{c}:=\left(\displaystyle\frac{2}{\chi(m_c-1)C_{H}}\right)^{d/2}\quad \text{ if }d\geq3
\end{equation}
where $C_H$ denotes the best constant in a suitable Hardy-Littlewood-Sobolev inequality, to be recalled in Section \ref{Sec:notation} along with 
the main properties of the energy functional $\EE_m$. Indeed,  solutions do exists globally in time if $\chi M<8\pi$ in dimension $2$, see \cite{JL,DP,BDP}, while in the case $\chi M>8\pi$ blow up in finite time occurs \cite{JL, BDP, N}. Stability of the blow up has also been analyzed \cite{RS,CGMN1,CGMN2}. If $m>m_c$ the diffusion dominates and prevents blow up, so that again solutions exist globally in time \cite{CC,BCL, S, S3}. In the case $m=m_c$, $d\ge 3$, a critical mass also appears, whose value from \eqref{defMc} has been determined in \cite{BCL}, again discriminating between global existence and blow up \cite{BRB, BK, BL, S2}.
A lot of interest has been also devoted to the study of stationary states  for aggregation-diffusion models.
	In the two-dimensional case     with linear diffusion $m=1$ and  Newtonian interaction kernel $\alpha=0$, stationary states exist only if $M=M_c$, see \cite{DP,BCC2,BCM, CF}, they are related to optimizers of the logarithmic HLS inequality and have infinite second moment. For initial data of finite second moment, blow up occurs in infinite time \cite{BCM}, and blow up profile is studied in \cite{DDDMW, GMa}.
	Newtonian fair competition models $m=m_c$ in higher dimension also admit stationary states only in the case $M=M_c$, and these are extremals of suitable HLS inequalities \cite{BCL}.  
	On the other hand, in the diffusion dominated regime stationary states exist for every choice of the mass $M>0$ and can be obtained as minimizers of $\mathcal E_m$ over the set of mass densities in $\Space\cap L^m(\R^d)$. Moreover, for given mass $M$, stationary states are unique up to translation, radially decreasing and compactly supported \cite{BLiu,CS,LO,KY,CCV,CHVY}. 
	 Similar properties are very well established also for more general potentials and have been generalized to Riesz potential interaction  \cite{CCH,CCH2, CHMV, CCH3, CGHMV, BMai,CLLS,DYY}.

\medskip
In this paper we are mostly interested in uniqueness, smoothing effect and  $L^\infty(\R^d)$ hypercontractivity estimates for aggregation diffusion models, i.e., estimates of the form 
$\|u(t)\|_{L^\infty(\R^d)}\le C t^{-\sigma}$ for small time, with constants $C,\sigma$ depending on the parameters,  and, possibly, on the initial datum.
Also in this regard, the most studied problem is the classical subcritical two dimensional Keller Segel system with $m=1$ and $\alpha=0$. Estimating $L^p$ norms is in fact a classical step towards the proof of existence of global solutions \cite{JL,BDP,BCL,CC,CPZ, CPZ04, K}. Refined $L^p$ hypercontractivity estimates for finite $p$ are obtained in \cite{EM16, LW} and are a crucial step towards the uniqueness of solutions proved therein. Still in the setting $d=2$, $m=1$, $\alpha=0$, with subcritical mass, existence, uniqueness and 
 optimal $L^\infty(\R^d)$ hypercontractivity rate, i.e., the estimate $\|u(t)\|_{L^\infty}\le C t^{-1}$ for small $t$, is obtained in \cite{BM}, also for measure data. The same estimate has been recently obtained in \cite{EJS} (based on the proof of Aronson-B\'enilan estimates) for measure data of small mass, and for data with finite energy
 in the general subcritical range $M<M_c$. 
 In \cite{EJS}, this result is shown also  for the fair competition regime in higher dimension.  Of course the $L^\infty(\R^d)$ estimates become uniform even for small $t$ under the additional assumption that $u_0\in L^\infty(\R^d)$, and interestingly such uniform estimates have been also obtained
for discrete solutions of the JKO scheme, \cite{CS18,DMS22}. In the diffusion dominated regime, hypercontractivity estimates are obtained with suboptimal rates in \cite{BLZ}.
 Going back to the uniqueness issue, a general result covering fair competition and diffusion dominated regimes has been obtained in \cite{CLM} for bounded data. Other general uniqueness results are found in \cite{KS0,LW2}, still for bounded data.

\medskip
The main contributions of this paper are in the diffusion dominated regime $m>m_c$ and are given in Theorem \ref{th:main} below. We provide sharp $L^\infty(\R^d)$ smoothing effect of solutions. Most importantly, in combination with the evolution variational inequality formulation of the dynamics introduced in \cite{CLM}, this allows to obtain existence and uniqueness of the solution for general initial data in $\Space$. This settles a problem that was left open in \cite{CLM}, where uniqueness crucially required $L^\infty(\R^d)$ initial data, and to obtain a full parallel with the theory of gradient flows of $\lambda$-convex functionals along Wasserstein geodesics, providing semigroup generation in $\Space$, smoothing effect, evolution variational inequalities, energy dissipation equality, and convergence of the minimizing movements scheme. Furthermore, building on the results on stationary states of the dynamics from \cite{CHVY}, we obtain as a corollary that for every initial datum in $\Space$ the solution converges as $t\to+\infty$ to the unique (up to translations) minimizer $u_M$ of the free energy $\mathcal E_m$ among densities with given total mass $M$, at least in dimension $d=2$ with $\alpha=0$ (see Corollary \ref{maincoro} below). 
Concerning the fair competition regime $m=m_c$, our results still apply at least if $u_0\in D(\mathcal E_m)$, see Thereom \ref{th:main2} below. On the other hand, in this case  sharp $L^\infty(\R^d)$ smoothing effect is already available in the literature as previously discussed, as well as uniqueness of solutions in dimension $2$.

\medskip
Before moving towards the statement of the main results, 
 recalling that $M_c$ is defined in \eqref{defMc}, \EEE
let us fix the basic assumption on the mass $M$  and on the parameters of the equation:
\begin{equation}\label{assumption}\tag{{\bf{\bf A}}}
\begin{aligned}
&d\ge 2, \quad \alpha\geq0,\quad \chi>0, \quad m\ge m_c,\quad M>0,&\\
&\qquad\quad\mbox{if $m=m_c$  assume also $M<M_c$.}&
\end{aligned}
\end{equation}
After the introduction, without further mention,  \eqref{assumption} is assumed to be always valid.

 \subsection{The JKO scheme}
 In this paper we consider the gradient flow approach to \eqref{PE} and deduce all the main results from estimates on the JKO scheme. We denote by $W$ the Wasserstein distance on $\Space$ (see Section \ref{Sec:notation} for definition and properties). The narrow convergence on $\Space$ is defined in duality with continuous and bounded functions on $\R^d$, and the convergence in the Wasserstein distance $W$ coincides with narrow convergence plus convergence of second moments.
 
Given $u_0\in \Space $,
we consider a uniform partition of size $\tau>0$ of the time interval $[0,+\infty)$ and we let
$u_\tau^0$ the following approximation of the initial datum $u_0$:
\begin{equation}\label{utau0}
 u_\tau^0:=\Gamma_{\varpi(\tau)}\ast u_0,\qquad\text{where}\quad \varpi(\tau):=\begin{cases} -1/\ln \tau &\mbox{if }\tau\in(0,1/2) \\
-1/\ln (1/2) & \mbox{if }\tau\in[1/2,+\infty),
\end{cases}
 \end{equation}
where $\Gamma_t$ denotes the heat kernel defined by
\begin{equation}\label{gaussian}
\Gamma_t(x):=\frac{1}{(4\pi t)^{d/2}}e^{-|x|^2/4t},\quad x\in\mathbb{R}^d,\;t>0.
\end{equation}
We clearly have $u_\tau^0\to u_0$ in $(\Space,W)$, i.e., $W(u_\tau^0,u_0)\to0$ as $\tau\to0$,
and $u_\tau^0\in D(\mathcal F_r)$ for every $r\ge 1$. Moreover, thanks to the assumption \eqref{assumption}, the energy $\mathcal E_m(u_\tau^0)$ is finite and there holds   $\mathcal E_m(u_\tau^0)\to \mathcal E_m(u_0)$ as $\tau\to0$, even if $\mathcal E_m(u_0)=+\infty$ (see also Proposition \ref{initialdatum} below).
We notice that by Young convolution inequality, for every $p\in[1,+\infty]$ we have
\begin{equation}\label{heatcontraction}
	\|u_\tau^0\|_{L^p(\R^d)}\le M\|\Gamma_{\varpi(\tau)}\|_{L^p(\R^d)}\le M p^{-\tfrac d{2p}}\,(4\pi\varpi(\tau))^{-\tfrac{d}{2}\left(1-\tfrac1p\right)},
\end{equation}
where,   for $p=+\infty$, we use the convention $p^{-\tfrac d{2p}}=1$ and $\frac1p=0$. \EEE
Therefore, $\|u^0_\tau\|_{L^p(\R^d)}$ may diverge at most logarithmically as $\tau\to0$.

For every integer $k\ge1$, we recursively  select \RRR
\begin{equation}\label{minmov1}
u_\tau^k\in{\rm argmin}_{u\in \Space}\left\{ \EE_m(u)+\frac1{2\tau}\,W^2(u,u^{k-1}_\tau)\right\}.
\end{equation}
A  sequence of minimizers $\{u_\tau^k: k=1,2,\ldots\}$ satisfying \eqref{minmov1} does exist under assumption \eqref{assumption}, by means of very standard arguments recalled in Section \ref{Sec:notation}. 
Given any sequence $\{u_\tau^k:{k}=0,1,2,\ldots\}$ satisfying \eqref{utau0} and \eqref{minmov1}, we define
the piecewise constant interpolation 
$u_\tau:[0,+\infty)\to \Space $  by
\begin{equation}\label{floor}
u_\tau(t):=u^k_{\tau}, \qquad t\in ((k-1)\tau,k\tau], \qquad k=0,1,2,3,\ldots,
\end{equation}
i.e., $u_\tau(t)=u_\tau^{\lceil t/\tau\rceil},$ $t\ge 0$, where $\lceil x\rceil:=\min\{z\in\mathbb Z: x\le z\}$.
 Any piecewise constant curve defined as above 
will be called a \emph{discrete
 solution} of the JKO scheme. 

\begin{definition}\label{GFdefinition}
We say that a curve $u\in C([0,+\infty);(\Space,W))$ is a 
 {\it gradient flow} \RRR in $(\Space, W)$ of the energy functional $\mathcal E_m$, starting from $u_0$,   if 
\begin{equation}\label{0conv}
u_{\tau_n}(t) \to u(t)\quad\mbox{ narrowly as $n\to+\infty$}, \qquad\mbox{for every $t\ge 0$},
\end{equation}
 for some sequence of discrete solutions $(u_{\tau_n})_n$ of the JKO scheme, associated to some vanishing sequence $(\tau_n)_n$ of time steps. \end{definition}
 The above definition is related to the notion of generalized minimizing movement in the space $(\Space,W)$ according to \cite[Section 2]{AGS}. 
 In our main results, we will prove uniqueness of the gradient flow of functional $\mathcal E_m$, starting from $u_0$, so that given any family of discrete solutions $(u_\tau)_{\tau>0}$ of the JKO scheme, the above convergence will hold as $\tau\to 0$ 
  and not only along some sequence $(\tau_n)_n$.\EEE
 

%

\EEE

Before stating the main result, we recall the notion of $AC^2$ curve in the Wasserstein space $(\Space,W)$. 
We say that a curve $u:[0,+\infty)\to \Space$ is locally absolutely continuous with finite energy, and we use the notation
$u\in \AC^2_{loc}((0,+\infty);\Space)$, if there exists
$g\in L^2_{loc}((0,+\infty);\R)$ such that $W(u(t_1),u(t_2))\leq \int_{t_1}^{t_2} g(r)\,\d r$ for every $0<t_1<t_2<+\infty$.
In this case we can take $g$ to be the Wasserstein {\it metric derivative} of the curve, defined by the $a.e.$ existing limit $|u'|(t):=\lim_{s\to t}\tfrac{W(u(s),u(t))}{|s-t|}$.


\subsection{The main result in the diffusion dominated regime}
We now state our first main result, concerning the case $m>m_c$. We prove existence and uniqueness of gradient flows according to Definition \ref{GFdefinition} along with several more properties.

%

\begin{theorem}\label{th:main}
Under assumption \eqref{assumption}, let $m>m_c$
and $u_0\in \Space$. \smallskip

{\rm\textbf{(I)}} {\bf Existence and regularity \EEE of discrete minimizers.} 
 Let $\tau>0$.   There exists a sequence $\{u_\tau^k: k=0,1,2,\ldots\}$ satisfying \eqref{utau0} and \eqref{minmov1}.
Moreover, for any such sequence, letting $v_\tau^k:=B_\alpha\ast u_\tau^k$, there holds
\begin{equation*}
\begin{aligned}&(u_\tau^k)^q\in H^1(\R^d)\;\; \mbox{for every $q\in[\tfrac m2,+\infty)$ and every  $k\ge 0$},\\& \nabla v_{\tau}^k\in W^{1,p}(\R^d)\;\;\mbox{for every $p\in(\tfrac d{d-1},+\infty)$  and every $k\ge 0$.}\end{aligned}
\end{equation*}

{\rm\textbf{(II)}} {\bf Existence and uniqueness of the gradient flow.} 
  There exists a  unique \EEE curve $u\in C([0,+\infty);(\Space,W))$ such that for every family $(u_\tau)_{\tau>0}$ of  discrete solutions of the JKO scheme there holds 
\begin{equation}\label{wholetau}
u_{\tau}(t) \to u(t)\quad\mbox{ narrowly as $\tau\to0$}, \qquad\mbox{for every $t\ge 0$}.
\end{equation}
 \RRR
Moreover, $u$ satisfies
\[\begin{aligned}&u\in \AC^2_{loc}((0,+\infty);\Space),\\
&u\in C((0,+\infty);L^p(\R^d))\quad\mbox{for every $p\in[1,+\infty)$},\\
& u^{q}\in L^2_{loc}((0,+\infty);H^1(\R^d)), \quad\mbox{for every $q\in[m/2,+\infty)$},\RRR\\
& v:=B_\alpha\ast u \in C((0,+\infty); W^{2,p}_{loc}(\R^d)) \quad\mbox{for every $p\in[1,+\infty)$}.\RRR
\end{aligned}\]

{\rm\textbf{(III)}} {\bf Convergence properties of the scheme.}
For every family $(u_\tau)_{\tau>0}$ of discrete solutions of the JKO scheme, letting $v_\tau=B_\alpha\ast u_\tau$,  there hold as $\tau\to 0$
 \begin{equation*}
\begin{aligned}
&\mbox{$u_{\tau}(t) \to u(t)$ weakly in $L^p(\R^d)$, for every $p\in[1,+\infty)$ and every $t>0$},\\
&\mbox{$u_{\tau}\to u$ strongly in $L^p_{loc}((0,+\infty);L^q(\R^d))$ for every $1\le p,q<+\infty$,}\\
&\mbox{$\nabla v_{\tau}\to \nabla v$ strongly in $L^p_{loc}((0,+\infty);L^q(\R^d))$ for every $p\in[1,+\infty)$ and $q\in(\frac{d}{d-1},+\infty].$}
\end{aligned}
\end{equation*}

 \RRR

{\rm\textbf{(IV)}} {\bf Solution of the equation.}   $u$ is a global solution of the PDE in \eqref{PE} \RRR in the following weak form
\[
\int_0^{+\infty}\int_{\R^d} \left( u\,\partial_t\varphi  - \nabla\varphi\cdot (\nabla u^m - \chi u \nabla v )\right)\, \dd x \, \dd t=0, \quad
\forall\,\varphi\in C^\infty_c((0,+\infty)\times\R^d).
\]


{\rm\textbf{(V)}} {\bf Energy dissipation identity.} 
The map $t\mapsto \mathcal E_m(u(t))$ is locally absolutely continuous on $(0,+\infty)$
and the following energy equality holds
\begin{equation*}
\EE_m(u(t_2))+\int_{t_1}^{t_2}\int_{\mathbb{R}^d}\left | \frac{\nabla u^{m}(t) - \chi u(t) \nabla v(t)} {u(t)}\right|^2u(t)\,\d x\,\d t = \EE_m(u(t_1)), \qquad \forall \;t_2>t_1 \ge 0.
\end{equation*}

{\rm\textbf{(VI)}} {\bf Smoothing effect.} There exists a constant $C_{\chi,d,m}$, only depending on $\chi,d,m$, such that
\[
\|u(t)\|_{L^\infty({\R}^d)} \le C_{\chi,d,m} \,\left(M^{\textstyle\frac{2}{2+d(m-1)}}\,\left(\frac{1}{t}\right)^{\textstyle\frac{d}{2+d(m-1)}}+M^{\textstyle\frac{2}{2+d(m-2)}}\right)\qquad\mbox{for every $t>0$.}
\]
If $u_0\in L^\infty(\R^d)$, then there exists  a constant $\bar C_{\chi,d,m}$, only depending on $\chi,d,m$, such that
\[
\|u(t)\|_\infty\le \bar C_{\chi,d,m}\left(M^{\textstyle\frac{2}{2+d(m-2)}}\vee\|u_0\|_{L^\infty(\R^d)}\right)\qquad\mbox{for every $t>0$}.
\]
\end{theorem}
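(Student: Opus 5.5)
The proof is organized around a discrete version of (VI) at the level of the JKO scheme. Every other statement then follows from standard compactness arguments combined with the uniqueness theory of \cite{CLM} for bounded data.

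\textbf{Step 1: discrete regularity, giving (I).} Existence of minimizers in \eqref{minmov1} follows from lower semicontinuity and coercivity of $\EE_m+\frac1{2\tau}W^2(\cdot,u_\tau^{k-1})$. Here the HLS inequality controls $\BB_\alpha$, and \eqref{assumption} ensures that $\FF_m$ dominates, also in the case $m=m_c$, $M<M_c$. For regularity we use the flow interchange technique: we take the heat flow, or more generally the flow of the porous medium equation $\partial_t w=\Delta w^{r}$, as an auxiliary displacement convex flow. Comparing $u_\tau^k$ with its perturbation along this flow yields
\[
\FF_{r}(u^k_\tau)-\FF_r(u^{k-1}_\tau)+\tau\int\nabla (u_\tau^k)^m\cdot\nabla (u_\tau^k)^{r-1}\,\d x\le \tau\chi\int u^k_\tau\,\nabla v^k_\tau\cdot\nabla (u^k_\tau)^{r-1}\,\d x .
\]
The right-hand side equals $\chi\tau\frac{r-1}{r}\int (u_\tau^k)^r(u^k_\tau-\alpha v^k_\tau)\,\d x\le C\tau\|u_\tau^k\|_{r+1}^{r+1}$. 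We absorb it into the gradient term by Gagliardo--Nirenberg, which is possible because $m>m_c$. By induction on $k$, starting from $u_\tau^0\in L^p$ for all $p$ via \eqref{heatcontraction}, this gives $(u^k_\tau)^q\in H^1$ for all $q\ge m/2$. Calder\'on--Zygmund estimates for $-\Delta v+\alpha v=u$ then give $\nabla v_\tau^k\in W^{1,p}$.

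\textbf{Step 2: discrete smoothing estimate (the main obstacle).} Summing the flow interchange inequality over $k$ gives a discrete analogue of the differential inequality
\[
\frac{d}{dt}\|u\|_r^r+c_r\|\nabla u^{(r+m-1)/2}\|_2^2\le C r\|u\|_{r+1}^{r+1}.
\]
We run a Moser/De Giorgi iteration on it with $r_j\to\infty$. We track the constants $c_r\sim 1/r$, use interpolation against the conserved mass $M$, and use time cutoffs $t_j\uparrow t$. The result is $\|u^k_\tau\|_\infty\le C(M^{a}(k\tau)^{-\sigma}+M^{b})$, with $\sigma=\frac d{2+d(m-1)}$ (the porous medium rate) and the additive term coming from the aggregation term. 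The exponents are fixed by the scaling $u_\lambda=\lambda^du(\lambda x,\lambda^{d(m-1)+2}t)$. The hardest part is making the iteration uniform in $\tau$: only discrete time derivatives are available, and $u^0_\tau$ has norms that blow up logarithmically. The $t^{-\sigma}$ weight has to absorb this blow-up, which is why we chose $\varpi(\tau)$ in \eqref{utau0}. For bounded $u_0$, a maximum-principle type version of the same iteration with $t_j=0$ yields the second bound in (VI). Both bounds pass to the limit by lower semicontinuity.

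\textbf{Step 3: compactness and passage to the limit, giving (II)--(V).} The usual estimate $\sum_k W^2(u^k_\tau,u^{k-1}_\tau)/\tau\le 2(\EE_m(u^0_\tau)-\inf\EE_m)$, together with the time-weighted version of Step 2, gives equicontinuity in $W$ on $[t_0,\infty)$ and uniform $L^p$ bounds. Combining these with the discrete $H^1$ bounds of Step 1, the Aubin--Lions lemma in the Rossi--Savar\'e form gives strong $L^p_{loc}L^q$ convergence. Strong convergence of $\nabla v_\tau$ then follows from the HLS inequality and Calder\'on--Zygmund estimates. From these we derive the discrete Euler--Lagrange equation $\frac{\mathrm{id}-T}{\tau}u^k=\nabla (u^k)^m-\chi u^k\nabla v^k$, where $T$ is the optimal map; passing to the limit gives (IV). The energy equality (V) follows from the chain rule for $\EE_m$ along $AC^2$ curves, using the lower semicontinuous slope representation. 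For uniqueness, any limit curve satisfies $u(t)\in L^\infty$ for $t>0$ with the bound of (VI). On $[t_0,\infty)$ we therefore apply the evolution variational inequality / contraction estimate of \cite{CLM}, valid for $L^\infty$ solutions, which yields $W(u(t),\tilde u(t))\le e^{\lambda(t-t_0)}W(u(t_0),\tilde u(t_0))$. Here $\lambda$ depends on the $L^\infty$ bound through $\|D^2v\|$ and is integrable near $0$ because $\sigma<1$ in the relevant log-Lipschitz sense. Letting $t_0\to0$ and using continuity at $0$ gives uniqueness, and hence convergence of the whole family \eqref{wholetau}.
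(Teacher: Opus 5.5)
\textbf{Gap 1: continuity at $t=0$.} In Step 3 you only obtain equicontinuity on $[t_0,\infty)$. Your uniqueness argument then ends with ``letting $t_0\to0$ and using continuity at $0$'', but that continuity is never proved, and $u\in C([0,+\infty);(\Space,W))$ is itself part of (II). For $u_0\notin D(\EE_m)$ it is not automatic; it is the key point of the existence theory. Since $\EE_m(u^0_\tau)\to+\infty$, both $W^2(u_\tau(t),u^0_\tau)\le 2(t+\tau)(\EE_m(u^0_\tau)-\inf\EE_m)$ and the usual moment bound from $\sum_k W^2(u^k_\tau,u^{k-1}_\tau)/\tau$ blow up as $\tau\to0$. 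So you have neither second moments bounded uniformly in $\tau$ on $[0,T]$ (needed for narrow compactness and for the EVI of \cite{CLM}) nor $W(u(t),u_0)\to0$.

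The paper supplies two ingredients for this. First, a flow interchange with the Fokker--Planck flow gives $\mom(u^k_\tau)-\mom(u^{k-1}_\tau)\le 2d\tau\|u^k_\tau\|_{L^m(\R^d)}^m$ (Proposition \ref{momentgrowth}). Combined with $\|u_\tau(t)\|_{L^m(\R^d)}^m\lesssim t^{-\theta}+1$, where $\theta=\frac{d(m-1)}{2+d(m-1)}<1$, this bounds the moments uniformly. Second, a dyadic argument: $\EE_m(u_\tau(s))-\EE_m(u_\tau(2s))\lesssim s^{-\theta}$ gives $W(u_\tau(2s),u_\tau(s))\lesssim s^{(1-\theta)/2}$, and summing over $s=2^{-j}t$ yields \eqref{LAST}.

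Relatedly, the stability estimate is not exponential. For $u\in L^\infty$, $\nabla v$ is only log-Lipschitz and $D^2v$ is unbounded, so the EVI carries a remainder $\omega(W^2)$ with $\omega(r)\sim r|\log r|$. Gronwall must therefore be replaced by the Osgood bound \eqref{contractivity}. That bound closes because $G_\omega(0^+)=-\infty$ and $t^{-d/(2+d(m-1))}\in L^1(0,1)$, but again only once $W(u^1(t_0),u^2(t_0))\to0$ as $t_0\to0$ is known.

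\textbf{Gap 2: the aggregation term in the Moser iteration.} Step 2 skips the point where this term could spoil the rate. With $p_j=R^j$ and $s=p_{j-1}$, the one-step bound has the form $Q_j\le (c_jt^{-Z_j}Q_{j-1}^{\gamma_j})\vee(b_jQ_{j-1}^{\beta_j})$ with $\beta_j>\gamma_j$. If $Q_{j-1}\sim t^{-\bar Z_{j-1}}$, the aggregation branch produces $t^{-\beta_j\bar Z_{j-1}}$. The porous-medium exponent $\bar Z_j=Z_j+\gamma_j\bar Z_{j-1}$ survives only because $\beta_j\bar Z_{j-1}\le\bar Z_j$. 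This is \eqref{crucial} in Lemma \ref{lemmata}, which reduces to $(2p_{j-1}+d(m-1))(2+d(m-2))\ge0$, and it must be applied along all $2^j$ branches of the iteration.

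Running the iteration on discrete solutions uniformly in $\tau$ is also unjustified. The discrete comparison of Proposition \ref{prop:DI} carries a remainder $\tau|\phi'(a_0)|$ involving the $L^{p_j}$ norm at the restart time. That remainder vanishes as $\tau\to0$ for each fixed $j$, but not uniformly as $p_j\to\infty$. The paper instead lets $\tau\to0$ at each fixed $p$ to obtain \eqref{tconzero} for the limit curve. It then iterates there, comparing with solutions that blow up at $t_{j-1}$.

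\textbf{Further points.} The scaling you invoke is not a symmetry for $m>m_c$: the drift scales like $\lambda^{2d}$ and the diffusion like $\lambda^{dm+2}$. The paper instead normalizes the mass via $u_\lambda=\lambda u(\lambda^{1-m/2}x,\lambda t)$, which preserves $\chi$ and relies on uniqueness. Also, since $\EE_m$ is not $\lambda$-convex, (V) does not follow from an off-the-shelf chain rule. Theorem \ref{ede} derives it from the EVI-induced $\omega$-convexity on $\{\|u\|_{L^\infty(\R^d)}\le R\}$.
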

\bigskip

For $m>m_c$,  every stationary state
 of the dynamics is radially decreasing and compactly supported, as shown in \cite{KY,CCV,CHVY}, and there is a unique (up to translations) stationary state of mass $M$, coinciding with the unique (up to translations) solution to the minimization problem
\begin{equation}\label{minprob}
\min\left\{\mathcal E_m(u): u\in L^m(\R^d),\; u\ge 0,\; \int_{\R^d} u=M \right\}.
\end{equation}
The energy functional $\mathcal E_m$ generates a nonlinear semigroup $S_t:\Space\to\Space$ that associates to every initial datum $u_0\in\Space$ the corresponding unique  curve $u$ given by Theorem \ref{th:main} evaluated at time $t>0$. \EEE The regularizing effect provided by Theorem \ref{th:main} implies that $S_t(u_0)\in D(\mathcal E_m)\cap L^\infty(\R^d) $ for every $t>0$. Concerning the long time asymptotics of the dynamics, it is still an open problem to determine whether  $S_t(u_0)$ converges for every $u_0\in \Space$ to the unique stationary state with same mass and center of mass of $u_0$, as $t\to+\infty$.  This result has been proven only in space dimension $2$ with $\alpha=0$, see \cite[Theorem 4.12]{CHVY}, for $L^\infty(\R^d)$ initial data. The semigroup property and the $L^\infty$ smoothing effect that we have proven in Theorem \ref{th:main} allow to  immediately extend the result of \cite{CHVY} to any initial data in $\Space$. 


\begin{corollary}\label{maincoro} 
Under assumption \eqref{assumption}, let $d=2$, $\alpha=0$, $u_0\in \mathscr P_2^M(\R^2)$  and $m>1$. Let $S_t(u_0)$ the unique gradient flow for functional $\mathcal E_m$, starting from $u_0$, provided by {\rm Theorem \ref{th:main}}. 
Then 
\[
\lim_{t\to+\infty} \|S_t(u_0)-u_M^0\|_{L^p(\R^2)}=0\qquad \mbox{ for every $p\in[1,+\infty)$},
\]
 where $u_M^0$ is the unique solution of the minimum problem in \eqref{minprob} having the same center of mass of $u_0$. \EEE
\end{corollary}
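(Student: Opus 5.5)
The plan is to reduce Corollary \ref{maincoro} to \cite[Theorem 4.12]{CHVY} using the semigroup property and the smoothing effect of Theorem \ref{th:main}. Fix $t_0>0$, say $t_0=1$, and set $w_0:=S_{t_0}(u_0)$. By part (VI) of Theorem \ref{th:main}, $w_0\in L^\infty(\R^2)$ with a bound depending only on $\chi,m,M$. By part (V), $\mathcal E_m(w_0)<+\infty$, so $w_0\in D(\mathcal E_m)\cap L^\infty(\R^2)\cap\mathscr P_2^M(\R^2)$. This is precisely the class of initial data for which \cite[Theorem 4.12]{CHVY} holds in $d=2$, $\alpha=0$, $m>1$.

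The next step is to identify the solution used in \cite{CHVY} starting from $w_0$ with $t\mapsto S_{t_0+t}(u_0)$. For this I would use uniqueness. Both curves are weak solutions of \eqref{PE} with bounded initial datum $w_0$, and both stay bounded in $L^\infty$ on bounded time intervals: for $S_{t_0+\cdot}(u_0)$ this follows from the second estimate in (VI), and for the CHVY solution it follows from their construction. The uniqueness theory for bounded solutions of \cite{CLM} then forces the two curves to coincide. An alternative is to use the semigroup property $S_{t_0+t}(u_0)=S_t(S_{t_0}(u_0))$, which follows from uniqueness in part (II), once one checks that the CHVY solution is also obtained as a limit of the JKO scheme or satisfies the EVI characterization. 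I expect this identification to be the main technical point, namely checking that the notion of solution used in \cite{CHVY} is covered by a uniqueness statement available here. The cleanest route is probably to invoke the EVI/uniqueness result for bounded data from \cite{CLM}, together with the $L^\infty_{loc}$ bounds.

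Next, the center of mass must be preserved. Testing the weak formulation in (IV) with $\varphi(x)=x$ (after a standard cutoff, justified by the finite second moment and the integrability in (II)), the diffusion term $\int\nabla u^m\,\d x$ vanishes. The interaction term $\int u\nabla v\,\d x$ also vanishes, because the kernel $\nabla B_0$ is odd and the double integral is antisymmetric. Hence the center of mass of $S_{t_0}(u_0)$ equals that of $u_0$, and the continuity of $u$ in $W$ at $t=0$ lets the identity pass down to $t=0$. Then \cite[Theorem 4.12]{CHVY} gives
\[
\lim_{t\to+\infty}\|S_{t_0+t}(u_0)-u_M^0\|_{L^p(\R^2)}=0,
\]
for the range of $p$ stated there, with $u_M^0$ the minimizer of \eqref{minprob} centered at the common center of mass.

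Finally, I would extend to every $p\in[1,+\infty)$ if \cite{CHVY} gives only some exponents, for instance $p=1$. The uniform bound $\sup_{t\ge t_0}\|S_t(u_0)\|_{L^\infty}\le C$ from the second estimate of (VI), applied from time $t_0$, gives this extension by interpolation:
\[
\|f\|_{L^p}\le\|f\|_{L^1}^{1/p}\|f\|_{L^\infty}^{1-1/p}.
\]
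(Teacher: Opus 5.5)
Your proposal is correct and follows the same route as the paper. The paper justifies the corollary in one line: the semigroup property (a consequence of uniqueness via the EVI) and the $L^\infty$ smoothing of Theorem \ref{th:main} reduce the problem to bounded data, to which \cite[Theorem 4.12]{CHVY} applies. Your additional checks make explicit what the paper leaves implicit: conservation of the center of mass, identification of the solution of \cite{CHVY} with the gradient flow through the bounded-solution uniqueness of \cite{CLM}, and interpolation to reach every $p$. One small correction: the $L^\infty$ bound on $S_{t_0+\cdot}(u_0)$ comes directly from the first estimate in (VI) for $t\ge t_0$, whereas invoking the second estimate already presupposes the semigroup identity.
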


Let us briefly comment on Theorem \ref{th:main}.  
A key result is the hypercontractivity estimate of  point {\textbf{(VI)}, in which the rate of explosion of the $L^\infty$ norm as $t\to 0$ is sharp, since it is the same rate  obtained for the classical porous medium equation $\partial_t u=\Delta u^m$, which corresponds to the case $\chi=0$. \EEE 
Indeed, it is well known that for the porous medium equation, the following estimate holds for a suitable constant $C_{d,m}$ depending only on $d$ and $m$, see \cite[Theorem 2.1]{V2}, for every initial datum in $\mathscr P^M(\R^d)$,
\begin{equation}\label{smoothingporous}
\|u(t)\|_{L^\infty({\R}^d)} \le C_{d,m} \,M^{\textstyle\frac{2}{2+d(m-1)}}\,\left(\frac{1}{t}\right)^{\textstyle\frac{d}{2+d(m-1)}}\qquad\mbox{for every $t>0$.}
\end{equation}
For the heat equation $m=1$, this is just \eqref{heatcontraction}.
A summary of the key steps towards the proof of  point {\textbf{(VI)}} is  given at the beginning of Section \ref{hypersection}.
%
We also prove in Section \ref{hypersection} that the hypercontractivity rate $\tfrac{d}{2+d(m-1)}$ improves to $\tfrac{d}{2p+d(m-1)}$ if we assume that $u_0\in L^p(\R^d)$ for some $p>1$, and that the uniform bound of point ${\textbf{(VI)}}$ of Theorem \ref{th:main}  holds if $u_0\in L^\infty(\R^d)$.

 In Section \ref{lastsection},
 once the $L^\infty$ estimate has been established, we  combine it with the evolution variational inequality (EVI) formulation  of the gradient flow \EEE
  that we have proved in \cite{CLM}:  for every $\bar u\in\PP_2^M(\mathbb R^d)$ there holds  
\begin{equation*}\begin{aligned}
   & \frac{1}2\frac{d}{dt} W_2^2(u(t),\bar u) \le\,
\EE_m(\bar u) - \EE_m(u(t))\\&\qquad+ \chi C_d(M+\|u(t)\|_{\infty}+\|\bar u\|_\infty)\,
\omega(W_2^2(u(t),\bar u))  \quad \text{for a.e. }
t>0,\end{aligned}
\end{equation*}
where $C_d\ge0$ is a constant only depending on $d$. This is a weaker form of the classical evolution variational inequality formulation of gradient flows for convex functionals along Wasserstein geodesics, as the remainder features a function $\omega(x)$ that is not linear for small $x$ but behaves like $x|\log x|$. This corresponds to a $\omega$-convexity property along geodesics as discussed in \cite{CLM}, see also \cite{Craig}. It is crucial to notice that the coefficient in front of $\omega$ linearly depends on $\|u(t)\|_\infty$. Thanks to the EVI formulation and to the the sharp $L^\infty$ estimate at our disposal, we obtain a stability inequality and, as a consequence, the uniqueness of the gradient flow. Uniqueness also implies that the whole family $(u_\tau)_{\tau>0}$ of  discrete solutions \EEE converge to $u$ as $\tau$ to zero as noticed in point {\textbf{(II)}}. Finally, suitable adaptations to the standard theory of convex functionals along Wasserstein geodesics allow to obtain the energy dissipation equality from point {\textbf{(V)}}, as we do in the last part of Section \ref{lastsection}.

%
%
%
%
%

%
%
%


\vspace{3mm}

\subsection{The main result in the fair competition regime}
 We define the {\it subcritical mass} 
\begin{equation}\label{defMsc} 
M_{sc}:=\displaystyle\frac4{\chi G_1}\quad \text{if }d=2,\qquad\quad
M_{sc}:=\displaystyle\left(\frac{4}{\chi C_{2,d}\,m_c}\right)^{d/2}\;\; \text{if }d\geq3,
\end{equation}
where $C_{2,d}=S_{2,d}^2$ and  $S_{2,d}$ is the optimal constant in the classical Sobolev inequality \eqref{sobolev}
%
 and $G_{1} $ is the optimal constant in the
Gagliardo-Nirenberg-Sobolev inequality \eqref{GNS}.
The explicit expression of $C_{2,d}$  will be recalled in Section \ref{Sec:notation}, along with the the approximate value of $G_1$.
\RRR
 The relation $M_{sc}<M_c$ holds in any dimension $d\ge 2$ and   will also be proved in Section \ref{Sec:notation}  (see Remark \ref{remarchino} and Remark \ref{remarcone}). \EEE

\begin{theorem}\label{th:main2}
Under assumption \eqref{assumption},
let $m=m_c$ and
$u_0\in D(\EE_m)$. 
Then {\rm{\textbf{(I), (II), (III), (IV), (V)}}} of {\rm Theorem \ref{th:main}} hold true and $u\in AC^2_{loc}([0,+\infty);(\Space,W))$. Moreover, the following properties hold:

\medskip
{\rm\textbf{(VI)}} {\bf Smoothing effect.} There exist  constants $C> 0$, $q\ge 0$,  depending on $M$, $\chi$, $d$, $\alpha$, 
$\mathcal E_m(u_0)$ and the second moment of $u_0$, with $q=0$ if $d\ge 3$, such that
\[
\|u(t)\|_{L^\infty(\R^d)}\le  C\left(\frac1t\right)^{\textstyle\frac {d^2}{d(d+2)-4}}+Ct^q\qquad\mbox{for every $t> 0$}.
\]
Under the additional assumption $u_0\in L^\infty(\R^d)$, 
there exists  $ \bar C>0$ depending on $M$, $\chi$, $d$, $\alpha$, 
$\|u_0\|_{L^\infty(\R^d)}$ 
and the second moment of $u_0$, such that
$$\|u(t)\|_{L^\infty(\R^d)}\le \bar C+\bar Ct^q \qquad\mbox{for every } t>0.$$

\medskip
{\rm\textbf{(VII)}} {\bf Uniform decay for small mass.}
If  $M<M_{sc},$ 
then there exists a constant $ C^*= C^*_{\chi,d,M}$  depending only on ${\chi,d,M}$ such that \EEE
\[
\|u(t)\|_\infty\le  \frac{C^*}{t}\qquad\mbox{for every  $t>0$.}
\]
\end{theorem}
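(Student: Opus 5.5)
Since $u_0\in D(\EE_m)$, the JKO scheme is started from data with uniformly bounded energy: by the remark after \eqref{utau0} (and Proposition \ref{initialdatum}), $\EE_m(u_\tau^0)\to\EE_m(u_0)<+\infty$. The plan is to reproduce the architecture used for Theorem \ref{th:main} and to replace the scaling argument, which fails at $m=m_c$, by the subcriticality $M<M_c$. The first step is a uniform bound on $\FF_m$. The HLS inequality at the critical exponent gives $\chi\BB_\alpha(u)\le \frac{\chi(m_c-1)C_H}{2}M^{2/d}\int u^{m_c}$ for $d\ge3$ (logarithmic HLS for $d=2$, with the Bessel kernel dominated by the Newtonian one up to a term controlled by the second moment). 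Hence $\EE_m(u)\ge (1-(M/M_c)^{2/d})\FF_m(u)$ for $d\ge3$, and an analogous inequality holds for $d=2$ with a $\log(1+\text{second moment})$ correction. Combining this with the discrete energy decrease $\EE_m(u_\tau^k)\le\EE_m(u_\tau^0)$ and the standard bound on second moments along the scheme (linear growth in $t$ for $d=2$, which is where the factor $t^q$ enters), we get $\FF_m(u_\tau(t))\le C(1+t)^q$ uniformly in $\tau$. From there, (I)--(V) follow exactly as in the diffusion dominated case: the optimality conditions give the $H^1$ regularity, the compactness yields (II)--(IV), and, once the $L^\infty$ bound below is available, the EVI of \cite{CLM} yields uniqueness and the energy identity. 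The absolute continuity up to $t=0$ comes from $\sum_k W^2(u_\tau^k,u_\tau^{k-1})/\tau\le 2(\EE_m(u_\tau^0)-\inf\EE_m)$, where the infimum is finite by the same lower bound.

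For the smoothing effect (VI) we follow the scheme of Section \ref{hypersection}: a discrete Moser iteration on $L^p$ norms obtained from the flow interchange with the $p$-entropies $\int u^p$. Testing the minimality with the heat/porous flow of $\int u^p$ gives, per step,
\[
\frac{1}{\tau}\Big(\int (u^k_\tau)^{p}-\int (u^{k-1}_\tau)^p\Big)\le -c_p\int|\nabla (u_\tau^k)^{\frac{m+p-1}{2}}|^2+\chi c_p'\int (u_\tau^k)^{p+1}.
\]
The key point is to absorb the aggregation term. By Gagliardo--Nirenberg we have $\int u^{p+1}\le C\|\nabla u^{(m+p-1)/2}\|_2^{2}\,\|u\|_{L^{m}}^{\theta}$-type bounds, and at $m=m_c$ the powers match exactly, so that the coefficient involves $\|u\|_{L^{d/2}}$ or $\FF_m$ rather than a free power. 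Since we only control $\FF_m$ and not smallness, we split $u=\min(u,K)+(u-K)_+$, with $K$ chosen so that $\|(u-K)_+\|$ is small in the critical norm. This uses $\FF_m$ bounded together with equi-integrability of $u^{m_c}$, and I expect this to be the main obstacle: equi-integrability must be uniform in time and $\tau$, which has to come from the dissipation bound (an $L^{m_c+\epsilon}$ integrability in time) rather than from $\FF_m$ alone. After absorption, the $L^{p}$ bound propagates, and the standard De Giorgi/Moser iteration across $p_j\to\infty$ over shrinking time intervals produces the rate $t^{-d^2/(d(d+2)-4)}$. This is the porous-medium rate $\frac{d}{2p+d(m-1)}$ with $p=m_c$, since the starting norm is $L^{m_c}$ and $\frac{d}{2m_c+d(m_c-1)}=\frac{d^2}{d(d+2)-4}$. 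With $u_0\in L^\infty$, the same iteration is run without time weights, giving the uniform bound.

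For (VII), when $M<M_{sc}$ we estimate the aggregation term directly by Sobolev ($d\ge3$) or by GNS \eqref{GNS} ($d=2$) with constant $\chi C_{2,d}m_c M^{2/d}/4<1$. This yields $\frac{d}{dt}\int u^p\le -c\,\|\nabla u^{(m+p-1)/2}\|_2^2$ with constants independent of the energy and of the moments. The Nash-type iteration then closes with mass alone, and the scaling invariance at $m=m_c$ (the mass-preserving rescaling $u_\lambda$) forces the rate $C^*/t$, uniformly for all $t>0$.
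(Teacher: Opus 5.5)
Your architecture (flow interchange with $\int u^p$, discrete Moser iteration, scaling for (VII)) is the paper's. However, the step you flag as the main obstacle is misdiagnosed, and the error carries over into (VII).

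At $m=m_c$ the exponents match exactly only when the low norm is the \emph{mass}. For $d\ge3$ one has $\int u^{p+1}\le M^{2/d}\|u\|_{L^{(p+m_c-1)d/(d-2)}}^{p+m_c-1}$. So the critical quantity is $M$, which is where $M_{sc}(p)$ comes from, and not $\|u\|_{L^{d/2}}$ or $\mathcal F_m$.

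For $d\ge3$, however, the energy controls $\|u\|_{L^{m_c}}$ with $m_c>1$ (Proposition \ref{equi}). Interpolating $L^{p+1}$ between $L^s$ with $s>1$ and the Sobolev norm is then strictly subcritical: the Young exponent $r(p,s)$ of \eqref{sigmafg} has denominator $2s+d(m_c-2)=2s-2>0$. The aggregation term is therefore absorbed with no smallness and no truncation (Proposition \ref{psprop} with $s=m_c$).

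Even in your truncation language, $\int(u-K)_+\le K^{1-m_c}\|u\|_{L^{m_c}}^{m_c}$ is already uniformly small. Equi-integrability of $u^{m_c}$ is never needed. Extracting it from time-integrated dissipation would not work anyway, since absorption requires control at every step $k$.

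Truncation is genuinely needed only for $d=2$, where $m_c=1$ and the energy gives just an $L\log L$ bound (Proposition \ref{sscritical}). There the smallness comes directly from $\int(u-K)_+\le(\log K)^{-1}\int u\log^+u$, which is uniform in $\tau$ and $k\tau\le T$ by Proposition \ref{logequi}.

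The iteration itself is also not standard. The aggregation creates branches with exponent $\beta_j>\gamma_j$, and the porous-medium rate survives only because of \eqref{crucial} in Lemma \ref{lemmata}.

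For (VII), your absorption constant is the $p\downarrow1$ limit. At exponent $p$ you need $M<M_{sc}(p)$, and $M_{sc}(p)\to0$ as $p\to\infty$, so mass alone cannot close the iteration. The paper proceeds in three steps:
\begin{enumerate}
\item It uses smallness at a single $p_*$ to get $\|u(t)\|_{L^{p_*}}^{p_*}\le M(Ct)^{1-p_*}$ (Theorem \ref{subsubtheorem}).
\item It runs the subcritical iteration from there, obtaining $\|u(t)\|_{L^\infty}\le C^*t^{-\sigma}$ with $\sigma<1$ for $t\ge1$ and $C^*=C^*(\chi,d,M)$ (Proposition \ref{extinct}).
\item Only then does it apply the scaling $u_\lambda(x,t)=\lambda^du(\lambda x,\lambda^dt)$. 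Since this turns $\alpha$ into $\lambda^2\alpha$, it needs constants independent of $\alpha$, the energy and the moments, together with uniqueness of the rescaled flow.
\end{enumerate}

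Second, uniqueness in (II) does not follow from the EVI when $d=2$. The Osgood argument of Theorem \ref{th:unique} needs $\int_0^1\|u(t)\|_{L^\infty}\,\mathrm{d}t<+\infty$. For $d=2$ the exponent $\frac{d^2}{d(d+2)-4}$ equals $1$, and $y'\le \frac{C}{t}\,\omega(y)$ with $y(0)=0$ does not force $y\equiv0$; for instance $y(t)=\exp(-At^{-C})$ solves the equality for small $t$.

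The EVI therefore gives uniqueness only in two cases: $d\ge3$, where the exponent is $<1$, or $d=2$ under the extra assumption $u_0\in L^p(\R^2)$ with $p>1$, which improves the rate to $t^{-1/p}$ (Corollary \ref{moreintegrability}). For general $u_0\in D(\mathcal E_1)$ the paper invokes \cite[Theorem 1.3]{EM16}. Its hypotheses are met by the distributional equation and the energy dissipation inequality of the constructed flow. Without this, for $d=2$ you have not justified the convergence of the whole family $(u_\tau)_{\tau>0}$ in (II)--(III), nor the scaling step in (VII).
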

\bigskip

The proof of Theorem \ref{th:main2} follows the same line as that of Theorem \ref{th:main}. Some extra care is needed in dimension $2$, where $m=1$, in which case the estimates of discrete minimizers of the JKO scheme require suitable use of equiintegrability properties. The obtained hpercontractivity rates are again sharp for $d\ge 3$, taking into account the assumption $u_0\in D(\mathcal E_m)$,  while for $d=2$ the expected rate would be $1/|t\log t|$, consistently with the heat equation, but we will not pursue this improvement  (a logarithmic correction to the $L^p$ hypercontractivity rate is obtained in \cite{LW} but only for finite $p$). 

Asymptotic results for $m=m_c$, in the subcritical two-dimensional model with $\alpha=0$, show that the solution gets close to a self similar solution for large time \cite{BDP, CC2, CD, EM16}. On the other hand, uniform extinction of solutions for large $t$, at the optimal rate $1/t$, is often  proven under smallness conditions on the mass, see for instance \cite{BDEF}. See also   \cite{EJS}  where the authors treat the case $m=m_c$ even in higher dimension. It is natural to conjecture the validity of such an estimate in all the subcritical regime $M<M_c$, which we do not prove, but we have an improvement of the range of values of the mass for which $L^\infty$ decay holds. For instance in dimension $2$ we reach the mass threshold $4/{(\chi G_1)}$ which is classically obtained as the formal threshold under which the logarithmic entropy $\mathcal F_1$ decreases along the solution, see \cite{BDP}.

Finally, let us mention that in dimension $d=2$, the uniqueness part in Theorem \ref{th:main2} does not directly follow from our analysis (unless we assume in addition that $u_0\in L^p(\R^d)$ for some $p>1$, as we will  explain in Section \ref{lastsection} and Section \ref{sectionproof}), but can be directly inferred by combining our results with the ones by \cite{EM16}.



\section{Notation and preliminary results} \label{Sec:notation}
\subsection{Wasserstein distance}
We denote by $\PP(\mathbb{R}^d)$ the set of Borel probability measures on $\mathbb{R}^d$ and by 
$\PP^M(\R^d):=\{Mu:u\in\PP(\R^d)\}$ the set of non-negative Borel measures of mass $M>0$.
We say that a sequence $\{u_n\}_n\subset \PP^M(\R^d)$ narrowly converges to $u\in  \PP^M(\R^d)$ if
$$\displaystyle\int_{\R^d} \phi \, \d u_n \to \int_{\R^d} \phi \, \d u\qquad\mbox{for every} \quad \phi\in C_b(\R^d),$$
where  $C_b(\R^d)$ is the set of continuous and bounded functions on $\R^d$.
The second moment and the logarithmic moment of $u\in\mathscr P^M(\R^d)$ are defined by $$\mathrm m_2(u)=\int_{\R^d}|x|^2\,\d u(x),\qquad\mathrm m_{log}(u)=\int_{\R^d}\log(1+|x|^2)\,du(x),$$ 
noticing that $\mathrm m_{log}(u)\le \mom(u)$.
We use the notation
$$\displaystyle\PP^M_2(\R^d):=\left\{u\in\PP^M(\R^d):  \mathrm m_2(u)<+\infty \right\},$$
for the subset of $\PP^M(\R^d)$ made by measures with finite second moment.
The Kantorovich-Rubinstein-Wasserstein distance (in the sequel only Wasserstein distance) {\RRR W} in $\PP^M_2(\R^d)$ is defined by
\begin{equation}\label{Kanto}
W(u,v):=\min_{\gamma\in\PP^M({\R^d\times\R^d})}\left\{\left(\int_{\R^d\times\R^d}\!\!\!\!\!|x-y|^2\,\d\gamma(x,y)\right)^{1/2}:
\,(\pi_1)_\#\gamma=u,\,(\pi_2)_\#\gamma=v
\right\}
\end{equation}
where $\pi_i:\R^d\times\R^d\to\R^d$, $i=1,2$, denote the canonical projections on the first and second factor respectively.
Denoting by $\id$ the identity map in $\R^d$, when $u$ is absolutely continuous with respect to the Lebesgue measure, 
the minimum problem~\eqref{Kanto}
has a unique solution $\gamma$ induced by a transport map $T_u^v$ in the following way:
$\gamma=(\id,T_u^v)_\#u$.
In particular, $T_u^v$ is the unique solution of the Monge optimal transport problem
$$
\min_{S:\R^d\to\R^d}\left\{\int_{\R^d}|S(x)-x|^2\d u(x):\ S_\#u=v\right\}
$$
of which~\eqref{Kanto} is the Kantorovich relaxed version. The push forward notation $S_\#u=v$ means that $v(A)=u(S^{-1}(A))$ for every Borel set $A$ of $\R^d$.
The function $W:\PP^M_2(\R^d)\times\PP^M_2(\R^d)\to [0,+\infty)$ is a distance on $\PP^M_2(\R^d)$ and $(\PP^M_2(\R^d),W)$ is a complete and separable metric space.
Moreover the distance $W$ is sequentially lower semi continuous with respect to the narrow convergence, i.e.,
\begin{equation}\label{lscWass}
	 u_n\to u,\quad v_n\to v, \mbox{ narrowly } \quad\Longrightarrow \quad\liminf_{n\to+\infty}W(u_n,v_n)\geq W(u,v),
\end{equation}
and bounded sets in $(\PP^M_2(\R^d),W)$ are narrowly sequentially relatively compact.

 For a detailed treatment of the above topics, see \cite{AGS}, \cite{Vil},  \cite{Sant}. \EEE
\subsection{Gagliardo-Nirenberg-Sobolev inequalities}
We recall the basic Sobolev inequality: for $1\le q<d$ and $q^*=\frac{dq}{d-q}$ there exists an optimal constant $S_{q,d}$ such that
\begin{equation}\label{sobolev}
\left(\int_{\mathbb{R}^d} |v|^{q^*}\,\d x\right)^{1/{q^*}} \le S_{q,d}\left(\int_{\mathbb{R}^d} |\nabla v|^q\,\d x \right)^{1/q}, \qquad \forall\, v\in W^{1,q}(\R^d),
\end{equation}
see for instance  \cite[Theorem 9.9]{Brezis}. 
The optimal constant in \eqref{sobolev} is
\begin{equation}\label{talenti}
 S_{1,d}=\frac{\Gamma(1+d/2)^{1/d}}{\sqrt{\pi}d}, \quad 
 S_{q,d}=\frac{1}{\sqrt{\pi}d^{1/q}}\left(\frac{q-1}{d-q}\right)^{1-1/q}\left(\frac{\Gamma(1+d/2)\Gamma(d)}{\Gamma(d/q)\Gamma(1+d-d/q)}\right)^{1/d},
\end{equation}
see \cite{T}, where $\Gamma$ is the Euler Gamma function.
 For notational convenience we also let
\begin{equation}\label{sq}
C_{q,d}:= S_{q,d}^2,
\end{equation}
and we stress that $C_{2,d}$ is the constant in the definition of  $M_{sc}$, see  \eqref{defMsc}. 
We observe that $C_{1,2}=1/(4\pi)$.

In dimension $d=2$, we will use this version of the Gagliardo-Nirenberg-Sobolev inequality:
 given  $p\ge 1$ there exists an optimal constant $G_p$ such that  \EEE
\begin{equation}\label{GNS}
\int_{\mathbb{R}^2}u^{p+1}\,\d x \le G_p\int_{\mathbb{R}^2} u \,\d x \int_{\mathbb{R}^2}|\nabla u^{p/2}|^2\,\d x,
 \end{equation}
 for any $u\in L^1(\R^2)$ such that $u\geq 0$ and $u^{p/2} \in H^{1}(\R^2)$.
  The sharp constant $G_p$ is however not explicitly known. 
A simple estimate of the constant $G_p$ is
\begin{equation}\label{badupper}
G_p\le\frac{(p+1)^2}{p^2}\, S_{1,2}^2=\frac{(p+1)^2}{4{\pi}p^2},
\end{equation}
which  can be  obtained from \eqref{sobolev} for $q=1$ and $v=u^{(p+1)/2}$ and the Cauchy-Schwarz inequality applied to $u^{1/2}$ and $|\nabla u^{p/2}|$. An easy estimate from below for $G_p$ can also be obtained, by testing the inequality \eqref{GNS} with $w_\lambda(x):=e^{-|x|^\lambda}$, $\lambda>0$, i.e.,
$$
G_p\ge \sup_{\lambda>0}\frac{\int_{\mathbb R^2}w_\lambda^{p+1}\,\d x}{\int_{\mathbb R^2} w_\lambda\,\d x\,\int_{\mathbb R^2}|\nabla w_{\lambda}^{p/2}|^2\,\d x}=\sup_{\lambda>0}\frac{2}{\pi\lambda(p+1)^{2/\lambda}}=\frac1{\pi\,e\log(p+1)}.
$$\RRR

In the next Lemma we state a generalization of \eqref{GNS}, needed in the sequel.\EEE
\begin{lemma} Let $p>s\ge 1$ and $m\ge 1$. 
Then, there exists a constant $K_{p,s}$ such that
\begin{equation}\label{nuovagns}
	\left(\int_{\R^2}u^{p+1}\,\d x\right)^{\textstyle\frac{p+m-1}{p-s+1}}\le K_{p,s}\left(\int_{\R^2}u^s\,\d x\right)^{\textstyle\frac{p+m-1}{p-s+1}}\int_{\R^2}\left|\nabla u^{\frac{p+m-1}{2}}\right|^2\,\d x,
\end{equation}
for any $u\in L^s(\R^d)$ such that $u\geq 0$ and $u^{\frac{p+m-1}{2}} \in H^1(\R^2)$.
A possible value for the constant $K_{p,s}$  is 
\begin{equation}\label{kps} K_{p,s}=\frac1{4\pi}\left(\frac{p+1}{p-s+1}\right)^2.\end{equation}
\end{lemma}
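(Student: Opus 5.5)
The plan is to combine the $q=1$ Sobolev inequality \eqref{sobolev} in $\R^2$ with a Cauchy--Schwarz step, as in the derivation of \eqref{badupper}, and then to interpolate with H\"older's inequality. Throughout set $r:=p+m-1$; since $m\ge1$ and $s\ge1$ we have $r\ge p\ge p+1-s>0$.

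\emph{Step 1 (Sobolev plus Cauchy--Schwarz with the $L^s$ weight).} Let $a:=\frac{s+r}{2}$ and apply \eqref{sobolev} with $d=2$, $q=1$, $q^*=2$ to $v=u^{a}$. This gives
\[
\int_{\R^2}u^{s+r}\,\d x\le C_{1,2}\Big(\int_{\R^2}|\nabla u^{a}|\,\d x\Big)^2 .
\]
Formally, $\nabla u^{a}=\frac{2a}{r}\,u^{a-r/2}\,\nabla u^{r/2}=\frac{s+r}{r}\,u^{s/2}\,\nabla u^{r/2}$. The Cauchy--Schwarz inequality applied to $u^{s/2}$ and $|\nabla u^{r/2}|$ then yields
\[
\int_{\R^2}u^{s+r}\,\d x\le \frac1{4\pi}\Big(\frac{s+r}{r}\Big)^2\int_{\R^2}u^s\,\d x\int_{\R^2}|\nabla u^{r/2}|^2\,\d x .
\]
Here we used $C_{1,2}=1/(4\pi)$.

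\emph{Step 2 (H\"older interpolation).} Since $s<p+1\le s+r$ (the last inequality because $s+r\ge 1+p$), we can write $p+1=\theta s+(1-\theta)(s+r)$ with $1-\theta=\frac{p+1-s}{r}$. H\"older's inequality gives
\[
\int u^{p+1}\le\Big(\int u^s\Big)^{\theta}\Big(\int u^{s+r}\Big)^{1-\theta}.
\]
Raising to the power $\frac{r}{p-s+1}=\frac1{1-\theta}$, the exponent of $\int u^s$ becomes $\frac{\theta}{1-\theta}=\frac{r}{p-s+1}-1$. Inserting Step 1 adds one more power of $\int u^s$, so
\[
\Big(\int_{\R^2}u^{p+1}\Big)^{\frac{r}{p-s+1}}\le\frac1{4\pi}\Big(\frac{s+r}{r}\Big)^2\Big(\int_{\R^2}u^s\Big)^{\frac{r}{p-s+1}}\int_{\R^2}|\nabla u^{r/2}|^2 ,
\]
which is \eqref{nuovagns} with constant $\frac1{4\pi}\big(\frac{s+r}{r}\big)^2$. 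This constant is at most the one in \eqref{kps}, because
\[
\frac{s+r}{r}=1+\frac sr\le 1+\frac{s}{p+1-s}=\frac{p+1}{p-s+1},
\]
using $r\ge p+1-s$.

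\emph{Main obstacle: justification.} The only delicate point is rigor in Step 1: we must know that $u^{a}\in W^{1,1}(\R^2)$ and that the chain rule holds, given only $u\in L^s$ and $u^{r/2}\in H^1$. I would proceed in three moves.
\begin{enumerate}
\item Assume both sides of \eqref{nuovagns} are finite; otherwise there is nothing to prove.
\item Apply the argument to truncations $u_\eps:=(\min\{u,1/\eps\}-\eps)^+$. These are bounded with support of finite measure, since $u\in L^s$. Hence $u_\eps^{a}\in W^{1,1}$, and the chain rule holds for Lipschitz compositions of $u^{r/2}$ (note $u^{a}=(u^{r/2})^{2a/r}$).
\item Pass to the limit $\eps\to0$. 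The left-hand side converges by monotone convergence. On the right-hand side, $\int u_\eps^s\le\int u^s$, and $|\nabla u_\eps^{r/2}|\le|\nabla u^{r/2}|$ up to a factor tending to $1$.
\end{enumerate}
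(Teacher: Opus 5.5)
Your proof is correct, and it takes a somewhat different and shorter route than the paper's.

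The paper's route is as follows. It sets $b=\frac{p+m-1}{p+1-s}$ and applies \eqref{sobolev} with $d=2$, $q=1$ to $u^{b(p+1)/2}$. Cauchy--Schwarz then produces the weight $u^{bs}$ and the intermediate bound $\int u^{b(p+1)}\le K_{p,s}\int u^{bs}\int|\nabla u^{(p+m-1)/2}|^2$, which is a \eqref{badupper}-type inequality for $\bar u=u^b$. From there it needs two interpolations ($L^{sb}$ and $L^{p+1}$, each between $L^s$ and $L^{(p+1)b}$) plus an exponent computation. It also handles $m=s=1$ separately through \eqref{GNS} and \eqref{badupper}.

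You instead choose the power $a=\frac{s+r}{2}$ so that the Cauchy--Schwarz weight is exactly $u^s$. This buys three things:
\begin{itemize}
\item a single H\"older step of $L^{p+1}$ between $L^s$ and $L^{s+r}$ closes the argument;
\item the case $m=s=1$ is covered automatically ($\theta=0$);
\item you get the constant $\frac1{4\pi}\bigl(1+\frac{s}{p+m-1}\bigr)^2$, which you correctly show is at most \eqref{kps}, with equality only when $m=s=1$.
\end{itemize}
The paper's formulation only buys an intermediate step that visibly parallels \eqref{badupper}; otherwise yours is strictly simpler.

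One correction to your justification paragraph. This goes beyond the paper, which uses the chain rule formally. For $u_\eps=(\min\{u,1/\eps\}-\eps)^+$ one has, on $\{\eps<u<1/\eps\}$,
\[
|\nabla u_\eps^{r/2}|=\bigl(\tfrac{u}{u-\eps}\bigr)^{1-r/2}|\nabla u^{r/2}|.
\]
When $r=p+m-1<2$, which is allowed (e.g.\ $m=1$, $1<p<2$), this factor exceeds $1$ and is unbounded near $\{u=\eps\}$. So your third move gives no domination and does not control $\int|\nabla u_\eps^{r/2}|^2$.

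There are two easy repairs.
\begin{itemize}
\item Avoid $\nabla u_\eps^{r/2}$ altogether. Using $a>1$, estimate
\[
|\nabla u_\eps^{a}|=\frac{s+r}{r}(u-\eps)^{a-1}u^{1-r/2}|\nabla u^{r/2}|\le\frac{s+r}{r}\,u^{s/2}|\nabla u^{r/2}|\quad\text{on }\{\eps<u<1/\eps\}.
\]
This gives Step~1 with $\int u_\eps^{s+r}$ on the left and the quantities of $u$ itself on the right. Monotone convergence then yields Step~1 for $u$, and Step~2 is applied to $u$ directly.
\item Truncate $u^{r/2}$ rather than $u$, taking $u_\eps=\bigl((\min\{u^{r/2},1/\eps\}-\eps)^+\bigr)^{2/r}$. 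For this choice $|\nabla u_\eps^{r/2}|\le|\nabla u^{r/2}|$ holds exactly.
\end{itemize}

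Your first move is also unnecessary. The right-hand side is finite by hypothesis, and finiteness of the left-hand side is part of what the truncation argument proves.
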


\begin{proof}
The case $m=s=1$ is obtained from \eqref{GNS} and the estimate \eqref{badupper}. 

Assume that $m+s>2$ and let $b:=\frac{p+m-1}{p+1-s}$.
We apply \eqref{sobolev}  with $d=2$, $q=1$, to $v= u^{\frac{b(p+1)}2}$. Therefore, with the notation $\bar u:=u^b$, we have
\[
\int_{\R^2}\bar u^{p+1}\,\d x\le S_{1,2}^2\left(\int_{\mathbb R^2}\left|\nabla \bar u^{\frac{p+1}{2}}\right|\,\d x\right)^2.
\]
Since $\nabla \bar u^{\frac{p+1}{2}}=\frac{p+1}{p+1-s}\,\bar u^{\frac s2}\,\nabla \bar u^{\frac{p+1-s}{2}}$, by Cauchy-Schwarz inequality we get
\begin{equation*}
\int_{\R^2}\bar u^{p+1}\d x\le S_{1,2}^2\left(\frac{p+1}{p+1-s}\right)^2\left(\int_{\R^2}\bar u^{\frac{s}{2}}\,|\nabla \bar u^{\frac{p+1-s}{2}}|\,\d x\right)^2\le 
K_{p,s}\int_{\R^2}\bar u^s\,\d x\,\int_{\R^2}|\nabla \bar u^{\frac{p+1-s}{2}}|^2\,\d x,
\end{equation*}
that is,
\begin{equation}\label{thekappa}
\int_{\R^2} u^{b(p+1)}\d x\le 
K_{p,s}\int_{\R^2} u^{bs}\,\d x\,\int_{\R^2}|\nabla u^{\frac{p+m-1}{2}}|^2\,\d x,
\end{equation}
having used \eqref{sq} and \eqref{kps}.  Since $m+s>2$ we have $b>1$ and we make use of the following $L^s(\R^2)-L^{sb}(\R^2)-L^{(p+1)b}(\R^2)$ and $L^s(\R^2)-L^{p+1}(\R^2)-L^{(p+1)b}(\R^2)$ interpolation inequalities
\[
\|u\|_{sb}\le\|u\|_s^{1-\theta}\|u\|_{(p+1)b}^{\theta}\qquad\theta:=\frac{(p+1)(b-1)}{b(p+1)-s},
\]
\[
\|u\|_{p+1}\le \|u\|_s^{1-\xi}\|u\|_{(p+1)b}^\xi,\qquad\xi:=\frac{b(p+1)-bs}{b(p+1)-s},
\]
which can be inserted into \eqref{thekappa} to get
\[
\left(\int_{\R^2}u^{p+1}\,\d x\right)^{\textstyle\frac{b(p+1)-sb\theta}{(p+1)\xi}}\le K_{p,s}\left(\int_{\R^2}u^s\,\d x\right)^{\textstyle\frac{sb\xi(1-\theta)+(bp+b-sb\theta)(1-\xi)}{s\xi}}\int_{\R^2}\left|\nabla u^{\frac{p+m-1}{2}}\right|^2\,\d x,
\]
and a computation using the definitions of $b,\theta,\xi$ shows that
$$
\frac{sb\xi(1-\theta)+(bp+b-sb\theta)(1-\xi)}{s\xi}=\frac{b(p+1)-sb\theta}{(p+1)\xi}=b=\frac{p+m-1}{p+1-s}
$$
so that we recover \eqref{nuovagns}.
\end{proof}

 Going back to the optimal constant $G_p$ in \eqref{GNS},
 we next obtain further estimates, that will be useful later on for the characterization of the two-dimensional subcritical mass $M_{sc}=4/(\chi G_1)$ from \eqref{defMsc}, in terms of the values of $G_p$ for $p>1$. 
 \begin{lemma}\label{GpG1}
 There holds
 $G_p\le G_1\le p\,G_p$ for every $p\ge 1$, where $G_p$ denotes the best constant of the inequality \eqref{GNS}.  In particular,
 \[
 G_1=\lim_{p\downarrow 1} p\,G_p=\inf_{p>1}p\,G_p.
 \]
 \end{lemma}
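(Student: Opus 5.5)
The plan is to compare the two Gagliardo--Nirenberg--Sobolev inequalities \eqref{GNS}, for the exponents $1$ and $p$, by testing one of them on a suitable power of the function appearing in the other. Log-convexity of $t\mapsto\log\int_{\R^2}u^t\,\d x$ then compares the resulting moments. The first inequality is routine. The second is the step I have not closed: below I describe what I would try and where it currently fails.

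\emph{Step 1: $G_p\le G_1$.} Let $u\ge0$ with $u\in L^1(\R^2)$ and $u^{p/2}\in H^1(\R^2)$, and apply \eqref{GNS} with exponent $1$ to $w:=u^p$. Since $w^{1/2}=u^{p/2}$, this gives
\[
\int_{\R^2}u^{2p}\,\d x\le G_1\int_{\R^2}u^p\,\d x\int_{\R^2}|\nabla u^{p/2}|^2\,\d x .
\]
The function $\phi(t):=\log\int_{\R^2}u^t\,\d x$ is convex on the interval where it is finite, by H\"older. The pair $(p,p+1)$ is majorized by $(1,2p)$: both pairs have sum $2p+1$, and $1\le p\le p+1\le 2p$. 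Hence $\phi(p)+\phi(p+1)\le\phi(1)+\phi(2p)$, that is,
\[
\int_{\R^2}u^{p+1}\,\d x\int_{\R^2}u^p\,\d x\le\int_{\R^2}u\,\d x\int_{\R^2}u^{2p}\,\d x .
\]
Combining the two displays and dividing by $\int_{\R^2}u^p\,\d x$ gives $\int_{\R^2}u^{p+1}\,\d x\le G_1\int_{\R^2}u\,\d x\int_{\R^2}|\nabla u^{p/2}|^2\,\d x$. By optimality of $G_p$, this yields $G_p\le G_1$. Finiteness of all the quantities follows from the assumptions together with Sobolev embedding.

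\emph{Step 2: $G_1\le p\,G_p$ (main obstacle).} The natural move is the reverse substitution: apply \eqref{GNS} with exponent $p$ to $v:=u^{1/p}$, so that the Dirichlet term becomes exactly $\int_{\R^2}|\nabla u^{1/2}|^2\,\d x$:
\[
\int_{\R^2}u^{1+1/p}\,\d x\le G_p\int_{\R^2}u^{1/p}\,\d x\int_{\R^2}|\nabla u^{1/2}|^2\,\d x .
\]
This only gives a first-order relation between $\int u^{1+1/p}$ and $\int u^{1/p}$. It does not turn into the desired statement by log-convexity alone, because convexity of $\phi$ runs in the wrong direction here. So a different mechanism is needed, and it should explain where the factor $p$ comes from. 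I would try the following.

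Rewrite \eqref{GNS} with exponent $p$ for a general test function $v$ in the scale-invariant form
\[
\int_{\R^2}v^{p+1}\,\d x\le \frac{4G_p}{p^2}\int_{\R^2}v\,\d x\int_{\R^2}v^{p-1}\,\frac{|\nabla v|^2}{4}\cdot p^2\,\d x ,
\]
so that the Dirichlet term reads $\frac{p^2}{4}\int v^{p-2}|\nabla v|^2$. Then test with $v:=u+\eps$ truncated, or with $v$ interpolating between $u$ and a power of $u$, chosen to match the weight $u^{-1}|\nabla u|^2=4|\nabla u^{1/2}|^2$. The difficulty is that the weight $v^{p-2}$ and the power $v^{p+1}$ move together, and one needs to produce exactly the factor $p$ in front. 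If this fails, I would use the one-parameter family $p\mapsto\int_{\R^2}u^{p+1}\,\d x$ at fixed $u$ and differentiate at $p=1$, in the spirit of entropy-dissipation computations. For $u$ in a dense class this gives $G_1\le\liminf_{p\downarrow1}pG_p$ directly.

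\emph{Step 3: the limit identity.} The limit formula follows from the two inequalities. From Step 1 and $p\ge1$ we get $p\,G_p\le p\,G_1$, so $\limsup_{p\downarrow1}pG_p\le G_1$. Step 2 gives $G_1\le\inf_{p>1}pG_p\le\liminf_{p\downarrow1}pG_p$. Together these yield $G_1=\lim_{p\downarrow1}pG_p=\inf_{p>1}pG_p$.
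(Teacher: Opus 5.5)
Your Step 1 is correct and is essentially the paper's argument. The paper applies the exponent-$1$ inequality to $u^p$ and then uses the two H\"older interpolations $L^1$--$L^{p+1}$--$L^{2p}$ and $L^1$--$L^p$--$L^{p+1}$. Your single majorization inequality $\phi(p)+\phi(p+1)\le\phi(1)+\phi(2p)$ for the log-convex $\phi$ is a slightly more compact form of the same step.

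The genuine gap is Step 2, which you leave open, and none of the routes you sketch can close it. The inequality $G_1\le pG_p$ is a \emph{lower} bound on the supremum $G_p$. H\"older or convexity manipulations valid for an arbitrary $u$ only produce implications of the Step 1 type: the exponent-$1$ inequality implies the exponent-$p$ one. That is why the substitution $v=u^{1/p}$ leads nowhere. Your fallback is continuity (or differentiation) at $p=1$, for fixed nice $u$, of the quotient $p\int u^{p+1}/(\int u\int|\nabla u^{p/2}|^2)$. It gives only $\liminf_{p\downarrow1}pG_p\ge G_1$. Combined with Step 1 this yields $\lim_{p\downarrow1}pG_p=G_1$, but it says nothing at a fixed $p>1$. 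So neither $G_1\le pG_p$ nor $G_1=\inf_{p>1}pG_p$ follows, and your Step 3 relies on the unproved Step 2 for the infimum. (Step 1 alone, i.e.\ $\limsup_{p\downarrow1}pG_p\le G_1$, is all that the proof of Theorem \ref{th:main2}(VII) uses, but the lemma claims more.)

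The missing idea is to test the exponent-$p$ inequality on the square of the optimizer for $p=1$.
\begin{itemize}
\item By Weinstein's characterization, $G_1=2/\|g\|_2^2$, where $g>0$ is the radially decreasing ground state of $-\Delta g=g^3-g$ in $\R^2$.
\item Taking $u=g^2$ in \eqref{GNS} gives $\int u^{p+1}=\int g^{2p+2}$, $\int u=\|g\|_2^2$ and $\int|\nabla u^{p/2}|^2=p^2\int g^{2p-2}|\nabla g|^2$. So $G_1\le pG_p$ reduces to $\int g^{2p+2}\ge 2p\int g^{2p-2}|\nabla g|^2$.
\item Multiplying the equation by $g^{2p-1}$ and integrating gives $(2p-1)\int g^{2p-2}|\nabla g|^2+\int g^{2p}=\int g^{2p+2}$. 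What is really needed is therefore the weighted Pohozaev-type inequality $\int g^{2p-2}|\nabla g|^2\le\int g^{2p}$. For $p=1$ this is the Pohozaev identity $\|\nabla g\|_2=\|g\|_2$.
\end{itemize}
The paper proves this inequality from the radial ODE, in three steps.
\begin{itemize}
\item Set $\Phi(r)=r^2(g^2-(g')^2-\tfrac12g^4)-rgg'$. Then $\Phi'=r(g^2-(g')^2)$ and $\Phi(0)=\Phi(\infty)=0$.
\item $\Phi\ge0$: at a negative interior minimum one would have $g=-g'$ and $\Phi''=2g^2(1-rg^2)\ge0$, hence $\Phi=rg^2(1-\tfrac12 rg^2)\ge0$, a contradiction.
\item Integrating $\Phi'$ against the nonincreasing weight $g^{2p-2}$ gives the claim.
\end{itemize}
This uses specific information about the extremal $g$, which is exactly the ingredient your proposal lacks.
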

 \begin{proof}
  Letting $p\in(1,+\infty)$,  by $L^1(\R^2)-L^{p+1}(\R^2)-L^{2p}(\R^2)$ interpolation  we have
\[
\int_{\R^2}u^{p+1}\,\d x
\le
\left(\int_{\R^2}u\,\d x\right)^{\frac{p-1}{2p-1}}
\left(\int_{\mathbb R^2}u^{2p}\,\d x\right)^{\frac{p}{2p-1}},
\]
where we include the inequality \eqref{GNS} with $p=1$, applied to $u^p$  in place of $u$, to get
\begin{equation}\label{2p-1}
\int_{\mathbb R^2} u^{p+1}\,\d x
\le
\left(\int_{\mathbb R^2} u\,\d x\right)^{\frac{p-1}{2p-1}}
\left(G_1\left(\int_{\mathbb R^2} u^p\,\d x\right)
\left(\int_{\mathbb R^2} |\nabla u^{p/2}|^2\,\d x\right)
\right)^{\frac{p}{2p-1}}.
\end{equation}
Moreover, by the $L^1(\R^2)-L^p(\R^2)-L^{p+1}(\R^d)$ interpolation inequality
\[
\int_{\mathbb R^2} u^p\,\d x
\le
\left(\int_{\mathbb R^2} u\,\d x\right)^{1/p}
\left(\int_{\mathbb R^2} u^{p+1}\,\d x\right)^{(p-1)/p},
\]
to be inserted into \eqref{2p-1}, we get
\[
\int_{\mathbb R^2} u^{p+1}\,\d x
\le
G_1
\left(\int_{\mathbb R^2} u\,\d x\right)
\left(\int_{\mathbb R^2} |\nabla u^{p/2}|^2\,\d x\right),
\]
so that by definition of $G_p$ as the optimal constant in \eqref{GNS}, we obtain $G_p\le G_1$ for every $p\in(1,+\infty)$.
%

On the other hand, the optimal constant $G_1$ of the inequality \eqref{GNS} with $p=1$  can be expressed as
  $G_1=2/\|g\|_2^2$, where $g$ is the unique ground state solution to \begin{equation}\label{groundstate}-\Delta u=u^3-u\qquad \mbox{in $\R^2$},\end{equation} with approximate value $G_1\simeq 0.171$, see \cite{Kw,W}. 
  The ground state $g$ is smooth, positive, radially decreasing. Moreover, $g$ and $g'$ decay exponentially fast at infinity (the decay rate of $g$ at infinity is the same as that of the Bessel kernel $B_1$). 
Identifying $g$ with its radial profile we have $g'(r)\le0$ for every $r>0$ and \begin{equation}\label{radialground}-g''-\frac1rg'+g-g^3=0,\qquad r>0.\end{equation} Let \begin{equation}\label{calcoloderivata}\Phi(r):=r^2\left(g(r)^2-g'(r)^2-\tfrac12 g(r)^4\right)-rg(r)g'(r),\qquad r\ge0.\end{equation}
 We compute the derivative
\begin{equation*}\begin{aligned}
\Phi'(r)&=2r\left(g(r)^2-g'(r)^2-\tfrac12 g(r)^4\right)+2r^2\left(g(r)g'(r)-g'(r)g''(r)-g(r)^3g'(r)\right)\\&\qquad\qquad-g(r)g'(r)-rg'(r)^2-rg(r)g''(r).
\end{aligned}\end{equation*}
But \eqref{radialground} entails  $$g(r)g'(r)-g'(r)g''(r)-g(r)^3g'(r)=\frac1r\,g'(r)^2,$$ $$rg(r)g''(r)=rg(r)^2-rg(r)^4-g(r)g'(r),$$ so that
\begin{equation}\label{derPhi}\Phi'(r)=r(g(r)^2-g'(r)^2).\end{equation} 
We claim that $\Phi(r)\ge0$ for every $r>0$. 
Arguing by contradiction, we assume that $\Phi$ takes some negative value on $(0,+\infty)$. Since $\Phi$ is smooth with $\Phi(0)=0$, and since $\Phi$ vanishes at infinity (due to its definition and to the exponential decay of $g,g'$) we deduce the existence of $r_0>0$ such that \begin{equation}\label{itsamin}\Phi(r_0)<0,\qquad \Phi'(r_0)=0,\qquad \Phi''(r_0)\ge 0,\end{equation} and then \eqref{derPhi} entails 
\begin{equation}\label{+o-}
g(r_0)=-g'(r_0).
\end{equation}
We compute $\Phi''(r)=g(r)^2+2rg(r)g'(r)-g'(r)^2-2rg'(r)g''(r)$, and taking \eqref{+o-} into account we deduce $\Phi''(r_0)=2r_0\left(g(r_0)g'(r_0)-g'(r_0)g''(r_0)\right)$. From \eqref{radialground} and using again \eqref{+o-} we get therefore $\Phi''(r_0)=2g(r_0)^2\left(1-r_0 g(r_0)^2\right)$.
Since $\Phi''(r_0)\ge 0$ as seen in \eqref{itsamin}, we conclude that
$
r_0g(r_0)^2\le 1.
$
By evaluating $\Phi(r_0)$ from \eqref{calcoloderivata} and by using \eqref{+o-}, we have 
\[
\Phi(r_0)=r_0g(r_0)^2\left(1-\frac12 r_0g(r_0)^2\right)
\]
and since we have just obtained $
r_0g(r_0)^2\le 1,
$ we deduce that $\Phi(r_0)\ge 0$ thus contradicting \eqref{itsamin}. The claim is proven and it implies that for every $p\in(1,+\infty)$
\[
\int_0^{+\infty}g(r)^{2p-2}\Phi'(r)\,\d r=-\int_0^{+\infty}\Phi(r)g_p'(r)\,\d r\ge 0
\]
having used that $g_p(r):=g(r)^{2p-2}$ is nonincreasing on $(0,+\infty)$, and then \eqref{derPhi} yields
\begin{equation}\label{radie}
\int_0^{+\infty}g(r)^{2p-2}(g(r)^2-g'(r)^2)\,r\,\d r\ge 0,\quad\mbox{i.e.,\quad}\int_{\R^2}\left(g^{2p-2}|\nabla g|^2-g^{2p}\right)\,\d x\le 0
\end{equation}
for every $p\in(1,+\infty)$.  By multiplying \eqref{groundstate} by $g^{2p-1}$ and integrating by parts we get
\[
(2p-1)\int_{\R^2}g^{2p-2}|\nabla g|^2\,\d x+\int_{\R^2}g^{2p}\,\d x=\int_{\R^2} g^{2p+2}\,\d x,
\]
thus \eqref{radie} implies
\begin{equation}\label{radie2}
\int_{\R^2}g^{2p+2}\,\d x\ge 2p\int_{\R^2}g^{2p-2}|\nabla g|^2\,\d x.
\end{equation}
Form the definition of $G_p$, since  $G_1=2/\|g\|_2^2$,  by making use of  \eqref{radie2} we get
\[
p G_p\ge \frac{p\int_{\R^2}g^{2p+2}\,\d x}{\int_{\R^2}|\nabla g^p|^2\,\d x\;\int_{\R^2}g^2\,\d x}=\frac{\int_{\R^2}g^{2p+2}\,\d x}{p\int_{\R^2}g^{2p-2}|\nabla g|^2\,\d x\int_{\R^2}g^2\,\d x}\ge\frac2{\int_{\R^2}g^2\,\d x}=G_1
\]
for every $p\in(1,+\infty)$,
thus concluding the proof.
\end{proof}
\begin{remark}\label{remarchino}\rm
Lemma \ref{GpG1} implies, since $G_1\simeq 0.171$, that for $d=2$ there holds
\[
M_{sc}=\sup_{p>1}\frac4{p\chi G_p}= \frac{4}{\chi G_1}\simeq\frac{23.391}{\chi}.
\]
We also notice that $M_{sc}<M_c=8\pi/\chi\simeq25.133/\chi$.
\end{remark}

\RRR

\RRR

%
%

\subsection{Bessel potentials and Hardy-Littlewood-Sobolev inequalities}
For $\alpha\geq 0$, the Bessel kernel $B_\alpha:\mathbb R^d\setminus\{0\}\to \mathbb R$ is defined  by
\[
B_\alpha(x):=\int_0^{+\infty} \frac{1}{(4\pi s)^{d/2}}\exp\left(-\frac{|x|^2}{4s}-\alpha s\right)\,\d s, \qquad \mbox{for } \alpha>0\quad\mbox{and}\quad d\ge 2,
\]
\[
B_0(x)=\frac{\Gamma(d/2)}{2(d-2)\pi^{d/2}}\,|x|^{2-d}, \qquad \mbox{for } d\geq 3,
\]
and 
\[
B_0(x)=-\frac1{2\pi}\log|x|, \qquad \mbox{for } d=2.
\]
For $\alpha=0$ we have therefore the Newtonian kernel.
It is not difficult to check that
\begin{equation}\label{alpha2}
	B_\alpha(x)\leq B_0(x), \qquad \forall\,x\in\R^d\setminus\{0\}, \quad \alpha \geq 0, \quad d\geq3
\end{equation}
and that 
\begin{equation}\label{alpha1}
B_\alpha(x)=\alpha^{\frac{d-2}{2}}\,B_1(\sqrt{\alpha}x), \qquad \forall\,x\in\R^d\setminus\{0\}, \quad \forall\alpha > 0. 
\end{equation}



In the next statements we recall Carleman and Hardy-Littlewood-Sobolev type inequalities.
 These inequalities are useful for proving boundedness and equintegrability of the sublevels of the energy functional  $\mathcal E_m$.


\begin{proposition}\label{carl}
Let $u\in L^1(\R^2)$ such that $u\geq 0$ and $\int_{\R^2}u(x)\,\d x =M>0$.  
If $\mathrm m_{log}(u) <+\infty$ and  $\FF_1(u)<+\infty$, 
then $\int_{\R^2}u\log^-u\,dx<+\infty$, where $\log^-\ge 0$ denotes the negative part of the natural logarithm, and
\begin{equation}\label{Carleman}
	\int_{{\R}^2}u|\log u|\,\d x\leq \FF_1(u) + 2M\ln\pi+\frac{2}{e} + 4 \mathrm m_{log}(u).
\end{equation}
and there exists a constant  $C_{M,\alpha}$,  depending only on $M$ and $\alpha$, \EEE such that
\begin{equation}\label{logbessel}
\frac{4\pi}M\int_{\mathbb{R}^2\times\mathbb{R}^2}B_\alpha(x-y) u(x)u(y)\,\d x\d y 
\leq \mathcal F_1(u) + 2\mathrm m_{log}(u) +C_{M,\alpha}.
\end{equation}


\end{proposition}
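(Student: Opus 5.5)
The plan is to prove the two inequalities separately, with the same device for both: compare $u$ with the probability-type density $\mu(x)=\frac{1}{\pi}(1+|x|^2)^{-2}$. This density has unit mass on $\R^2$ and satisfies $\log\mu=-\log\pi-2\log(1+|x|^2)$.

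\emph{Step 1: integrability of $u\log^-u$ and \eqref{Carleman}.} Write $u|\log u| = u\log u + 2u\log^- u$. On the set $\{u\le 1\}$, split further into $A=\{\mu\le u\le 1\}$ and $A^c=\{u<\mu\}\cap\{u\le1\}$.
\begin{itemize}
\item On $A$ we have $\log^-u\le -\log\mu = \log\pi+2\log(1+|x|^2)$. Hence
\[
\int_A u\log^-u\le M\log\pi+2\,\mathrm m_{log}(u).
\]
\item On $A^c$ we use the elementary bound $s\log^- s\le C\sqrt{s}$ for $s\le1$, for instance $s|\log s|\le \frac{2}{e}\sqrt s$. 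This gives
\[
\int_{A^c}u\log^-u\le \frac2e\int_{\R^2}\sqrt{\mu}\,\d x=\frac{2}{e}\cdot\frac{1}{\sqrt\pi}\int_{\R^2}(1+|x|^2)^{-1}\,\d x .
\]
But this integral diverges. So I would instead use the sharper bound $s\log^-s\le s\,\log(1/\mu)+\ldots$, or simply the inequality $-s\log s\le s\log(1/\mu)+\mu/e$, which holds for every $s,\mu>0$ (Young/Fenchel for $s\log s$). This yields
\[
\int u\log^-u\le M\log\pi+2\,\mathrm m_{log}(u)+\frac1e\int\mu = M\log\pi+2\,\mathrm m_{log}(u)+\frac1e .
\]
\end{itemize}
Doubling and adding $\FF_1(u)$ gives exactly \eqref{Carleman}, with constants $2M\log\pi+\frac2e+4\mathrm m_{log}(u)$. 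In particular $\int u\log^-u<+\infty$, so $\int u\log^+u$ is finite as well.

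\emph{Step 2: \eqref{logbessel}.} First reduce to $\alpha=0$.
\begin{itemize}
\item For $\alpha>0$, the scaling \eqref{alpha1} and the Bessel kernel's representation show $B_\alpha(x)\le B_0(x)+C_\alpha$ near the origin and $B_\alpha\le C_\alpha$ away from it. So $B_\alpha\le (B_0)^+ + C_\alpha$, and $(B_0)^+=-\frac1{2\pi}\log^-$-type, supported in $|x-y|<1$.
\item Hence it suffices to bound $-\frac{1}{2\pi}\int\int_{|x-y|<1}\log|x-y|\,u(x)u(y)$, plus $C_\alpha M^2$.
\end{itemize}
For the Newtonian part, invoke the logarithmic HLS inequality (Carlen--Loss / Beckner), which I take as known:
\[
-\int\int \log|x-y|\,u(x)u(y)\le \frac{M}{2}\int u\log u + C(M).
\]
Multiplying by $\frac{4\pi}{M}\cdot\frac{1}{2\pi}$ gives the coefficient $1$ in front of $\FF_1(u)$.

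The remaining step is to control the part of the log-HLS left-hand side coming from $|x-y|\ge1$. The needed direction is to bound the full $-\log$ double integral from below, so the portion $\int\int_{|x-y|\ge1}\log|x-y|\,u\,u$ must be bounded above. Using $\log|x-y|\le \log(1+|x|^2)+\log(1+|y|^2)$ (valid since $|x-y|^2\le (1+|x|^2)(1+|y|^2)$), this portion is at most $2M\,\mathrm m_{log}(u)$. After multiplying by $\frac{4\pi}{M}\cdot\frac1{2\pi}\cdot\frac12$, with care about the factor $\frac12$, it produces the term $2\,\mathrm m_{log}(u)$.

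\emph{Main obstacle.} The delicate point is bookkeeping the constants so that $\FF_1$ appears with coefficient exactly $1$ and $\mathrm m_{log}$ with coefficient $2$. This is where the sharp log-HLS constant $M/2$ matters; a non-sharp version would not suffice for the critical-mass applications later. I would carefully normalize $u/M$ to a probability density before applying log-HLS, and absorb the resulting $M\log M$ terms into $C_{M,\alpha}$.
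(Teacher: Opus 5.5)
Your argument is correct and is essentially what the paper does by citation: \eqref{Carleman} via the weight $\mu=\pi^{-1}(1+|x|^2)^{-2}$ and the Fenchel inequality $-s\log s\le s\log(1/\mu)+\mu/e$ (using $\log(1/\mu)\ge 0$), as in \cite[Lemma 2.2]{BCC}, and \eqref{logbessel} via the logarithmic HLS inequality of \cite{CL} combined with $B_\alpha\le\frac1{2\pi}\log^-|x|+C_\alpha$ for $\alpha>0$ (cf.\ \cite[Lemma 4.2]{CaCo}); you can drop the abandoned $\sqrt{s}$ attempt in Step 1. The one slip is where you place the factor $\frac12$: the prefactor is just $\frac{4\pi}{M}\cdot\frac1{2\pi}=\frac2M$, but your own inequality $|x-y|^2\le(1+|x|^2)(1+|y|^2)$ gives $\log^+|x-y|\le\frac12(\log(1+|x|^2)+\log(1+|y|^2))$, so the part with $|x-y|\ge1$ is at most $M\,\mathrm m_{log}(u)$, and multiplying by $\frac2M$ gives exactly $2\,\mathrm m_{log}(u)$ (for $\alpha=0$ log-HLS alone already suffices without this term).
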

 The proof of \eqref{Carleman} can be carried out as a simple variant of the proof of \cite[Lemma 2.2]{BCC}. 
 The proof of \eqref{logbessel} is given  \cite{CL} in the case $\alpha=0$, where the constant is explicit, and in \cite[Lemma 4.2]{CaCo} for the case $\alpha>0$.

\begin{proposition}\label{HLS}
Let $d\ge 3$. Then there exists an optimal constant $C_H$, depending only on $d$,
such that
\begin{equation}\label{HLS2}
	\int_{\mathbb{R}^d\times \mathbb{R}^d} B_{\alpha}(x-y) u(x)u(y)\,\d x \,\d y
	\leq C_H M^{2/d}\int_{\mathbb{R}^d} u^{m_c}(x)\,\d x, \qquad \forall \, u \in L^{m_c}(\R^d).
\end{equation}
for any  $u \in L^{m_c}(\R^d)$ such that $u\geq 0$ and  $\int_{\R^2}u(x)\,\d x =M$, where $m_c$ is defined in \eqref{mc}. 
\end{proposition}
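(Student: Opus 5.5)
The plan is to reduce to the Newtonian case and then invoke the classical sharp Hardy--Littlewood--Sobolev inequality with the exponent dictated by $m_c$. By \eqref{alpha2} we have $B_\alpha\le B_0$ pointwise for $d\ge3$. Since $u\ge0$, the left-hand side of \eqref{HLS2} is bounded by the same integral with $B_0$ in place of $B_\alpha$. So it suffices to treat $\alpha=0$, where $B_0(x)=c_d|x|^{2-d}$ is a Riesz kernel of order $\lambda=d-2$.

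For this kernel, the HLS inequality of Lieb gives
\[
\int\!\!\int \frac{u(x)u(y)}{|x-y|^{d-2}}\,\d x\,\d y\le C\|u\|_{r}^2,\qquad \frac2r+\frac{d-2}{d}=2,
\]
that is, $r=\frac{2d}{d+2}$. Next I interpolate $L^r$ between $L^1$ and $L^{m_c}$, with $m_c=\frac{2(d-1)}{d}$. Writing $\frac1r=\frac{1-\theta}{1}+\frac{\theta}{m_c}$, the exponents should give $\|u\|_r^2\le M^{2(1-\theta)}\|u\|_{m_c}^{2\theta}$ with $2\theta=m_c$, so that $\|u\|_{m_c}^{2\theta}=\int u^{m_c}$, and with $2(1-\theta)=2-m_c=2/d$.

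I check this. From $\theta=m_c/2=\frac{d-1}{d}$ we get $1-\theta=\frac1d$. Then $\frac{1-\theta}{1}+\frac{\theta}{m_c}=\frac1d+\frac12=\frac{d+2}{2d}=\frac1r$, which is consistent. Hence the bound $C\,M^{2/d}\int u^{m_c}$ holds with a constant depending only on $d$.

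The optimal constant $C_H$ is then defined as the supremum of the ratio $\int\!\!\int B_\alpha u u\big/\bigl(M^{2/d}\int u^{m_c}\bigr)$ over admissible $u$. This supremum is finite by the above. It is independent of $\alpha$: the scaling \eqref{alpha1} together with the mass-preserving dilations $u\mapsto\lambda^d u(\lambda\cdot)$ leaves $M^{2/d}\int u^{m_c}$ invariant and sends $B_\alpha$ toward $B_0$ as $\lambda\to\infty$. The only delicate point is this identification of the optimal constant as depending only on $d$, via the scaling argument and monotone convergence $B_\alpha\uparrow B_0$ as $\alpha\downarrow0$. Existence of extremals is not needed and I would not pursue it.
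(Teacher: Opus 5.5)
Your proof is correct and is essentially the argument the paper relies on. The paper cites \cite{BCL} (Lemma 3.2), which is exactly the Hardy--Littlewood--Sobolev inequality with exponent $2d/(d+2)$ followed by $L^1$--$L^{m_c}$ interpolation; this gives $C_H\le \frac{\Gamma(d/2)}{2(d-2)\pi^{d/2}}C_{HLS}$, as recorded in Remark~\ref{remarcone}. It also cites \cite{BL} for the independence of $\alpha$, obtained via $B_\alpha\le B_0$ and scaling.

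There is one slip to fix. The dilation $u\mapsto\lambda^d u(\lambda\,\cdot)$ does not leave $M^{2/d}\int u^{m_c}$ invariant; it multiplies it by $\lambda^{d(m_c-1)}=\lambda^{d-2}$. The interaction term picks up the same factor, since \eqref{alpha1} gives $B_\alpha(z/\lambda)=\lambda^{d-2}B_{\alpha/\lambda^2}(z)$. So what is invariant is the ratio, with $\alpha$ replaced by $\alpha/\lambda^2$, and that is all your monotone-convergence argument needs.
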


The proof of \eqref{HLS2} can be found in
\cite{BCL} and \cite{BL}, where it is shown that the optimal constant $C_H$ does not depend on $\alpha$.

A direct consequence of Proposition \ref{carl} and \ref{HLS}, taking into account the validity of the basic assumption $m\ge m_c$, 
is that the energy functional $\mathcal E_m$ defined in \eqref{E_m} is finite for every $u\in D(\mathcal F_m)$.

\begin{remark}\label{remarcone}
From the proof of  \cite[Lemma 3.2]{BCL} we deduce that, for $d\ge 3$, the optimal constant $C_H$ in \eqref{HLS2} can be estimated 
as $C_H\le \frac{\Gamma(d/2)}{2(d-2)\pi^{d/2}}\, C_{HLS}$, where $C_{HLS}$ is the best constant of the classical Hardy-Littlewood-Sobolev inequality
\[
\iint_{\mathbb R^d\times \mathbb R^d}|x-y|^{2-d}u(x)u(y)\,\d x\,\d y\le C_{HLS} \|u\|^2_{L^\frac{2d}{d+2}(\mathbb R^d)},
\]
whose explicit value is $C_{HLS}=\frac{\pi^{-1+d/2}}{\Gamma(1+d/2)}\left(\frac{\Gamma(d/2)}{\Gamma(d)}\right)^{-2/d},$ see \cite{L}. 
Therefore, we have $$C_H\le \frac{1}{\pi d(d-2)}\left(\frac{\Gamma(d/2)}{\Gamma(d)}\right)^{-2/d}.$$ The latter estimate, along with  \eqref{defMsc}, shows that in dimension $d\ge 3$ there holds 
$$
M_c\ge \left(\frac{2\pi d^2}{\chi}\right)^{d/2}\frac{\Gamma(d/2)}{\Gamma(d)}.
$$
On the other hand we notice that by \eqref{defMsc},
 \eqref{talenti} and \eqref{sq} we get
\begin{equation*}
M_{sc}=\left(\frac4{\chi m_c C_{2,d}}\right)^{d/2}=\left(\frac{2\pi d^2}{\chi}\right)^{d/2}\left(\frac{d-2}{d-1}\right)^{d/2}\frac{\Gamma(d/2)}{\Gamma(d)}, 
\end{equation*}
showing that indeed
$M_{sc}<M_c$  also in dimension $d\ge 3$.
\end{remark}

\EEE

Some useful properties of Bessel potentials are gathered in the next proposition.
%
%
%
%
%
%
%

\begin{proposition}\label{loggrowth}
Let $d\ge 2$, $\alpha\ge 0$. 
\begin{itemize}
\item[{\bf(i)}] If $p\in(d,+\infty]$, then there exist constants $c_1$, $c_2$ such that
\[
\|\nabla B_\alpha\ast u\|_{L^\infty(\mathbb R^d)}\le c_1\|u\|_{L^p(\mathbb R^d)}+c_2\|u\|_{L^1(\mathbb R^d)}, \qquad \forall u\in L^1\cap L^p(\R^d).
\] 
\item[{\bf(ii)}]
If  $p\in(d/2,+\infty]$ and either $d\geq 3$ or $\alpha>0$, then
there exist constants $c_3$, $c_4$ such that
\[
\| B_\alpha\ast u\|_{L^\infty(\mathbb R^d)}\le c_3\|u\|_{L^p(\mathbb R^d)}+c_4\|u\|_{L^1(\mathbb R^d)}, \qquad \forall u\in L^1\cap L^p(\R^d).
\] 
\item[{\bf(iii)}]
If $p\in(1,+\infty]$, $d=2$ and $\alpha=0$,  then  there exists a constant $c_5$ such that 
\[\begin{aligned}
|(B_0\ast u)(x)|\le c_5\|u\|_{L^p(\mathbb R^2)}+\frac1{2\pi}\mathrm m_{log}(|u|)&+\frac{1}{2\pi}\|u\|_{L^1(\mathbb R^2)}\,\log(1+|x|^2),\\&\qquad \quad \forall u\in L^1\cap L^p(\R^d), \,\;\;\;\forall x\in \R^2.
\end{aligned}\]
\item[{\bf (iv)}] If $p=+\infty$, then there exists a constant $c_d>0$ depending only on $d$ such that,
the following log-Lipschitz estimate  holds:
\begin{equation*}\label{logmeno}
\begin{aligned}
	|\nabla (B_\alpha\ast u)(x)-\nabla (B_\alpha\ast u)(y)|\le & c_d(\|u\|_{L^1(\R^d)}+\|u\|_{L^\infty(\R^d)})|x-y|(1+\log^-|x-y|), \\
	 &\forall u\in L^1\cap L^\infty(\R^d), \quad \forall\,x,y\in \R^d,
\end{aligned}
\end{equation*}
where $\log^-\ge 0$ is the negative part of the natural logarithm.
\end{itemize}
\end{proposition}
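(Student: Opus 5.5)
The four items are classical potential-theoretic estimates, and the plan is to prove each by splitting the kernel into a near part and a far part. I would first record the pointwise bounds on $B_\alpha$ and $\nabla B_\alpha$ that everything else uses. By \eqref{alpha1} and the integral representation, for $\alpha>0$ one has $B_\alpha(x)\le C(|x|^{2-d}+\log^-|x|+1)$ near the origin with exponential decay at infinity, and $|\nabla B_\alpha(x)|\le C|x|^{1-d}$ for all $x$. The gradient bound follows by differentiating under the integral, $\nabla B_\alpha(x)=-\int_0^\infty \frac{x}{2s}(4\pi s)^{-d/2}e^{-|x|^2/4s-\alpha s}\,\d s$, and dropping $e^{-\alpha s}$ to get exactly $|\nabla B_0(x)|=c|x|^{1-d}$. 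Similarly $|D^2 B_\alpha(x)|\le C|x|^{-d}$, and for $\alpha>0$ there is an additional decay. For $\alpha=0$ all of these are explicit.

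For \textbf{(i)}, I would write $\nabla B_\alpha\ast u=(\nabla B_\alpha\bC_{B_1})\ast u+(\nabla B_\alpha\bC_{B_1^c})\ast u$. The first kernel lies in $L^{p'}$ precisely when $(1-d)p'>-d$, that is $p>d$, so Young's inequality gives a bound by $c_1\|u\|_p$. The second kernel is bounded by $C$, which gives a bound by $c_2\|u\|_1$.

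Item \textbf{(ii)} is handled the same way. Near the origin, $B_\alpha\in L^{p'}(B_1)$ iff $p>d/2$, and this also holds when $d=2$, $\alpha>0$, where the singularity is only logarithmic. Away from the origin, $B_\alpha$ is bounded, since it decays to zero for $d\ge3$ or for $\alpha>0$.

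For \textbf{(iii)}, I would use $|\log|x-y||\le \log^-|x-y|+\tfrac12\log(1+|x-y|^2)$ together with $1+|x-y|^2\le (1+|x|^2)(1+|y|^2)$, which holds because $|x-y|^2\le 2|x|^2+2|y|^2$ up to a harmless constant. A cleaner route is the inequality $\log(1+|x-y|^2)\le \log(1+|x|^2)+\log(1+|y|^2)+\log 2$, with the $\log 2$ term absorbed using the local part. The $\log^-$ part is controlled by $\|\log^-|\cdot|\|_{L^{p'}(B_1)}\|u\|_p$. The logarithmic part gives $\frac1{2\pi}\bigl(\|u\|_1\log(1+|x|^2)+\mathrm m_{log}(|u|)\bigr)$. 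If the constants do not come out exactly, I would keep track of them carefully so that the $\log 2\,\|u\|_1$ contribution can be absorbed; if necessary, the local term can be tuned to match the stated form.

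The main obstacle is \textbf{(iv)}, the classical Yudovich-type log-Lipschitz estimate. Set $r=|x-y|$. If $r\ge1$, I would simply use (i) with $p=\infty$ to get $|\nabla v(x)-\nabla v(y)|\le 2(c_1\|u\|_\infty+c_2\|u\|_1)\le C(\cdots)r$. If $r<1$, let $z=(x+y)/2$ and split the integral $\int(\nabla B_\alpha(x-w)-\nabla B_\alpha(y-w))u(w)\,\d w$ into three regions. The first is $|w-z|<2r$, where each term is bounded by $\|u\|_\infty\int_{B_{3r}}|\xi|^{1-d}\,\d\xi\le C\|u\|_\infty r$. The second is $2r\le|w-z|\le 2$, where the mean value theorem and $|D^2B_\alpha|\le C|\xi|^{-d}$ give $\|u\|_\infty\, r\int_{2r}^{2}C\rho^{-1}\,\d\rho\le C\|u\|_\infty r(1+\log^- r)$. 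The third is $|w-z|>2$, where $|D^2B_\alpha|$ is bounded and the contribution is $\le C r\|u\|_1$. Summing the three pieces yields the claim with $c_d$ independent of $\alpha$. This independence uses the bounds $|\nabla B_\alpha|\le|\nabla B_0|$ and $|D^2B_\alpha(\xi)|\le C_d|\xi|^{-d}$ uniformly in $\alpha$, and both follow from the heat-kernel representation by bounding $e^{-\alpha s}\le1$.
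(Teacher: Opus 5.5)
\textbf{Items (i) and (ii).} These follow the paper's argument: split at radius $1$, apply H\"older on the near part, and use radial monotonicity of $|\nabla B_\alpha|$, resp.\ $B_\alpha$, on the far part. They are fine.

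\textbf{Item (iii).} This item contains a step that fails as written. The inequality $1+|x-y|^2\le(1+|x|^2)(1+|y|^2)$ is false: for $x=e_1$, $y=-e_1$ the left side is $5$ and the right side is $4$.

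Your fallback $\log(1+|x-y|^2)\le\log(1+|x|^2)+\log(1+|y|^2)+\log 2$ leaves an additive term $\frac{\log 2}{4\pi}\|u\|_{L^1}$. This term cannot be ``absorbed using the local part'', because $\|u\|_{L^1}$ is not controlled by $c_5\|u\|_{L^p}$ (take $u$ the indicator of $B_R$ with $R\to\infty$).

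The estimate can be rescued, but the mechanism has to be stated. Your factor $\frac12$ leaves a spare $\frac1{4\pi}\mathrm m_{log}(|u|)$, and $\|u\|_{L^1}\le |B_1|^{1/p'}\|u\|_{L^p}+\mathrm m_{log}(|u|)/\log 2$.

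The paper avoids the issue altogether. On $\{|x-y|>1\}$ it uses $\log|x-y|\le\log(1+|x|^2)+\log(1+|y|^2)$, which follows from $|x-y|\le|x|+|y|\le(1+|x|^2)(1+|y|^2)$. This gives exactly the coefficients $\frac1{2\pi}$ with no additive term. In your formulation, the correct replacement is $1+|x-y|^2\le(1+|x|^2)^2(1+|y|^2)^2$.

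\textbf{Item (iv).} This item is correct but takes a different route from the paper.

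You prove the log-Lipschitz bound directly by the three-region Yudovich splitting around $z=(x+y)/2$. Uniformity in $\alpha$ comes from the pointwise kernel bounds $|\nabla B_\alpha|\le|\nabla B_0|$ and $|D^2B_\alpha(\xi)|\le C_d|\xi|^{-d}$, which follow from $e^{-\alpha s}\le 1$ in the heat-kernel formula. For the mean value theorem in the middle and far regions, you should note that the segment $[x-w,y-w]$ stays at distance at least $\frac34|z-w|$ from the origin.

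The paper instead quotes the case $\alpha=0$ from the literature and transfers it to $\alpha>0$ through the resolvent identity $\nabla B_\alpha\ast u=\nabla B_0\ast(u-\alpha B_\alpha\ast u)$. It uses $\|B_\alpha\|_{L^1}=\alpha^{-1}$, so that $u-\alpha B_\alpha\ast u$ has $L^1$ and $L^\infty$ norms at most twice those of $u$.

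Your version is self-contained and makes the $\alpha$-independence visible at the level of the kernel. The paper's version is shorter and needs no second-derivative estimates on $B_\alpha$.
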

\begin{proof} {\bf(i)}. Notice that  $|\nabla B_\alpha|$ is radially decreasing and vanishing at infinity.
Let $p\in(d,+\infty]$ and $p'$ its conjugate.
It is simple to check that $|\nabla B_\alpha|^{p'}$  is integrable in $B_1(0)$. Using H\"older's inequality  and identifying $|\nabla B_\alpha|$ with its radial profile, we obtain 
\begin{equation}\begin{aligned}\label{vanishingatinfty}
	&|(\nabla B_\alpha\ast u)(x)|\le \int_{\mathbb R^d}|\nabla B_\alpha(x-y)||u(y)|\,\d y\\&\qquad=\int_{\{y:|y-x|\le R\}}|\nabla B_\alpha(x-y)||u(y)|\,\d y+\int_{\{y:|y-x|>R\}}|\nabla B_\alpha(x-y)||u(y)|\,\d y\\&\qquad
	\le \|\nabla B_\alpha\|_{L^{p'}(B_R(0))}\|u\|_{L^p(B_R(x))}+|\nabla B_\alpha(R)|\|u\|_{L^1(\mathbb R^d)}
\end{aligned}\end{equation}
for every $R>0$, and choosing for instance $R=1$ proves {\bf{(i)}}.
 
{\bf (ii)}.
Let $p\in(d/2,+\infty]$ and either $d\geq 3$ or $\alpha>0$.
Since $B_\alpha$ belongs to $L^{p'}(B_1(0))$ and it is vanishing at infinity, using H\"older's inequality, we have
\[\begin{aligned}
|(B_\alpha\ast u)(x)|&\le \int_{\mathbb R^d}B_\alpha(x-y)|u(y)|\,\d y\\&=\int_{\{y:|y-x|\le 1\}}B_\alpha(x-y)|u(y)|\,\d y+\int_{\{y:|y-x|>1\}}B_\alpha(x-y)|u(y)|\,\d y\\&
\le  \|B_\alpha\|_{L^{p'}(B_1(0))} \|u\|_{L^p(\mathbb R^d)}+B_\alpha(1)\|u\|_{L^1(\mathbb R^d)}.
\end{aligned}\]
\EEE

{\bf (iii)}.
Let $p\in(1,+\infty]$, $d=2$ and $\alpha=0$. Using  H\"older inequality and the elementary inequality $|x-y|\le(1+|x|^2)(1+|y|^2)$, we obtain
\[\begin{aligned}
|(B_0\ast u)(x)|&\le\frac1{2\pi}\int_{\{y:|y-x|\le 1\}}-\log|x-y|\,|u(y)|\,\d y+\frac{1}{2\pi}\int_{\{y:|y-x|>1\}}\log|x-y|\,|u(y)|\,\d y\\&
\le \|B_0\|_{L^{p'}(B_1(0))} \|u\|_{L^p(\mathbb R^2)}+\frac1{2\pi}\int_{\mathbb R^2}(\log(1+|x|^2)+\log(1+|y|^2))\,|u(y)|\,\d y.
\end{aligned}\]

{\bf(iv)}. The proof of the case $\alpha=0$ can be found in \cite[Lemma 4.1]{SV} (see also \cite{BLL}, \cite[Chapter 8]{MB} and 
 \cite{Y}).\\
Let $\alpha>0$. It is immediate to prove, using the definition of $B_\alpha$ and Fubini theorem, that $\|B_\alpha\|_{L^1(\R^d)}=\alpha^{-1}$.
Observing that $B_\alpha\ast u=B_0 \ast(u-\alpha B_\alpha\ast u)$ and consequently $\nabla B_\alpha\ast u=\nabla B_0 \ast(u-\alpha B_\alpha\ast u)$, 
the thesis follows applying the result for $\alpha=0$, taking into account that
 $ \|B_\alpha\ast u\|_{L^\infty(\R^d)}\le \|B_\alpha\|_{L^1(\R^d)}\|u\|_{L^\infty(\R^d)}=\alpha^{-1}\|u\|_{L^\infty(\R^d)}$ 
 and $\|B_\alpha\ast u\|_{L^1(\R^d)}\le \|B_\alpha\|_{L^1(\R^d)}\|u\|_{L^1(\R^d)}=\alpha^{-1}\|u\|_{L^1(\R^d)}$.
\end{proof}

\subsection{Lower bounds for the energy}

As a direct consequence of the Hardy-Littlewood-Sobolev inequalities of Propositions \ref{carl} and \ref{HLS}
we obtain the following basic estimates of the energy functional $\EE_m$ that yield useful equi-integrability properties for their sublevels. 
The constants appearing in the next result are explicit and given through the proof.
\begin{proposition}\label{F1boundprop} 
There exist constants $C_1, C_2,C_3 >0$ depending only on $M$, $\chi$, $m$ and $d$,  $C_2$ depending also on $\alpha$ in \eqref{F1bound}  \eqref{Fmbound12}, \EEE such that, for each $u\in D(\EE_m)$,
\begin{equation}\label{F1bound}
\EE_1(u)\ge C_1\int_{\mathbb{R}^2}u|\log u|\,\d x -C_2 -C_3\int_{\mathbb{R}^2}\log(1+|x|^2)u(x)\,\d x, \qquad \text{for }d=2,\, m=1,
\end{equation}
\begin{equation}\label{Fmbound12}
\EE_m(u)\ge C_1\int_{\mathbb{R}^2}u^m\,\d x -C_2 -C_3\int_{\mathbb{R}^2}\log(1+|x|^2) u(x)\,\d x, \qquad \text{for }d=2,\, m>1,
\end{equation}
\begin{equation}\label{Fmbound}
\EE_{m_c}(u) \ge C_1\int_{\mathbb{R}^d}u^{m_c}\,\d x, \qquad \text{for }d\geq 3,\, m=m_c,
\end{equation}
\begin{equation}\label{Fmbound2}
\EE_m(u)\ge C_1\int_{\mathbb{R}^d}u^m\,\d x -C_2. \qquad  \text{for }d\geq3,\, m>m_c.
\end{equation}

%
\end{proposition}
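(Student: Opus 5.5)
The plan is to split the energy as $\EE_m(u)=\FF_m(u)-\chi\BB_\alpha(u)$ and control the interaction term by the Hardy--Littlewood--Sobolev type inequalities of Propositions \ref{carl} and \ref{HLS}, absorbing a fraction of $\FF_m$. Throughout, $u\in D(\EE_m)$ has mass $M$.

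\emph{Case $d\ge3$, $m=m_c$.} By \eqref{HLS2},
\[
\chi\BB_\alpha(u)\le \tfrac{\chi}{2}C_HM^{2/d}\int u^{m_c}\,\d x .
\]
By \eqref{defMc}, the condition $M<M_c$ is equivalent to $\frac{\chi}{2}C_HM^{2/d}<\frac{1}{m_c-1}$. Hence
\[
\EE_{m_c}(u)\ge\Big(\tfrac1{m_c-1}-\tfrac\chi2C_HM^{2/d}\Big)\int u^{m_c}\,\d x ,
\]
and the bracket is the constant $C_1>0$ in \eqref{Fmbound}.

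\emph{Case $d\ge3$, $m>m_c$.} Here $M$ is arbitrary, so we need an interpolation that gains smallness. Since $1<m_c<m$, interpolate $L^{m_c}$ between $L^1$ and $L^m$:
\[
\int u^{m_c}\le M^{a}\Big(\int u^m\Big)^{b},\qquad b=\frac{m_c-1}{m-1}<1 .
\]
Combining with \eqref{HLS2},
\[
\chi\BB_\alpha(u)\le C(M,\chi,d)\Big(\int u^m\Big)^{b}.
\]
Young's inequality then gives
\[
C\Big(\int u^m\Big)^b\le \frac{1}{2(m-1)}\int u^m+C_2 ,
\]
which yields \eqref{Fmbound2} with $C_1=\frac1{2(m-1)}$.

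\emph{Case $d=2$, $m=1$.} By \eqref{logbessel},
\[
\chi\BB_\alpha(u)\le \frac{\chi M}{8\pi}\big(\FF_1(u)+2\mathrm m_{log}(u)+C_{M,\alpha}\big).
\]
Since $\theta:=\chi M/(8\pi)<1$,
\[
\EE_1(u)\ge(1-\theta)\FF_1(u)-2\theta\,\mathrm m_{log}(u)-\theta C_{M,\alpha}.
\]
Now write $\FF_1=\int u|\log u|-2\int u\log^-u$ and bound $\int u\log^-u$ via \eqref{Carleman}. Equivalently, bound it by the standard splitting on $\{u\le e^{-|x|^2}\}$ and its complement, which gives $\int u\log^-u\le \int|x|^2u+C$. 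To get $\log(1+|x|^2)$ instead of $|x|^2$, split on $\{u\le (1+|x|^2)^{-3}\}$ instead, so that $\int u\log^-u\le 3\,\mathrm m_{log}(u)+C$. This yields
\[
\EE_1(u)\ge (1-\theta)\int u|\log u|\,\d x-C_3\,\mathrm m_{log}(u)-C_2 .
\]

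\emph{Case $d=2$, $m>1$.} First, \eqref{logbessel} only controls $\BB_\alpha$ through $\FF_1$. Using $u\log u\le \frac{1}{m-1}u^m$ we get
\[
\FF_1(u)\le \frac{1}{m-1}\int u^m\,\d x .
\]
Better, replace $u$ by the elementary bound $u\log u\le \eps u^m+C_\eps u$ for small $\eps$, giving
\[
\FF_1(u)\le\eps\int u^m+C_\eps M .
\]
Inserting this into \eqref{logbessel} gives
\[
\chi\BB_\alpha(u)\le \frac{\chi M}{8\pi}\Big(\eps\int u^m+C_\eps M+2\,\mathrm m_{log}(u)+C_{M,\alpha}\Big).
\]
Choosing $\eps$ so that $\frac{\chi M\eps}{8\pi}\le\frac1{2(m-1)}$ yields \eqref{Fmbound12}.

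The main obstacle is only bookkeeping of the negative part of $u\log u$ in the case $d=2$, $m=1$, where $\FF_1$ is not sign-definite. This is handled by the weighted splitting above, that is, the Carleman-type estimate \eqref{Carleman}.
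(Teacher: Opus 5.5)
Your proof is correct and follows the paper's argument. The case $d\ge3$, $m=m_c$ is identical. For $d=2$ you likewise combine \eqref{logbessel} with \eqref{Carleman} (or your equivalent weighted splitting) and absorb $\FF_1$ into $\int u^m$ by superlinearity.

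The only cosmetic difference is in how the absorption is done. For $d\ge3$, $m>m_c$ you use $L^1$--$L^m$ interpolation plus Young's inequality, and for $d=2$, $m>1$ you use the pointwise bound $u\log u\le\eps u^m+C_\eps u$. The paper instead truncates at a level $K$ and splits over $\{u<K\}$ and $\{u\ge K\}$; both routes give $C_1=\frac1{2(m-1)}$.
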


\begin{proof}
Let $d=2$ and $m=1$.
Using \eqref{logbessel} we obtain
\begin{equation*}\label{F1bound0}
\begin{split}
\mathcal{E}_1(u) &= 
\frac{8\pi-\chi M}{8\pi}\int_{\mathbb{R}^2}u\log u\,\d x + \frac{\chi M}{8\pi}\int_{\mathbb{R}^2}u\log u\,\d x
-\frac{\chi }{2} \iint_{\mathbb{R}^2\times\mathbb{R}^2}B_\alpha(x-y) u(x)u(y)\,\d x\d y \\
& \geq \frac{8\pi-\chi M}{8\pi}\int_{\mathbb{R}^2}u\log u\,\d x
 - \frac{\chi }{2} \left(\frac{ M}{2\pi} \int_{\mathbb{R}^2}u(x)\log(1+|x|^2)\,\d x +\frac{MC_{M,\alpha}}{4\pi}    \right). 
 \end{split}
\end{equation*}
Thus using \eqref{Carleman} we obtain \eqref{F1bound} with
\begin{equation*}\label{C123}
C_1=\frac{8\pi-\chi M}{8\pi}, \quad C_2=\frac{\chi M  C_{M,\alpha}\RRR}{8\pi}+\frac{8\pi-\chi M}{8\pi}\Big(2M\ln\pi+\frac{2}{e}\Big), \quad C_3=\frac{16\pi-\chi M}{4\pi}.
\end{equation*}
We observe that $C_1>0$ is equivalent to $M<M_c$, where $M_c$ is defined in  \eqref{defMc}.

Let $d=2$ and $m>1$. 
Taking into account that for $K\geq 1$ we have  $$\displaystyle\int_{\{u<K\}}u\log^+u\,\d x\le M\log K,$$
 by \eqref{logbessel} it holds
\[\begin{aligned} 
\EE_m(u) &\geq  \frac1{m-1}\int_{\mathbb{R}^2} u^m\,\d x-\frac{M\chi}{8\pi}\left(\int_{\mathbb{R}^2}u\log u\,\d x + 2\int_{\R^2}\log(1+|x|^2)u(x)\,\d x +  C_{M,\alpha}\right)\\
&= \frac{1}{2(m-1)}\int_{\mathbb{R}^2} u^m\,\d x
+\frac1{2(m-1)}\int_{\{u\ge K\}}u^{m}\,\d x -\frac{M\chi}{8\pi} \int_{\{u\ge K\}}u\log u\,\d x\\
&\qquad+\frac1{2(m-1)}\int_{\{u< K\}}u^{m}\,\d x -\frac{M\chi}{8\pi} \int_{\{u< K\}}u\log u\,\d x\\
&\qquad-\frac{M\chi}{4\pi}\int_{\R^2}\log(1+|x|^2)u(x)\,\d x  -\frac{M\chi C_{M,\alpha}}{8\pi}\\
&\geq \frac{1}{2(m-1)}\int_{\mathbb{R}^2} u^m\,\d x
+\int_{\{u\ge K\}} \left(\frac{u^{m}}{2(m-1)} -\frac{M\chi u\log u}{8\pi} \right)\,\d x\\
& \qquad-\frac{M^2\chi \log K}{8\pi} 
-\frac{M\chi}{4\pi}\int_{\R^2}\log(1+|x|^2)u(x)\,\d x  -\frac{M\chi C_{M,\alpha}}{8\pi}.
\end{aligned}
\]
Fixing $K\geq e^{1/(m-1)}$ such that $\frac{K^{m-1}}{\log K} \geq \frac{\chi M(m-1)}{4\pi}$ we have
$$\int_{\{u\ge K\}} \left(\frac{u^{m}}{2(m-1)} -\frac{M\chi u\log u}{8\pi} \right)\,\d x\geq 0$$
and \eqref{Fmbound12} holds with $C_1=\frac1{2(m-1)}$, $C_2=\frac{M^2\chi \log K+M\chi C_{M,\alpha}}{8\pi}$, $C_3=\frac{M\chi}{4\pi}.$

Let $d\geq3$ and $m=m_c$. In this case \eqref{Fmbound} holds with  $C_1=\frac{1}{m_c-1}-\frac\chi2 C_H M^{2/d}$ and follows by the definition of $\EE_{m_c}$ and \eqref{HLS2}.
We observe that $C_1>0$ is equivalent to $M<M_c$, where $M_c$ is defined in  \eqref{defMc}.

Let $d\geq3$ and $m>m_c$. For $K>0$ it holds  
\begin{equation*}\label{equisimple}
\begin{aligned}
	\int_{\{u\le K\}}u^{m_c}\,\d x \le K^{m_c-1}\int_{\mathbb{R}^d} u\,\d x = K^{m_c-1}M, 
	\qquad \int_{\{u\ge K\}}u^{m}\,\d x \ge K^{m-m_c}\int_{\{u\ge K\}}u^{m_c}\,\d x, 
\end{aligned}
\end{equation*}
 which together with \eqref{HLS2}  entails
\[\begin{aligned}
\mathcal{E}_m(u)\ge &\frac1{2(m-1)}\int_{\mathbb{R}^d} u^m\,\d x
+\frac1{2(m-1)}\int_{\{u\ge K\}}u^{m}\,\d x -\frac\chi2 C_H M^{2/d} \int_{\{u\ge K\}}u^{m_c}\,\d x\\
&+\frac1{2(m-1)}\int_{\{u< K\}}u^{m}\,\d x -\frac\chi2 C_H M^{2/d} \int_{\{u< K\}}u^{m_c}\,\d x\\
&\geq\frac1{2(m-1)}\int_{\mathbb{R}^d} u^m\,\d x
+\left(\frac{K^{m-m_c}}{2(m-1)}-\frac\chi2 C_H M^{2/d}\right) \int_{\{u\ge K\}} {u^{m_c}}\,\d x\\
&- \frac\chi2 C_H M^{2/d}K^{m_c-1}M.
\end{aligned}
\]
For $K= (\chi(m-1) C_H M^{2/d})^{1/(m-m_c)}$, we get \eqref{Fmbound2} with constants $C_1=\frac1{2(m-1)}$, $C_2=\frac\chi2 C_H M^{2/d}K^{m_c-1}M$.
\end{proof}


\subsection{Continuity properties of the energy}

We state basic properties of lower semi continuity for the functional $\FF_m$ and of continuity for the  functional $\BB_\alpha$ with respect to the narrow convergence.

\begin{proposition}\label{prop:FP}
Let $u_n,u\in\Space$ such that $u_n\to u$ narrowly. 
If
\begin{equation}\label{energymomentum}
\sup_{n\in\N}\FF_m(u_n)<+\infty\quad\mbox{and }\quad\sup_{n\in\N}\int_{\mathbb R^d}|x|^2u_n(x)\,\d x<+\infty,
\end{equation}
then
\begin{equation}\label{lscmoment}
	\liminf_{n\to+\infty}\int_{\mathbb R^d}|x|^2u_n(x)\,\d x\geq\int_{\mathbb R^d}|x|^2u(x)\,\d x,
\end{equation}
\begin{equation}\label{lscFm}
	\liminf_{n\to+\infty}\FF_m(u_n)\geq\FF_m(u),
\end{equation}
\begin{equation}\label{cBalpha}
	\lim_{n\to+\infty}\BB_\alpha(u_n)=\BB_\alpha(u),
\end{equation}
\begin{equation}\label{lscEm}
	\liminf_{n\to+\infty}\EE_m(u_n)\geq\EE_m(u)
\end{equation}
and $u\in D(\EE_m)$.
\end{proposition}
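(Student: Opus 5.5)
The plan is to prove the four conclusions in order. \eqref{lscEm} follows from \eqref{lscFm} and \eqref{cBalpha}, and membership $u\in D(\EE_m)$ follows from \eqref{lscFm} together with \eqref{DE_m}.

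For \eqref{lscmoment}, I will use the standard narrow lower semicontinuity of integrals of nonnegative lower semicontinuous integrands. Truncate $|x|^2\wedge R$, which is continuous and bounded, so $\int (|x|^2\wedge R)\,\d u_n\to\int(|x|^2\wedge R)\,\d u$. Then let $R\to\infty$ by monotone convergence.

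Tightness and uniform integrability come from \eqref{energymomentum}. For $m>1$, the bound $\sup_n\int u_n^m<\infty$ gives weak compactness in $L^m$. Hence $u_n\rightharpoonup u$ weakly in $L^m(\R^d)$, the limit being identified with $u$ via narrow convergence, and $u$ is absolutely continuous. Then \eqref{lscFm} is weak lower semicontinuity of the $L^m$ norm.

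For $m=1$ ($d=2$) the argument is more delicate, since $\int u\log u$ is not bounded below a priori. Here I use \eqref{Carleman} with $\mathrm m_{log}\le \mom$ to get $\sup_n\int u_n|\log u_n|<\infty$. By de la Vallée Poussin this gives equi-integrability, so $u_n\rightharpoonup u$ weakly in $L^1$. I then write
\[u\log u=(u\log u+ u\,\log(1+|x|^2)... )\]
more precisely, I use the relative entropy with respect to the Gaussian-type reference $\mu=e^{-|x|^2}\d x$:
\[
\FF_1(u_n)=\int \frac{u_n}{e^{-|x|^2}}\log\frac{u_n}{e^{-|x|^2}}\,\d\mu-\int|x|^2u_n\,\d x .
\]
The first term is convex and narrowly lower semicontinuous (a relative entropy, with Jensen's inequality ensuring the mass normalization). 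The second term converges because the second moments are uniformly bounded and the $u_n$ are uniformly integrable. This step requires care: I only have $\liminf$ of moments, not convergence. So instead I use a weight $\log(1+|x|^2)$, or $|x|$, whose integrals do converge, because the uniformly bounded second moments give uniform integrability of that weight. Replacing $e^{-|x|^2}$ by $c\,(1+|x|^2)^{-2}$, whose logarithm grows like $\log(1+|x|^2)$, makes the subtracted term continuous, and the entropy term is lower semicontinuous. This yields \eqref{lscFm}.

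The main obstacle is \eqref{cBalpha}, because of the kernel's singularity at the origin (and, for $d=2$, $\alpha=0$, its logarithmic growth at infinity). I split $B_\alpha=B_\alpha\wedge K+(B_\alpha-K)^+$, with an additional truncation at large $|x|$ in the $d=2$ Newtonian case. The bounded continuous part converges, since $u_n\otimes u_n\to u\otimes u$ narrowly. The far-field logarithmic part is controlled uniformly by the bounded second moments, using $\log|x-y|\le\log(1+|x|^2)+\log(1+|y|^2)$ on $|x-y|\ge1$ together with uniform integrability of $\log(1+|x|^2)$ under the moment bound.

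The singular part $(B_\alpha-K)^+$ is supported in a small ball $|x|\le r_K$. For $d\ge3$ I bound its contribution by the HLS inequality \eqref{HLS2}, applied to the truncated kernel via Young or Hölder with $u_n\in L^{m_c}$ bounded (since $m\ge m_c$). The resulting norm $\|(B_\alpha-K)^+\|_{L^{s}}\to0$ as $K\to\infty$ for an appropriate $s$ given by the $L^{m_c}$--$L^1$ interpolation, uniformly in $n$. For $d=2$ I use the entropy bound via the duality inequality $ab\le e^{a}+b\log b$ (Young for $e^s$ and $s\log s$). With a parameter $\lambda$, the term $\iint(\log^-|x-y|-K)^+u_n(x)u_n(y)$ is controlled by $\lambda^{-1}\int u_n(x)\,\big[\int e^{\lambda(\log^-|x-y|-K)^+}\dots\big]$. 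A cleaner route is to note that $\sup_n\int u_n|\log u_n|$ bounded makes $B_0\ast u_n$ locally uniformly controlled on small scales. Either way the singular contributions vanish uniformly as $K\to\infty$, which proves \eqref{cBalpha}.
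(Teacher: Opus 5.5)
Your proposal is correct, and it organizes the argument somewhat differently from the paper.

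\textbf{The paper's route.} It obtains \eqref{lscmoment} from the narrow lower semicontinuity of $W$ tested against $M\delta_0$. It obtains \eqref{lscFm} from convexity and superlinearity of the integrand, with \eqref{Carleman} controlling the negative part of the entropy. It writes out \eqref{cBalpha} only for $d=2$, $\alpha>0$; the case $\alpha=0$ is taken from the literature and $d\ge3$ is declared analogous. In that case it cuts sharply at $|x-y|=\eps$. The near-diagonal region is controlled by $ab\le e^a+b\log b$ together with the equi-integrability bound $\sup_y\int_{B_\eps(y)}u_n\,\d x\le\pi\eps+Q/\log(1/\eps)$. The far region is handled by weak $L^1(\R^{2d})$ convergence of $u_n(x)u_n(y)$, which itself rests on equi-integrability.

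\textbf{Your route.} For the entropy, you write $\FF_1$ as the relative entropy with respect to $\pi^{-1}(1+|x|^2)^{-2}\,\d x$, minus $2\int\log(1+|x|^2)u_n\,\d x$, plus a constant. Then \eqref{lscFm} follows from narrow lower semicontinuity of relative entropy and convergence of the logarithmic moment under the second-moment bound, with no separate handling of $u_n\log^-u_n$. For the interaction, you truncate the kernel continuously:
\begin{itemize}
\item the bulk term converges just by narrow convergence of $u_n\otimes u_n$;
\item the far field for $d=2$, $\alpha=0$ is killed by the moments;
\item for $d\ge3$ the singular part is uniformly small by Young's inequality, e.g. $\iint (B_\alpha-K)^+(x-y)u_n(x)u_n(y)\,\d x\,\d y\le\|u_n\|_{L^{m_c}}^2\|(B_\alpha-K)^+\|_{L^{(d-1)/(d-2)}}$. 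This is Young, not \eqref{HLS2}.
\end{itemize}
The gain is a self-contained, uniform treatment of all cases, including the two the paper outsources.

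\textbf{What you still need to write out: the $d=2$ singular part.} Your sketch stops before the actual mechanism. Apply $ab\le e^a+b\log b$ with $a=\lambda(B_\alpha-K)^+(x-y)$ and $b=u_n(y)$, only on the support $\{|x-y|\le r_K\}$, and with $\lambda<4\pi$ fixed so that $e^{\lambda(B_\alpha-K)^+}$ is locally integrable.
\begin{itemize}
\item The exponential part is then $O(Mr_K^2)$.
\item The entropy part is $\int u_n(y)\log^+u_n(y)\,u_n(B_{r_K}(y))\,\d y$. It is small only because $\sup_{n,y}u_n(B_r(y))\to0$ as $r\to0$, by the equi-integrability you derived from \eqref{Carleman} (for $m>1$, from the $L^m$ bound).
\end{itemize}
A locally uniform bound on $B_0\ast u_n$, your ``cleaner route'', gives boundedness but not smallness, so it does not suffice on its own. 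For $\alpha>0$ you also need $B_\alpha(x)\le\frac1{2\pi}\log^-|x|+C_\alpha$ near $0$. Finally, the same singular and tail estimates must be checked for the limit $u$. This is fine, since you have already shown $\FF_m(u)<\infty$ and $\mom(u)<\infty$.
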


\begin{proof}
 \eqref{lscmoment} follows from \eqref{lscWass} choosing $v_n=\delta_0$. \EEE 
\eqref{lscFm} follows from the convexity and superlinearity of the integrand  and standard lower semi continuity of integral functionals (see for instance \cite{AFP}), \EEE taking into account Proposition \ref{carl} for bounding the negative part of the entropy in the case of $\mathcal F_1$.
We also notice that  \eqref{lscEm} is a direct consequence of \eqref{lscFm} and \eqref{cBalpha}. $u\in D(\EE_m)$ as a consequence of \eqref{lscEm} and \eqref{energymomentum}.

 The property \eqref{cBalpha} in the case $d=2$ and $\alpha=0$, is proved in \cite[Lemma 3.1]{BCC}.

We fix  $d=2$ and $\alpha>0$ and 
we prove \eqref{cBalpha}. \EEE
In view of \eqref{alpha1}, we may assume that $\alpha=1$. 
We have $B_1(x)=\frac1{2\pi}\,K_0(|x|)$, where $K_0$ is the modified Bessel function of second kind. By taking advantage of the series representations of $K_0$, see for instance \cite[pp 919]{GR}, there exists $\delta_0\in (0,1)$ such that for every $r\in(0,\delta_0)$ there holds
\[
K_0(r)\le\log\frac{2}{r}.
\] 
Thanks to the latter property and to the elementary inequality $ab\le e^a+b\log b$, $\forall \, a,b\in (0,+\infty)$, if $0<\eps<\delta_0$ we find
\[
\begin{aligned}
&2\pi\iint_{\{|x-y|<\eps\}}u_n(x)u_n(y)\,B_1(x-y)\,\d x\,\d y=\iint_{\{|x-y|<\eps\}}u_n(x)u_n(y)\,K_0(x-y)\,\d x\,\d y\\
&\quad\le\iint_{\{|x-y|<\eps\}}u_n(x)\left(u_n(y)\log u_n(y)+\frac2{|x-y|}\right)\,\d x\,\d y\\&\quad
\le \int_{\mathbb R^2}u_n(y)\log^+ u_n(y)\left(\int_{B_\eps(y)}u_n(x)\,\d x\right)\d y+ \int_{\mathbb R^2}u_n(x)\,\d x\int_{B_\eps(x)}\frac2{|x-y|}\,\d y,
\end{aligned}
\]
But letting $V_{\eps,n}:=\{x\in\mathbb R^2:u_n(x)\le 1/\eps\}$ we see that for every $y\in\mathbb R^d$
\[\begin{aligned}
\int_{B_\eps(y)}u_n(x)\,\d x&=\int_{B_\eps(y)\cap V_{\eps,n}}u_n(x)\,\d x+\int_{B_\eps(y)\cap{V}_{\eps,n}^C}u_n(x)\,\d x
\\&\le\pi\eps+\int_{B_\eps(y)\cap V_{\eps,n}^C}\frac{u_n\log^+ u_n}{\log u_n}\,\d x\le \pi\eps+\frac1{\log^+(1/\eps)}\int_{\mathbb R^2}u_n\log^+ u_n\,\d x,
\end{aligned}\]
and we notice that $Q:=\sup_{n\in\N}\int_{\mathbb R^2}u_n\log^+ u_n\,\d x<+\infty$
thanks to \eqref{energymomentum} (in case $m=1$, this is due to the Carleman estimate \eqref{Carleman}). Inserting the latter estimate in the previous one we get
\begin{equation}\label{Qn}
2\pi\iint_{\{|x-y|<\eps\}}u_n(x)u_n(y)\,B_1(x-y)\,\d x\,\d y\le \pi Q\eps+\frac{Q^2}{\log(1/\eps)}+M\int_{B_\eps(0)}\frac{2}{|y|}\,\d y.
\end{equation}
Since \eqref{lscFm} holds and since $u$ has finite second moment (by the lower semicontinuity of the second moment) the same argument can be applied with $u$ in place of $u_n$ to get
\begin{equation}\label{Q0}
2\pi\iint_{\{|x-y|<\eps\}}u(x)u(y)\,B_1(x-y)\,\d x\,\d y\le \pi Q_0\eps+\frac{Q_0^2}{\log(1/\eps)}+M\int_{B_\eps(0)}\frac{2}{|y|}\,\d y,
\end{equation}
where $Q_0:=\int_{\mathbb R^2}u\log^+u\,\d x$.
We notice that 
\begin{equation}\label{Q3}\lim_{n\to+\infty}\iint_{\{|x-y|\ge\eps\}}u_n(x)u_n(y)\,B_1(x-y)\,\d x\,\d y=\iint_{\{|x-y|\ge \eps\}}u(x)u(y)B_1(x-y)\,\d x\,\d y,\end{equation}
since $\bC_ {\{|x-y|\ge \eps\}}(x,y)\,B_\alpha(x-y)$ belongs to $L^\infty(\mathbb R^2\times\mathbb R^2)$ and since $u_n(x)u_n(y)$ converge to $u(x)u(y)$ weakly in $L^1(\mathbb R^d\times\mathbb R^d)$. For the latter convergence, see \cite[Lemma 2.3]{BCC}: it follows from the weak $L^1(\mathbb R^d)$ convergence of $u_n$ to $u$ and to the equiintegrability property which comes from \eqref{energymomentum}. 
By combining  \eqref{Qn},  \eqref{Q0} and \eqref{Q3} we deduce that
\[\begin{aligned}&
\limsup_{n\to\infty}\left|\iint_{\mathbb R^2\times\mathbb R^2}(u_n(x)u_n(y)-u(x)u(y))B_1(x-y)\,\d x\,\d y\right|\\&\qquad\qquad\le \frac{(Q+Q_0)\eps}{2}+\frac{Q_0^2+Q^2}{2\pi\log(1/\eps)}+\frac{2M}{\pi}\int_{B_\eps(0)}\frac{1 }{|y|}\,\d y.
\end{aligned}\]
Passing to the limit for $\eps \to 0$ we obtain \eqref{cBalpha}.

The proof of  \eqref{cBalpha} in the case $d\geq 3$ and $\alpha\ge0$ is analogous, taking into account \eqref{alpha2} (see also \cite{BL}).
\EEE
\end{proof}

We also  show the continuity of the energy functional $\EE_m$ with respect to the 
regularization of the initial datum given by \eqref{utau0}, i.e., the continuity of $\mathcal E_m$ along the heat flow. This property is a particular case of the analysis of Section \ref{interchange}.
\begin{proposition}\label{initialdatum}
Let
$u_0\in \mathscr P_2^M(\R^d)$ and $u_\tau^0$ given by \eqref{utau0}. 
Then
\begin{equation}\label{momevolve}
	 \mom(u_\tau^0)=\mom(u_0)+2dM\varpi(\tau)\le \mom(u_0)+3dM, \qquad \forall\, \tau\ge 0
\end{equation}
and
\begin{equation}\label{energytau}
\lim_{\tau\to 0}\EE_m(u_\tau^0)=\EE_m(u_0).
\end{equation}
 
If $u_0\in D(\mathcal E_m),$ then the map $\tau\mapsto \mathcal E_m(u_\tau^0)$ is continuous on $[0,+\infty)$,
\begin{equation}\label{uniformwithtau}
\mathcal E_m(u_\tau^0)\le \mathcal F_m(u_0)+\frac{\chi M}{\pi}(\mom(u_0)+6M), \qquad \forall\, \tau\ge 0,
\end{equation}
and in the case $\alpha>0$ or $d\ge 3$, it holds
$$\mathcal E_m(u_\tau^0)\le \mathcal F_m(u_0), \qquad \forall\, \tau\ge 0. 
$$\end{proposition}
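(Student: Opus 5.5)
The second-moment identity follows from the heat flow. For $u_\tau^0=\Gamma_{\varpi(\tau)}\ast u_0$, Fubini gives
\[
\mom(u_\tau^0)=\iint|x+z|^2\Gamma_{\varpi}(z)\,\d z\,\d u_0(x).
\]
The cross term vanishes because $\Gamma_\varpi$ is symmetric, and $\int|z|^2\Gamma_\varpi=2d\varpi$. This yields \eqref{momevolve}. The bound $2\varpi(\tau)\le 2/\log 2<3$ gives the inequality.

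For \eqref{energytau} I split the energy into $\FF_m$ and $\BB_\alpha$.

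\emph{Internal energy.} Since $s\mapsto s^m/(m-1)$ and $s\log s$ are convex, Jensen's inequality applied to convolution with the probability density $\Gamma_\varpi$ gives $\FF_m(\Gamma_\varpi\ast u_0)\le\FF_m(u_0)$ when $m>1$. When $m=1$ I would instead use the known monotonicity of the entropy along the heat flow, $\frac{d}{dt}\FF_1=-\int|\nabla u|^2/u\le 0$. Together with narrow convergence, bounded second moments (from \eqref{momevolve}) and the lower semicontinuity \eqref{lscFm}, this gives $\FF_m(u_\tau^0)\to\FF_m(u_0)$ whenever $u_0\in D(\FF_m)$.

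\emph{Interaction energy.} If $\FF_m(u_0)<\infty$, the family $u_\tau^0$ has bounded $\FF_m$ and bounded second moments, so \eqref{cBalpha} gives $\BB_\alpha(u_\tau^0)\to\BB_\alpha(u_0)$. Hence \eqref{energytau} holds in this case.

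\emph{Infinite energy.} If $\EE_m(u_0)=+\infty$, i.e.\ $\FF_m(u_0)=+\infty$, I argue by contradiction. Suppose $\EE_m(u_\tau^0)$ stayed bounded along a sequence. The lower bounds of Proposition \ref{F1boundprop}, combined with the uniform moment bound, would then bound $\FF_m(u_\tau^0)$ (for $m=m_c$ this uses the assumption $M<M_c$). Proposition \ref{prop:FP} would then give $u_0\in D(\EE_m)$, which is a contradiction. In the case $m=1$ I need the term $\int u\log^- u$ to be controlled via \eqref{Carleman}, so that $\FF_1\ge C_1\int u|\log u|-\dots$ bounds $\FF_1$ itself.

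\emph{Continuity in $\tau$.} Since $\Gamma_{\varpi}\ast u_0=\Gamma_{\varpi-\varpi'}\ast u^0_{\tau'}$ by the semigroup property, the same argument applied with base point $u^0_{\tau'}$ gives continuity at every $\tau'$. Note that $\varpi$ is continuous and monotone in $\tau$.

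\emph{Uniform bound \eqref{uniformwithtau}.} I write $\EE_m(u_\tau^0)\le\FF_m(u_0)-\chi\BB_\alpha(u_\tau^0)$. If $\alpha>0$ or $d\ge3$, the kernel $B_\alpha$ is positive, so $-\chi\BB_\alpha\le 0$ and the sharper bound follows. If $d=2$ and $\alpha=0$, the kernel $-\frac1{2\pi}\log|x-y|$ is positive only for $|x-y|<1$. So
\[
-\BB_0(u)\le \frac1{4\pi}\iint\log^+|x-y|\,\d u\,\d u,
\]
and using $\log^+|x-y|\le\log(1+|x-y|^2)\le|x|^2+|y|^2$ I get
\[
-\chi\BB_0(u_\tau^0)\le\frac{\chi M}{2\pi}\mom(u_\tau^0)\le\frac{\chi M}{2\pi}\bigl(\mom(u_0)+3dM\bigr).
\]
With $d=2$ this is at most $\frac{\chi M}{\pi}(\mom(u_0)+6M)$ after adjusting constants.

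The main obstacle is the $m=1$ entropy monotonicity for measure-type data together with the blow-up case. I would handle the entropy by Jensen on $s\log s$ restricted to the positive part, combined with the moment control of the negative part.
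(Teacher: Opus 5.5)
Your arguments for \eqref{momevolve}, for \eqref{energytau} and for \eqref{uniformwithtau} follow the paper's proof. For \eqref{energytau} this means Jensen for the upper bound on $\FF_m$, then \eqref{lscFm}--\eqref{cBalpha}, and the lower bounds of Proposition \ref{F1boundprop} when $\EE_m(u_0)=+\infty$. The step that fails as written is continuity in $\tau$ at $\tau'>0$. The identity $u^0_\tau=\Gamma_{\varpi(\tau)-\varpi(\tau')}\ast u^0_{\tau'}$ needs $\varpi(\tau)\ge\varpi(\tau')$, so re-basing at $u^0_{\tau'}$ only gives continuity from the right. For $\tau\uparrow\tau'$ the base point moves. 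Jensen gives $\FF_m(u^0_{\tau'})\le\FF_m(u^0_\tau)$ and lower semicontinuity gives $\FF_m(u^0_{\tau'})\le\liminf\FF_m(u^0_\tau)$. Both point the same way, and a nonincreasing lower semicontinuous function may still jump down from the left. You need a two-sided argument at positive times, which the semigroup property supplies through smoothing. For $\tau$ near $\tau'$ one has $\varpi(\tau)\ge\varpi(\tau')/2$, so the $u^0_\tau$ are uniformly bounded in $L^\infty$ by \eqref{heatcontraction}. Moreover $\|u^0_\tau-u^0_{\tau'}\|_{L^1}\le M\|\Gamma_{\varpi(\tau)}-\Gamma_{\varpi(\tau')}\|_{L^1}\to0$, and the second moments stay bounded. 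Together these give $\FF_m(u^0_\tau)\to\FF_m(u^0_{\tau'})$; for $m=1$, treat $u\log^+u$ with the $L^\infty$ bound and $u\log^-u$ with the moment bound at infinity. Alternatively, use the local absolute continuity of $t\mapsto\FF_m(\Gamma_t\ast u_0)$ on $(0,+\infty)$ from Proposition \ref{thenewprop} and Remark \ref{heatremark}. The interaction part is already two-sided by \eqref{cBalpha}.

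Two smaller points. First, the inequality $\log(1+|x-y|^2)\le|x|^2+|y|^2$ is false: take $y=-x$ with $|x|$ small, and the left side is about $4|x|^2$ while the right side is $2|x|^2$. Use $\log^+r\le r^2/2$ instead, so that $\log^+|x-y|\le\frac12|x-y|^2\le|x|^2+|y|^2$. With this, your bound $-\chi\BB_0(u^0_\tau)\le\frac{\chi M}{2\pi}(\mom(u_0)+6M)$ is correct and improves the constant in \eqref{uniformwithtau} by a factor $2$; the paper uses $\log^+|x-y|\le|x-y|^2$. Second, the case $m=1$ is not an obstacle. The map $s\mapsto s\log s$ is convex and bounded below, and $u_0\log u_0\in L^1$ by \eqref{Carleman}. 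So pointwise Jensen plus Fubini gives $\FF_1(u^0_\tau)\le\FF_1(u_0)$ for every $\tau$, which is exactly what the paper does for any convex integrand. In the finite-energy case $u_0$ is absolutely continuous anyway, so there is no measure-data issue. Your fallback (Jensen on the positive part, moments for the negative part) would not give this inequality at fixed $\tau$, and \eqref{uniformwithtau} requires it.
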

\begin{proof} 
A direct computation on the Gaussian kernel $\Gamma_t$ (defined in \eqref{gaussian}) shows that $\mom(\Gamma_t\ast u_0)=\mom(u_0)+2dM t$, for every $t\ge 0$, so that \eqref{momevolve} follows from \eqref{utau0}.

We prove \eqref{energytau}.
Let $u_0\in D(\mathcal E_m).$
We observe that for a convex function $F:[0,+\infty)\to \R$, 
taking into account that $\Gamma_t$ is a density of probability measure,
by Jensen's inequality  we have 
$F(u_0*\Gamma_t)\leq F(u_0)*\Gamma_t$. 
Integrating this inequality and using Fubini's theorem
we obtain 
\begin{equation}\label{jens}
	\int_{\R^d}F(u_0*\Gamma_t(x))\,\d x \leq \int_{\R^d} F(u_0(x))\,\d x, \qquad \forall\, t\in (0,+\infty).
\end{equation} 
Using \eqref{jens} it follows that
\begin{equation*}\label{uscFm}
	\limsup_{t\to 0}\FF_m(\Gamma_t\ast u_0)\leq\FF_m(u_0).
\end{equation*}
Thus \eqref{energytau} follows using Proposition \ref{prop:FP}, \eqref{lscEm}. 

 If $u_0\notin D(\mathcal E_m),$ by \eqref{DE_m} $u_0\notin D(\mathcal F_m)$. By Fatou's Lemma we obtain that 
 $$\liminf_{t\to 0}\FF_m(\Gamma_t\ast u_0)\geq\FF_m(u_0)=+\infty.$$ 
 Taking into account assumption \eqref{assumption},   then  \eqref{energytau} follows by the previous limit inequality and Proposition \ref{F1boundprop}, 
 since $\Gamma_t\ast u_0 \in D(\EE_m)$ for any $t\in (0,+\infty)$ and \eqref{momevolve} holds (necessary for the case $d=2$).
 \RRR
 

Let $u_0 \in D(\mathcal E_m)$, by the semigroup property of the heat flow it follows that $t\mapsto \mathcal F_m(\Gamma_t\ast u_0)$ is continuous at every $t>0$, and then the continuity of $t\mapsto\mathcal E_m(\Gamma_t\ast u_0)$ follows from Proposition \ref{prop:FP} and \eqref{energytau}.

We notice that
$\mathcal B_\alpha\ge 0$ if either $\alpha>0$ or $d\ge 3$, else if $d=2$, $\alpha=0$ we have the straightforward estimate
\begin{equation}\label{-mom}
-\chi\mathcal B_0(u)\le \frac{\chi M}\pi\mom(u)\qquad \mbox{for every $u\in D(\FF_1),$}
\end{equation}
following from $\log^+|x-y|< |x-y|^2\le  2|x|^2+2|y|^2$. By combining the latter fact with \eqref{momevolve} and \eqref{jens}, we get \eqref{uniformwithtau}.
\end{proof}

\subsection{Basic estimates of the JKO scheme}
Existence of a solution for  the minimization problem \eqref{minmov1} is standard and it is stated in the following Proposition.
\begin{proposition}\label{prop:existenceMM}
Let 
$u_0\in \Space$.
For any $\tau>0$ there exists a sequence
$\{u_\tau^k:k=0,1,2,\ldots\}$
satisfying \eqref{utau0} and \eqref{minmov1}.
\end{proposition}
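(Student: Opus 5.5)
The plan is to use the direct method of the calculus of variations one step at a time, and then argue by induction on $k$.

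The starting term $u_\tau^0$ is given explicitly by \eqref{utau0}. It belongs to $\Space$: its mass is $M$, and its second moment is finite by \eqref{momevolve}. By Proposition \ref{initialdatum} it also satisfies $\mathcal E_m(u_\tau^0)<+\infty$.

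Now suppose that $u_\tau^{k-1}\in\Space$ has been constructed. Consider the functional
\[
\Phi(u):=\EE_m(u)+\frac1{2\tau}W^2(u,u_\tau^{k-1})
\]
on $\Space$. The first task is to show that $\Phi$ is bounded from below and that its sublevels satisfy \eqref{energymomentum}. For this we use the triangle inequality with $\delta_0$, which gives
\[
\mom(u)=W^2(u,M\delta_0)\le 2W^2(u,u_\tau^{k-1})+2\mom(u_\tau^{k-1}).
\]
So the distance term controls the second moment from above.

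Next we bound $\EE_m$ from below via Proposition \ref{F1boundprop}. For $d\ge3$ the bounds \eqref{Fmbound} and \eqref{Fmbound2} give $\EE_m\ge C_1\int u^m-C_2$. Here we use $M<M_c$ when $m=m_c$, which is part of \eqref{assumption}. For $d=2$ the bounds \eqref{F1bound} and \eqref{Fmbound12} have a negative term $-C_3\,\mathrm m_{log}(u)$. Since $\mathrm m_{log}(u)\le \mom(u)$, this term is bounded by $C_3\mom(u)$, and it is tamed by the quadratic distance term only when $2C_3\tau<1/(4\tau)$ or so. I expect this to be the only real obstacle: for large $\tau$ the crude bound is not enough. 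To handle it I would use the sublinearity of $\log(1+|x|^2)$ instead. For any $\varepsilon>0$ we have $\log(1+|x|^2)\le \varepsilon|x|^2+C_\varepsilon$. Hence
\[
C_3\,\mathrm m_{log}(u)\le \varepsilon C_3\bigl(2W^2(u,u_\tau^{k-1})+2\mom(u_\tau^{k-1})\bigr)+C_3C_\varepsilon M,
\]
and choosing $\varepsilon$ so that $2\varepsilon C_3\le 1/(4\tau)$ absorbs this term into half of the distance term, for every $\tau>0$.

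With these bounds, $\inf\Phi$ is finite; it is below $+\infty$ because $\Phi(u_\tau^{k-1})=\EE_m(u_\tau^{k-1})<\infty$. Along a minimizing sequence $(u_n)$, the quantities $\sup_n\mom(u_n)$ and $\sup_n\FF_m(u_n)$ are bounded. For $m=1$ the entropy bound comes from $\int u|\log u|$ together with the Carleman estimate \eqref{Carleman}. For $m>1$ it comes from $\int u^m$. In the case $d=2$, $m>1$, I use that $\FF_m=\frac1{m-1}\int u^m$ directly.

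Bounded sets in $(\Space,W)$ are narrowly relatively compact, so up to a subsequence $u_n\to \bar u$ narrowly, with $\bar u\in\PP^M(\R^d)$. By Proposition \ref{prop:FP} we get $\bar u\in D(\EE_m)$, the moment bound \eqref{lscmoment} gives $\bar u\in\Space$, and \eqref{lscEm} gives lower semicontinuity of $\EE_m$. The distance term is lower semicontinuous by \eqref{lscWass}, applied with the constant sequence $v_n=u_\tau^{k-1}$. Hence $\Phi(\bar u)\le\liminf_n\Phi(u_n)=\inf\Phi$, so $u_\tau^k:=\bar u$ is a minimizer.

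Finally, $\EE_m(u_\tau^k)\le\Phi(u_\tau^k)\le\Phi(u_\tau^{k-1})=\EE_m(u_\tau^{k-1})<\infty$, so the inductive hypothesis is preserved and the sequence $\{u_\tau^k\}$ satisfying \eqref{utau0} and \eqref{minmov1} exists.
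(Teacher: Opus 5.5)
Your proposal is correct and follows the same route as the paper: the direct method applied to $\Phi_\tau^k(u)=\EE_m(u)+\frac1{2\tau}W^2(u,u_\tau^{k-1})$, with coercivity from Proposition \ref{F1boundprop} and lower semicontinuity from Proposition \ref{prop:FP} and \eqref{lscWass}. You absorb the logarithmic moment term in $d=2$ into the distance term via $\log(1+|x|^2)\le\varepsilon|x|^2+C_\varepsilon$, which works for every $\tau>0$ and makes explicit a step the paper leaves implicit; you also get compactness from the moment bound rather than from Dunford--Pettis, an inessential difference.
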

\begin{proof}
For $k=0$, $u_\tau^0$ is given by definition in \eqref{utau0} and we observe that $u_\tau^0\in D(\EE_m)$.
For $k\geq 1$ we consider the functional $\Phi_\tau^k:\Space\to(-\infty,+\infty]$ defined by
$$\Phi_\tau^k(u):=\EE_m(u)+\frac1{2\tau}\,W^2(u,u^{k-1}_\tau).$$
This functional is bounded from below over $\mathscr P_2^M(\R^d)$ by Proposition \ref{F1boundprop},
and the sub-levels of $\Phi_\tau^k$ are relatively weakly compact in $L^1$ by Proposition \ref{F1boundprop} and Dunford-Pettis theorem. 
In particular, a sequence on a sub-level of $\Phi_\tau^k$ has a  narrowly converging subsequence and uniformly bounded second moments.
By Proposition \ref{prop:FP} and the property \eqref{lscWass}, $\Phi_\tau^k$ is lower semicontinuous with respect to the narrow convergence.
The existence of a minimum of $\Phi_\tau^k$ follows by direct method in calculus of variations.
\end{proof}
Given a sequence 
 of discrete minimizers of \eqref{minmov1},
we state some uniform equi-integrability estimates in the next two propositions. 
We have the following
\begin{proposition}\label{logequi} 
Let $d=2$.
Let $\tau>0$ and $\{u_\tau^k:k=0,1,2,\ldots\}$ be a sequence from {\rm Proposition \ref{prop:existenceMM}}.
Let $C_1,C_2,C_3$  the constants of {\rm Proposition \ref{F1boundprop}}, and let 
\begin{equation}\label{forsecostante}
C_\tau^0:=\frac{(8C_1)\vee 1}{C_1}\left(\frac12 \mom(u_\tau^0)+\mathcal E_1(u_\tau^0)+C_2+C_3M\right).
\end{equation}
Then,
\begin{equation}\label{FondEst2}
\int_{{\R}^2}u_\tau^k|\log u_\tau^k|\,\d x \leq  C_\tau^0(1+\log^+(8C_3T)), \;\; \mom(u_\tau^k)  \leq  C_\tau^0 T(1+\log^+(8C_3T)),\;\; m=1,
\end{equation}
\begin{equation}\label{FondEst2m}
\int_{{\R}^2}(u_\tau^k)^m\,\d x \leq  C_\tau^0(1+\log^+(8C_3T)), \quad \mom(u_\tau^k)  \leq C_\tau^0 T(1+\log^+(8C_3T)),\quad m>1,
\end{equation}
for every $T\ge1$, $\tau>0$ and $k\ge 1$ such that $k\tau\le T$. \RRR 
Under the additional assumption $u_0\in D(\mathcal E_m)$, we can bound $C_\tau^0$ uniformly w.r.t. $\tau$, that is, for every $\tau\ge 0$ there holds
\begin{equation}\label{notau}
C^0_\tau\!\le C_0:=\frac{(8C_1)\vee 1}{C_1}\left(\!\mathcal F_m(u_0)+\frac{\pi+2\chi M}{2\pi}\mom(u_0)+C_2+C_3M+\!\frac32M+\frac{6\chi M^2}{\pi}\right)\!.\end{equation}
\RRR
\end{proposition}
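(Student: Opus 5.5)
Our plan is to combine the minimality of each $u_\tau^k$ with the lower bounds of Proposition~\ref{F1boundprop}, and to close the resulting recursive inequality by a discrete Gronwall-type argument on the second moment. Comparing the value of the functional at $u_\tau^k$ with its value at the competitor $u_\tau^{k-1}$ in \eqref{minmov1} gives
\[
\mathcal E_m(u_\tau^k)+\frac1{2\tau}W^2(u_\tau^k,u_\tau^{k-1})\le \mathcal E_m(u_\tau^{k-1}).
\]
Summing over $j=1,\dots,k$, the energy is nonincreasing, so $\mathcal E_m(u_\tau^k)\le\mathcal E_m(u_\tau^0)$. We also get $\sum_{j\le k}W^2(u_\tau^j,u_\tau^{j-1})\le 2\tau(\mathcal E_m(u_\tau^0)-\mathcal E_m(u_\tau^k))$. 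To control the second moment we use $\sqrt{\mom(u)}=W(u,M\delta_0)/\sqrt{M}$ up to normalization. The triangle inequality and Cauchy--Schwarz then yield
\[
\mom(u_\tau^k)\le 2\mom(u_\tau^0)+2k\tau\cdot 2\bigl(\mathcal E_m(u_\tau^0)-\mathcal E_m(u_\tau^k)\bigr).
\]

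Next we insert the lower bound \eqref{F1bound} (or \eqref{Fmbound12} if $m>1$) for $\mathcal E_m(u_\tau^k)$, together with $\log(1+|x|^2)\le$ a suitable function of $|x|^2$. The crude bound $\log(1+|x|^2)\le |x|^2$ gives a $C_3\mom$ term. Multiplied by $4k\tau\le 4T$, this would force a linear Gronwall and hence exponential growth, which is not the claimed form. Instead we split as follows. Since $\log(1+|x|^2)\le \log(1+R^2)+|x|^2/R^2$ for $|x|\ge R$, and $\log(1+|x|^2)\le\log(1+R^2)$ otherwise, we get
\[
\int\log(1+|x|^2)u\,\d x\le M\log(1+R^2)+R^{-2}\mom(u).
\]
Choosing $R^2\approx 8C_3T$ makes the moment contribution $4T\cdot C_3R^{-2}\mom\le \tfrac12\mom$. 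It can therefore be absorbed into the left-hand side, leaving a term of size $C_3M\log(1+8C_3T)$. This is exactly the source of the factor $1+\log^+(8C_3T)$.

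Concretely, set $A:=\int u_\tau^k|\log u_\tau^k|$ (or $\int (u_\tau^k)^m$). The moment inequality becomes
\[
\mom(u_\tau^k)\le 2\mom(u_\tau^0)+4T\Bigl(\mathcal E_m(u_\tau^0)+C_2-C_1A+C_3M\log(1+R^2)+C_3R^{-2}\mom(u_\tau^k)\Bigr).
\]
After absorbing, and dropping $-C_1A\le0$, we obtain $\mom(u_\tau^k)\lesssim T(\ldots)(1+\log^+(8C_3T))$, using $T\ge1$. Conversely, since $\mathcal E_m(u_\tau^k)\le\mathcal E_m(u_\tau^0)$, the lower bound gives $C_1A\le \mathcal E_m(u_\tau^0)+C_2+C_3\bigl(M\log(1+R^2)+R^{-2}\mom(u_\tau^k)\bigr)$. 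Inserting the moment bound, the $R^{-2}T$ factor is $O(1)$, which yields the first estimate. The main bookkeeping obstacle is arranging the constants to match $C_\tau^0$ in \eqref{forsecostante}, including the factor $(8C_1)\vee1$ and the term $\tfrac12\mom(u_\tau^0)$. This is routine tracking, and we do not claim exact constants beyond this structure.

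Finally, \eqref{notau} follows by bounding $\mom(u_\tau^0)\le\mom(u_0)+3dM$ with $d=2$, via \eqref{momevolve}, and $\mathcal E_m(u_\tau^0)$ via \eqref{uniformwithtau}. The extra terms $\tfrac32 M$ and $6\chi M^2/\pi$ come precisely from $\tfrac12\cdot 3dM$ with $d=2$, and from $\tfrac{\chi M}{\pi}\cdot 6M$.
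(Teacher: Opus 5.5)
Your proposal follows the paper's proof: minimality and telescoping, the triangle--Cauchy--Schwarz bound $\mom(u_\tau^k)\le 2\mom(u_\tau^0)+\frac{2T}{\tau}\sum_{n\le k}W^2(u_\tau^n,u_\tau^{n-1})$, the lower bound of Proposition~\ref{F1boundprop}, and absorption of the log-moment with weight $1/(8C_3T)$, which gives \eqref{FondEst}, followed by Proposition~\ref{initialdatum} for \eqref{notau}. On the constant bookkeeping you left open, four adjustments are needed.

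\begin{itemize}
\item Read both bounds off the single inequality \eqref{FondEst}, keeping $C_1\int u_\tau^k|\log u_\tau^k|\,\d x$ on the left. Your detour of dropping it and then re-inserting the moment bound doubles the $\mathcal E_m(u_\tau^0)+C_2+C_3M\log$ contribution, and the factor $\frac{(8C_1)\vee1}{C_1}$ need not absorb that.
\item Use $\log(1+y)\le\varepsilon y+\log^-\varepsilon$ with $\varepsilon=1/(8C_3T)$. This gives exactly $\log^+(8C_3T)$ instead of $\log(1+8C_3T)$.
\item Note that $\frac12\cdot 3dM=3M$ for $d=2$, not $\frac32M$. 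So \eqref{momevolve} and \eqref{uniformwithtau} put $3M$ into $C_0$. This is a harmless change of a $\tau$-independent constant, but it is not the exact match you assert.
\item Be aware that the final step ``$\frac12\mom(u_\tau^0)+\mathcal E_m(u_\tau^0)+C_2+C_3M\log^+(8C_3T)\le C_1C_\tau^0(1+\log^+(8C_3T))$'' needs $\frac12\mom(u_\tau^0)+\mathcal E_m(u_\tau^0)+C_2\ge0$. The paper uses this implicitly without proof, and it is not automatic from Proposition~\ref{F1boundprop}. It can be arranged by enlarging $C_2$ via the constant $C_{M,\alpha}$.
\end{itemize}
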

\begin{proof}
The proof is adapted from \cite[Lemma 4.1]{BCKKLL}. We consider the case $m=1$.
Let $T\ge1$, $\tau>0$ and $k\in\mathbb{N}$ such that $k\tau\le T$.
By the triangle inequality of the Wasserstein distance and the  Cauchy-Schwarz inequality we get the basic moment estimate
\begin{equation}\label{A1}
\begin{aligned}
&\mom(u_\tau^k)=W^2(u_\tau^k,M\delta_0)\leq \left(\sum_{n=1}^kW(u_\tau^n,u_\tau^{n-1})+W(u_\tau^0,M\delta_0)\right)^2\\
&\quad\le 2k\sum_{n=1}^k W^2(u_\tau^n,u_\tau^{n-1})+2W^2(u_\tau^0,M\delta_0)\le \frac{2T}{\tau}\sum_{n=1}^k W^2(u_\tau^n,u_\tau^{n-1})
+2\mom(u_\tau^0).
\end{aligned}
\end{equation}
On the other hand, the basic estimate of the minimizing movements scheme is 
\begin{equation}\label{basicMM}
  \EE_1(u_\tau^k)+\frac{1}{2\tau}\sum_{n=1}^k W^2(u_\tau^n,u_\tau^{n-1}) \leq  \EE_1(u_\tau^0).
\end{equation}
Rewriting \eqref{A1} as
\begin{equation*}
  \frac{1}{4T}\mom(u_\tau^k) \leq \frac{1}{2\tau}\sum_{n=1}^k W^2(u_\tau^n,u_\tau^{n-1}) + \frac{1}{2T}\mom(u_\tau^0)
\end{equation*}
and inserting in the basic estimate \eqref{basicMM} we obtain
\begin{equation*}
 	\EE_1(u_\tau^k)+ \frac{1}{4T}\mom(u_\tau^k) \leq  \EE_1(u_\tau^0) + \frac{1}{2T}\mom(u_\tau^0).
\end{equation*}
Combining \eqref{F1bound} with the last inequality we have
\begin{equation*}
	C_1\int_{{\R}^2}u_\tau^k|\log u_\tau^k|\,\d x -C_2 -C_3\int_{{\R}^2}u_\tau^k(x)\log(1+|x|^2)\,\d x + \frac{1}{4T}\mom(u_\tau^k) \leq  \EE_1(u_\tau^0) + \frac{1}{2T}\mom(u_\tau^0).
\end{equation*}
Using the elementary inequality $\log(1+|x|^2)\leq \eps|x|^2+\log^-\eps$, holding for any $\eps>0$, we have
\begin{equation*}
	\frac{1}{4T}\mom(u_\tau^k) - C_3\int_{{\R}^2}u_\tau^k(x)\log(1+|x|^2)\,\d x  \geq  \left(\frac{1}{4T}-\eps C_3\right)\mom(u_\tau^k) - C_3M\log^-\eps
\end{equation*}
and 
\begin{equation*}
	C_1\int_{{\R}^2}u_\tau^k|\log u_\tau^k|\,\d x -C_2 + \left(\frac{1}{4T}-\eps C_3\right)\mom(u_\tau^k)  \leq  \EE_1(u_\tau^0) + \frac{1}{2T}\mom(u_\tau^0) + C_3M\log^-\eps
\end{equation*}
The choice of $\eps= 1/(8C_3 T)$ in the last inequality yields, since $T\ge 1$,
\begin{equation}\label{FondEst}
 \frac{1}{8T}\mom(u_\tau^k) + C_1\int_{{\R}^2}u_\tau^k|\log u_\tau^k|\,\d x \leq \frac{1}{2}\mom(u_\tau^0) + \EE_1(u_\tau^0) + C_2 + C_3M\log^+(8C_3T).
\end{equation}
 Then, \eqref{FondEst2} directly follows from  \eqref{FondEst}, for $C_\tau^0$ defined by \eqref{forsecostante}.  If $u_0\in D(\mathcal E_1)$, from \eqref{forsecostante} and Proposition \ref{initialdatum}, 
 the last statement follows.

The proof in the case $m>1$ is the same and uses \eqref{Fmbound12} instead of  \eqref{F1bound}.
\end{proof}

\begin{proposition}\label{equi} Let $d\ge 3$. 
Let $\tau>0$ and $\{u_\tau^k:k=0,1,2,\ldots\}$ be a sequence from {\rm Proposition \ref{prop:existenceMM}}.
Then, if  $C_1,C_2$ are the constants from \eqref{Fmbound}-\eqref{Fmbound2},  
for any $k\in\mathbb{N}$ there holds 
\[
\int_{\mathbb{R}^d}(u_\tau^k)^m\,\d x\le C_\tau^0:=\frac{\mathcal E_m{(u_\tau^0)}+C_2}{C_1}.
\]
Moreover, if $T\ge1$ and $k\ge 1$ is an integer such that $k\tau\le T$, we have 
\[\mom(u_\tau^k)\le 4TC_1C_\tau^0+2\mom(u_\tau^0).
\]
If  $u_0\in D(\mathcal E_m)$, we also have for every $\tau\ge 0$
\[
C_\tau^0\le C_0:=\frac1{C_1}\left( \mathcal F_m(u_0)+\frac{\chi M}{\pi}(\mom(u_0)+6M)+C_2\right), 
\]
and 
$\;
4TC_1C_\tau^0+2\mom(u_\tau^0)\le 4TC_1C_0+2\mom(u_0)+6dM.
$
\end{proposition}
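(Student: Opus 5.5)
The plan is to reproduce, for $d\ge3$, the argument used for Proposition \ref{logequi}. It is simpler here because the lower bounds \eqref{Fmbound}--\eqref{Fmbound2} carry no moment term.

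\emph{Bound on the $L^m$ norm.} We first use minimality in \eqref{minmov1}, comparing $u_\tau^k$ with the competitor $u=u_\tau^{k-1}$. This gives
\[
\EE_m(u_\tau^k)+\frac1{2\tau}W^2(u_\tau^k,u_\tau^{k-1})\le \EE_m(u_\tau^{k-1}).
\]
Summing over the steps yields the basic estimate
\[
\EE_m(u_\tau^k)+\frac1{2\tau}\sum_{n=1}^k W^2(u_\tau^n,u_\tau^{n-1})\le \EE_m(u_\tau^0).
\]
Here $\EE_m(u_\tau^0)<+\infty$, because $u_\tau^0\in D(\mathcal F_m)$. Combining this with \eqref{Fmbound2} gives $C_1\int (u_\tau^k)^m\le \EE_m(u_\tau^0)+C_2$, which is the claimed $L^m$ bound for every $k$. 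When $m=m_c$ the same holds with $C_2=0$, using \eqref{Fmbound}; there $M<M_c$ ensures $C_1>0$. The bound also holds trivially at $k=0$.

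\emph{Bound on the second moment.} As in \eqref{A1}, the triangle inequality for $W$ and Cauchy--Schwarz give
\[
\mom(u_\tau^k)\le \frac{2T}{\tau}\sum_{n=1}^kW^2(u_\tau^n,u_\tau^{n-1})+2\mom(u_\tau^0),
\]
using $k\tau\le T$. From the basic estimate,
\[
\frac1{2\tau}\sum_{n=1}^k W^2(u_\tau^n,u_\tau^{n-1})\le \EE_m(u_\tau^0)-\EE_m(u_\tau^k)\le \EE_m(u_\tau^0)+C_2=C_1C_\tau^0,
\]
since $\EE_m(u_\tau^k)\ge -C_2$ by Proposition \ref{F1boundprop}. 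Hence the sum of squared distances is at most $2\tau C_1C_\tau^0$. This gives $\mom(u_\tau^k)\le 4TC_1C_\tau^0+2\mom(u_\tau^0)$.

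\emph{The case $u_0\in D(\EE_m)$.} Proposition \ref{initialdatum}, specifically \eqref{uniformwithtau} (or the sharper bound $\EE_m(u_\tau^0)\le\mathcal F_m(u_0)$, valid for $d\ge3$), gives $C_\tau^0\le C_0$. By \eqref{momevolve}, $2\mom(u_\tau^0)\le 2\mom(u_0)+6dM$. Substituting these two bounds gives the final inequality.

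No step is a real obstacle. The only point needing care is the sign bookkeeping: $-\EE_m(u_\tau^k)\le C_2$ must be used rather than a bound on the energy from above.
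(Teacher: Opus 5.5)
Your proof is correct and follows the paper's own argument. The basic JKO estimate \eqref{basicjko} combined with \eqref{Fmbound}--\eqref{Fmbound2} gives the $L^m$ bound; the telescoping moment bound \eqref{A1} together with $-\EE_m(u_\tau^k)\le C_2$ gives the second-moment estimate; and Proposition \ref{initialdatum} (via \eqref{uniformwithtau} and \eqref{momevolve}) yields the bounds in terms of $C_0$ and $\mom(u_0)$.
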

\begin{proof} 
The proof of the first estimate follows from the basic estimate of the minimizing movements scheme 
\begin{equation}\label{basicjko}
  \EE_m(u_\tau^k)+\frac{1}{2\tau}\sum_{n=1}^k W^2(u_\tau^n,u_\tau^{n-1}) \leq  \EE_m(u_\tau^0),
\end{equation}
combined with the estimates \eqref{Fmbound} and \eqref{Fmbound2}. 
Moreover,  \eqref{A1} and \eqref{basicjko} yield
\[
\frac1{4T}\mom(u_\tau^k)\le \mathcal E_m(u_\tau^0)-\mathcal E_m(u_\tau^k)+\frac1{2T}\mom(u_\tau^0),
\]
so that the second estimate follows again from \eqref{Fmbound} and \eqref{Fmbound2}.
  Proposition \ref{initialdatum} yields the last statement.
\end{proof}

We show how to extract a limit curve in $\Space$ from the minimizing movements scheme.
This is a very basic result that assumes $u_0\in D(\mathcal E_m)$, and shows existence of gradient flows of $\mathcal E_m$ starting from $u_0\in D(\mathcal E_m)$,   in the sense of Definition \ref{GFdefinition}. \RRR  
Later on, after having developed refined estimates on discrete solutions, we will show how to extend the result to general initial data in $\mathscr P_2^M(\R^d)$, at least for $m>m_c$, see Theorem \ref{esistenzagenerale}.

\begin{theorem}[{\bf Basic convergence result}]\label{th:convergence1}
Let 
$u_0\in D(\EE_m)$. 
For every $\tau>0$, let $\{u_\tau^k:k=0,1,2,\ldots\}$ be a sequence from {\rm Proposition \ref{prop:existenceMM}} and 
let $u_\tau$ be the corresponding piecewise constant curve defined in \eqref{floor}.
Then, for every vanishing sequence $(\tau_n)_n$, there exists a (not relabeled) subsequence and a curve
$u\in AC^2_{loc}([0,+\infty);\Space)$ such that \eqref{0conv} holds
\end{theorem}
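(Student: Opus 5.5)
The plan follows the standard compactness argument for minimizing movements (as in \cite[Section 3]{AGS}), with the uniform bounds supplied by Propositions \ref{logequi} and \ref{equi}. Since $u_0\in D(\EE_m)$, Proposition \ref{initialdatum} gives $\sup_{\tau}\EE_m(u_\tau^0)<+\infty$. Moreover, the constants $C_\tau^0$ in Propositions \ref{logequi} and \ref{equi} are bounded by $C_0$, uniformly in $\tau$.

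\textbf{Step 1: discrete energy and metric bounds.} Fix $T\ge1$. By Propositions \ref{logequi} and \ref{equi}, the second moments $\mom(u_\tau^k)$ are bounded uniformly for $k\tau\le T$. Together with the lower bounds of Proposition \ref{F1boundprop}, this shows that $\EE_m(u_\tau^k)$ is bounded from below uniformly for $k\tau\le T$. The basic estimate \eqref{basicjko} (equivalently \eqref{basicMM} when $m=1$) then yields
\[
\frac1{2\tau}\sum_{n=1}^{\lceil T/\tau\rceil} W^2(u_\tau^n,u_\tau^{n-1})\le \sup_\tau \EE_m(u_\tau^0)-\inf_{k\tau\le T}\EE_m(u_\tau^k)=:C_T<+\infty .
\]

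\textbf{Step 2: approximate equicontinuity.} By Cauchy--Schwarz, for $0\le s<t\le T$ we get
\[
W(u_\tau(s),u_\tau(t))\le \sqrt{2C_T}\,\sqrt{t-s+\tau}.
\]
The curves $u_\tau(t)$, $t\in[0,T]$, lie in a set with bounded second moments, which is narrowly sequentially compact. We then apply the refined Ascoli--Arzel\`a theorem \cite[Proposition 3.3.1]{AGS} with the narrow topology and the $W$-lsc property \eqref{lscWass}. For any vanishing sequence $(\tau_n)_n$, this gives a subsequence and a limit curve $u$ with $u_{\tau_n}(t)\to u(t)$ narrowly for every $t\in[0,T]$. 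A diagonal argument over $T\in\N$ extends this to all $t\ge0$. At $t=0$ we use $u_\tau^0\to u_0$ in $W$, so $u(0)=u_0$.

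\textbf{Step 3: $AC^2$ regularity of the limit.} Define the piecewise constant functions $g_\tau(t):=W(u_\tau^k,u_\tau^{k-1})/\tau$ for $t\in((k-1)\tau,k\tau]$. Step 1 gives $\|g_\tau\|_{L^2(0,T)}^2\le 2C_T$, so up to a further subsequence $g_{\tau_n}\rightharpoonup g$ weakly in $L^2_{loc}([0,+\infty))$. For $s<t$ we have
\[
W(u_\tau(s),u_\tau(t))\le\int_{s-\tau}^{t+\tau} g_\tau\,\d r .
\]
Passing to the limit, using \eqref{lscWass} on the left and weak convergence on the right, gives $W(u(s),u(t))\le\int_s^t g\,\d r$. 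Hence $u\in AC^2_{loc}([0,+\infty);\Space)$; in particular $u$ is $W$-continuous.

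\textbf{Main obstacle.} The delicate point is Step 1 in dimension $2$ with $\alpha=0$, where $\EE_m$ is not bounded below on $\Space$. There, the lower bound on $\EE_m(u_\tau^k)$ must use the logarithmic moment term in \eqref{F1bound} or \eqref{Fmbound12}, which is in turn controlled by the second-moment bound of Proposition \ref{logequi}. This bound grows at most like $T(1+\log T)$, so it is still locally uniform in time, which is all the argument needs.
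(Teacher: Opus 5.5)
Your proposal is correct and follows essentially the paper's own proof. The steps are the same: a uniform energy lower bound via Proposition \ref{F1boundprop} combined with the moment bounds of Propositions \ref{logequi} and \ref{equi} (using $C_\tau^0\le C_0$), then the basic JKO estimate, the $\sqrt{t-s+\tau}$ equicontinuity, and \cite[Proposition 3.3.1]{AGS} with a diagonal argument. Your Step 3 writes out the $AC^2$ argument that the paper omits as standard. One bookkeeping point: the sum up to $\lceil T/\tau\rceil$ reaches times up to $T+\tau$, so the energy infimum should be taken over $k\tau\le T+1$ with $\tau<1$, exactly as the paper does.
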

\begin{proof} Let $d=2$. Let $\tau\in(0,1)$ and $T>0$. Let $k\ge1$ be an integer such that $k\tau\le T+1$.
We use Proposition \ref{initialdatum} 
and combine it with \eqref{F1bound}-\eqref{Fmbound12}, with the elementary inequality $\mathrm m_{log}(u)\le \mom(u)$,
and with \eqref{FondEst2}-\eqref{FondEst2m}-\eqref{notau}, to get
\[\begin{aligned}
&\mathcal E_m(u_\tau^0)-\mathcal E_m(u_\tau^k)\le \mathcal E_m(u_\tau^0)+C_2+C_3\mom(u_\tau^k)\\
&\quad\le \mathcal F_m(u_0)+\frac{\chi M}{\pi}(\mom(u_0)+6M)+C_2+C_3\,C_0\,(T+1)(1+\log^+(8C_3(T+1)))=:C(T). 
\end{aligned}\]
By the latter and by the basic estimate of minimizing movements \eqref{basicjko}
we obtain 
\begin{equation}\label{Cest}
 	\frac{1}{2\tau}\sum_{n=1}^k W^2(u_\tau^n,u_\tau^{n-1}) \leq  C(T).
\end{equation}
But  
\eqref{Cest} and Cauchy-Schwarz inequality imply
\[\begin{aligned}
W^2(u_\tau^{k_1}, u_\tau^{k_2})&\le\left(\sum_{n=1+k_1}^{k_2}W(u_\tau^n,u_\tau^{n-1})\right)^2\le (k_2-k_1)\sum_{n=1+k_1}^{k_2}W^{2}(u_\tau^n,u_\tau^{n-1})\le 2\tau(k_2-k_1)C(T)
\end{aligned}\]
and choosing $k_1=\lceil t_1/\tau\rceil$, $k_2=\lceil t_2/\tau\rceil$, with $0\le t_1\le t_2\le T$, shows that  the family
$\mathcal U:=\{u_\tau(t): \tau\in(0,1),\,t\in[0,T]\}$
 is equicontinuous up to an error that vanishes as $\tau\to0$, in the sense that
 \begin{equation}\label{equicontinuityofWasserstein}
 W^2(u_\tau(t_1),u_\tau(t_2))\le 2C(T)(t_2-t_1+\tau)\qquad\mbox{for every $0\le t_1\le t_2\le T$.}
 \end{equation}
%
The latter also shows that $\mathcal U$ is equibounded in $\mathscr P_2^M(\R^d)$, choosing $t_1=0$ and taking \eqref{momevolve} into account.
By the generalized Ascoli-Arzel\'a theorem \cite[Proposition 3.3.1]{AGS}  and a diagonal argument \RRR we conclude that there for every vanishing sequence $(\tau_n)_n$ there exists a non relabelled subsequence and there exists a narrowly continuous curve $[0,+\infty)\ni t\mapsto u(t)\in\mathscr P_2^M(\R^d)$ such that \eqref{0conv} holds. 

The case $d\geq 3$ is proven in the same way, taking advantage of Proposition \ref{equi}.
Another standard argument based on \eqref{Cest} shows that the limit curve is in $AC^2([0,T];\mathscr P_2^M(\R^d))$ for every $T>0$.  
We omit the details here and we will be back on this argument in a more general setting later in Theorem \ref{esistenzagenerale}.
\end{proof}

\section{Energy estimates along auxiliary flows}\label{interchange}


In order to obtain regularity properties of discrete minimizers we will take advantage of the flow interchange technique introduced by Matthes, McCann  and Savar\'e \cite{MMS}. For, we shall introduce suitable auxiliary functionals. These are typical choices in the theory of gradient flows with respect to the Wasserstein distance.
Then, in the rest of this section we shall compute the time derivatives of the energy $\mathcal E_m$ along auxiliary flows. 

\begin{definition}[\textbf{Potential energy}]\label{pot}
Let $J\in C^1(\R^d)$ be a convex function, bounded from below. 
The associated potential energy of $u\in\mathscr P_2^M(\R^d)$ is the functional  $\mathcal J:\PP^M_2(\R^d)\to (-\infty,+\infty]$ defined by
\[
\mathcal J(u)=\int_{\R^d}J(x)\,\d u(x).
\]
\end{definition}


\begin{definition}[\textbf{Displacement convex entropy}]\label{entr} 
Let $U:[0,+\infty)\to\R$ be a continuous convex function with super linear growth at infinity, such that $U(0)=0$,  $U\in C^2(0,+\infty)$, \RRR
$ \liminf_{x\downarrow 0}\frac{U(x)}{x^\alpha}>-\infty$ for some $\alpha>\frac{d}{d+2}$ and
the following assumption (introduced by Mc Cann in \cite{Mc}) holds:
\begin{equation*}\label{McCann}
	r \mapsto r^dU(r^{-d}) \qquad \text{is convex and nonincreasing in }(0,+\infty).
\end{equation*}
Let 
 $\mathcal U:\PP^M_2(\R^d)\to (-\infty,+\infty]$ be defined by
 $$\mathcal U(u)=\int_{\R^d} U(u(x))\,\d x$$ if $u$
 has a density (still denoted by $u$) w.r.t. the Lebesgue measure and $\mathcal U(u)=+\infty$ otherwise. Then, we say that $\mathcal U$ is a displacement convex entropy 
and we say that $U$ is the density function of $\mathcal U$.
\end{definition}

Taking into account that $U$ is not necessarily a positive function, the condition on the behavior of $U$ at $0$ in Definition \ref{entr} is needed as usual to have the integrability of the negative part of {\RRR $U\circ u$},
 as soon as $u$ is mass density with finite second moment. 
 Moreover, if $u_0\in\PP_2(\mathbb{R}^d)$ and $u_\tau^0$ is the regularization defined by \eqref{utau0}, then $u_\tau^0\in D(\mathcal U)$ for any displacement convex entropy $\mathcal U$, 
 since $u_\tau^0$ is bounded.   
 We also recall that displacement convex entropies are lower semicontinuous along narrowly converging sequences having uniformly bounded second moments, as a consequence of the convexity and superlinearity of $U$ along with the uniform bound of the negative part of $U\circ u_n$ provided by the bound on second moments. Also $\mathcal J$ is narrowly lower semicontinuous, since $J$ is continuous and bounded from below.
 Finally, the functionals in the above two definitions are convex along Wasserstein geodesics and generalized Wasserstein geodesics, see \cite[Chapter 9]{AGS}. These functionals are also coercive in the sense that they are bounded from below over sets of measures with mass $M$ and uniformly bounded second moments.

A coercive and  $W$-lower semicontinuous functional $\VV:\PP^M_2(\R^d)\to (-\infty,+\infty]$ that is convex along generalized Wasserstein geodesics  generates a continuous semigroup $S_t:D(\VV)\to D(\VV)$, where $D(\VV):=\{u\in\mathscr P_M^2(\R^d):\mathcal V(u)<+\infty\}$, satisfying
the following family of \emph{Evolution Variational Inequalities} (see \cite[Theorem 11.2.5]{AGS})
\begin{equation}\label{EVI0}
       \frac12 W^2(S_t(\bar u),v)- \frac12 W^2(\bar u,v) \le t(\VV(v)-\VV(S_t(\bar u))) \quad \forall \bar u,v \in D(\VV), \quad  \forall t>0.
\end{equation}
Moreover, $u_t:=S_t(\bar u)$ belongs to $AC^2_{loc}([0,+\infty);\Space)$
 and it is the unique limit curve obtained from the JKO scheme \eqref{minmov1} applied to functional $\mathcal V$, starting from the initial datum $\bar u$. 
The semigroup $S_t$ satisfies the following contraction property: 
\begin{equation}\label{contr} 
	W(S_t(\bar u), S_t(\tilde u))\le W(\bar u,\tilde u)\qquad \forall\,\bar u,\tilde u \in D(\mathcal V), \quad \forall\, t>0.
\end{equation} 
By the property \eqref{contr}, the semigrup $S_t$ extends to $\overline{D(\VV)}=\PP_2^M(\mathbb{R}^d)$.
Moreover the following regularizing effect holds: $S_t(u)\in D(\VV)$ for any $u\in\PP_2(\mathbb{R}^d)$ and any $t>0$.
Then,  \eqref{EVI0} holds for any $u,v\in\PP_2(\mathbb{R}^d)$. 
$S_t(u)$ will be referred to as the {\it gradient flow} of the functional $\mathcal V$, starting from $u$.

The properties illustrated above are shared by the functionals $\mathcal J, \mathcal U$ of Definition \ref{pot} and Definition \ref{entr}, 
and by any linear combination $a\mathcal U+b\mathcal J$ with positive coefficients $a,b$. 
In the latter case, convexity along generalized geodesics 
 still holds and  $u_t:=S_t(\bar u)$ 
is the unique distributional solution to the Cauchy problem 
\begin{equation*}
\label{basicpde}\de_t u_t=a\,\Delta(L_U(u_t))+b\,\mathrm{div}(\nabla J\,u),\quad u(0)=\bar u,\end{equation*} where
$L_U(u):=uU'(u)-U(u)$ is the conjugated function of $U$. Taking $a=b=1$, $U(u)=u\log u$ and $J(x)=\frac12|x|^2$ 
we obtain one of the fundamental examples of Wasserstein gradient flows \cite{JKO}, i.e., $u_t=S_t(\bar u)$ solves the Cauchy problem for the linear Fokker-Planck equation
 \begin{equation*}\de_t u_t=\Delta u+\mathrm{div}(xu),\quad u(0)=\bar u.\end{equation*}
 
Among our main results, we will establish to which extent the above properties are still valid for the energy functional $\mathcal E_m$ itself, see Section \ref{lastsection}. 
At this stage, we are interested in the behavior of the functional $\mathcal E_m$ along the gradient flow of the above functionals $\mathcal J$ and $\mathcal U$.
The following result provides the basic flow interchange \cite{MMS} estimate. The statement is borrowed from \cite[Proposition 4.3]{LMS}. We include the quick proof.
\begin{proposition}[\textbf{Flow interchange}]\label{prop:FI}
Let $\tau>0$ and let $\{u_\tau^k:k=0,1,2,\ldots\}$ be a sequence given by {\rm Proposition \ref{prop:existenceMM}}. 
 Let $\VV:\PP^M_2(\R^d)\to (-\infty,+\infty]$ be coercive,  $W$-lower semicontinuous, convex along generalized Wasserstein geodesics, and such that $u_\tau^0\in D(\VV)$. \RRR
Suppose that
\begin{equation}\label{>-inf}
\limsup_{t \downarrow 0} \frac{\EE_m(u_\tau^k)-\EE_m(S_t(u_\tau^k))}{t}>-\infty\quad \text{for every } k\geq 1.
\end{equation} 
Then, for every  $k\ge 1$ there hold $u_\tau^k\in D(\VV)$  and
\begin{equation}\label{mms09}
 \limsup_{t \downarrow 0} \frac{\EE_m(u_\tau^k)-\EE_m(S_t(u_\tau^k))}{t}\le   \frac{\VV(u_{\tau}^{k-1}) -\VV(u_{\tau}^k)}{\tau}.
\end{equation}
\end{proposition}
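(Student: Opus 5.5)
The proof compares the minimality of $u_\tau^k$ in \eqref{minmov1} with the competitor $S_t(u_\tau^k)$, where $S_t$ is the gradient flow of $\VV$, and then uses the evolution variational inequality \eqref{EVI0} with reference point $u_\tau^{k-1}$.

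Fix $k\ge1$. First I check that $u_\tau^k\in D(\VV)$. By the regularizing effect recalled above, $S_t(u_\tau^k)\in D(\VV)$ for every $t>0$, and \eqref{EVI0} holds for all $\bar u,v\in\Space$. So it is enough to obtain the estimate below and then use the lower semicontinuity of $\VV$ along $S_t(u_\tau^k)\to u_\tau^k$. This convergence holds in $W$ as $t\downarrow0$, by the continuity of the semigroup.

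Next, by the minimality of $u_\tau^k$ in \eqref{minmov1}, tested against the competitor $S_t(u_\tau^k)$, we have
\[
\EE_m(u_\tau^k)-\EE_m(S_t(u_\tau^k))\le \frac1{2\tau}\Big(W^2(S_t(u_\tau^k),u_\tau^{k-1})-W^2(u_\tau^k,u_\tau^{k-1})\Big).
\]
Applying \eqref{EVI0} with $\bar u=u_\tau^k$ and $v=u_\tau^{k-1}$ bounds the right-hand side by $\frac{t}{\tau}\big(\VV(u_\tau^{k-1})-\VV(S_t(u_\tau^k))\big)$. Dividing by $t$ gives
\[
\frac{\EE_m(u_\tau^k)-\EE_m(S_t(u_\tau^k))}{t}\le \frac{\VV(u_\tau^{k-1})-\VV(S_t(u_\tau^k))}{\tau}.
\]
Here $\VV(u_\tau^{k-1})<+\infty$ must be known beforehand. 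I get this by induction on $k$: the base case is $u_\tau^0\in D(\VV)$, which holds by assumption, and the induction step is the finiteness obtained at the previous step.

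Now I take the $\limsup$ as $t\downarrow0$. Assumption \eqref{>-inf} gives $\limsup_{t\downarrow0}(-\VV(S_t(u_\tau^k)))>-\infty$. In other words, $\liminf_{t\downarrow 0}\VV(S_t(u_\tau^k))<+\infty$, and $\VV$ is also bounded below, by coercivity together with the bounded second moments. Lower semicontinuity then gives $\VV(u_\tau^k)\le\liminf_{t\downarrow0}\VV(S_t(u_\tau^k))<+\infty$, so $u_\tau^k\in D(\VV)$. Using the same lower semicontinuity once more,
\[
\limsup_{t\downarrow0}\big(-\VV(S_t(u_\tau^k))\big)=-\liminf_{t\downarrow0}\VV(S_t(u_\tau^k))\le -\VV(u_\tau^k).
\]
This yields \eqref{mms09} and completes the induction.

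The only delicate point is the order of the logic. The finiteness of $\VV(u_\tau^{k-1})$ is needed to apply the EVI meaningfully, and the finiteness of $\VV(u_\tau^k)$ is only extracted afterwards, through \eqref{>-inf}. That is why the argument has to be run as an induction on $k$.
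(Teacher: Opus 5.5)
Your proposal is correct and follows the paper's proof. You test the minimality of $u_\tau^k$ against $S_t(u_\tau^k)$, apply \eqref{EVI0} with $\bar u=u_\tau^k$ and $v=u_\tau^{k-1}$, divide by $t$, and run the recursion on $k$ from $u_\tau^0\in D(\VV)$, using \eqref{>-inf} and the lower semicontinuity of $\VV$ along $S_t(u_\tau^k)\to u_\tau^k$ to obtain both $u_\tau^k\in D(\VV)$ and \eqref{mms09}; your explicit check that $\VV(u_\tau^{k-1})<+\infty$ is needed before the EVI gives information is what the paper compresses into ``recursively apply''.
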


\begin{proof}
For  $t>0$ and and integer $k\ge1$,
by definition of minimizer there holds
\[
    \EE_m(u_\tau^k) + \frac{1}{2\tau}W^2(u_\tau^k,u_\tau^{k-1})\leq \EE_m(S_t(u_\tau^k)) + \frac{1}{2\tau}W^2(S_t(u_\tau^k),u_\tau^{k-1}),
\]
that is,
\[
   \tau(\EE_m(u_\tau^k) -\EE_m(S_t(u_\tau^k)) ) \leq \frac{1}{2}W^2(S_t(u_\tau^k),u_\tau^{k-1}) - \frac{1}{2}W^2(u_\tau^k,u_\tau^{k-1}).
\]
By using \eqref{EVI0} we obtain
\[
   \tau\frac{\EE_m(u_\tau^k) -\EE_m(S_t(u_\tau^k))}{t} \leq \VV(u_\tau^{k-1}) - \VV(S_t(u_\tau^k)).
\]
As $u_\tau^0\in D(\VV)$, we may now recursively apply the above inequality: thanks to \eqref{>-inf}, by passing to the limit as $t\downarrow 0$ and using the lower semicontinuity of $\VV$ with respect to the narrow convergence  we obtain that $u_\tau^k\in D(\VV)$ and  \eqref{mms09} holds.
\end{proof}

\subsection{Nonlinear diffusion equations}

Given  a displacement convex entropy $\mathcal U$ with density function $U$, according to Definition \ref{entr}, the conjugated function $L_U:[0,+\infty)\to \mathbb R$ is defined by 
\begin{equation*}\label{L}
	L_U(0)=0,\qquad\qquad L_{U}(u)=uU'(u)-U(u),\quad u>0.
\end{equation*}
 The properties of $U$ directly imply that 
\begin{equation}\label{LU}
	L_U\in C([0,+\infty))\cap C^1((0,+\infty)), \qquad L_U\geq 0, \qquad (L_U)'(u)=uU''(u)\geq 0 \;\;\;\mbox{$\forall \,u>0$}.
\end{equation}
\EEE
We shall consider the modified displacement convex entropy 
\begin{equation}\label{defV}
\mathcal V:=\mathcal U+\eps\mathcal F_1, \quad \eps>0,
\end{equation}
having conjugated function defined by 
\begin{equation}\label{defPsi}
	\Psi:[0,+\infty)\to\mathbb R, \qquad \Psi(u)=L_U(u)+\eps u.
\end{equation}
The gradient flow of $\mathcal V$ is therefore a distributional solution to the nondegenerate parabolic equation
\begin{equation}\label{nonlinear}
	\de_t u_t= \Delta \Psi(u_t).
\end{equation}
 By \eqref{LU} we have 
\begin{equation}\label{newstrong}
	\Psi\in C([0,+\infty))\cap C^1((0,+\infty)), \qquad \Psi'(u)\ge \eps \quad \forall\,u> 0.
\end{equation}\RRR
  The condition \eqref{newstrong} guarantees that the distributional solution of \eqref{nonlinear} is a strong solution, 
  in the sense that $\partial_tu_t \in L^1((t_1,t_2)\times\mathbb R^d)$ for every $0<t_1<t_2<+\infty$ and \eqref{nonlinear} is satisfied pointwise a.e. in $(0,+\infty)\times\mathbb R^d$
  (see \cite{BG} and \cite{J}).
  Moreover, by \eqref{newstrong} the equation \eqref{nonlinear} is uniformly parabolic, 
 hence $u_t>0$ a.e. in $(0,+\infty)\times\mathbb R^d$. 
  We also notice that under the above assumptions the solution to \eqref{nonlinear} is strong but it is not expected to be classical in general, since $\Psi$ is not required to be more smooth than $C^1$. \EEE 
  
   We observe that the model cases $U(u)=u\log u$ and $U(u)=u^m$, $m>1$,  satisfy \eqref{newstrong}. Another example  is $U(u)=(u-K)_+^p$ for $p>2$ and $K>0$.

The following Proposition is a technical result about the behavior of functionals $\mathcal F_m$ and $\mathcal B_\alpha$ along the solution of \eqref{nonlinear}.
We state it in a general form, even if we shall only need particular choices of $\mathcal U$ later on.

\EEE

\begin{proposition}\label{thenewprop} 
Let $\mathcal U$ be a displacement convex entropy according to {\rm Definition \ref{entr}}, with density function $U$. Let $\VV$ and $\Psi$ defined in \eqref{defV} and \eqref{defPsi}.
Given $\bar u\in D(\mathcal V)$, we denote by 
$u_t$ the gradient flow generated by $\VV$ starting from $\bar u$.\EEE
Then the following hold:
\begin{itemize}
\item[\textbf{(I)}] 
 For every $r\ge1$, \EEE
the map  $t\mapsto\mathcal{F}_r(u_t)$ is nonincreasing, belongs to $AC_{loc}((0,+\infty);\R)$ (also $AC_{loc}([0,+\infty);\R)$   if $r=1$)  and 
\begin{equation}\label{PSIm}
\frac d{dt}\mathcal F_r(u_t)=-r\int_{\mathbb R^d}\Psi'(u_t(x))\,u_t(x)^{r-1}\,\frac{|\nabla u_t(x)|^2}{u_t(x)}\,\d x, \qquad \mbox{for a.e. }t>0. 
\end{equation}
\item[\textbf{(II)}]
If $G:[0,+\infty)\to[0,+\infty)$ is  convex nondecreasing and such that $G(0)=0$, then the map $t\mapsto \int_{\mathbb R^d}G(u_t)\,\d x$ is nonincreasing on $(0,+\infty)$. 
 Furthermore, the map $t\mapsto \mathcal U(u_t)$ is nonincreasing and belongs to $AC_{loc}([0,+\infty);\R)$.
\item[\textbf{(III)}]
The map $t\mapsto \int_{\mathbb R^d}u_t\Psi(u_t)\,\d x$ belongs to $L^1_{loc}((0,+\infty))$
 and the map $t\mapsto \mathcal B_\alpha(u_t)$ belongs to $AC_{loc}((0,+\infty);\R)$ (also $AC_{loc}([0,+\infty);\R)$ if $d=2$\RRR). 
 Moreover
\begin{equation}\label{flowB}
	\frac d{dt}\mathcal B_\alpha(u_{t})\ge-\int_{\mathbb R^d}u_t\Psi(u_t)\,\d x \qquad \mbox{for a.e. }t>0,
\end{equation}
and equality holds if $\alpha=0$.
\end{itemize}
\end{proposition}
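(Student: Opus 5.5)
The plan is to exploit that $u_t$ is a strong solution of the uniformly parabolic equation $\partial_t u_t=\Delta\Psi(u_t)$ with $\Psi'\ge\eps$, so that $u_t>0$ a.e. and $\nabla\Psi(u_t)=\Psi'(u_t)\nabla u_t$ is square integrable on $(t_1,t_2)\times\R^d$ for $0<t_1<t_2$. This follows from the entropy dissipation of $\mathcal V$ along its own gradient flow, since the EVI \eqref{EVI0} gives $u_t\in AC^2_{loc}$ and the energy identity for $\mathcal V$ holds. I would first regularize: approximate $\bar u$ by smooth, rapidly decaying data and $\Psi$ by smooth functions with $\Psi'\ge\eps$. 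On the approximations all computations are classical, and the resulting inequalities are passed to the limit using the contraction \eqref{contr} and lower semicontinuity of the functionals involved. Identities such as \eqref{PSIm}, written in integrated form $\mathcal F_r(u_{t_1})-\mathcal F_r(u_{t_2})=\int_{t_1}^{t_2}(\dots)$, are recovered by lower semicontinuity of the dissipation (Fatou on $\Psi'(u)u^{r-2}|\nabla u|^2$) on one side, and by the chain rule in the class of strong solutions on the other.

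For \textbf{(I)} and \textbf{(II)}, I would test the equation with $F'(u_t)$ for $F$ convex. For $F(u)=u^r/(r-1)$ the test function is $\frac r{r-1}u^{r-1}$. Integration by parts then gives $\frac d{dt}\int F(u_t)=-\int F''(u_t)\Psi'(u_t)|\nabla u_t|^2\le 0$, which is exactly \eqref{PSIm} since $F''(u)=ru^{r-2}$ (and $u^{-1}$ for $r=1$). Two technical points need care. First, to justify the test for $r>1$, $\mathcal F_r(u_t)$ must be finite for $t>0$; I would get this from $L^1$–$L^\infty$ smoothing for the nondegenerate equation, or by truncating $F$ at level $K$ and letting $K\to\infty$ via monotone convergence. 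Second, for $r=1$, integrability of $u\log^- u$ uses the second moment bound, whose derivative is $2d\int\Psi(u_t)$. That bound is finite near $t=0$ because $\int\Psi(\bar u)\le C(\mathcal V(\bar u)+M)$ is controlled through $L_U\le$ a multiple of $U$ plus linear terms; this gives AC up to $t=0$. For general $G$ in \textbf{(II)}, I would approximate by $C^2$ convex $G$ with bounded $G''$ and use the same sign argument. Monotonicity of $\mathcal U(u_t)$ and its absolute continuity on $[0,\infty)$ follow from the gradient flow energy identity for $\mathcal V$, combined with the AC of $\mathcal F_1$.

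For \textbf{(III)}, formally
\[
\frac d{dt}\mathcal B_\alpha(u_t)=\int v_t\,\Delta\Psi(u_t)=\int\Psi(u_t)\Delta v_t=\int\Psi(u_t)(\alpha v_t-u_t),\qquad v_t=B_\alpha\ast u_t .
\]
Since $\Psi\ge0$ and $v_t\ge0$ when $\alpha>0$, this yields $\ge -\int u_t\Psi(u_t)$, with equality when $\alpha=0$. Local integrability of $t\mapsto\int u_t\Psi(u_t)$ is the main obstacle. I would bound it through a Gagliardo–Nirenberg type inequality, controlling $\int u\Psi(u)$ by mass times $\int\Psi'(u)|\nabla u|^2$-type dissipation terms, which are integrable in time by \textbf{(I)}–\textbf{(II)}. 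In practice, I would use the dissipation of $\mathcal F_r$ for suitable $r$ together with $\Psi(u)\le u\sup_{[0,u]}\Psi'$. If that fails in general, I would fall back on boundedness of $u_t$ for $t>0$ via smoothing, so that $\Psi(u_t)\le C u_t$.

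Making the integration by parts rigorous needs three further ingredients. Near the singularity of $B_\alpha$, I would use $v_t\in W^{2,p}_{loc}$. For the logarithmic growth when $d=2$, $\alpha=0$, I would use Proposition~\ref{loggrowth}(iii) together with the moment bound. Finally, I would mollify in time. The AC of $\mathcal B_\alpha$ up to $t=0$ when $d=2$ comes from the fact that, in that case, the $L^1$-in-time bound of $\int u\Psi(u)$ can be closed near zero using only $\mathcal F_1$ dissipation.
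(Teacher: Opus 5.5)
Your treatment of \textbf{(I)}, \textbf{(II)}, and of \textbf{(III)} on $(0,+\infty)$ follows the paper's route. You test the strong equation with $F'(u_t)$, use $L^1$--$L^\infty$ smoothing for $t>0$ so that $\Psi(u_t)\le Cu_t$, and compute $\frac{d}{dt}\mathcal B_\alpha(u_t)=\int\Psi(u_t)(\alpha B_\alpha\ast u_t-u_t)\,\d x$.

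The gap is the absolute continuity of $t\mapsto\mathcal B_\alpha(u_t)$ up to $t=0$ when $d=2$. You assert that $\int_0^{t}\!\int u_s\Psi(u_s)\,\d x\,\d s$ can be controlled near $s=0$ by a functional inequality involving only the $\mathcal F_1$-dissipation $\int\Psi'(u)|\nabla u|^2u^{-1}\,\d x$. For a general displacement convex entropy no such inequality holds. Take $U(u)=e^u-1-u$:
\begin{itemize}
\item it satisfies Definition \ref{entr}, since McCann's condition holds because $L_U(u)=ue^u-e^u+1\ge0$ and $uL_U'(u)\ge L_U(u)$;
\item $\Psi'(u)=ue^u+\eps$, so the $\mathcal F_1$-dissipation equals $4\int|\nabla e^{u/2}|^2\,\d x+\eps\int|\nabla u|^2u^{-1}\,\d x$;
\item for $u(x)=2\log\bigl(1+A\phi^2(x/R)\bigr)$, with $\phi$ a fixed Lipschitz bump and $R^2\sim M/\log A$ chosen to keep mass $M$, two-dimensional scale invariance makes this dissipation $O(A^2)$;
\item on the same profile, $\int u\Psi(u)\,\d x\gtrsim MA^2\log A$.
\end{itemize}
So no bound $\int u\Psi(u)\,\d x\le C(M)\bigl(1+\int\Psi'(u)|\nabla u|^2u^{-1}\,\d x\bigr)$ is available. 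It does hold for power-type $U$ by \eqref{GNS}, but the proposition concerns general $U$. Moreover, even granting an $L^1(0,t_2)$ bound, absolute continuity up to $0$ also requires $\mathcal B_\alpha(u_t)\to\mathcal B_\alpha(\bar u)$ as $t\downarrow0$, which you never establish.

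The paper needs no functional inequality here.
\begin{itemize}
\item It first gets continuity of $t\mapsto\mathcal F_1(u_t)$ at $t=0$. The map $t\mapsto\mathcal V(u_t)$ is continuous on $[0,T]$ by the energy identity, and $\mathcal U$, $\mathcal F_1$ are separately lower semicontinuous along $u_t\to\bar u$, so each is continuous.
\item Together with bounded second moments, Proposition \ref{prop:FP} then gives continuity of $\mathcal B_\alpha(u_t)$ at $t=0$. This is where $d=2$, i.e.\ $m_c=1$, enters.
\item For $\alpha=0$, the identity $\mathcal B_0(u_{t_2})-\mathcal B_0(u_{t_1})=-\int_{t_1}^{t_2}\!\int u_t\Psi(u_t)\,\d x\,\d t$ has a sign-definite integrand. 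Letting $t_1\downarrow0$ by monotone convergence yields $\int u_t\Psi(u_t)\,\d x\in L^1(0,t_2)$ for free.
\item The flow of $\mathcal V$ does not depend on $\alpha$, so this integrability transfers to $\alpha>0$. There the extra term $\alpha\int(B_\alpha\ast u_t)\Psi(u_t)\,\d x\ge0$ again passes to the limit by monotone convergence.
\end{itemize}

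Two minor points. First, your bound $\int\Psi(\bar u)\,\d x\le C(\mathcal V(\bar u)+M)$ fails for the same $U$: take $\bar u$ with $\int(e^{\bar u}-1-\bar u)\,\d x<\infty=\int\bar ue^{\bar u}\,\d x$. You do not need it, since $\mom(u_t)$ is bounded on $[0,T]$ by the $W$-continuity of the flow; the paper uses \eqref{EVI0}. Second, \eqref{contr} controls perturbations of the initial datum for a fixed functional, not perturbations of $\Psi$, so it cannot justify your regularization of $\Psi$.
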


\begin{proof} \textbf{Step 1: properties of the solution $u_t$}.
%
  First of all, since $u_t$ is a gradient flow of the energy functional $\VV$,  $u\in L^1_{loc}((0,+\infty);W^{1,1}_{loc}(\mathbb R^d))$ 
  and it satisfies the energy identity
 \begin{equation}\label{enid}
	\VV(u_{t_1})=\VV(u_{t_2})+\int_{t_1}^{t_2}\int_{\mathbb R^d}\frac{|\nabla \Psi(u_t)|^2}{u_t}\,\d x\,\d t, \quad \forall\, t_1,t_2\in[0,+\infty), \quad t_1<t_2,
\end{equation} 
 see \cite[Theorem 11.2.1, Theorem 11.2.5]{AGS}.
 In particular, $u_t^{-1}{|\nabla (\Psi\circ u_t)|^2}\in L^1((0,T)\times\mathbb R^d)$ for any $T>0$. \EEE 
Moreover,  
\begin{equation}\label{1infty}
u\in C^0([0,+\infty);L^1(\mathbb R^d))\cap L^\infty((\tau+\infty)\times \mathbb R^d), \qquad \forall\,\tau>0
\end{equation}
(see \cite[Theorem 9.12]{V} for the porous media equation
and \cite[Corollary 10.24]{V}, \cite{BB}, \cite[Section 2.5]{V2} for the general case).

 By \eqref{newstrong} $\Psi$ is strictly increasing and locally Lipschitz function on $[0,+\infty)$, then $\Psi^{-1}$ is Lipschitz on $[0,+\infty)$ with Lipschitz constant $L\leq \eps^{-1}$. 
 We deduce that,
  for $T>0$,
 \begin{equation}\label{fish}\begin{aligned}
 \int_0^{T}\int_{\mathbb R^d}\frac{|\nabla u_t|^2}{u_t}\,\d x\,\d t&= \int_0^{T}\int_{\mathbb R^d}\frac{|\nabla (\Psi^{-1}\circ\Psi\circ u_t)|^2}{u_t}\,\d x\,\d t\\
 &\le \frac1{\eps^2}\int_0^{T}\int_{\mathbb R^d}\frac{|\nabla \Psi( u_t)|^2}{u_t}\,\d x\,\d t<+\infty .
 \end{aligned}\end{equation}
 \EEE
 From the latter, by applying Young inequality we also deduce that 
 \begin{equation}\label{graduestimate}\begin{aligned}
 	\int_{0}^T\int_{\mathbb R^d}|\nabla u_t|\,\d x\,\d t&\le\frac12\int_0^T\left(\int_{\mathbb R^d}\frac{|\nabla u_t|^2}{u_t}\,\d x+\int_{\mathbb R^d}u_t\,\d x\right)\,\d t\\
	 &=\frac {MT}2+\frac12\int_0^T\int_{\mathbb R^d}\frac{|\nabla u_t|^2}{u_t}\,\d x\,\d t<+\infty, \quad \forall \,T>0,
	 \end{aligned}
 \end{equation}
so that the solution $u_t$ belongs to $L^1((0,T);W^{1,1}(\mathbb R^d))$. Moreover, from \eqref{1infty} and \eqref{graduestimate}, and since $\Psi$ is locally Lipschitz, we deduce
 \begin{equation}\label{doublebound}
 \Psi(u_t)\in L^1_{loc}((0,+\infty); W^{1,1}(\mathbb R^d)\cap L^\infty(\mathbb R^d)).
 \end{equation}

By applying \eqref{EVI0} with $v$ being the characteristic function of the unit ball in $\mathbb R^d$, we easily see that the second moment of $u_t$ grows at most linearly with respect to $t$, in particular it stays uniformly bounded on $[0,T]$, for any fiexd $T\in \mathbb R$. Further, \eqref{enid} implies that $t\mapsto \VV(u_t)$ belongs to $C^0([0,T])$. The bound on $\VV(u_t)$ and on the second moment of $u_t$ on $[0,T]$ implies the uniform bound on $[0,T]$ also for $\FF_1$, thanks to the Carleman estimate \eqref{Carleman}. 
Moreover, having already noticed that $[0,+\infty)\ni t\mapsto u_t$ is strongly continuous in $L^1(\mathbb R^d)$, we may now apply the lower semicontinuity properties of displacement convex entropies and deduce that both $[0,+\infty)\ni t\mapsto\mathcal U(u_t)$ and $[0,+\infty)\ni t\mapsto \FF_1(u_t)$ are lower semicontinuous. In fact, they are both continuous on $[0,+\infty)$ since $t\mapsto\VV(u_t)=\mathcal U(u_t)+\eps\FF_1(u_t)$ is continuous. The map $t\mapsto \mathcal B_\alpha(u_t)$ is also continuous on $[0,+\infty)$ if $d=2$, thanks to the bound on $\mathcal F_1$\RRR,  by Proposition \ref{prop:FP}.

\medskip

 \textbf{Step 2: proof of (I)}.
We have already observed that the assumption \eqref{newstrong} on  $\Psi$ implies that $u_t$ is a strong solution to \eqref{nonlinear},  \EEE i.e.,   its time distributional derivative $\partial_tu_t$ belongs to $L^1((t_1,t_2)\times\mathbb R^d)$ for every $0<t_1<t_2<+\infty$ and \eqref{nonlinear} is satisfied pointwise a.e. in $(0,+\infty)\times\mathbb R^d$.
 In particular, if $r>1$, by the Sobolev chain rule we have $\partial_t(u_t^r)=ru_t^{r-1}\partial_t u_t$ a.e. in $(0,+\infty)\times\mathbb R^d$. Since $u_t$ is $W^{1,1}$ in space, it admits classical space partial  derivatives a.e., implying that $\nabla(u_t(x)^\gamma)=\gamma u_t(x)^{\gamma-1}\,\nabla u_t(x)$ a.e. on the set $\{u_t>0\}$, thus a.e. in $(0,+\infty)\times\mathbb R^d$, for every $\gamma>0$.  
 
  Let us consider first the case $r>1$. Let $\zeta_n(x)=\zeta(x/n)$, $n\in\mathbb N$, where $\zeta:\mathbb R^d\to [0,1]$ is smooth, radially decreasing and such that $\zeta(x)=1$ if $|x|\le 1$ and $\zeta(x)=0$ if $|x|\ge 2$. Thus $\zeta_n\to 1$ pointwise and monotonically and $\nabla\zeta_n\to 0$ in $L^\infty(\mathbb R^d)$ as $n\to+\infty$.   Let also $\eta\in C^\infty_c((0,+\infty))$. We may multiply \eqref{nonlinear} by $ru_t^{r-1}\eta\zeta$ and integrate by parts to get
\begin{equation}\label{ffff}\begin{aligned}
&-\int_0^\infty\int_{\mathbb R^d}\eta'(t)u_t(x)^r\zeta_n(x)\,\d x\,\d t=\int_0^\infty\int_{\mathbb R^d}\partial_t (u_t(x)^r)\eta(t)\zeta_n(x)\,\d x\,\d t\\
&\qquad=r\int_0^\infty\int_{\mathbb R^d}\Delta\Psi(u_t(x))u_t(x)^{r-1}\eta(t)\zeta_n(x)\,\d x\,\d t\\
&\qquad=-r(r-1)\int_0^\infty\int_{\mathbb R^d}\Psi'(u_t(x))\frac{|\nabla u_t(x)|^2}{u_t(x)}u_t(x)^{r-1}\eta(t)\zeta_n(x)\,\d x\,\d t\\
&\qquad\qquad-r\int_0^\infty\int_{\mathbb R^d}\Psi'(u_t(x))\eta(t)u_t(x)^{r-1}\nabla u_t(x)\cdot\nabla\zeta_n(x)\,\d x\,\d t.
\end{aligned}\end{equation}
  The last term  is vanishing as $n\to+\infty$, since $u_t$ belongs to $L^\infty((t_1,t_2)\times\mathbb R^d)$ and $\nabla u_t$ belongs to  $L^1((t_1,t_2)\times\mathbb R^d)$ for every $0<t_1<t_2<+\infty$ by \eqref{graduestimate},  and since $\Psi$ is locally Lipschitz on $[0,+\infty)$.
  The same properties and \eqref{fish} allow to pass to the limit in \eqref{ffff} by dominated convergence and get
\[\begin{aligned}
\int_0^\infty\eta'(t)\int_{\mathbb R^d}u_t(x)^r\,\d x\,\d t=
r(r-1)\int_0^\infty\int_{\mathbb R^d}\Psi'(u_t(x))\frac{|\nabla u_t(x)|^2}{u_t(x)}u_t(x)^{r-1}\eta(t)\,\d x\,\d t,
\end{aligned}\]
along with the fact that
 the map $t\mapsto \int_{\mathbb R^d}\Psi'(u_t(x))|\nabla u_t(x)|^2u_t(x)^{r-2}\,\d x$ is in $L^1((t_1,t_2))$ for every $0<t_1<t_2<+\infty$.
  We deduce that the map $t\mapsto\int_{\mathbb R^d} u_t(x)^r\,dx$ is in $AC_{loc}((0,+\infty))$ and that 
\begin{equation}\label{mt1t2}
\int_{\mathbb R^d}\frac{u_{t_2}(x)^r}{r-1}\,\d x-\int_{\mathbb R^d}\frac{u_{t_1}(x)^r}{r-1}\,\d x=-r\int_{t_1}^{t_2}\int_{\mathbb R^d}\Psi'(u_t(x))\frac{|\nabla u_t(x)|^2}{u_t(x)}u_t(x)^{r-1}\,\d x\,\d t
\end{equation}
for every $0<t_1<t_2<+\infty$. We notice that the integrand in the right hand side is nonnegative, so that the map $t\mapsto\int_{\mathbb R^d} u_t(x)^r\,\d x$ is nonincreasing on $(0,+\infty)$. From this relation it follows \eqref{PSIm}.

The same result for $r=1$ may be obtained  by passing to the limit in \eqref{mt1t2} as $r\to1^+$. Indeed, in the right hand side we may pass to the limit since we can apply  the monotone convergence theorem (up to splitting the integrals in two regions, where $u_t>1$ and $u_t<1$). About the left hand side, since we have mass conservation so that the map $t\mapsto\int_{\mathbb R^d}u_t\,\d x$ is constant, we have, for $0<t_1<t_2<+\infty$,
\[
\int_{\mathbb R^d}\frac{u_{t_2}(x)^r}{r-1}\,\d x-\int_{\mathbb R^d}\frac{u_{t_1}(x)^r}{r-1}\,\d x=\int_{\mathbb R^d}\frac{u_{t_2}(x)^r-u_{t_2}(x)}{r-1}\,\d x-\int_{\mathbb R^d}\frac{u_{t_1}(x)^r-u_{t_1}(x)}{r-1}\,\d x
\]
and again we may pass to the limit with the monotone convergence theorem, getting
\begin{equation}\label{dissm=1}\begin{aligned}
\int_{\mathbb R^d}{u_{t_2}\log u_{t_2}}\,\d x-\int_{\mathbb R^d}{u_{t_1}\log u_{t_1}}\,\d x&=-\int_{t_1}^{t_2}\int_{\mathbb R^d}\Psi'(u_t(x))\frac{|\nabla u_t(x)|^2}{u_t(x)}\,\d x\,\d t.
\end{aligned}\end{equation}
Since $t_1$ and $t_2$ are arbitrary, it follows \eqref{PSIm} for $r=1$. 
The continuity of $t\mapsto\mathcal F_1(u_t)$ up to $t=0$ has been shown in the previous step. Its absolute continuity up to $t=0$ is then obtained by passing to the limit as $t_1\to 0$ in \eqref{dissm=1}, where the left hand side passes to the limit by the continuity of $\mathcal F_1(u_t)$ and the right hand side passes to the limit by the monotone convergence theorem.

\medskip

\textbf{Step 3: proof of (II)}.
Concerning the general nonlinearity $G$, we preliminarily observe that $\int_{\R^d} G(u_t)\,\d x<+\infty$ for every $t>0,$ since $G$ is convex with $G(0)=0$ and since $u_t\in L^\infty(\R^d)$ for every $t>0$. Let $(G_k)_{k\in\mathbb N}$ be a sequence of convex nonnegative nondecreasing $C^{2}((0,+\infty))$ functions such that $G_k(x)\le G(x)$ for every $x\ge 0$ and such that $G_k\uparrow$ G pointwise and monotonically on $[0,+\infty)$.  By the Sobolev chain rule we have
$\partial_tG_k(u_t)=G_k'(u_t)\partial_t u_t$ and $\nabla G'_k(u_t)=G''_k(u_t)\nabla u_t $ a.e. in $(0,+\infty)\times\mathbb R^d$ and by repeating the same arguments of the previous step starting from \eqref{nonlinear} we obtain
\begin{equation}\label{GoppureU}
\int_{t_1}^{t_2}\eta'(t)\int_{\mathbb R^d}G_k(u_t)\,\d x\,\d t=\int_{t_1}^{t_2}\eta(t)\int_{\mathbb R^d}G_k''(u_t)u_t\Psi'(u_t)\frac{|\nabla u_t|^2}{u_t}\,\d x\,\d t
\end{equation}
for every $0<t_1<t_2<+\infty$, the space integral in the right hand side being in $L^1(t_1,t_2)$ due to \eqref{1infty}, \eqref{fish} and to the properties of $\Psi$ and $G_k$. This shows that, for every $0<t_1<t_2<+\infty$ and every $k\in\mathbb N$, the map $t\mapsto \int_{\mathbb R^d}G_k(u_t)\,\d x$ is in $AC([t_1,t_2])$, and since  $G_k''\ge0$ and $\Psi'\ge 0$ we also have $\int_{\mathbb R^d}G_k(u_{t_2})\,\d x\le\int_{\mathbb R^d}G_k(u_{t_1})\,\d x$. The latter inequality holds for $G$ as well, by the monotone convergence theorem.  The assumptions on $U$ and $L_U$ imply $U\in C^2((0,+\infty))$ with $U''(x)\ge 0$ for every $x>0$, therefore \eqref{GoppureU} holds with $U$ in place of $G$. This shows that $t\mapsto\mathcal U(u_t)$ is nonincreasing on $(0,+\infty)$. It is also $AC([0,T])$ for every $T>0$, since this holds true for $\mathcal V$ and $\mathcal F_1$ as shown in Step 1 and Step 2, respectively.

\medskip

\textbf{Step 4: proof of (III)}.
Assume that either $d>2$ or $\alpha>0$. 
By Proposition \ref{loggrowth} {\bf(i)}, {\bf(ii)} and \eqref{1infty} we have $B_\alpha\ast u_t\in L^\infty((\tau,+\infty); W^{1,\infty}(\mathbb R^d))$ for every $\tau>0$.
 
By the symmetry of the kernel $B_\alpha$ is is immediate to prove that 
\[
	\int_{\R^d}v B_\alpha\ast w\,\d x = \int_{\R^d}w B_\alpha\ast v\,\d x.
\]
Using the previous property and recalling \eqref{nonlinear},  we have, for any $\eta\in C^\infty_c((0,+\infty))$
and $\zeta_n$ as defined in Step 2,
\[
\begin{aligned}
&\frac12\int_0^\infty\int_{\mathbb R^d}\eta(t)\zeta_n(x)\partial_t[u_t(x)(B_\alpha\ast u_t)(x)]\,\d x\,\d t
\\&\qquad\qquad=\frac12\int_0^\infty\int_{\mathbb R^d}\eta(t)\left[\zeta_n(x)(B_\alpha\ast u_t)(x)+(B_\alpha\ast(\zeta_n u_t))(x)\right]\Delta\Psi(u_t(x))\,\d x\,\d t.
\end{aligned}
\]
Since $-\Delta(B_\alpha\ast u_t)+\alpha B_\alpha\ast u_t=u_t$, and the same for $\zeta_nu_t$ in place of $u_t$,
after integrations by parts  we get
\begin{equation}\label{R_n}
\begin{aligned}
&-\frac12\int_0^\infty\int_{\mathbb R^d}\eta'(t)u_t(x)(B_\alpha\ast u_t)(x)\zeta_n(x)\,\d x\,\d t\\&\;\;\;=
\frac 12\int_0^\infty\int_{\mathbb R^d}\eta(t)\left[\zeta_n(x)\Delta(B_\alpha\ast u_t)(x)+\Delta (B_\alpha\ast(\zeta_n u_t))(x)\right]\,\Psi(u_t(x))\,\d x\,\d t+\mathcal R_n\\&\;\;\;=
 \frac12\int_0^\infty\int_{\mathbb R^d}\eta(t)\zeta_n(x)[\alpha (B_\alpha\ast u_t)(x)-u_t(x)]\,\Psi(u_t(x))\,\d x\,\d t\\
 &\qquad\quad+\frac12 \int_0^\infty\int_{\mathbb R^d}\eta(t)[\alpha (B_\alpha\ast (\zeta_n u_t))(x)-\zeta_n u_t(x)]\,\Psi(u_t(x))\,\d x\,\d t+\mathcal R_n
\end{aligned}
\end{equation}
where the remainder term is
\[\begin{aligned}\mathcal R_n&=
  \frac12\int_0^\infty\int_{\mathbb R^d}\eta(t)\Psi(u_t(x))\nabla\zeta_n(x)\cdot\nabla(B_\alpha\ast u_t)(x)\,\d x\,\d t
\\&\qquad- \frac12\int_0^\infty\int_{\mathbb R^d}\eta(t)(B_\alpha\ast u_t)(x)\nabla\zeta_n(x)\cdot\nabla\Psi(u_t(x))\,\d x\,\d t.
\end{aligned}
\]
We wish to pass to the limit in \eqref{R_n} as $n\to+\infty$. First of all, we claim that $\mathcal R_n\to 0$ as $n\to+\infty$. 
Indeed, recalling that $B_\alpha\ast u_t\in L^\infty_{loc}((0,+\infty); W^{1,\infty}(\mathbb R^d))$ and that  \eqref{doublebound} holds, the claim follows from the fact that $\nabla\zeta_n\to 0$ in $L^\infty(\mathbb R^d)$ as $n\to+\infty$.
 Thanks to the property $B_\alpha\ast u_t\in L^\infty_{loc}((0,+\infty); L^{\infty}(\mathbb R^d))$ along with \eqref{1infty} and \eqref{doublebound}, and since $\zeta_n\to 1$ pointwise an monotonically as $n\to+\infty$, implying also the pointwise convergence $(B_\alpha\ast(\zeta_n u_t))(x)\to (B_\alpha \ast u_t)(x)$, we may also
 pass to the limit as $n\to\infty$ in both sides of \eqref{R_n} by dominated convergence, obtaining
\begin{equation}\label{-1/2}\begin{aligned}
&-\frac12\int_0^\infty\int_{\mathbb R^d}\eta'(t)u_t(x)(B_\alpha\ast u_t)(x)\,\d x\,\d t\\&\qquad\qquad\qquad=\int_0^\infty\int_{\mathbb R^d}\eta(t)(\alpha (B_\alpha\ast u_t)(x)-u_t(x))\Psi(u_t(x))\,\d x\,\d t.
\end{aligned}\end{equation}

Let us show how to get the same formula if $\alpha=0$ and $d=2$ as well. 
In fact, in $\mathcal R_n$ the first term can be treated as before, because $\nabla B_0\ast u_t\in L^\infty_{loc}((0,+\infty);L^\infty(\mathbb R^2))$ by Proposition \ref{loggrowth} {\bf(i)}. 
We claim that the second term in $\mathcal R_n$ vanishes as well. 
Indeed, letting the interval $[t_1,t_2]$ include the support of $\eta$, we use Proposition \ref{loggrowth} {\bf(iii)} to
estimate $B_0\ast u_t$, and  we get  
\begin{equation}\label{redundancy}\begin{aligned}
	& \int_0^\infty\int_{\mathbb R^2}|(B_0\ast u_t)(x)|\eta(t)||\nabla\zeta_n(x)||\nabla\Psi(u_t(x))|\,\d x\,\d t\
	\\&\qquad \le \max|\eta|\, \|\nabla \zeta_n\|_{L^\infty(\mathbb R^2)}\,\int_{t_1}^{t_2}\left(\frac{\pi}{2}\|u_t\|_{L^\infty(\mathbb R^2)}+\frac{1}{2\pi}\mathrm m_{log}(u_t)\right)\int_{\mathbb R^2}|\nabla\Psi(u_t)|\,\d x\,\d t\\		&\qquad\quad+\frac{M}{2\pi}\,\max|\eta|\,\|\nabla\zeta_n(\cdot)\log(1+|\cdot|^2)\|_{L^\infty(\mathbb R^2)}\int_{t_1}^{t_2}\int_{\mathbb R^2}|\nabla\Psi(u_t)|\,\d x\,\d t.
 \end{aligned}
 \end{equation}
 We notice that $\mathrm m_{log}(u_t)\in L^\infty((t_1,t_2))$, because the same holds even for the second moments as previously observed, and then thanks to \eqref{1infty} and to \eqref{doublebound} we deduce that the first term in the right hand side of \eqref{redundancy} vanishes as $n\to+\infty$. Still in view of \eqref{doublebound}, the same conclusion is found for the second term in the right hand side of \eqref{redundancy},
 because $|\nabla \zeta_n(x)| \,\log(1+|x|^2)\le n^{-1}(\max |\nabla\zeta|)\log(1+4n^2)$ for every $x\in\mathbb R^2$ so that $\nabla \zeta_n(\cdot)\log(1+|\cdot|^2)$ is still vanishing in $L^\infty(\mathbb R^d)$ as $n\to+\infty$.
  The same estimate for $B_0\ast u_t$ can be employed for obtaining a dominant function for passing to the limit in the other terms of \eqref{R_n}, recalling that $\mathrm m_{log}(u_t)\in L^1_{loc}(0,+\infty)$.

Since the identity \eqref{-1/2} holds for every $\eta\in C^\infty_c((0,+\infty))$, and the map 
$$t\mapsto \int_{\mathbb R^d}(\alpha (B_\alpha\ast u_t)(x)-u_t(x))\Psi(u_t(x))\,\d x$$ belongs to
$L^1_{loc}(0,+\infty)$, then the map $t\mapsto\mathcal B_\alpha(u_t)$ is in $AC_{loc}((0,+\infty))$ and
\begin{equation}\label{flowforB}
\mathcal B_\alpha(u_{t_2})-\mathcal B_\alpha(u_{t_1})=\int_{t_1}^{t_2}\int_{\mathbb R^d}(\alpha (B_\alpha\ast u_t)-u_t)\Psi(u_t)\,\d x\,\d t, \qquad \forall\, t_1,t_2\in(0,+\infty).
\end{equation}

We claim that \eqref{flowforB} holds for $t_1=0$ as well if $d=2$\RRR.
Indeed, since $u_t$ is in $C^0([0,+\infty);L^1(\mathbb R^d))$,  
the map $t\mapsto\mathcal B_\alpha(u_t)$ is continuous up  to $t=0$ by Proposition \ref{prop:FP} (where we can take $m=1$ if $d=2$, and take advantage of the fact that $\mathcal F_1(u_t)$ is continuous up to $t=0$ as seen in the previous steps\RRR). Therefore, if $\alpha=0,$ we may pass to the limit in \eqref{flowforB} as $t_1\to 0$ by applying the monotone convergence theorem to the right hand side where the integrand $-u_t\Psi(u_t)$ is nonpositive, obtaining in particular that the map $t\mapsto u_t\Psi(u_t)$ is in $L^1(0,t_2)$, for every $t_2>0$. The latter fact, together with the continuity of $t\mapsto\mathcal B_\alpha(u_t)$ and the monotone convergence theorem, allows to pass to the limit in \eqref{flowforB} also in the case $\alpha>0$. The claim is proved and \eqref{flowB} follows.
\end{proof}

\begin{remark}\label{heatremark}\rm If $\mathcal U$ is given itself by $\mathcal F_1$, the above result holds true  (we can of course take $\eps=0$) since \eqref{nonlinear} is reduced to the heat equation.  
\end{remark}

\begin{corollary}\label{corofp} In {\rm Proposition \ref{thenewprop}},
 if $\bar u\in L^q(\R^d)$ for some $q>1$, and if there exists $c>0,p>1$ such that $\Psi'(x)\ge cx^{p-1}$ for every $x\ge 0$  (for instance if $\mathcal U=\mathcal F_p$, $p>1$), then  the map $t\mapsto\FF_q(u_t)$ is absolutely continuous and nonincreasing on $[0,T]$, for every $T>0$. 
If $q\ge m$, also the map $t\mapsto\mathcal B_\alpha (u_t)$ is absolutely continuous on $[0,T]$, for every $T>0$.
\end{corollary}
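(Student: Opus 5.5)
The plan is to extend the derivative formula \eqref{PSIm} and the monotonicity from Step 2 of the proof of Proposition \ref{thenewprop} down to $t=0$. The only missing ingredient is a time-integrable bound near $t=0$, and it will come from the extra assumption $\bar u\in L^q(\R^d)$.

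\textbf{Step 1: monotonicity of $\FF_q$ up to $t=0$.} Since $G(x)=x^q/(q-1)$ is convex, nondecreasing, and satisfies $G(0)=0$, Proposition \ref{thenewprop} \textbf{(II)} shows that $t\mapsto\FF_q(u_t)$ is nonincreasing on $(0,+\infty)$. To include $t=0$, I will use the Jensen argument underlying \eqref{GoppureU}, applied with $t_1\downarrow 0$, together with the strong continuity $u\in C^0([0,+\infty);L^1(\R^d))$ from \eqref{1infty}. Fatou's lemma then gives $\FF_q(u_t)\le\liminf_{s\downarrow0}\FF_q(u_s)$, and the monotone approximation $G_k\uparrow G$ gives $\FF_q(u_t)\le\FF_q(\bar u)$. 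Combining these, $\FF_q(u_t)\to\FF_q(\bar u)$ as $t\downarrow 0$: the $\liminf$ inequality follows from weak lower semicontinuity of the convex functional along the $L^1$-continuous curve, and the $\limsup$ inequality follows from the bound $\FF_q(u_t)\le\FF_q(\bar u)$.

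\textbf{Step 2: absolute continuity of $\FF_q$ on $[0,T]$.} Identity \eqref{mt1t2} with $r=q$ holds for $0<t_1<t_2$. Its integrand $q\Psi'(u_t)u_t^{q-2}|\nabla u_t|^2$ is nonnegative, so letting $t_1\downarrow0$ and using monotone convergence together with the continuity obtained in Step 1, we get that this integrand is in $L^1((0,T)\times\R^d)$ and that \eqref{mt1t2} holds with $t_1=0$. This yields absolute continuity on $[0,T]$.

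\textbf{Step 3: absolute continuity of $\mathcal B_\alpha$ when $q\ge m$.} For $t_1,t_2>0$ we already have \eqref{flowforB}, so the task is to show that $t\mapsto\int u_t\Psi(u_t)\,\d x$ and $t\mapsto\int\alpha(B_\alpha\ast u_t)\Psi(u_t)\,\d x$ are integrable near $0$, and that $\mathcal B_\alpha(u_t)$ is continuous at $0$. Continuity follows from Proposition \ref{prop:FP}, since $\FF_m(u_t)$ is bounded by interpolation between $\FF_1$ and $\FF_q$ (both controlled for $t\in[0,T]$ by Step 1 and Proposition \ref{thenewprop}) and $u_t\to\bar u$ in $L^1$. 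For integrability, I expect $\Psi(u)\lesssim u+u^{p}$ in the relevant cases, such as $\mathcal U=\mathcal F_p$. Then $\int u_t\Psi(u_t)\,\d x\lesssim\int(u_t^2+u_t^{p+1})\,\d x$, and this must be shown to be in $L^1(0,T)$.

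The hypothesis $\Psi'(x)\ge cx^{p-1}$ provides the dissipation $\int_0^T\int u_t^{q+p-3}|\nabla u_t|^2\,\d x\,\d t<\infty$ from Step 2, which is a bound on $\int_0^T\|\nabla u_t^{(q+p-1)/2}\|_2^2\,\d t$. Combining this with the uniform $L^1\cap L^q$ bound via a Gagliardo--Nirenberg inequality (\eqref{nuovagns} in $d=2$, Sobolev \eqref{sobolev} in $d\ge3$) gives $u\in L^{q+p-1+\delta}$ integrated in time, for some $\delta>0$ depending on $d$. The main obstacle is checking that this exponent covers $p+1$ and $2$, which is where $q\ge m$ is used. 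If it falls short, I will instead control the $\alpha$-term by $\|B_\alpha\ast u_t\|_\infty$ through Proposition \ref{loggrowth}, and for $\alpha=0$ rely on the sign of $-u_t\Psi(u_t)$ with monotone convergence exactly as at the end of Step 4 of that proof. Letting $t_1\to0$ in \eqref{flowforB} then gives absolute continuity on $[0,T]$.
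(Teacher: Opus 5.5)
The gap is in Step 3. Your primary route bounds $\int u_t\Psi(u_t)\,\d x$ by $\int(u_t^2+u_t^{p+1})\,\d x$. But the corollary only assumes the lower bound $\Psi'(x)\ge cx^{p-1}$, and nothing in the hypotheses gives $\Psi(u)\lesssim u+u^p$: for a general displacement convex $\mathcal U$, $\Psi$ may grow much faster. So this route cannot prove the statement as posed. Also, $q\ge m$ is not what makes exponents match.

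Your fallback is fine for $\alpha=0$, but it breaks for $\alpha>0$. Bounding the $\alpha$-term through $\|B_\alpha\ast u_t\|_{L^\infty}$ and Proposition \ref{loggrowth}\,\textbf{(ii)} needs $\sup_{t\in(0,T)}\|u_t\|_{L^r(\R^d)}<\infty$ for some $r>d/2$. You do not have this near $t=0$ when $d\ge3$ and $q\le d/2$, for instance $q=m$ close to $m_c$, which is exactly how the corollary is used in Step 1 of the proof of Theorem \ref{generalp}.

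The missing idea is the sign structure the paper exploits. The flow $u_t$ of $\mathcal V$ does not depend on $\alpha$. So you can use the $\alpha=0$ version of \eqref{flowforB}, together with continuity of $\mathcal B_0(u_t)$ at $t=0$ (Proposition \ref{prop:FP}, using the $L^m$ bound from $q\ge m$ and the second-moment bound). Monotone convergence on the nonpositive integrand $-u_t\Psi(u_t)$ then yields $t\mapsto\int u_t\Psi(u_t)\,\d x\in L^1(0,t_2)$. For $\alpha>0$, the remaining integrand $\alpha(B_\alpha\ast u_t)\Psi(u_t)$ is nonnegative. Hence continuity of $\mathcal B_\alpha(u_t)$ at $0$ plus monotone convergence let you send $t_1\to0$ in \eqref{flowforB}. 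No integrability estimate on $u_t$ is needed. The only role of $q\ge m$ is the continuity at $t=0$, which you correctly identified.

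Step 2 matches the paper. Step 1 takes a different route. The paper gets $\|u_t\|_{L^q}\le\|\bar u\|_{L^q}$ from the symmetrization comparison of \cite{V2} with the porous medium equation $\partial_t v=c\Delta v^p$ started from $\bar u^*$; that is the only place the hypothesis on $\Psi'$ enters.

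Your route works, but as written it misses the key point: you need the upper bound $\FF_q(u_t)\le\FF_q(\bar u)$. Fatou only yields lower semicontinuity. Moreover, $L^1$-continuity at $t=0$ passes to $\int G_k(u_s)\,\d x$ only if the $G_k$ are globally Lipschitz, for example $G_k=G$ on $[0,k]$ and affine with slope $G'(k)$ beyond. It does not pass for the possibly superlinear $C^2$ approximants used in the proof of \textbf{(II)}.

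With the Lipschitz choice, \textbf{(II)} applied to $G_k$ gives, for $0<s<t$,
\[
\int G_k(u_t)\,\d x\le\int G_k(u_s)\,\d x\to\int G_k(\bar u)\,\d x\le\FF_q(\bar u)\quad\text{as } s\downarrow0,
\]
and monotone convergence in $k$ concludes. This is more elementary than the paper's comparison argument. Combined with the sign argument above, it never uses the hypothesis $\Psi'(x)\ge cx^{p-1}$ at all.
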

\begin{proof}
Since the initial datum $\bar u$ belongs to $L^q(\mathbb R^d)$, we obtain a uniform bound $\|u_t\|_{L^q(\mathbb R^d)}\le \|\bar u\|_{L^q(\mathbb R^d)}$ for every $t\ge0$. Indeed, the monotonicity of $L^q(\R^d)$ norms is a  standard property of the porous medium equation, see for instance \cite[Theorem 9.3]{V}, \cite[Section 1.2]{V2}. It holds more generally for equation \eqref{nonlinear}, by using comparison with the porous medium equation, if $c>0, p>1$ and   $\Psi'(x)\ge c p  x^{p-1}$ for every $x\ge 0$ (meaning that \eqref{nonlinear} is more diffusive than the porous media equation $\partial_t v_t=c\Delta v_t^p $). By applying \cite[Theorem 1.3]{V2} we deduce
$\|u_t\|_{L^q(\mathbb R^d)}\le \|v_t\|_{L^q(\mathbb R^d)}$ for every $t>0$, where $v_t$ solves   $\partial_t v_t=c\Delta v_t^p $ with initial datum $\bar u^*$, which is the spherical decreasing rearrangement of $\bar u$. Since $v_t$ solves a porous medium equation with initial datum $\bar u^*$, we conclude that 
$$\|u_t\|_{L^q(\mathbb R^d)}\le \|v_t\|_{L^q(\mathbb R^d)}\le \|\bar u^*\|_{L^q(\mathbb R^d)}=\|\bar u\|_{L^q(\mathbb R^d)}.$$
     On the other hand,  the narrow lower semicontinuity of the $L^q$ norm entails
$$\|\bar u\|_{L^q(\mathbb R^d)}^q\le \liminf_{t\downarrow 0}\|u_t\|^q_{L^q(\mathbb R^d)}\le \limsup_{t\downarrow 0}\|u_t\|^q_{L^q(\mathbb R^d)}\le \|\bar u\|_{L^q(\mathbb R^d)}^q,$$ hence we get continuity of the map  $t\mapsto \FF_q(u_t)$ up to $t=0$. Such a map then proves to be absolutely continuous up to $t=0$, by applying  \eqref{mt1t2} with $r=q$ and taking the limit as $t_1\to 0$, as done for $\mathcal F_1$ in the proof of Proposition \ref{thenewprop}.

Eventually, if $q\ge m$, by taking advantage of the uniform bound on the $L^m$ norms of $u_t$, we may apply Proposition \ref{prop:FP} and deduce that $t\mapsto\mathcal B_\alpha (u_t)$ is continuous up to $t=0$. Then we may pass to the limit in \eqref{flowforB} as $t_1\to0$ as done in the last part of the proof of Proposition \ref{thenewprop} and get the desired absolute continuity up to $t=0$.  
\end{proof}

\EEE

In the next corollary, we specialize the formulas of Proposition \ref{thenewprop}  to a  relevant choice of $\mathcal U$: 
we consider the displacement convex entropy $\mathcal{F}_{p,K}$, with density function $$F_{p,K}(x):=\frac1{p-1}(x-K)_+^p,$$ 
where $K> 0$ and $p\ge2$.
If $p=2$,  ${F}_{2,K}$ is not $C^2(0,+\infty)$   
as required by Definition \ref{entr} and Proposition \ref{thenewprop}. 
This can be easily circumvented by taking a   regularized version of $\FF_{p,K}$, namely   
\begin{equation}\label{baddelta}
F_{p,K,\delta}(x)=\frac1{p-1}\,(x-K)_\delta^p,
\end{equation}
where   $\{(x-K)_\delta\}_{\delta\in(0,K)}$ is the notation for a family of smooth, convex, strictly increasing functions on $[0,+\infty)$ with the following properties: for any  fixed $\delta\in (0,K)$, there holds  $(x-K)_\delta=(x-K)_+$ for every $0\le x\le K-\delta$ and every $x\ge K+\delta$; moreover, as $\delta\downarrow 0$, $(x-K)_\delta\to (x-K)_+$ pointwise and monotonically.
Indeed, letting
\[
z_\delta(t)=\left\{\begin{aligned}&\left(1+\exp\left(-\frac{4t\delta}{\delta^2-t^2}\right)\right)^{-1}\quad&\mbox{if $|t|\le \delta$}\\
&\bC_{(0,+\infty)}(t)\quad &\mbox{if $|t|>\delta$},
\end{aligned}\right. 
\]
we define $$(x-K)_\delta=\int_{-\infty}^{x-K}z_\delta(t)\,dt,\qquad x\ge 0.$$ 
Therefore, $F_{p,K,\delta}$ is a smooth function on $[0,+\infty)$. We also notice that $$D(\FF_{p,K,\delta})=D(\FF_{p,K})=D(\FF_p),$$
where $D(\FF_p)$ is the set of positive measures with finite second moment and $L^p(\mathbb R^d)$ density w.r.t. the Lebesgue measure.
 We shall consider the gradient flow associated with the displacement convex entropy  $\FF_{p,K,\delta}+\eps\FF_1$, satisfying equation \eqref{nonlinear} with $\Psi(x)=L_{p,K,\delta}(x)+\eps x$, where $L_{p,K,\delta}$ is the conjugated function of $F_{p,K,\delta}$.
We next apply the result of Proposition \ref{thenewprop} to this specific choices.
\begin{corollary}\label{dissipations} 
Let $p\ge 2$, $\eps>0$, $\delta>0$, $K> 0$, $r\ge 1$. 
Let $u_t$ denote the gradient flow, starting from  $\bar u\in D(\mathcal F_p)$, of  the displacement convex entropy $\VV=\mathcal{F}_{p,K,\delta}+\eps\mathcal{F}_1$. 
Then, for a.e. $t>0$ there hold 
\begin{equation}\label{mterm}
	\frac{d}{dt}\FF_r(u_t)\le-\frac{4pr}{(r+p-1)^2}\int_{\mathbb{R}^d}\left|\nabla (u_t-K)_\delta^{\frac{r+p-1}{2}}\right|^2\,\d x
	-\eps \frac{4}{r}\int_{\mathbb{R}^d} |\nabla u_t^{r/2}|^2\,\d x,
\end{equation}
\begin{equation}\label{interactionderivative}\begin{aligned}
	\frac{d}{dt}\mathcal{B}_\alpha(u_t)&\ge - \int_{\mathbb{R}^d}(u_t-K)_\delta^{p+1}\,\d x-\eps\int_{\mathbb R^d}u_t^2\,\d x\\
	&\quad-\frac{2p-1}{p-1}\, K\int_{\mathbb{R}^d}(u_t-K)_\delta^p\,\d x-\frac{p}{p-1}\, K^2\int_{\mathbb{R}^d}(u_t-K)_\delta^{p-1}\,\d x.\end{aligned}
\end{equation}
\end{corollary}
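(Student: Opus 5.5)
The proof consists of specializing Proposition \ref{thenewprop} to $U=F_{p,K,\delta}$, so that $\Psi(x)=L_{p,K,\delta}(x)+\eps x$, and then estimating the integrands pointwise. Throughout we write $g:=(x-K)_\delta$. From the construction in \eqref{baddelta} we will use only the following elementary facts:
\begin{itemize}
\item $g\ge 0$, $0\le g'\le 1$ and $g''\ge 0$;
\item $(x-K)_+\le g(x)\le x$.
\end{itemize}
The last inequality holds because $g=0$ on $[0,K-\delta]$, and on $[K-\delta,+\infty)$ we have $g(x)\le x-K+\delta\le x$ since $\delta<K$. In particular we get $x\le g(x)+K$. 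Differentiating $U=\frac1{p-1}g^p$ gives
\[
U'=\tfrac{p}{p-1}g^{p-1}g',\qquad U''=p\,g^{p-2}(g')^2+\tfrac{p}{p-1}g^{p-1}g''\ \ge\ p\,g^{p-2}(g')^2 .
\]

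For \eqref{mterm}, I start from \eqref{PSIm}, which gives
\[
\frac{d}{dt}\FF_r(u_t)=-r\int_{\mathbb R^d}\big(u_tU''(u_t)+\eps\big)\,u_t^{r-2}|\nabla u_t|^2\,\d x .
\]
I treat the two contributions separately.
\begin{itemize}
\item The $\eps$-part equals $\eps r\int u^{r-2}|\nabla u|^2=\frac{4\eps}{r}\int|\nabla u^{r/2}|^2$.
\item For the $U''$-part, the chain rule gives $\nabla g(u)=g'(u)\nabla u$ a.e. Together with $U''\ge p g^{p-2}(g')^2$ this bounds the integrand from below by $p\,u^{r-1}g(u)^{p-2}|\nabla g(u)|^2$.
\item Since $r\ge1$ and $g(u)\le u$, we have $u^{r-1}\ge g(u)^{r-1}$. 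Hence the $U''$-part is at least $rp\int g^{r+p-3}|\nabla g|^2=\frac{4pr}{(r+p-1)^2}\int|\nabla g^{\frac{r+p-1}{2}}|^2$.
\end{itemize}
Combining the two bounds yields \eqref{mterm}.

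For \eqref{interactionderivative}, I use \eqref{flowB}, which reduces the claim to a pointwise upper bound on $u\Psi(u)=u L_{p,K,\delta}(u)+\eps u^2$. Since $g'\le1$,
\[
uL_{p,K,\delta}(u)=u\Big(\tfrac{p}{p-1}u g^{p-1}g'-\tfrac{1}{p-1}g^p\Big)\le \tfrac{1}{p-1}\,u\,g^{p-1}\,(pu-g).
\]
For fixed $g\ge0$, the right-hand side is nondecreasing in $u$ on the range $u\ge g$, because $pu-g\ge0$ there. So I may substitute $u\le g+K$ and obtain
\[
\frac{(g+K)g^{p-1}((p-1)g+pK)}{p-1}=g^{p+1}+\frac{2p-1}{p-1}Kg^p+\frac{p}{p-1}K^2g^{p-1}.
\]
Integrating this bound and inserting it into \eqref{flowB} gives \eqref{interactionderivative}.

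The computation itself is routine. The main point requiring care is justifying the chain rule and the a.e. manipulations for the smoothed positive part. This is covered by Step 1 of the proof of Proposition \ref{thenewprop}: there $u_t\in W^{1,1}$ in space, it is bounded for $t>0$, and the Fisher-information bound \eqref{fish} holds. I also need the integrability of the right-hand sides, which follows from $u_t\in L^\infty$ for $t>0$ together with mass conservation.
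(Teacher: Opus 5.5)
Your proof is correct. For \eqref{interactionderivative} it is in substance the paper's argument. The paper splits $x=(x-K)_\delta+S_{K,\delta}(x)$, with $S_{K,\delta}(x)=x-(x-K)_\delta\in[0,K]$, inserts the closed form \eqref{lpkd}, and bounds $S_{K,\delta}\le K$. The quantity it expands is exactly your $\frac1{p-1}x\,g^{p-1}(px-g)$, and $S_{K,\delta}\le K$ is the same as your substitution $x\le g+K$.

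For \eqref{mterm} your route is genuinely different. The paper works with $\nabla u_t^{r-1}\cdot\nabla\Psi(u_t)$. It differentiates \eqref{lpkd}, drops the term with $\nabla S_{K,\delta}(u_t)$ by monotonicity, and divides by $(u_t-K)_\delta$ on $\{u_t>K-\delta\}$ to rewrite $\nabla u_t^{r-1}\cdot\nabla(u_t-K)_\delta^p$. It then replaces $u_t^r$ by $(u_t-K)_\delta^r$ through a second monotonicity argument, and leaves $r=1$ to ``very similar computations''. You instead bound the scalar $\Psi'=xU''+\eps$ from below by $p\,x\,g^{p-2}(g')^2+\eps$ and use $x^{r-1}\ge g^{r-1}$. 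This covers all $r\ge1$ at once, with no division.

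Besides being shorter, your route never relies on \eqref{lpkd}, which is exact only where $(x-K)_\delta'\in\{0,1\}$. On the layer $(K-\delta,K+\delta)$, the true conjugate $xF'_{p,K,\delta}-F_{p,K,\delta}$ falls below the right-hand side of \eqref{lpkd} by $\frac{p}{p-1}\,x\,(x-K)_\delta^{p-1}\bigl(1-z_\delta(x-K)\bigr)>0$. This is harmless for \eqref{interactionderivative}, which only needs an upper bound on $L_{p,K,\delta}$; your use of $g'\le1$ makes that bound explicit. For \eqref{mterm}, however, the paper's expression for $\nabla\Psi(u_t)$ is exact only off the layer. Your pointwise bound on $\Psi'$ holds everywhere and gives the stated inequality directly.
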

\begin{proof}
We define the smooth nondecreasing  function $S_{K,\delta}:[0,+\infty)\to\mathbb R$ as $S_{K,\delta}(x)=x-(x-K)_\delta$. A computation shows that the conjugated function of $F_{p,K,\delta}$, namely $L_{p,K,\delta}(x):=xF'_{p,K,\delta}(x)-F_{p,K,\delta}(x)$, can be expressed as
\begin{equation}\label{lpkd}L_{p,K,\delta}(x)=(x-K)_\delta^p+\frac {p}{p-1}\,S_{K,\delta}(x)\, (x-K)_\delta^{p-1}\end{equation}
and it is itself a smooth nondecreasing function on $[0,+\infty)$. Indeed,  the properties of $(x-K)_\delta$ ensure that its positive powers are smooth as well. We shall apply Proposition \ref{thenewprop} with $\mathcal U=\mathcal F_{p,K,\delta}$, so that the function $\Psi$ therein is given in this case by $\Psi(x)=L_{p,K,\delta}(x)+\eps x$. Since $S_{K,\delta}$ is nondecreasing we get
\[
\int_{\mathbb R^d} u_t^{r-2}(u_t-K)_\delta^{p-1}\nabla S_{K,\delta}(u_t)\cdot\nabla u_t\,\d x \ge 0.
\]
 Therefore, starting from  \eqref{PSIm}  we obtain
 if $r>1$
\begin{equation}\label{ateveryt}\begin{aligned}
     & \frac{d}{dt}\mathcal F_r(u_t)=-r\int_{\mathbb R^d}\Psi'(u_t)\,u_t^{r-1}\,\frac{|\nabla u_t|^2}{u_t}\,\d x=-\frac{r}{r-1}\int_{\mathbb R^d}\nabla u_t^{r-1}\cdot\nabla \Psi(u_t)\,\d x\\
	&\;\;\le-\frac{r}{r-1}\int_{\mathbb R^d}\left(\nabla (u_t-K)_\delta^p+\frac{p}{p-1}S_{K,\delta}(u_t)\nabla (u_t-K)_\delta^{p-1}+\eps\nabla u_t\right)\cdot\nabla u_t^{r-1}\,\d x
\end{aligned}
\end{equation}
for a.e. $t>0$, where
concerning the first term in the right hand side, since  $\nabla(u_t-K)_\delta^p=p(u_t-K)_\delta^{p-1}\nabla(u_t-K)_\delta$ vanishes on the set $\{x\in\mathbb R^d:u_t(x)\le K-\delta\}$, we deduce 
\[\begin{aligned}
&-\frac{r}{r-1}\int_{\mathbb R^d}\nabla (u_t-K)_\delta^p\cdot\nabla u_t^{r-1}\,\d x=-\int_{\{u_t>K-\delta\}}\frac1{u_t}\,\nabla u_t^r\cdot\nabla(u_t-K)_\delta^p\,\d x\\
&\qquad=\int_{\{u_t>K-\delta\}}\frac{S_{K,\delta}(u_t)}{u_t(u_t-K)_{\delta}}\,\nabla u_t^r\cdot\nabla (u_t-K)_\delta^p\,\d x-\int_{\{u_t>K-\delta\}}\frac{\nabla u_t^r\cdot\nabla (u_t-K)_\delta^p}{(u_t-K)_\delta}\,\d x\\
&\qquad=\frac{p}{p-1}\int_{\{u_t>K-\delta\}}\frac{{S_{K,\delta}(u_t)}}{u_t}\,\nabla(u_t-K)_\delta^{p-1}\cdot\nabla u_t^r\,\d x-\int_{\{u_t>K-\delta\}}\frac{\nabla u_t^r\cdot\nabla (u_t-K)_\delta^p}{(u_t-K)_\delta}\,\d x,
\end{aligned} 
\]
whereas for the term involving $\eps$ we readily have
\[
\eps\frac{r}{r-1}\int_{\mathbb R^d}\nabla u_t\cdot\nabla u_t^{r-1}\,\d x = \eps \frac{4}{r}\int_{\mathbb{R}^d} |\nabla u_t^{r/2}|^2\,\d x,
\]
which we insert into \eqref{ateveryt} to obtain
\begin{equation}\label{nexttime}
\frac{d}{dt}\mathcal F_r(u_t)\le -\int_{\{u_t>K-\delta\}}\frac{\nabla u_t^r\cdot\nabla (u_t-K)_\delta^p}{(u_t-K)_\delta}\,\d x-\eps \frac{4}{r}\int_{\mathbb{R}^d} |\nabla u_t^{r/2}|^2\,\d x
\end{equation}
for a.e. $t>0$, and very similar computations show that \eqref{nexttime} holds if $r=1$ as well.
In the right hand side of \eqref{nexttime} we write $u_t^r=(u_t-K)_\delta^r+u_t^r-(u_t-K)_\delta^r$ and similarly as before we take advantage of the fact that the maps
$x\mapsto (x-K)_\delta^p$, $x\mapsto x^r-(x-K)_\delta^r$ are nonincreasing to deduce
\[
\int_{\{u_t>K-\delta\}}\frac{\nabla(u_t^r-(u_t-K)_\delta^r)\cdot\nabla(u_t-K)_\delta^p}{(u_t-K)_\delta}\,\d x\ge 0,
\]
thus
\[\begin{aligned}
\frac{d}{dt}\mathcal F_r(u_t)&\le -\int_{\{u_t>K-\delta\}}\frac{\nabla (u_t-K)_\delta^r\cdot\nabla (u_t-K)_\delta^p}{(u_t-K)_\delta}\,\d x-\eps \frac{4}{r}\int_{\mathbb{R}^d} |\nabla u_t^{r/2}|^2\,\d x\\
&=-\frac{4pr}{(p+r-1)^2} \int_{\{u_t>K-\delta\}}\left|\nabla (u_t-K)_\delta^{\frac{p+r-1}{2}}\right|^2\,\d x-\eps \frac{4}{r}\int_{\mathbb{R}^d} |\nabla u_t^{r/2}|^2\,\d x,
\end{aligned}
\]
where the last equality is obtained with a direct computation. Moreover, the integral can be equivalently taken on the whole $\mathbb R^d$ since $\nabla (u_t-K)_\delta^{(p+r-1)/2}=0$ on the set $\{u_t\le K-\delta\}$. This proves \eqref{mterm}.


We next notice that since $\Psi(x)=(x-K)_\delta^p+\frac{p}{p-1}\,S_{K,\delta}(x)\,(x-K)_\delta^{p-1}$ and since $0\le S_{K,\delta}(x)\le K$ we have for every $x\ge 0$
\[\begin{aligned}
x\Psi(x)&=(x-K)_\delta\Psi(x)+S_{K,\delta}(x)\Psi(x)\\&=(x-K)_\delta^{p+1}+\frac{2p-1}{p-1}\,S_{K,\delta}(x)\,(x-K)_\delta^p+\frac{p}{p-1}\, S_{K,\delta}^2(x)\,(x-K)_\delta^{p-1}+\eps x^2\\
&\le (x-K)_\delta^{p+1}+\frac{2p-1}{p-1}\,K\,(x-K)_\delta^p+\frac{p}{p-1}\, K^2\,(x-K)_\delta^{p-1}+\eps x^2.
\end{aligned}\]
The latter can be inserted into \eqref{flowB} to get \eqref{interactionderivative}.
\end{proof}

\subsection{The Fokker-Planck equation}
We  compute the derivative of the energy along the solution of the Fokker-Planck equation.
\begin{proposition}\label{thenewprop2}   Let $\eps>0$. Let $u_t$ denote the gradient flow of $\mathcal V=\frac12\mom+\eps\mathcal{F}_1$, 
starting from $\bar u\in D(\mathcal V)$. 
Then the following hold.
\begin{itemize}
\item[\textbf{(I)}] 
The map  $t\mapsto\mathcal{F}_m(u_t)$ is smooth on $(0,+\infty)$, it is continuous up to $t=0$ if $\bar u\in D(\mathcal F_m)$,  and for every $t>0$ there holds 
\begin{equation*}
\frac d{dt}\mathcal F_m(u_t)=d\int_{\R^d}u_t^m\,\d x-\eps m\int_{\R^d} u_t^{m-2}|\nabla u_t|^2\,\d x.
\end{equation*}
\item[\textbf{(II)}]
The map $t\mapsto \mathcal B_\alpha(u_t)$ is smooth on $(0,+\infty)$, it is continuous up to $t=0$ if $\bar u\in D(\mathcal F_m)$,  and for every $t>0$ there holds 
\begin{equation*}
\frac d{dt}\mathcal B_\alpha(u_{t})=\eps\alpha\mathcal B_\alpha(u_t)-\eps\int_{\R^d}u_t^2\,\d x-\int_{\R^d}u_t\,x\cdot\nabla B_\alpha\ast u_t\,\d x 
\ge -\eps\int_{\R^d}u_t^2\,\d x.
\end{equation*}
\end{itemize}
\end{proposition}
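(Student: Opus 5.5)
The plan is to treat the gradient flow of $\mathcal V=\frac12\mom+\eps\mathcal F_1$ as the classical linear Fokker--Planck equation
\[
\partial_t u_t=\eps\Delta u_t+\mathrm{div}(x\,u_t),\qquad u_0=\bar u .
\]
I will first write its explicit solution in order to get smoothness. Set $u_t(x)=e^{dt}\,(\Gamma_{s(t)}\ast\bar u)(e^{t}x)$, where $s(t)=\eps(e^{2t}-1)/2$ and $\Gamma$ is the heat kernel \eqref{gaussian}. This representation shows the following.
\begin{itemize}
\item $u_t$ is $C^\infty$ and positive for $t>0$.
\item $u_t$ has Gaussian tails with moments controlled in terms of $\mom(\bar u)$.
\item $u_t$ is bounded in $L^1\cap L^\infty$ locally uniformly on compact subsets of $(0,+\infty)$.
\end{itemize}
By the uniqueness in \cite[Theorem 11.2.5]{AGS}, this solution is the gradient flow of $\mathcal V$. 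The smoothness of $t\mapsto\mathcal F_m(u_t)$ and $t\mapsto\mathcal B_\alpha(u_t)$ on $(0,+\infty)$ then follows by differentiating under the integral sign, with dominated convergence justified by the tail bounds.

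\textbf{(I).} Multiply the equation by $\frac{m}{m-1}u_t^{m-1}$ (or by $1+\log u_t$ if $m=1$) and integrate by parts; boundary terms vanish by the decay. The diffusion term gives $-\eps m\int u_t^{m-2}|\nabla u_t|^2$. For the drift term, $\int \mathrm{div}(xu)\,\frac{m}{m-1}u^{m-1}=-m\int x\cdot\nabla u\,u^{m-1}=-\int x\cdot\nabla u^m=d\int u^m$. When $m=1$ the same computation gives $-\int x\cdot\nabla u=dM$, which matches $d\int u_t^m$.

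For continuity at $t=0$ when $\bar u\in D(\mathcal F_m)$:
\begin{itemize}
\item Jensen's inequality, as in \eqref{jens}, applied to the rescaled heat convolution gives $\mathcal F_m(u_t)\le e^{d(m-1)t}\mathcal F_m(\bar u)$ for $m>1$, and an analogous bound with an additive $dMt$ for $m=1$. Hence $\limsup_{t\downarrow0}\mathcal F_m(u_t)\le\mathcal F_m(\bar u)$.
\item Narrow lower semicontinuity \eqref{lscFm}, together with bounded second moments, gives the reverse inequality.
\end{itemize}

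\textbf{(II).} With $v_t=B_\alpha\ast u_t$, the symmetry of $B_\alpha$ gives $\frac d{dt}\mathcal B_\alpha(u_t)=\int v_t\,\partial_tu_t$. Then:
\begin{itemize}
\item The diffusion term equals $\eps\int u_t\Delta v_t=\eps\int u_t(\alpha v_t-u_t)=2\eps\alpha\mathcal B_\alpha(u_t)-\eps\int u_t^2$, since $-\Delta v+\alpha v=u$.
\item The drift term equals $-\int u_t\,x\cdot\nabla v_t$.
\end{itemize}
This produces the claimed identity. I will double-check the factor in front of $\eps\alpha\mathcal B_\alpha$ against the normalization of $\mathcal B_\alpha$; any positive multiple suffices for the inequality.

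To get the lower bound, I need $\int u\,x\cdot\nabla v\le0$ and $\mathcal B_\alpha\ge0$. For the first, symmetrize:
\[
\int u\,x\cdot\nabla v=\tfrac12\iint (x-y)\cdot\nabla B_\alpha(x-y)\,u(x)u(y).
\]
Since $B_\alpha$ is radial and decreasing, $z\cdot\nabla B_\alpha(z)\le0$. For the second, $\mathcal B_\alpha\ge0$ when $\alpha>0$. When $\alpha=0$ the term $\eps\alpha\mathcal B_\alpha$ vanishes, so no sign is needed. These combine to give $\ge-\eps\int u_t^2$.

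Continuity of $\mathcal B_\alpha(u_t)$ at $t=0$ follows from Proposition \ref{prop:FP}: the $\mathcal F_m$ bounds from (I) and the second moments are uniform near $0$, and $u_t\to\bar u$ narrowly.

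The main obstacle is the justification of the integrations by parts in (II) for $d=2$, $\alpha=0$, where $v_t$ grows logarithmically and $x\cdot\nabla v_t$ is unbounded. I will handle this as in Step 4 of Proposition \ref{thenewprop}: insert cutoffs $\zeta_n$ and use Proposition \ref{loggrowth} (i),(iii) together with the Gaussian decay of $u_t$ and $\nabla u_t$, so that the remainder terms vanish. Alternatively, I can compute directly on the symmetrized double integral, where the kernel $(x-y)\cdot\nabla B_0(x-y)=-\frac1{2\pi}$ is bounded.
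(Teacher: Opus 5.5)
Your proposal is correct and follows the paper's proof essentially step by step. Both use the explicit representation $u_t(x)=e^{dt}(\Gamma_{\eps(e^{2t}-1)/2}\ast\bar u)(e^tx)$, integration by parts against cutoffs $\zeta_n$ with Proposition \ref{loggrowth} controlling $B_\alpha\ast u_t$, continuity at $t=0$ via Jensen and Proposition \ref{prop:FP}, and the symmetrization $\int u_t\,x\cdot\nabla v_t\,\d x=\frac12\iint (x-y)\cdot\nabla B_\alpha(x-y)\,u_t(x)u_t(y)\,\d x\,\d y\le 0$.

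Your coefficient $2\eps\alpha\mathcal B_\alpha(u_t)$ is the right one: the paper's own computation yields $\eps\int(\alpha v_t-u_t)u_t\,\d x$, so the $\eps\alpha$ in the statement is a harmless slip, since only the inequality is used later. Also, $u_t$ need not have Gaussian tails when $\bar u$ only has finite second moment. However, the integrability you actually use ($u_t,\nabla u_t\in L^1\cap L^\infty$ with finite second moments) does hold and suffices.
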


\begin{proof}
The gradient flow of $\mathcal V$ is the Fokker Planck equation $\partial _tu=\eps\Delta u+\mathrm{div}(xu)$, see \cite{JKO}.
The solution to the initial value problem for the Fokker-Planck equation admits the representation formula
\[
u_t(x)=\int_{\R^d}H_t^\eps(x,y)\bar u(y)\,\d y, \qquad t>0,\;x\in\mathbb R^d,
\]
where
\[
H_t^\eps(x,y)=\left(2\eps\pi(1-e^{-2t})\right)^{-d/2}\exp\left\{\frac{|x-e^{-t}y|}{2\eps(1-e^{-2t})}\right\},
\]
see for instance \cite{C, KLan}.
In terms of the Gaussian kernel introduced in \eqref{gaussian}, it reads
\begin{equation}\label{convtrans}
u_t(x)=e^{dt}\left(\Gamma_{\eps\sigma(t)}\ast\bar u\right)\!(e^tx),\qquad\sigma(t):=\frac{e^{2t}-1}{2}.
\end{equation}
Therefore $u\in C^{\infty}((0,+\infty)\times\mathbb R^d)$, and if $\bar u \in D(\mathcal F_m)$, then the maps $t\mapsto\mathcal B_\alpha(u_t)$   and $t\mapsto\mathcal F_m(u_t)$ are continuous up to $t=0$, see Proposition \ref{initialdatum} and Proposition \ref{prop:FP}. Furthermore, if $m>1$,  for every $t>0$ there holds 
\begin{equation}\label{sottointegrale}
\frac{d}{dt}\mathcal F_m(u_t)=\frac m{m-1}\int_{\mathbb R^d}u_t^{m-1}\left(\eps\Delta u_t+\mathrm{div}(xu_t)\right)\,\d x.
\end{equation}
Letting $\zeta_n$ as in the proof of Proposition \ref{thenewprop}, we have for every $n\in\mathbb N$
\[\begin{aligned}
\int_{\mathbb R^d}\zeta_nu_t^{m-1}\mathrm{div}(xu_t)\,\d x&=d\int_{\mathbb R^d}\zeta_nu_t^m\,dx+\frac{1}{m}\int_{\mathbb R^d}\zeta_n \,x\cdot\nabla u_t^m\,\d x
\\&= d\int_{\mathbb R^d}\zeta_nu_t^m\,\d x-\frac{1}{m}\int_{\mathbb R^d}u_t^m\,\mathrm{div}(x\zeta_n)\,\d x\\&=\frac {(m-1)d}m\int_{\mathbb R^d}\zeta_nu_t^m\,\d x-\frac{1}{m}\int_{\mathbb R^d}u_t^m\,x\cdot\nabla \zeta_n\,\d x.
\end{aligned}\]
Since the representation formula \eqref{convtrans}  shows that for every $t>0$ there holds $u_t(\cdot)\in D(\mathcal V)\cap L^1(\R^d)\cap L^\infty(\R^d)$ and thus $x u_t^m \in L^1(\mathbb R^d)$, we can pass to the limit as $n\to+\infty$ to get the integration by parts formula
\begin{equation}\label{unaemme}
\int_{\mathbb R^d}u_t^{m-1}\mathrm{div}(xu_t)\,\d x=\frac {(m-1)d}m\int_{\R^d}u_t^m\,\d x.
\end{equation}
Similarly, since $\dfrac{|\nabla u_t|^2}{u_t}\in L^1(\mathbb R^d)$, we may pass to the limit as $n\to+\infty$ in the identity
\[
\int_{\mathbb R^d}\zeta_n u_t^{m-1}\Delta u_t\,\d x=-\int_{\R^d}u_t^{m-1}\nabla \zeta_n\cdot\nabla u_t\,\d x-\int_{\R^d}\zeta_n\nabla u_t\cdot\nabla u_t^{m-1}\,\d x
\]
and get by monotone convergence
\[
\int_{\mathbb R^d} u_t^{m-1}\Delta u_t\,\d x=-(m-1)\int_{\R^d} u_t^{m-2}|\nabla u_t|^2\,\d x.\]
The latter and \eqref{unaemme} can be inserted in\eqref{sottointegrale} to conclude the proof of point \textbf{(I)} in the case $m>1$. The proof for  $d=2, m=1$ is analogous, and it is obtained by making use of the following identities that holds for every $t>0$
\begin{equation}\label{twoidentities}\begin{aligned}
&\int_{\R^2}x\cdot\nabla(u_t\log u_t-u_t)\,\d x=-d\int_{\R^2}(u_t\log u_t-u_t)\,\d x,\\
&\int_{\R^2}\log u_t\,\Delta u_t\,\d x=-\int_{\R^2}\frac{|\nabla u_t|^2}{u_t}\,\d x
\end{aligned}
\end{equation}
and are proved in the same way. Indeed, by making use of \eqref{twoidentities} we compute for $t>0$
\[\begin{aligned}
\frac{d}{dt}\mathcal F_1(u_t)&=\frac{d}{dt}\int_{\R^2}(u_t\log u_t-u_t)\,\d x
=\int_{\R^2}\log u_t\,\partial_tu_t\,\d x=\int_{\R^2}\log u_t(\eps\Delta u_t+\mathrm{div}(xu_t))\,\d x\\&
=d\int_{\R^2}u_t\log u_t\,\d x+\int_{\R^2}x\cdot\nabla(u_t\log u_t-u_t)\,\d x-\eps\int_{\R^2}\frac{|\nabla u_t|^2}{u_t}\,\d x\\
&=dM -\eps\int_{\R^2}\frac{|\nabla u_t|^2}{u_t}\,\d x
\end{aligned}\]
thus proving \textbf{(I)} also in the case $d=2, m=1$.

On the other hand, 
  after two integrations by parts, we have for $t>0$ 
 \begin{equation}\label{zetarhs}
 \begin{aligned}
 &\int_{R^d}\zeta_n (B_\alpha\ast u_t)\Delta u_t\,\d x=-\int_{\R^d}B_\alpha\ast u_t\nabla\zeta_n\cdot\nabla u_t\,\d x\\&\qquad+\int_{\R^d}u_t \nabla \zeta_n\cdot\nabla B_\alpha\ast u_t \,\d x+\int_{\R^d}\zeta_n (\Delta B_\alpha\ast u_t)u_t\,\d x.
\end{aligned}
 \end{equation}
 We claim that the first two terms in the right hand side of \eqref{zetarhs} vanish as $n\to+\infty$, for every $t>0$. We have indeed $\nabla \zeta_n\to 0$ in $L^\infty(\R^d)$,  $u_t\in L^\infty(\R^d)\cap L^1(\R^d)$, $\nabla u_t\in L^\infty(\R^d)\cap L^1(\R^d)$. We also have finiteness of the second moment of $\nabla u_t$, thanks to \eqref{convtrans} and since $\bar u\in \mathscr P_2^M(\R^d)$. Thus,  by invoking Proposition \ref{loggrowth}, showing that  $\nabla B_\alpha\ast u_t\in L^\infty(\R^d)$ and that $B_\alpha\ast u_t$ grows at most logarithmically as $|x|\to+\infty$, 
 the claim follows. 
 By the same argument we also have $(B_\alpha\ast u_t)\Delta u_t\in L^1(\R^d)$ for every $t>0$. 
 Therefore, by passing to the limit as $n\to+\infty$ in \eqref{zetarhs}, by the monotone convergence theorem and by using that $-\Delta B_\alpha\ast u_t+\alpha B_\alpha\ast u_t=u_t$, we deduce
 \begin{equation}\label{partslaplaciano}
 \int_{R^d} (B_\alpha\ast u_t)\Delta u_t\,\d x=\int_{\R^d}(\Delta B_\alpha\ast u_t)u_t\,\d x=\int_{\R^d} (\alpha B_\alpha\ast u_t-u_t)u_t\,\d x.
 \end{equation}
Similarly, we have
\[
\int_{\R^d}\zeta_n B_\alpha\ast u_t\,\mathrm{div}(xu_t)\,\d x=-\int_{\R^d}x\cdot\nabla \zeta_n\,u_tB_\alpha\ast u_t\,\d x-\int_{\R^d}\zeta_n u_t\,x\cdot\nabla B_{\alpha}\ast u_t\,\d x,
\]
to be passed to the limit with analogous argument,  using that $xu_t \in L^1(\R^d)$, $x u_t B_\alpha\ast u_t\in L^1(\R^d)$ and $x \cdot\nabla u_t B_\alpha\ast u_t\in L^1(\R^d)$ for every $t>0$ (again as a consequence of the at most logarithmic growth of $B_\alpha\ast u$ from Proposition \ref{loggrowth}),  to get
\begin{equation}\label{consegno}
\int_{\R^d} B_\alpha\ast u_t\,\mathrm{div}(xu_t)\,\d x=-\int_{\R^d} u_t\,x\cdot\nabla B_{\alpha}\ast u_t\,\d x,
\end{equation}
where, since $B_\alpha$ is radially decreasing with radial profile denoted  by $\tilde B_\alpha$, there holds
\begin{equation}\label{radialdecreasing}\begin{aligned}
\int_{\R^d}u_t\, x\cdot \nabla B_\alpha\ast u_t\,\d x&=\int_{\R^d}\int_{\R^d}\frac{\tilde B_\alpha(|x-y|)}{|x-y|}\,x\cdot(x-y)\,u_t(x)u_t(y)\,\d x\,\d y\\
&=\frac12\int_{\R^d}\int_{\R^d}\tilde B'_\alpha(|x-y|)|x-y|\,u_t(x)u_t(y)\,\d x\,\d y\le 0.\end{aligned}
\end{equation}
 Since
 \[
 \frac{d}{dt}\mathcal B_\alpha(u_t)=\frac12\frac{d}{dt}\int_{\R^d}u_t\,B_\alpha\ast u_t\,\d x=\int_{\R^d}B_\alpha\ast u_t\,\partial_tu_t\,\d x
 =\int_{\R^d}B_\alpha\ast u_t\,(\eps\Delta u_t+\mathrm{div}(xu_t))\,\d x,
 \]
the result of point \textbf{(II)} follows from \eqref{partslaplaciano}, \eqref{consegno},  and \eqref{radialdecreasing} .
 \end{proof}

\EEE

\EEE

\section{Regularity of discrete minimizers}\label{discretemin}

The section is devoted to the properties of discrete minimizers from \eqref{minmov1}. We shall prove the Sobolev estimate $(u_\tau^k)^{m/2}\in H^1(\R^d)$ in Theorem \ref{generalp}, by invoking the flow interchange from Proposition \ref{prop:FI}. Furthermore, in Proposition \ref{psprop} we will derive estimates of the discrete derivatives of the $L^p$ norms of the $(u_\tau^k)$'s that will be crucially exploited to get  $L^\infty$ smoothing effect later in Section \ref{hypersection}. We shall eventually prove discrete hypercontractivity estimates for finite $p$ in Thereom \ref{lemmaLp}, Theorem \ref{theop} and Theorem \ref{newlpdge3}.\RRR


\subsection{$L^p$ and Sobolev estimates of discrete minimizers}

We start with a basic regularization property for the case $d=2$ and $m=m_c=1$. Its proof contains the main line of the arguments that shall be developed in the subsequent propositions.
We define the Fisher information functional
\begin{equation*}
	\II:\PP_2^M(\R^d)\to [0,+\infty], \qquad \II(v)=\int_{\R^d}\frac{|\nabla v|^2}{v}\,\d x, \quad \II(v)=+\infty \text{ if } v\not\ll \Leb{d},
\end{equation*}
and we recall that it is lower semi continuous with respect to the narrow convergence on sequence with uniformly bounded entropy (see \cite{AGS} Proposition 10.4.14). We shall take advantage of the next
\begin{proposition}
There exists a constant $C>0$ such that, for any $\eps>0$,
\begin{equation}\label{L2I}
	\int_{{\R}^2}v^2\,\d x\le \eps\left( \int_{{\R}^2}v|\log v|\,\d x\right)\II(v)\,+ Ce^{2C/\eps}\int_{{\R}^2}v\,\d x.
\end{equation} 
for any nonnegative $v\in L^1(\mathbb{R}^2)$ such that $\II(v)<+\infty$ and $v\log v\in L^1(\R^2)$.
\end{proposition}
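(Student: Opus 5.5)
We split $v$ at a height $K\ge 1$ and apply the Gagliardo--Nirenberg--Sobolev inequality \eqref{GNS} with $p=1$ to the part above height $K$. Set $w:=(v-K)_+$. Then
\[
v^2\le 2w^2+2K v ,
\]
so $\int_{\R^2} v^2\,\d x\le 2\int_{\R^2} w^2\,\d x+2K\int_{\R^2} v\,\d x$. Since $w^{1/2}\in H^1$ whenever $\II(v)<+\infty$, inequality \eqref{GNS} with $p=1$ together with \eqref{badupper} gives
\[
\int_{\R^2} w^2\,\d x\le G_1\int_{\R^2} w\,\d x\int_{\R^2}|\nabla w^{1/2}|^2\,\d x .
\]
Here $|\nabla w^{1/2}|^2=\frac{|\nabla v|^2}{4w}\bC_{\{v>K\}}$. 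This is the delicate point, because $1/w$ is not bounded by $1/v$ near the level set $\{v=K\}$.

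To avoid this, we use the level $w:=(v-K)_+$ only to measure mass, and apply the GNS inequality to a function comparable to $v$ on $\{v>2K\}$. Take $w:=v\,\bC_{\{v>K\}}$ smoothed, for instance $w:=\phi(v)$ with $\phi$ Lipschitz, $\phi(s)=0$ for $s\le K$, $\phi(s)=s$ for $s\ge 2K$, $0\le\phi(s)\le s$ and $\phi'\le 2$. Then $|\nabla \phi(v)^{1/2}|^2=\frac{\phi'(v)^2|\nabla v|^2}{4\phi(v)}$, which is still singular where $\phi\to0$.

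A cleaner choice is $w=v\,\psi(v)$ with $\psi$ the $C^1$ cutoff $\psi(s)=((s-K)/K)^2\wedge 1$ on $[K,2K]$. Then $w^{1/2}=v^{1/2}\psi^{1/2}$, and $\psi^{1/2}=(s-K)/K$ is Lipschitz with constant $1/K$. Hence
\[
|\nabla w^{1/2}|^2\le 2\psi\,\frac{|\nabla v|^2}{4v}+2v\,\frac{|\nabla v|^2}{K^2}\bC_{\{K<v<2K\}}\le C\,\frac{|\nabla v|^2}{v},
\]
using $v^2/K^2\le 4$ on that set. Therefore $\int_{\R^2}|\nabla w^{1/2}|^2\,\d x\le C\,\II(v)$. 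Moreover $w\ge v$ on $\{v\ge 2K\}$ and $v^2\le 2Kv$ on $\{v<2K\}$, so
\[
\int_{\R^2} v^2\,\d x\le \int_{\R^2} w^2\,\d x+2K\int_{\R^2}v\,\d x .
\]

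The key smallness comes from the mass of $w$. We have
\[
\int_{\R^2} w\,\d x\le\int_{\{v>K\}}v\,\d x\le \frac1{\log K}\int_{\R^2} v|\log v|\,\d x .
\]
Combining the two displays gives
\[
\int_{\R^2} v^2\,\d x\le \frac{C G_1}{\log K}\left(\int_{\R^2} v|\log v|\,\d x\right)\II(v)+2K\int_{\R^2} v\,\d x .
\]
Choosing $K=e^{C'/\eps}$ with $C'=CG_1$ yields \eqref{L2I} in the form $\eps(\cdot)\II(v)+2e^{C'/\eps}\int v$, which is bounded by $Ce^{2C/\eps}\int v$ after enlarging $C$.

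The main obstacle is the one identified above: controlling the Fisher information of the truncated function by $\II(v)$, since the naive truncation $(v-K)_+$ is singular near the cut. The smooth multiplicative cutoff $\psi$ resolves this. The requirement $\II(w)<\infty$ needed to apply \eqref{GNS} follows from the same bound, after a routine approximation argument if needed.
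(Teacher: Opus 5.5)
Your proof is correct and complete. Note that the paper gives no argument of its own for this statement; it only cites \cite[Corollary A.1]{BCKKLL}.

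Your route is the same mechanism the paper uses shortly afterwards in Proposition \ref{trunc}. There too one splits at a level $K>1$, applies \eqref{GNS} to the part of $v$ above $K$, bounds the mass of that part by $\frac{1}{\log K}\int v|\log v|$, and then chooses $\log K\sim 1/\eps$.

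The difference lies only in how you truncate:
\begin{itemize}
\item With $p=1$, the paper's device would be $w_K=(v^{1/2}-K^{1/2})_+$, fed into \eqref{GNS} through $w_K^2$. This gives $|\nabla w_K|\le|\nabla v^{1/2}|$ for free. The price is a constant: one only has $v^2\le(1-2^{-1/2})^{-4}w_K^4$ on $\{v>2K\}$.
\item Your multiplicative cutoff $w=v\psi(v)$ gives $w=v$ exactly on $\{v\ge 2K\}$. The price is that you must estimate the gradient of the cutoff.
\end{itemize}
Both rest on the point you correctly identified: the square root of the truncated function must be a Lipschitz function of $\sqrt v$. This is true for your $w$ and for $w_K^2$, but false for $(v-K)_+$.

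Some cleanup is needed in the write-up:
\begin{itemize}
\item Your opening sentence claims $(v-K)_+^{1/2}\in H^1$ whenever $\II(v)<+\infty$. This is unjustified, since $r\mapsto (r^2-K)_+^{1/2}$ is not Lipschitz at $r=\sqrt K$. You notice this yourself, so simply delete the first two attempts and keep only the $\psi$-cutoff argument.
\item Your $\psi$ is not $C^1$ at $s=2K$ (its left derivative there is $2/K$). This is harmless, since you only use that $\psi^{1/2}$ is $1/K$-Lipschitz.
\item No approximation argument is needed to get $w^{1/2}\in H^1(\R^2)$.
  \begin{itemize}
  \item Write $w^{1/2}=\Phi(\sqrt v)$ with $\Phi(r)=r\,\psi(r^2)^{1/2}$. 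This $\Phi$ is Lipschitz on $[0,+\infty)$ with constant $5$ and $\Phi(0)=0$, since $\Phi'(r)=(3r^2-K)/K\in[2,5]$ on $[\sqrt K,\sqrt{2K}]$.
  \item Also $\sqrt v\in H^1(\R^2)$, because $v\in L^1$ and $\II(v)=4\|\nabla\sqrt v\|_{L^2}^2<+\infty$.
  \item The chain rule then gives $w^{1/2}\in H^1(\R^2)$ and directly $\int|\nabla w^{1/2}|^2\,\d x\le\frac{25}{4}\II(v)$, which replaces your pointwise computation.
  \end{itemize}
\end{itemize}
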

\begin{proof} A proof of this inequality can be found on \cite[Corollary A.1]{BCKKLL}. \end{proof}

We include another simple estimate that will be useful later on. 
\begin{proposition}\label{trunc}
Let $p\in(1,2)$.
For every  $K>1$ there holds 
\[
\int_{\R^2}v^{p+1}\,\d x\le 2K\int_{\R^2}v^p\,\d x+\frac{A_p}{\log K}\left(\int_{\R^2}v|\log v|\,\d x\right)\int_{\R^2}|\nabla v^{p/2}|^2\,\d x,
\]
for every nonnegative $v \in L^p(\R^2)$ such that $v\log v\in L^1(\R^2)$ and $v^{p/2}\in H^1(\R^2)$, where $A_p:=(1-2^{-p/2})^{-2-2/p}\,G_p$ and $G_p$ is the best constant in the inequality \eqref{GNS}. 
\end{proposition}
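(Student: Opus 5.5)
The plan is to split the integral at height $K$ and apply the Gagliardo--Nirenberg--Sobolev inequality \eqref{GNS}, with exponent $p$, to a truncation of $v$ from below. The form of $A_p=(1-2^{-p/2})^{-2-2/p}G_p$ suggests truncating $v^{p/2}$ at the level $(K/2)^{p/2}$.

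First I handle the low part trivially:
\[
\int_{\{v<K\}}v^{p+1}\,\d x\le K\int_{\R^2}v^p\,\d x,
\]
which uses only half of the available term $2K\int v^p$.

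For the high part I set
\[
w:=\bigl(v^{p/2}-(K/2)^{p/2}\bigr)_+,\qquad f:=w^{2/p}.
\]
Then $w\in H^1(\R^2)$ with $\nabla w=\bC_{\{v>K/2\}}\nabla v^{p/2}$, so that $\int|\nabla f^{p/2}|^2\,\d x\le\int|\nabla v^{p/2}|^2\,\d x$. Moreover $0\le f\le v$, so $f\in L^1$.

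On $\{v\ge K\}$ we have $(K/2)^{p/2}\le 2^{-p/2}v^{p/2}$, hence $w\ge(1-2^{-p/2})v^{p/2}$, which gives
\[
v^{p+1}\le (1-2^{-p/2})^{-\frac{2(p+1)}{p}}f^{p+1}\qquad\text{on }\{v\ge K\}.
\]
The exponent $-2(p+1)/p=-2-2/p$ is exactly the one appearing in $A_p$. Applying \eqref{GNS} to $f$ then yields
\[
\int_{\{v\ge K\}}v^{p+1}\,\d x\le A_p\Bigl(\int_{\R^2}f\,\d x\Bigr)\int_{\R^2}|\nabla v^{p/2}|^2\,\d x .
\]

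It remains to show
\[
\int_{\R^2} f\,\d x\le \frac{1}{\log K}\int_{\R^2}v|\log v|\,\d x.
\]
For this I would check pointwise that
\[
(1-(K/2v)^{p/2})^{2/p}\,v\log K\le v|\log v|\qquad\text{for } v>K/2 .
\]
For $v\ge K$ this is immediate, since the prefactor is at most $1$ and $\log v\ge\log K$.

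The main obstacle is the intermediate range $K/2<v<K$, where $\log v<\log K$ and, when $K<2$, $\log v$ can even be negative. The first thing I would try is to exploit the smallness of the prefactor $(1-(K/2v)^{p/2})^{2/p}$ near $v=K/2$, via an elementary one-variable inequality in $s=v/K\in(1/2,1)$.

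If that fails for $K$ close to $1$, the fallback is to use the unused slack $K\int v^p$. Concretely, I would modify the splitting so that $f$ is supported where $\log v\ge\log K$, for instance truncating at level $K$ and absorbing the range $\{K\le v<2K\}$ into the term $2K\int v^p$. This possibly requires adjusting which level is used in the definition of $w$.
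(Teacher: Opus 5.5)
Your primary route, truncating at $(K/2)^{p/2}$, has a genuine gap for $K\in(1,2)$. No better one-variable inequality can fix it, because the intermediate bound $\int_{\R^2} f\,\d x\le(\log K)^{-1}\int_{\R^2}v|\log v|\,\d x$ is false there. Pointwise it already fails at $v=1\in(K/2,K)$: the right-hand side vanishes, while the left-hand side equals $(1-(K/2)^{p/2})^{2/p}\log K>0$. It also fails after integration. Take $v^{p/2}=\min\{1,(R+1-|x|)_+\}$. Then $f=(1-(K/2)^{p/2})^{2/p}$ on $B_R$, so $\int f\,\d x\gtrsim R^2$. On the other hand, $\int v|\log v|\,\d x\le e^{-1}\pi(2R+1)$, since only the annulus contributes. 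For $K\ge 2$ your pointwise inequality does hold: with $t=2v/K\in(1,2)$ it reduces to $(1-t^{-p/2})^{2/p}\le 1-t^{-1}\le\log_2 t$. However, the statement is needed for every $K>1$. In Proposition~\ref{very} it is applied with $K_*(T)$, which is only known to exceed $1$. So the fallback is not optional.

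Your fallback is the right fix and is exactly the paper's proof, and it needs no further adjustment of levels. Set $w:=(v^{p/2}-K^{p/2})_+$ and $f:=w^{2/p}$.
\begin{itemize}
\item On $\{v\le 2K\}$, bound $\int_{\{v\le 2K\}}v^{p+1}\,\d x\le 2K\int_{\R^2}v^p\,\d x$. This is where the full factor $2K$ is used.
\item On $\{v>2K\}$ you have $K^{p/2}\le 2^{-p/2}v^{p/2}$, hence $v^{p+1}\le(1-2^{-p/2})^{-2-2/p}f^{p+1}$. This is the same constant as in your computation.
\item Applying \eqref{GNS} to $f$, with $\nabla f^{p/2}=\bC_{\{v>K\}}\nabla v^{p/2}$, gives $\int_{\{v>2K\}}v^{p+1}\,\d x\le A_p\bigl(\int_{\R^2} f\,\d x\bigr)\int_{\R^2}|\nabla v^{p/2}|^2\,\d x$.
\item Now $f\le v\,\bC_{\{v>K\}}$, and $\log v>\log K>0$ on that set. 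Hence $\int_{\R^2} f\,\d x\le\int_{\{v>K\}}v\,\d x\le(\log K)^{-1}\int_{\R^2}v|\log v|\,\d x$ for every $K>1$.
\end{itemize}
You should drop the $K/2$ truncation and present this version directly.
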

\begin{proof}
Let us fix $K>1$ and define $w_K:=(v^{p/2}-K^{p/2})_+$. It is easy to verify that that on the set $\{v>2K\}$ there holds $v^{p+1}\le \tilde c_pw_K^{2+2/p}$, where $\tilde c_p:=(1-2^{-p/2})^{-2-2/p}$.   We may apply \eqref{GNS} to the function $w_K^{2/p}$ to get
\begin{equation}\label{trunc1}\begin{aligned}
\int_{\{v>2K\}}v^{p+1}\,\d x&\le \tilde c_p\int_{\R^2}w_K^{2+2/p}\,\d x\le \tilde c_pG_p\int_{\R^2}{w_K}^{2/p}\,\d x\int_{\R^2}|\nabla w_K|^2\,\d x\\&\le A_p \int_{\R^2}{w_K}^{2/p}\,\d x\int_{\R^2}|\nabla v^{p/2}|^2\,\d x.
\end{aligned}
\end{equation}
On the other and, since $w_K^{2/p}\le v$ if $v>K$ and $w_K=0$ otherwise, we have
\begin{equation}\label{trunc2}
\int_{\R^d} w_K^{2/p}\,\d x\le \frac{log K}{log K}\int_{\{v>K\}}v\,\d x\le \frac1{\log K}\int_{\R^d}v|\log v|\,\d x.
\end{equation}
Since
\[
\int_{\R^2}v^{p+1}\,\d x=\int_{\{v\le 2K\}}v^{p+1}\,\d x+\int_{\{v>2K\}}v^{p+1}\,\d x\le 2K\int_{\R^2}v^p+\int_{\{v>2K\}}v^{p+1}\,\d x,
\]
by invoking \eqref{trunc1} and \eqref{trunc2} we conclude.
\end{proof}
\RRR
\begin{proposition}\label{L2}
Let $d=2$, $m=1$.
Let  $\tau>0$ and $\{u_\tau^k: k=0,1,2,\ldots\}$ a sequence given by {\rm Proposition \ref{prop:existenceMM}}. 
Then, for any integer $k\ge 1$, we have $u_\tau^k\in L^2(\mathbb{R}^2)$, $u_\tau^k\in W^{1,1}_{loc}(\R^2)$, $\II(u_\tau^k)<+\infty$.
Moreover, for any $T\ge 1$ there exists a constant 
$C_\tau(T)>0$ depending only on $M$, $\chi$, $\alpha$ $\EE_1(u_\tau^0)$, $\mom(u_\tau^0)$ 
such that
\begin{equation}\label{EstimateIL2}\begin{aligned}
	&\frac{1}{2}\II(u_\tau^k) \leq \frac{\FF_1(u_\tau^{k-1})-\FF_1(u_\tau^{k})}{\tau} + C_\tau(T), \\&\frac{1}{2} \|u_\tau^k\|^2_{L^2(\R^2)} \leq \frac{\FF_1(u_\tau^{k-1})-\FF_1(u_\tau^{k})}{\tau} + C_\tau(T)\end{aligned}
\end{equation}
for each $k\ge 1$ such that $k\tau\leq T$.  If $u_0\in D(\mathcal E_1)$, then $C_\tau(T)\le C(T)$ for every $\tau\ge0$, where $C(T)$ depends only on $M$, $\chi$, $\alpha$ $\EE_1(u_0)$, $\mom(u_0)$. \RRR
\end{proposition}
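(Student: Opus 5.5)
The plan is to apply the flow interchange, Proposition \ref{prop:FI}, with auxiliary functional $\VV=\FF_1$, i.e.\ the heat flow (Remark \ref{heatremark}, $\eps=0$). For each $k\ge1$ we let $u_t:=S_t(u_\tau^k)$ solve the heat equation started at $u_\tau^k$. The first task is to compute the dissipation of $\EE_1$ along this flow.

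By Proposition \ref{thenewprop} (I) with $r=1$ and $\Psi(u)=u$, we get $\frac{d}{dt}\FF_1(u_t)=-\II(u_t)$. By (III), with $\Psi(u)=u$, we get $\frac{d}{dt}\BB_\alpha(u_t)\ge-\int u_t^2$. Both maps are absolutely continuous up to $t=0$ since $d=2$. Hence
\[
\frac{d}{dt}\EE_1(u_t)\le-\II(u_t)+\chi\int_{\R^2}u_t^2\,\d x .
\]
We then use \eqref{L2I} to bound $\chi\int u_t^2$ by $\chi\eps\big(\int u_t|\log u_t|\big)\II(u_t)+\chi Ce^{2C/\eps}M$.

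The key point is a uniform control of $\int u_t|\log u_t|$ for small $t$. At $t=0$, Proposition \ref{logequi} (estimate \eqref{FondEst2}) bounds $\int u_\tau^k|\log u_\tau^k|$ by $Q:=C_\tau^0(1+\log^+(8C_3T))$. Along the heat flow, $\FF_1(u_t)$ is nonincreasing and the second moment grows like $\mom(u_\tau^k)+4Mt$. The Carleman estimate \eqref{Carleman} then gives $\int u_t|\log u_t|\le 2Q'$ for $t\in[0,1]$, with $Q'$ depending only on the data through $C_\tau^0$ and $T$. We choose $\eps$ so that $\chi\eps\cdot 2Q'=\tfrac12$. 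This yields
\[
\EE_1(u_\tau^k)-\EE_1(u_t)\ge\int_0^t\Big(\tfrac12\II(u_s)-C'\Big)\,\d s .
\]
Dividing by $t$ and using lower semicontinuity of $\II$ along the heat flow as $t\downarrow0$ (narrow convergence with bounded entropy, \cite[Prop.~10.4.14]{AGS}), we obtain $\liminf_{t\downarrow0}\frac1t(\EE_1(u_\tau^k)-\EE_1(u_t))\ge\frac12\II(u_\tau^k)-C'$.

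This verifies \eqref{>-inf}, so \eqref{mms09} gives $\frac12\II(u_\tau^k)\le\frac{\FF_1(u_\tau^{k-1})-\FF_1(u_\tau^k)}{\tau}+C'$. This is the first inequality of \eqref{EstimateIL2}. In particular $\II(u_\tau^k)<\infty$, and since $u\in L^1$, Cauchy--Schwarz as in \eqref{graduestimate} gives $\nabla u_\tau^k\in L^1$, so $u_\tau^k\in W^{1,1}_{loc}$.

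For the $L^2$ bound we apply \eqref{L2I} once more to $u_\tau^k$ with $\eps$ chosen so that $\eps Q\le\frac12$. Together with the $\II$ estimate, this bounds $\frac12\|u_\tau^k\|_2^2$ by $\frac12\II(u_\tau^k)$ plus a constant. After enlarging $C_\tau(T)$, this gives the second inequality of \eqref{EstimateIL2} and $u_\tau^k\in L^2$. The constants depend only on $C_\tau^0$, hence on $M,\chi,\alpha,\EE_1(u_\tau^0),\mom(u_\tau^0)$. When $u_0\in D(\EE_1)$, \eqref{notau} makes them uniform in $\tau$.

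The main obstacle is justifying the dissipation computation and the passage $t\downarrow0$ at $t=0$ for a merely $L\log L$ datum $u_\tau^k$: we need absolute continuity up to $0$ of $\FF_1$ and $\BB_\alpha$, which Proposition \ref{thenewprop} provides in $d=2$. We also need the integral of $\II(u_s)$ to control $\II(u_\tau^k)$, which requires the lower semicontinuity step rather than a pointwise derivative at $0$.
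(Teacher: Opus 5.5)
Your proposal is correct and follows essentially the paper's proof. You use flow interchange with the heat flow and the dissipation bound $-\II(u_t)+\chi\int u_t^2$. You control $\int u_t^2$ through \eqref{L2I}, combined with Carleman and the entropy/moment bounds of Proposition~\ref{logequi} propagated along the heat flow. You then apply lower semicontinuity of $\II$ and a second application of \eqref{L2I} for the $L^2$ estimate. The only cosmetic difference is that you integrate the differential inequality using absolute continuity up to $t=0$, while the paper applies the Lagrange mean value theorem at an intermediate time $\theta(t)$. Both routes feed the same lower semicontinuity step.
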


\begin{proof}
We fix $T\ge 1$ and $k\ge 1$ such that $k\tau\leq T$. 
We consider the flow associated to the entropy $\FF_1$ starting from $u_\tau^k$.
We denote by $u_t:=S_t(u_\tau^k)$, $t\geq 0$, this flow which solve the heat equation starting from $u_\tau^k$, 
thus given by convolution of $u_\tau^k$ with the Gaussian kernel \eqref{gaussian}. 
Such a solution is smooth and the map $t\mapsto \EE_1(u_t)=\FF_1(u_t)-\chi\mathcal B_\alpha(u_t)$ is differentiable at every point $t>0$. 
By taking into account Proposition \ref{thenewprop} (see also Remark \ref{heatremark}) we have 
\[
\frac{d}{dt}\mathcal{E}_1(u_t) \le -\II(u_t)+\chi\int_{\mathbb{R}^d}u_t^2\,\d x \qquad \forall\,t>0.
\]
Using the Carleman type inequality \eqref{Carleman} and \eqref{L2I}
it holds that
\[
\int_{\R^2}|u_t|^2\,\d x\le \eps\left(\FF_1(u_t)+2M\log\pi+\frac{2}{e}+4\int_{\R^2}|x|^2 u_t(x)\,\d x\right)\II(u_t)\,+ Ce^{2C/\eps}M  \qquad \forall\,t>0
\]
for any $\eps>0$, where $C$ is the constant from \eqref{L2I}.
Taking into account the evolution of the second moment along the heat equation, see Proposition \ref{initialdatum}, and $\FF_1(u_t)\le \FF_1(u_\tau^k)$ it follows that for every $t>0$
\[
\int_{\R^2}u_t^2\,\d x\le \eps\left(\FF_1(u_\tau^k)+2M\log\pi+\frac{2}{e}+4\int_{\R^2}|x|^2 u_\tau^k(x)\,\d x + 16M t \right)\II(u_t)\,+ Ce^{2C/\eps}M.
\]
By Proposition \ref{logequi}, if $C_\tau^0$ denotes the constant from \eqref{forsecostante},  letting $\bar C_{\tau,T}:=2M\log\pi+2e^{-1}+16M+C_\tau^0+4C_\tau^0 T$, we then deduce 
\[
\int_{\R^2}u_t^2\,\d x\le \eps\left (\bar C_{\tau,T}(1+\log^+(8C_3T))\right)\II(u_t)\,+ Ce^{2C/\eps}M  \qquad \forall\,t\in(0,1).
\]
Choosing  $\eps=1/\left(2\chi \bar C_{\tau,T}(1+\log^+(8C_3T))\right)$ we get
\[
\frac{d}{dt}\mathcal{E}_1(u_t) \le -\frac{1}{2}\II(u_t)+ C_\tau(T)  \qquad \forall\,t\in(0,1)
\]
where $C_\tau(T):= MCe^{4C\chi \bar C_{\tau,T}(1+\log^+(8C_3T))}$.

We observe that the map $[0,+\infty)\ni t\mapsto\EE_1(u_t)$ is continuous at $t=0$. Indeed, the maps $t\mapsto \FF_1(u_t)$ and $t\mapsto\BB_\alpha(u_t)$ 
are continuous up to $t=0$ along the heat flow, as seen in Proposition \ref{initialdatum}.
By Lagrange mean value theorem, for any $t\in(0,1)$ there exists $\theta(t)\in (0,t)$ such that
\begin{equation*}
\frac{\mathcal{E}_1(u_\tau^k)-\mathcal{E}_1(S_t(u_\tau^k))}{t}=-\frac{d}{ds}(\mathcal{E}_1(S_s(u_\tau^k))){\Bigg|}_{s=\theta(t)}\!\ge 
\frac{1}{2}\II(u_{\theta(t)}) - C_\tau(T) 
\end{equation*}
Passing to the limit as $t\to 0$
\begin{equation}\label{basicinterchange}
\limsup_{t\to 0^+}\frac{\mathcal{E}_1(u_\tau^k)-\mathcal{E}_1(S_t(u_\tau^k))}{t} \ge 
\liminf_{t\to 0^+}\frac{1}{2}\II(u_{\theta(t)}) - C_\tau(T) 
\end{equation}
By the lower semi continuity of $\II$ with respect to the narrow convergence we obtain that
$u_\tau^k\in D(\II)$ and by \eqref{basicinterchange} and Proposition \ref{prop:FI} we get
\begin{equation}\label{firstchange}
\frac{1}{2}\II(u_\tau^k) \leq \frac{\FF_1(u_\tau^{k-1})-\FF_1(u_\tau^{k})}{\tau} + C_\tau(T)
\end{equation}
which is the first  inequality in \eqref{EstimateIL2}.

Applying \eqref{L2I} to $u_\tau^k$ we get $u_\tau^k\in L^2(\R^2)$ and
\begin{equation}\label{noname}
	\int_{{\R}^2}|u_\tau^k|^2\,\d x\le \eps\left( \int_{{\R}^2}u_\tau^k|\log u_\tau^k|\,\d x\right)\II(u_\tau^k)\,+ MCe^{2C/\eps}.
\end{equation} 
By applying \eqref{FondEst2},  $C_\tau^0$ being again the constant therein, by applying \eqref{noname} with  the choice of $\eps=1/\left(C_\tau^0(1+\log^+(8C_3T))\right)$, and by \eqref{firstchange}, up to redefining $C_\tau(T)$ we obtain the second inequality in \eqref{EstimateIL2}.
The last statement follows from Proposition \ref{logequi} as well.
\end{proof}

\RRR

\begin{proposition}\label{sscritical} 
Let $d=2$, $m=1$. 
Let  $\tau>0$ and $\{u_\tau^k: k=0,1,2,\ldots\}$ a  sequence given by {\rm Proposition \ref{prop:existenceMM}}.
For every $T\ge 1$ and $p\in(1,+\infty)$, define
\begin{equation}\label{kappat}
K_{p,\tau}(T):= \exp\left(\frac{1}{4}\chi p G_p C_\tau^0 (1+\log^+(8C_3T))\right),
\end{equation} 
where $C_\tau^0$ is the constant in {\rm Proposition \ref{logequi}} and $G_p$ is the optimal constant of the inequality \eqref{GNS}.
 Then, for every $T\ge 1$, for every integer $k\ge 1$ such that $k\tau\le T$  and every $p\in[2,+\infty)$ we have $u_\tau^k\in L^{p+1}(\R^2)$ and there holds \RRR
\begin{equation}\label{extraterms1}\begin{aligned}
	& \frac1{\tau(p-1)} \left(\int_{\mathbb{R}^2}(u_\tau^{k-1}-K)_+^p\,\d x-\int_{\mathbb{R}^2}(u_\tau^k-K)^p_+\,\d x\right)
	\ge C_*(K)\int_{\mathbb{R}^2}(u_\tau^k-K)_+^{p+1}\,\d x\\
	&\qquad\qquad -\frac{2p-1}{p-1}\,\chi K\int_{\mathbb{R}^2}(u_\tau^k-K)_+^p\,\d x-\frac{p}{p-1}\,\chi  K^2\int_{\mathbb{R}^2}(u_\tau^k-K)_+^{p-1}\,\d x,
\end{aligned}\end{equation}
for every $K > K_{p,\tau}(T)$,
 where $C_*(K)>0$ is defined by 
\begin{equation}\label{CKappa}
	C_*(K):= M^{-1}\left(\frac{4}{pG_p}-\frac{\chi C_\tau^0(1+\log^+(8C_3T))}{\log K}\right).
\end{equation}
In particular,  $u_\tau^k\in L^p(\R^2)$ for every $p\in(1,+\infty)$ and every integer $k\ge 1$. 
\end{proposition}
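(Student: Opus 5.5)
We apply the flow interchange of Proposition \ref{prop:FI} with the auxiliary functional $\VV=\FF_{p,K,\delta}+\eps\FF_1$. Its gradient flow $u_t=S_t(u_\tau^k)$ is given by Corollary \ref{dissipations}. We then let $\delta\downarrow0$ and $\eps\downarrow0$.

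\textbf{Step 1: the case $u_\tau^{k-1}\in L^p$.} Fix $p\ge2$, $K>K_{p,\tau}(T)$ and assume first $u_\tau^{k-1}\in D(\FF_p)$. For $k=1$ this holds since $u_\tau^0$ is bounded. Along $u_t$ we have $\frac{d}{dt}\EE_1(u_t)=\frac{d}{dt}\FF_1(u_t)-\chi\frac{d}{dt}\BB_\alpha(u_t)$. We bound the first term with \eqref{mterm} for $r=1$ and the second with \eqref{interactionderivative}. This yields, for a.e. $t>0$,
\[
-\frac{d}{dt}\EE_1(u_t)\ge \frac{4p}{p^2}\int|\nabla (u_t-K)_\delta^{p/2}|^2-\chi\int(u_t-K)_\delta^{p+1}-\chi\frac{2p-1}{p-1}K\int(u_t-K)_\delta^p-\chi\frac{p}{p-1}K^2\int(u_t-K)_\delta^{p-1}+\eps\Big(4\int|\nabla u_t^{1/2}|^2-\chi\int u_t^2\Big).
\]

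\textbf{Step 2: absorbing the cubic term.} Set $w=(u_t-K)_\delta$. Apply \eqref{GNS} to $w$, following the truncation idea of Proposition \ref{trunc}: the mass of $w$ lives on $\{u_t>K-\delta\}$, so
\[
\int w\le \frac{1}{\log(K-\delta)}\int u_t|\log u_t|.
\]
This gives
\[
\int w^{p+1}\le \frac{G_p}{\log(K-\delta)}\Big(\int u_t|\log u_t|\Big)\int|\nabla w^{p/2}|^2 .
\]
The entropy of $u_t$ is controlled uniformly for small $t$. Indeed, $\FF_1(u_t)\le\FF_1(u_\tau^k)+o(1)$, and the second moment grows at most linearly along the flow. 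Combined with \eqref{Carleman} and \eqref{FondEst2}, this gives $\int u_t|\log u_t|\le C_\tau^0(1+\log^+(8C_3T))$ up to $o(1)$; the constant $\frac14$ in \eqref{kappat} absorbs the slack. Writing $\int w^{p+1}$ as a split with weight $M^{-1}$, part of the gradient term dominates $\chi\int w^{p+1}$. The remainder produces the positive coefficient $C_*(K)$ in front of $\int w^{p+1}$, through a bound of the type $\int w^{p+1}\le G_pM\int|\nabla w^{p/2}|^2$. The condition $K>K_{p,\tau}(T)$ is exactly what makes $C_*(K)>0$.

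The $\eps$-term is handled by \eqref{L2I}, in the same way as in Proposition \ref{L2}, and contributes a bounded constant times $\eps$. The remaining $L^{p}$ and $L^{p-1}$ terms of $w$ are finite, since the flow is bounded for $t>0$.

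\textbf{Step 3: passing to the limit.} The bound from Step 2 is a lower bound on $-\frac{d}{dt}\EE_1(u_t)$, so \eqref{>-inf} holds. Using the mean value argument of Proposition \ref{L2} together with lower semicontinuity as $t\downarrow0$, Proposition \ref{prop:FI} gives
\[
\frac{\VV(u_\tau^{k-1})-\VV(u_\tau^k)}{\tau}\ge \liminf_{t\to0}\Big[C_*(K)\int (u_t-K)_\delta^{p+1}-\dots\Big].
\]
Lower semicontinuity of $\int(\cdot-K)_\delta^{p+1}$ gives $u_\tau^k\in L^{p+1}$. The lower-order terms pass to the limit by the continuity of $\FF_p$ from Corollary \ref{corofp}, which is where the hypothesis $u_\tau^{k-1}\in L^p$ is used.

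We then let $\eps\to0$ and $\delta\to0$ by monotone convergence. This gives \eqref{extraterms1}, and in particular $u_\tau^k\in L^{p+1}$.

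\textbf{Step 4: induction.} Inducting on $k$ starting from $u_\tau^0\in L^\infty$ removes the assumption $u_\tau^{k-1}\in L^p$, so the estimate holds for all $k$ with $k\tau\le T$. The final claim, $u_\tau^k\in L^p$ for every $p\in(1,+\infty)$, follows by interpolation with $L^1$.

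The main obstacle is Step 3. One must justify the $t\downarrow0$ limits of the lower-order terms, namely the continuity of $t\mapsto\int (u_t-K)_\delta^{p}$ up to $t=0$, and keep the entropy bound uniform in $t$ small.
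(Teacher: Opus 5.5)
Your scheme (flow interchange with $\VV=\FF_{p,K,\delta}+\eps\FF_1$, GNS with truncation for the cubic term, then $t\downarrow0$, $\eps\to0$, $\delta\to0$) is the paper's. Your treatment of the $\eps$-term via \eqref{L2I} is a valid alternative to the paper's bound by $\chi\eps\int(u_\tau^k)^2$. The first gap is that your hypothesis sits on the wrong minimizer. Corollary \ref{corofp} concerns the auxiliary flow started at $\bar u=u_\tau^k$, so it needs $u_\tau^k\in L^p$. Knowing $u_\tau^{k-1}\in L^p$ says nothing about $\int(u_t-K)_\delta^p$, $\int(u_t-K)_\delta^{p-1}$ or $\int u_t\log^+u_t$ as $t\downarrow0$. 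Boundedness of $u_t$ at each fixed $t>0$ does not stop these from blowing up as $t\downarrow0$. So neither \eqref{>-inf} nor replacing the lower-order terms by their values at $u_\tau^k$ is justified, and \eqref{extraterms1} only makes sense once $\int(u_\tau^k-K)_+^p<\infty$ is known. The paper runs a recursion in $p$ at fixed $k$. It starts from $u_\tau^k\in L^2$ (Proposition \ref{L2}, via the heat flow and \eqref{L2I}) and proves $u_\tau^k\in L^p\Rightarrow u_\tau^k\in L^{p+1}$ together with \eqref{extraterms1}. At each step, Corollary \ref{corofp}, Vitali's theorem and point \textbf{(II)} of Proposition \ref{thenewprop} give continuity at $t=0$ and monotonicity of all these maps. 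The induction on $k$ serves only to ensure $\VV(u_\tau^{k-1})<\infty$. Within your scheme, a repair is to first absorb $\int w^p$ and $\int w^{p-1}$ into $\frac{1}{2}C_*\int w^{p+1}$, using interpolation with $\int w\le M$ and Young's inequality. This gives \eqref{>-inf} without knowing $u_\tau^k\in L^p$, so flow interchange from $u_\tau^{k-1}\in D(\VV)$ yields $u_\tau^k\in L^p$. Only then can you run the sharp argument.

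The second gap is the entropy constant. Bounding through $\int u_t|\log u_t|$ and \eqref{Carleman} adds $2M\ln\pi+\frac{2}{e}+4\,\mathrm m_{log}(u_t)$. Here \eqref{FondEst2} only gives $\mathrm m_{log}(u_\tau^k)\le\mom(u_\tau^k)\le C_\tau^0T(1+\log^+(8C_3T))$, so the error is of order $T\,C_\tau^0(1+\log^+(8C_3T))$, not $o(1)$. The factor $\frac14$ in \eqref{kappat} leaves no slack: by \eqref{CKappa}, $C_*(K)>0$ exactly when $K>K_{p,\tau}(T)$. Your argument therefore gives \eqref{extraterms1} only with a smaller coefficient and a larger threshold than stated. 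The fix is to avoid $|\log u|$. Since $w$ vanishes where $u_t\le K-\delta$ and $K-\delta>1$, you have $\int w\le\frac{1}{\log(K-\delta)}\int u_t\log^+u_t$. The map $x\mapsto x\log^+x$ is convex, nondecreasing and vanishes at $0$. Monotonicity along the flow plus continuity at $t=0$ then give $\int u_t\log^+u_t\le\int u_\tau^k\log^+u_\tau^k\le C_\tau^0(1+\log^+(8C_3T))$ by \eqref{FondEst2}. This is exactly the constant in \eqref{CKappa}. Using \eqref{Carleman} remains harmless for your $\eps$-term, where the constant only multiplies $\eps$.
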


\begin{proof}


 Let $T\ge 1$ and $k\ge 1$ an integer such that $k\tau\le T$.  The argument is recursive with respect to $k$, as in the proof of  Proposition \ref{prop:FI}. Indeed, we assume that for every  $j=0,1,\ldots k-1$ there holds $u_\tau^j\in L^p(\R^d)$ for every $1\le p<+\infty$, a property which is clearly true for $j=0$, see \eqref{heatcontraction}. \RRR
The result will be achieved by proving the following implication:
\begin{equation}\label{recursivestep}
p\ge 2\quad\mbox{and}\quad(u_\tau^k)\in L^p(\R^2)\qquad\Longrightarrow\qquad(u_\tau^k)\in L^{p+1}(\R^2)\quad\mbox{and}\quad\mbox{\eqref{extraterms1} holds.}
\end{equation}
Indeed, this allows to recursively prove \eqref{extraterms1} for every $p\ge 2$, and we notice that we can start the recursive argument since we already know that $u_\tau^k\in L^2(\R^2)$ by Proposition \ref{L2}.

In order to prove \eqref{recursivestep}, we assume $(u_\tau^k)\in L^p(\R^2)$ for some $p\ge 2$.
We let $K> K_{p,\tau}(T)$ (in particular, $K>1$)
such that $\int_{\mathbb R^2}(u_\tau^k-K)_+\,\d x>0$ (otherwise $u_\tau^k\in L^\infty(\R^d)$ and \eqref{recursivestep} is satisfied).
We also let $\delta\in(0,(K-1)\wedge(K/2))$. 
We denote by $u_t:=S_t(u_\tau^k)$ the gradient flow of the
 functional $\FF_{p,K,\delta}+\eps\FF_1$, starting from $u_\tau^k$. Here, $\mathcal F_{p,K,\delta}$ is the displacement convex entropy whose density function is given by \eqref{baddelta}, and $u_t$ solves therefore \eqref{nonlinear} with $\Psi(x)=L_{p,K,\delta}(x)+\eps x$.
By Proposition \ref{thenewprop} and Corollary \ref{dissipations},  the map $t\mapsto \mathcal E_1(u_t)=\FF_1(u_t)-\mathcal B_\alpha(u_t)$ belongs to $AC_{loc}([0,+\infty))$ and, for any $t>0$, there holds
\begin{equation}\label{long}\begin{aligned}
&{\mathcal{E}_1(u_\tau^k)-\mathcal{E}_1(S_t(u_\tau^k))}=-\int_{0}^t\frac{\d}{\d s}(\mathcal{E}_1(S_s(u_\tau^k)))\,\d s\ge \frac4p\int_0^t\int_{\mathbb{R}^2}\left|\nabla (u_s-K)_\delta^{p/2}\right|^2\,\d x\,\d s\\&\qquad\quad-\chi\int_0^t\int_{\mathbb{R}^2}(u_s-K)_\delta^{p+1}\,\d x\,\d s-\frac{2p-1}{p-1}\,\chi K\int_0^t\int_{\mathbb{R}^2}(u_s-K)_\delta^p\,\d x\,\d s\\
&\qquad\quad-\frac{p}{p-1}\,\chi K^2\int_0^t\int_{\mathbb{R}^2}(u_s-K)_\delta^{p-1}\,\d x\,\d s-\chi\eps\int_0^t\int_{\mathbb R^2}u_s^2\,\d x\,\d s.
\end{aligned}
\end{equation}
 We estimate the gradient term in \eqref{long} with \eqref{GNS} obtaining
\begin{equation}\label{startgns}
	\frac4p\int_{\R^2}|\nabla(u_s-K)_\delta^{p/2}|^2\,\d x \ge \frac{4}{pG_p}\left(\int_{\R^2}(u_s-K)_\delta\,\d x \right)^{-1}\int_{\R^2} (u_s-K)_\delta^{p+1}\,\d x.
\end{equation} 
Since $K>1$ we have 
\begin{equation}\label{equiintegral0}\begin{aligned}
	\int_{\mathbb{R}^2} (u_s-K)_\delta\,\d x&\le \frac1{\log K}\int_{\R^2}(u_s-K)_\delta\log K\,\d x\\&\le\frac1{\log K}\int_{\{u_s>K-\delta\}}(u_s-K)_\delta\log(u_s+\delta)\,\d x.\end{aligned}
\end{equation}
We notice that for every $x\ge K-\delta$ there holds $$(x-K)_\delta\log(x+\delta)\le (x-(K-\delta))\log(x+\delta)
= x\log x +b_{K,\delta}(x),
$$
where $b_{K,\delta}:[K-\delta,+\infty)\to\mathbb R$ is given by $b_{K,\delta}(x):=x\log (x+\delta)-x\log x-(K-\delta)\log(x+\delta)$, and we claim that  $b_{K,\delta}\le 0 $. Indeed, Since $\delta<K-1$ we get $b_{K,\delta}(K-\delta)<0$, and since $b'_{K,\delta}(x)={(\delta^2-x(K-\delta))}/{(x^2+\delta x)}$ we get $b'_{K,\delta}\le 0$ as $K>2\delta$. This proves the claim, which together with \eqref{equiintegral0} entails  
\begin{equation}\label{equiintegral}
\int_{\mathbb{R}^2} (u_s-K)_\delta\,\d x\le\frac1{\log K}\int_{\mathbb R^2}u_s\log^+u_s\,\d x.
\end{equation}
We next claim that $\|u_t\|_2\le\|u_\tau^k\|_2<+\infty$ for every $t\ge0$, and more generally we claim that the four maps \begin{equation*}\label{3maps}t\mapsto \int_{\R^2}(u_t-K)_\delta^p\,\d x,\quad t\mapsto \int_{\R^2} (u_t-K)_\delta^{p-1}\,\d x,\quad t\mapsto \int_{\mathbb R^2}u_t\log^+u_t\,\d x,\quad t\mapsto\int_{\R^2}u_t^2\,\d x\end{equation*} are nonincreasing  and continuous up to $t=0$. 
Indeed, since $u_\tau^k\in L^p(\R^2)$, $p\ge 2$, and since it is not difficult to check that $\Psi'(x)\ge c x^{p-1}$ for every $x\ge 0$ (for some suitable $c>0$ depending on $p,K,\delta,\eps$), we may invoke Corollary \ref{corofp} to deduce that the map $t\mapsto \|u_t\|_p$ is nonincreasing and continuous on $[0,+\infty)$. The four maps above are in the form $t\mapsto \int_{\R^2}G(u_t)\,\d x$ for a nonnegative convex $G:[0,+\infty)\to[0,+\infty)$ with at most $p$ growth and with $G(0)=0$, so that there exists $a>0,b>0$ such that  $G(x)\le ax+bx^p$ for every $x\ge 0$. Their continuity on $[0,+\infty)$ then follows by an application of Vitali's convergence theorem. Their monotonicity is a consequence of point {\bf{(II)}} of Proposition \ref{thenewprop}.  \RRR
Then, from \eqref{equiintegral} and Proposition \ref{logequi},
\begin{equation*}\begin{aligned}\label{intermezzo}
\int_{\mathbb{R}^2} (u_s-K)_\delta\,\d x&\le\frac1{\log K}\int_{\mathbb R^2}u_s\log^+u_s\,\d x\le \frac{1}{\log K}\int_{\mathbb R^2}u_\tau^k\log^+u_\tau^k\,\d x
\le\frac{C_\tau^0(1+\log^+(8C_3T))}{\log K}\end{aligned}
\end{equation*}
where $C_\tau^0$ is the constant of Proposition \ref{logequi}. 
Therefore, from \eqref{startgns} we obtain
\begin{equation}\label{followgns}\begin{aligned}
	&\frac4p\int_{\R^2}|\nabla(u_s-K)_\delta^{p/2}|^2\,\d x-\chi\int_{\R^2}(u_s-K)_\delta^{p+1}\,\d x\\
	&\quad \ge\left(\int_{\R^2}(u_s-K)_\delta\,\d x \right)^{-1}\left(\frac{4}{pG_p}  -\chi \int_{\R^2}(u_s-K)_\delta\,\d x \right)\int_{\R^2}(u_s-K)_\delta^{p+1}\,\d x\\
	&\quad\geq M^{-1}\left(\frac{4}{pG_p}  -\chi \frac{C_\tau^0(1+\log^+(8C_3T))}{\log K}\right)\int_{\R^2}(u_s-K)_\delta^{p+1}\,\d x.\\
\end{aligned}\end{equation}
By \eqref{CKappa} we have $C_*(K)>0$ for $K>K_{p,\tau}(T)$, therefore the coefficient in the above left hand side is positive. 
Then the latter claim,
 \eqref{followgns},
 \eqref{long} and  \eqref{CKappa} entail that for every $t>0$
\begin{equation}\label{long2}\begin{aligned}
	&\frac{\mathcal{E}_1(u_\tau^k)-\mathcal{E}_1(S_t(u_\tau^k))}{t}=-\frac1t\int_0^t\frac{\d}{\d s}(\mathcal{E}_1(S_s(u_\tau^k)))\,\d s\\
	&\quad\ge C_*(K)\frac1t\int_0^t \int_{\mathbb{R}^2}(u_s-K)_\delta^{p+1}\,\d x\,\d s-\frac{2p-1}{p-1}\,\chi K\,\int_{\mathbb{R}^2}(u_\tau^k-K)_\delta^p\,\d x\\
	&\qquad-\frac{p}{p-1}\,\chi K^2\,\int_{\mathbb{R}^2}(u_\tau^k-K)_\delta^{p-1}\,\d x\,-\chi\eps \int_{\R^d}(u_\tau^k)^2\,\d x.
\end{aligned}
\end{equation}
  If  we  
 pass to the limit as $t\to 0$ in \eqref{long2},  we get
\[\begin{aligned}
	&\liminf_{t\downarrow 0}\frac{\mathcal{E}_1(u_\tau^k)-\mathcal{E}_1(S_t(u_\tau^k))}{t}
	\ge C_*(K)\liminf_{t\downarrow 0}\frac1t\int_0^t\int_{\mathbb{R}^2}(u_s-K)_\delta^{p+1}\,\d x\,\d s\\
	&\quad-\frac{2p-1}{p-1}\chi K\int_{\mathbb{R}^2}(u_\tau^k-K)_\delta^p\,\d x-\frac{p}{p-1}\chi K^2\int_{\mathbb{R}^2}(u_\tau^k-K)^{p-1}_\delta\,\d x -\chi \eps\int_{\R^2}(u_\tau^k)^2\,\d x.
\end{aligned}
\]
The flow interchange inequality from Proposition \ref{prop:FI} gives therefore
\begin{equation*}\label{iglu}\begin{aligned}
	&\frac{\FF_{p,K,\delta}(u_\tau^{k-1})+\eps\mathcal F_1(u_\tau^{k-1})-\FF_{p,K,\delta}(u_\tau^k)-\eps\mathcal F_1(u_\tau^k)}{\tau}
	+\frac{2p-1}{p-1}\chi K\int_{\mathbb{R}^2}(u_\tau^k-K)_\delta^p\,\d x\\&\quad+\frac{p}{p-1}\chi K^2\int_{\mathbb{R}^2}(u_\tau^k-K)^{p-1}_\delta\,\d x+\chi\eps\int_{\R^2}(u_\tau^k)^2\,\d x\\
	&\quad \ge C_*(K)\liminf_{t\downarrow0}\frac1t\int_0^t\int_{\mathbb{R}^2}(u_s-K)_\delta^{p+1}\,\d x\,\d s\ge C_*(K)\liminf_{t\downarrow0}\int_{\mathbb{R}^2}(u_t-K)_\delta^{p+1}\,\d x
\end{aligned}
\end{equation*}
where the last inequality is due to the fact that the map $s\mapsto\int_{\mathbb{R}^2}(u_s-K)_\delta^{p+1}\,\d x $ is nonincreasing by point \textbf{(II)} of Proposition \ref{thenewprop}. By a lower semicontinuity argument, we obtain 
\begin{equation}\label{iglu2}\begin{aligned}
	&\frac{\FF_{p,K,\delta}(u_\tau^{k-1})+\eps\mathcal F_1(u_\tau^{k-1})-\FF_{p,K,\delta}(u_\tau^k)-\eps\mathcal F_1(u_\tau^k)}{\tau}
	+\frac{2p-1}{p-1}\chi K\int_{\mathbb{R}^2}(u_\tau^k-K)_\delta^p\,\d x\\&\quad+\frac{p}{p-1}\chi K^2\int_{\mathbb{R}^2}(u_\tau^k-K)^{p-1}_\delta\,\d x+\chi\eps \int_{\R^2}(u_\tau^k)^2\,\d x \ge C_*(K)\int_{\mathbb{R}^2}(u_\tau^k-K)_\delta^{p+1}\,\d x.
\end{aligned}
\end{equation}
Taking into account that $u_\tau^k$ is independent of $\eps$ and $\delta$, we pass to the limit in \eqref{iglu2} for $\eps\to 0$ 
and for $\delta\to 0$ (using monotone convergence theorem) thus obtaining  $u_\tau^k\in L^{p+1}(\R^d)$ and the validity of \eqref{extraterms1}. This proves \eqref{recursivestep}.
 
 Eventually, the arbitrariness of $T$ also implies that $u_\tau^k\in L^p(\R^2)$ for every $p\in(1,+\infty)$ and every integer $k\ge 1$.
\end{proof}

\RRR

 \RRR
 
 The following is among the main result of this section. It provides a Sobolev estimate for discrete minimizers that is useful for proving  the subsequent discrete derivatives estimates for the $L^p$ norms.
 
\begin{theorem}\label{generalp}
Let  
 $\tau>0$ and let $\{u_\tau^k: k=0,1,2,\ldots\}$ a sequence given by {\rm Proposition \ref{prop:existenceMM}}.
 Then for every $k\ge 1$ and every $p\in[1,+\infty)$ there hold $$u_\tau^k\in L^p(\R^d),\qquad (u_\tau^k)^{	{\frac{p+m-1}{2}}}\in H^1(\R^d),$$ and
\begin{equation}\label{stimagirata}
\frac{4mp}{(p+m-1)^2}\|\nabla(u_\tau^k)^{\frac{p+m-1}{2}}\|^{2}_{L^{2}(\R^d)}\le 
\frac{\mathcal F_p(u_\tau^{k-1})-\mathcal F_p(u_\tau^{k})}{\tau} +\chi\|u_\tau^k\|_{L^{p+1}(\R^d)}^{p+1}.
\end{equation}
\end{theorem}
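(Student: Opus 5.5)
We will prove the statement by the flow interchange technique of Proposition \ref{prop:FI}, using as auxiliary functional $\VV=\FF_p+\eps\FF_1$, i.e. the gradient flow $u_t:=S_t(u_\tau^k)$ of \eqref{nonlinear} with $\Psi(x)=L_{F_p}(x)+\eps x = x^p+\eps x$ (for $p=1$ simply the heat flow, Remark \ref{heatremark}).

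\textbf{Step 1: integrability.} We first secure $u_\tau^k\in L^p(\R^d)$ for every $p\in[1,+\infty)$. For $d=2$, $m=1$ this is Proposition \ref{sscritical}. In the other cases we would run the same recursive scheme on $k$: assuming $u_\tau^j\in L^p$ for $j<k$, we apply the flow interchange with $\FF_{p,K,\delta}+\eps\FF_1$. By Corollary \ref{dissipations} the dissipation of $\FF_m$ is at least $\frac{4mp}{(p+m-1)^2}\|\nabla (u_s-K)_\delta^{(p+m-1)/2}\|_2^2$, and the interaction term is bounded below by \eqref{interactionderivative}. The positive power $\int (u_s-K)_\delta^{p+1}$ is then absorbed using Sobolev/GNS, namely \eqref{sobolev} for $d\ge3$ and \eqref{nuovagns} for $d=2$. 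The coercivity comes from the exponent gap $m>m_c$ together with the smallness of $\int(u_s-K)_\delta^{s}$ for large $K$, which follows from the uniform $L^m$ bounds of Propositions \ref{logequi} and \ref{equi}. For $m=m_c$ this smallness comes instead from equi-integrability, exactly as in Proposition \ref{sscritical}. This gives $u_\tau^k\in L^{p+1}$, and induction in $p$ starting from $p=m$ (or $p=2$) concludes.

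\textbf{Step 2: differentiating the energy along $u_t$.} With $u_\tau^k\in L^{p+1}\cap L^m$, Corollary \ref{corofp} and Proposition \ref{thenewprop} give absolute continuity of $t\mapsto \EE_m(u_t)$ on $[0,T]$. Formula \eqref{PSIm} with $r=m$ yields
\[
-\frac{d}{dt}\FF_m(u_t)=m\int (pu_t^{p-1}+\eps)u_t^{m-2}|\nabla u_t|^2\,\d x\ \ge\ \frac{4mp}{(p+m-1)^2}\int|\nabla u_t^{\frac{p+m-1}2}|^2\,\d x .
\]
By \eqref{flowB}, $\frac{d}{dt}\BB_\alpha(u_t)\ge -\int u_t^{p+1}-\eps\int u_t^2$. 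Integrating on $(0,t)$ and dividing by $t$, the monotonicity of $t\mapsto\|u_t\|_{p+1}$ and $\|u_t\|_2$ from Proposition \ref{thenewprop}\textbf{(II)} bounds the negative terms by their values at $t=0$, namely $\chi\|u_\tau^k\|_{p+1}^{p+1}+\chi\eps\|u_\tau^k\|_2^2$. Hence \eqref{>-inf} holds, and Proposition \ref{prop:FI} gives
\[
\frac{4mp}{(p+m-1)^2}\liminf_{t\downarrow0}\frac1t\int_0^t\|\nabla u_s^{\frac{p+m-1}2}\|_2^2\,\d s\le \frac{\VV(u_\tau^{k-1})-\VV(u_\tau^k)}{\tau}+\chi\|u_\tau^k\|_{p+1}^{p+1}+\chi\eps\|u_\tau^k\|_2^2 .
\]

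\textbf{Step 3: limits.} We pick $s_n\downarrow0$ along which the Dirichlet integrals stay bounded. Since $u_{s_n}\to u_\tau^k$ in $L^1$, $u_{s_n}^{(p+m-1)/2}$ converges weakly in $H^1$ (bounded in $L^2$ by the $L^{p+m-1}$ bound), and lower semicontinuity of the Dirichlet norm gives $(u_\tau^k)^{(p+m-1)/2}\in H^1$ together with the same inequality at $u_\tau^k$. Letting $\eps\to0$ then yields \eqref{stimagirata}. For $p=1$ we use the heat flow, with the convention $\FF_1$ for the entropy. The main obstacle is Step 1, in particular making the absorption of $\int(u-K)_\delta^{p+1}$ work uniformly in $d$ and $m$. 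There we will mimic Proposition \ref{sscritical}, using \eqref{nuovagns} with $s=m$ in $d=2$ and the Sobolev inequality combined with interpolation between $L^m$ and $L^{\frac{d(p+m-1)}{d-2}}$ in $d\ge3$.
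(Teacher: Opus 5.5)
Your Steps 2 and 3 coincide with the final step of the paper's proof, and they are fine once $u_\tau^k\in L^r(\R^d)$ is known for every $r<+\infty$. That final step consists of:
\begin{itemize}
\item flow interchange with $\FF_p+\eps\FF_1$;
\item absolute continuity of $t\mapsto\EE_m(u_t)$ via Corollary \ref{corofp};
\item monotonicity of $\|u_t\|_{L^{p+1}}$ along the flow;
\item a sequence of times with bounded Dirichlet integrals, and identification of the weak $L^2$ limit of the gradients;
\item finally $\eps\to0$.
\end{itemize}

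\textbf{The gap: Step 1 has no base case for $m<2$.} A priori you only know $u_\tau^k\in L^m(\R^d)$. Moreover $m<2$ whenever $m=m_c$ (in every dimension), and also for $m\in(m_c,2)$. The tool you rely on, Corollary \ref{dissipations}, is only available for $p\ge 2$.
\begin{itemize}
\item ``Starting from $p=m$'' is therefore not covered.
\item ``Starting from $p=2$'' presupposes $u_\tau^k\in L^2(\R^d)$. The paper provides this only for $d=2$, $m=1$, through Proposition \ref{L2} and the special inequality \eqref{L2I}.
\end{itemize}
This is not a bookkeeping issue. For $p$ close to $1$, the truncated density $U(x)=\frac1{p-1}(x-K)_+^p$ can violate McCann's condition, in its differential form $x^2U''(x)\ge(1-\frac1d)(xU'(x)-U(x))$. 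For example, with $d=2$, $p=1.1$, $x=3.75K$, the left side is about $15.5K^2$ and the right side about $18.9K^2$. So $\FF_{p,K,\delta}+\eps\FF_1$ is not even an admissible auxiliary functional in Proposition \ref{prop:FI}. As written, Step 1 does not produce $u_\tau^k\in L^p$ for $p>m$ when $m<2$, except in the case $d=2$, $m=1$.

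\textbf{The missing idea.} Away from $d=2$, $m=1$, no truncation and no smallness are needed for this qualitative statement.
\begin{enumerate}
\item Work recursively in $k$ and take $\VV=\FF_p+\eps\FF_1$ with $p\in(1,m]$. Then $u_\tau^k\in D(\FF_p)$, and Corollary \ref{corofp} gives $\|u_s\|_{L^p}\le\|u_\tau^k\|_{L^p}$ along the auxiliary flow.
\item Bound the dissipation from below. For $d\ge3$, use the Sobolev inequality \eqref{sobolev} to get $\|u_s\|_{L^{(p+m-1)d/(d-2)}}^{p+m-1}$. For $d=2$, use \eqref{GNS} with exponent $p+m-1$ to get $\int u_s^{p+m}$.
\item Interpolate $\|u_s\|_{L^{p+1}}$ between the finite lower norm $\|u_s\|_{L^p}$ and this strong norm. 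The power carried by the strong norm is strictly smaller than $p+m-1$ (resp.\ $p+m$). For $d\ge3$ this is exactly the condition $2p+d(m-2)>0$, which holds for $p>1$, $m\ge m_c$; for $d=2$ it is the condition $m>1$.
\item Young's inequality then absorbs this term, at the price of a large constant depending on $\|u_\tau^k\|_{L^p}$. The term $\eps\int u_s^2$ is absorbed in the same way, by $L^1$--$L^2$--strong-norm interpolation.
\item Flow interchange and lower semicontinuity give $u_\tau^k\in L^{(p+m-1)d/(d-2)}$ (resp.\ $L^{p+m}$), hence $u_\tau^k\in L^{p+1}$. Iterating $p\mapsto p+1$ gives every $L^p$.
\end{enumerate}
This treats $m=m_c$, $d\ge 3$ without any equi-integrability. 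Your own remark about interpolating against $L^m$ already points this way: once a finite lower norm is used, the truncation is superfluous. Truncation together with equi-integrability is genuinely needed only for $d=2$, $m=1$, where there is no exponent gap, and you correctly delegate that case to Proposition \ref{sscritical}.
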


\begin{proof} 
{\textbf{Step 1.}}
 We start by proving that for $d\ge 3$ there holds $u_\tau^k\in L^p(\R^d)$ for every $p\in(1,+\infty)$ and every $k\ge 1$.  Again this is done with a recursive argument with respect to $k$: we prove this property for given $k\ge1$ assuming it is already known for all integers from $0$ to $k-1$\RRR. In order to do this, we first let $p\in(1,m]$. We notice that $(1,m]$ is nonempty because $m\ge m_c$ and $d\ge 3$. We  consider the functional $\FF_{p}+\eps\FF_1$, and $u_t:=S_t(u_\tau^k)$ its gradient flow
starting from ${u_\tau^k}$. 
Since  $u_\tau^k \in D(\FF_m)$ and $m\ge p$, we may apply  Proposition \ref{thenewprop} and Corollary \ref{corofp} to obtain that 
the maps $t\mapsto\FF_m(u_t)$ and  $t\mapsto\BB_\alpha(u_t)$ belong to $AC_{loc}([0,+\infty))$ and, for every $t>0$, there holds 
\begin{equation}\label{beta?}\begin{aligned}
&{\mathcal{E}_{m}(u_\tau^k)-\mathcal{E}_{m}(S_t(u_\tau^k))}=-\int_0^t\frac \d{\d s}\mathcal E_m(u_s)\,\d s\ge -\chi\eps\int_0^t\int_{\R^d}u_s^2\,\d x\,\d s
\\
&\;\;\;+ \frac{4pm}{(p+m-1)^2}\int_0^t\int_{\mathbb{R}^d}\left|\nabla u_s^{\frac{p+m-1}{2}}\right|^2\,\d x\,\d s
-\chi\int_0^t\int_{\mathbb{R}^d}u_s^{p+1}\,\d x\,\d s
\end{aligned}
\end{equation}
Then, applying the Sobolev inequality  \eqref{sobolev} to the gradient term in \eqref{beta?}, and setting $\lambda=\frac{d}{d-2}$ for notational ease, we deduce
\begin{equation}\label{fromsobolev}
C_{2,d}\int_{\R^d}\left|\nabla u_s^{\frac{p+m-1}{2}}\right|^2\,\d x \ge \|u_s\|_{L^{(p+m-1)\lambda}(\R^d)}^{p+m-1}.
\end{equation}
We notice that ${(p+m-1)\lambda}>p+1>2$ since $m\ge m_c$.
We also notice that setting $a=\frac{2p+d(m-1)}{2p+d(m-2)}$, $a'=\frac{a}{a-1}$, and by using $L^p-L^{p+1}-L^{(p+m-1)\lambda}$ interpolation, along with Young inequality, we get for every $\delta>0$
\begin{equation}\label{interp1}
\|\rho\|_{L^{p+1}(\R^d)}^{p+1}\le \frac{\delta^a}{a}\|\rho\|_{L^{p}(\R^d)}^{(1-\theta)(p+1)a}+\frac1{a'\delta^{a'}}\|\rho\|_{L^{(p+m-1)\lambda}(\R^d)}^{p+m-1},\qquad\theta=\frac{(p+m-1)\lambda}{(p+1)[(p+m-1)\lambda-p]}.
\end{equation}
Similarly by $L^1-L^2-L^{(p+m-1)\lambda}$ interpolation and Young inequality we get
\begin{equation}\label{interp2}
\|\rho\|_{L^{2}(\R^d)}^2\le \frac{M^{(2-2\varsigma)b'}}{b'}+\frac1b\,\|\rho\|_{L^{(p+m-1)\lambda}(\R^d)}^{p+m-1},\qquad2\varsigma=\frac{(p+m-1)\lambda}{(p+m-1)\lambda-1},
\end{equation}
where $b=\frac{(p+m-1)\lambda-1}{\lambda}$ and $b'=\frac{b}{b-1}$. We apply \eqref{interp1} and \eqref{interp2} to $\rho=u_s$, and insert them along with \eqref{fromsobolev} into \eqref{beta?} to obtain that for every $t>0$
\begin{equation}\label{dividebyt}\begin{aligned}
&\frac{\mathcal{E}_{m}(u_\tau^k)-\mathcal{E}_{m}(S_t(u_\tau^k))}{t}\ge\left(\frac{4pm}{C_{2,d}(p+m-1)^2}-\frac{\chi}{a'\delta^{a'}}-\frac{\chi\eps}{b}\right)\frac1t\int_0^t\|u_s\|_{L^{(p+m-1)\lambda}(\R^d)}^{p+m-1}\,\d s 
\\
&\qquad\qquad -\frac{\chi\delta^a}{a}\frac1t\int_0^t\|u_s\|_{L^{p}(\R^d)}^{(1-\theta)(p+1)a}\,ds-\frac{\chi\eps }{b'}\,M^{(2-2\varsigma)b'}.
\end{aligned}
\end{equation}
We may  fix $\delta$ large enough, and $\eps$ small enough, in order to have a positive coefficient in the first term in the above right hand side. Moreover, since $u_\tau^k\in L^p(\R^d)=D(\mathcal F_p)$,  we also have by Proposition \ref{thenewprop} and Corollary \ref{corofp} that $\|u_s\|_{L^{p}(\R^d)}\le \|u_\tau^k\|_{L^{p}(\R^d)}<+\infty$ for every $s>0$, thus from \eqref{dividebyt} it is clear that the map 
$t\mapsto \|u_t\|_{L^{(p+m-1)\lambda}(\R^d)}^{p+m-1}$ belongs to $L^1(0,t)$ for every $t>0$ 
and  we may also invoke the  flow interchange from Proposition \ref{prop:FI}, and we find
\begin{equation*}\begin{aligned}
&\frac{\FF_{p}(u_\tau^{k-1})+\eps\FF_1(u_\tau^{k-1})-\FF_{p}(u_\tau^k)-\eps\FF_1(u_\tau^k)}{\tau} +\frac{\chi\delta^a}{a}\|u_\tau^k\|_{L^p(\R^d)}^{(1-\theta)(p+1)a}+\frac{\chi\eps }{b'}\,M^{(2-2\varsigma)b'}
\\&\quad
\ge\limsup_{t\downarrow0}\left(\frac{4pm}{C_{2,d}(p+m-1)^2}-\frac{\chi}{a'\delta^{a'}}-\frac{\chi\eps}{b}\right)\,\frac1t\int_0^t\|u_s\|_{{L^{(p+m-1)\lambda}(\R^d)}}^{p+m-1}ds\\&\quad
 \ge\limsup_{t\downarrow0}\left(\frac{4pm}{C_{2,d}(p+m-1)^2}-\frac{\chi}{a'\delta^{a'}}-\frac{\chi\eps}{b}\right)\,\|u_t\|_{{L^{(p+m-1)\lambda}(\R^d)}}^{p+m-1},
\end{aligned}
\end{equation*}
where the last inequality is due to point {\bf{(II)}} of Proposition \ref{thenewprop}. Since $u_t$  narrowly converges to $u_\tau^k$ as $t\downarrow0$, by the narrow lower semicontinuity of the $L^{(p+m-1)\lambda}(\R^d)$ norm we conclude that $u_\tau^k\in L^{(p+m-1)\lambda}(\R^d)$. In particular, $u_\tau^{k}\in L^{p+1}(\R^d)$. Therefore, starting from $u_\tau^k\in L^{p}(\R^d)$ for some $p\in(1,m]$, we have obtained improved $p+1$ summability. By repeating the same argument with $p+1$ in place of $p$, we get $p+2$ summability. Recursively, we conclude that $u_\tau^k\in L^{p}(\R^d)$ for every $p\in(1,+\infty)$.

\textbf{ Step 2.} 
We prove the same property of the previous step in the case $d=2$, that is, $u_\tau^k \in L^p(\R^2)$ for every $p\in(1,+\infty)$ and every integer $k\ge 1$. This has already been done in Proposition \ref{sscritical} in the case $m=1$ so that now we prove this for $m>1$. 
The argument is similar to the previous step  and recursive in $k$ as usual\RRR, thus we shall skip some details.
 Let $k\ge 1$ and assume the desired property holds for every integer from $0$ to $k-1$\RRR. We start from $p\in(1,m]$, so that $u_\tau^k\in L^m(\R^2)$, and since $p\le m$, we have $u_\tau^k\in L^p(\R^2)$. If $u_t=S_t(u_\tau^k)$ is the gradient flow of $\mathcal F_p+\eps\mathcal F_1$, starting from ${u_\tau^k}$, we still have that for every $t>0$ the maps $t\mapsto\FF_m(u_t)$ and of $t\mapsto\BB_\alpha(u_t)$ are absolutely continuous on $[0,t]$ and \eqref{beta?} holds.
For the gradient term of \eqref{beta?} we use \eqref{GNS} with $p+m-1$ in place of $p$, that is,
\begin{equation}\label{startgnsm2}
\int_{\R^2}|\nabla u_s^{(p+m-1)/2}|^2\,\d x \ge \frac{1}{MG_{p+m-1}}\int_{\R^2} u_s^{p+m}\,\d x.
\end{equation} 
By $L^p-L^{p+1}-L^{p+m}$ interpolation and Young inequality we have for every $\delta>0$
\[
\|u_s\|_{L^{p+1}(\R^d)}^{p+1}\le \frac{\delta^{m'}}{m'}\|u_s\|_{L^{p}(\R^d)}^{(1-\vartheta)(p+1)m'}+\frac1{m\delta^m}\|u_s\|_{L^{p+m}(\R^d)}^{p+m},\qquad\vartheta=\frac{p+m}{m(p+1)},
\]
where $m'=\frac m{m-1}$, and in the same way by $L^1-L^2-L^{p+m}$ interpolation and Young inequality
\[
\|u_s\|_{L^{2}(\R^d)}^2\le \frac{1}{b'}M^{2-2\ell b'}+\frac1{b}\|u_s\|_{L^{p+m}(\R^d)}^{p+m},\qquad2\ell=\frac{p+m}{p+m-1}
\]
where $b=p+m-1$ and $b'=\frac b{b-1}$, which we insert into \eqref{beta?} along with \eqref{startgnsm2} to get
\begin{equation*}\begin{aligned}
&\frac{\mathcal{E}_m(u_\tau^k)-\mathcal{E}_m(S_t(u_\tau^k))}t
\ge \left(
\frac{4pm}{MG_{p+m-1}(p+m-1)^2}-\frac\chi{m\delta^m}-\frac{\eps\chi}{b}\right)\frac1t\int_0^t\int_{\mathbb{R}^2} u_s^{p+m}\,\d x\,\d s
\\&\qquad-\frac{\chi\delta^{m'}}{m'}\|u_\tau^k\|_{L^{p}(\R^d)}^{(1-\vartheta)(p+1)m'}-\frac{\chi\eps}{b'}M^{(2-2\ell)b'},
\end{aligned}
\end{equation*}
having also used $\|u_s\|_{L^{p}(\R^d)}\le \|u_\tau^k\|_{L^{p}(\R^d)}<+\infty$ for every $s>0$. By Proposition \ref{prop:FI}, the limsup as $t\downarrow0$ of the left hand side is finite  
and then the weak lower semicontinuity of $L^p$ norms shows that $u_\tau^k\in L^{p+m}(\R^2)$ thus improving the original $p$ summability of $u_\tau^k$. We can therefore repeat the argument with $p+1$ in place of $p$, and then recursively, to deduce finally that $u_\tau^k\in L^p(\R^2)$ for every $p\in(1,+\infty)$.

{\textbf{Step 3.}}
Let $p\in[1,+\infty)$ and let again $u_t=S_t(u_\tau^k)$ the gradient flow of $\mathcal F_p+\eps\mathcal F_1$, starting from $u_\tau^k$.  For every $t>0$ the map $s\mapsto \mathcal E_m(u_s)$ is in $AC([0,t])$ and \eqref{beta?} holds (invoking Proposition \ref{thenewprop} and Corollary \eqref{corofp} as in the previous steps). From \eqref{beta?}, already knowing by Corollary \ref{corofp} the finiteness and continuity up to $t=0$ of the map $t\mapsto \|u_t\|_{p+1}$, since we already know that $u_\tau^k\in L^r(\R^d)$ for every $r\in[1,+\infty)$,
we get
\begin{equation}\label{acca1}
\int_0^t\int_{\R^d}\left|\nabla u_s^{\frac{p+m-1}{2}}\right|^2\,\d x\,\d s<+\infty.\end{equation}
 We divide \eqref{beta?}   by $t$ and  apply again Proposition \ref{prop:FI}, thus obtaining
\begin{equation}\label{hopt}\begin{aligned}
&\frac{\FF_{p}(u_\tau^{k-1})+\eps\FF_1(u_\tau^{k-1})-\FF_{p}(u_\tau^k)-\eps\FF_1(u_\tau^k)}{\tau} +\chi\eps\|u_\tau^k\|_{L^{2}(\R^d)}^2
+\chi\|u_\tau^k\|_{L^{p+1}(\R^d)}^{p+1}
\\&\quad
\ge\frac{4pm}{(p+m-1)^2}\,\liminf_{t\downarrow0}\frac1t\int_0^t\|\nabla u_s^{\frac{p+m-1}{2}}\|_{L^{2}(\R^d)}^2\,\d s.
\end{aligned}
\end{equation}
By \eqref{acca1}, we have
 $\int_{\R^d}|\nabla u_t^{{(p+m-1)}/{2}}|^2\,\d x<+\infty$ for a.e. $t\in (0,1)$, and 
 we denote by $E\subset(0,1)$ the set of times where the latter property holds. We claim that
\begin{equation}\label{Eclaim}
\liminf_{t\downarrow0}\frac1t\int_0^t\|\nabla u_s^{\frac{p+m-1}{2}}\|_{L^{2}(\R^d)}^2\,\d s
\ge \liminf_{t\downarrow0\atop{t\in E}}\|\nabla u_t^{\frac{p+m-1}{2}}\|_{L^{2}(\R^d)}^2.
\end{equation}
Indeed, let $L\ge 0$ denote the liminf in the right hand side. If $L=+\infty$, then for every $R>0$ there exists $\tilde \delta>0$ such that 
for any $s\in E\cap (0,\tilde\delta)$, we have $\|\nabla u_s^{{(p+m-1)}/{2}}\|_{L^{2}(\R^d)}^2\ge R$. 
Therefore we have
\[
\liminf_{t\downarrow0}\frac1t\int_{0}^t\|\nabla u_s^{\frac{p+m-1}{2}}\|_{L^{2}(\R^d)}^2\,\d s\ge R.
\]
Since $R$ is arbitrary, then the left hand side of \eqref{Eclaim} is $+\infty$.
If $L<+\infty$,
 then for every $\tilde\eps>0$ there exists $\tilde\delta>0$ such that for any $s\in E\cap (0,\tilde\delta)$,
 we have  $\|\nabla u_s^{{(p+m-1)}/{2}}\|_{L^{2}(\R^d)}^2>L-\tilde\eps$. Thus  we have
\[
\liminf_{t\downarrow0}\frac1t\int_{0}^t\|\nabla u_s^{\frac{p+m-1}{2}}\|_{L^{2}(\R^d)}^2\,\d s\ge L-\tilde\eps.
\]
Since $\tilde\eps$ is arbitrary, then \eqref{Eclaim} holds. 
We notice that \eqref{hopt} and \eqref{Eclaim} imply $L<+\infty$. 
Let $(t_n)_n\subset E$ a strictly decreasing vanishing sequence such that $L=\lim_{n\to+\infty} \|\nabla u_{t_n}^{\frac{p+m-1}{2}}\|_{L^{2}(\R^d)}^2$.
Since $L$ is finite there exists a subsequence of $n\mapsto\nabla u_{t_n}^{\frac{p+m-1}{2}}$ weakly convergent in $L^2(\R^d;\R^d)$ to a limit point $z_*$. 
By the lower semicontinuity of the $L^2(\R^d;\R^d)$ norm we have that 
\begin{equation}\label{eccolastima}\begin{aligned}
\frac{\FF_{p}(u_\tau^{k-1})+\eps\FF_1(u_\tau^{k-1})-\FF_{p}(u_\tau^k)-\eps\FF_1(u_\tau^k)}{\tau} &+\chi\eps\|u_\tau^k\|_{L^{2}(\R^d)}^2
+\chi\|u_\tau^k\|_{L^{p+1}(\R^d)}^{p+1}
\\&\quad\ge \frac{4pm}{(p+m-1)^2}\|z_*\|_{L^{2}(\R^d)}^2.\end{aligned}
\end{equation}
We observe that $u_{t_n}$ strongly converges to $u_\tau^k$ in $L^p(\R^d)$ for every $p\in[1,+\infty)$. 
Indeed, it follows from the strong continuity in $L^1(\R^d)$ of the  gradient flow of $\mathcal F_p+\eps\mathcal F_1$, see \eqref{1infty}, 
and from the boundedness in every $L^{p}(\R^d)$ up to $t=0$ (which is again due to the fact that $u_\tau^k$ is in $L^r(\R^d)$ for every $r\in[1,+\infty)$). 
Therefore, possibly extracting a subsequence, we have that $u_{t_n}^{(p+m-1)/2}$ converge to $(u_{\tau}^k)^{(p+m-1)/2}$ a.e. in $\R^d$.
Then  the weak $L^2(\R^d;\R^d)$ limit of the sequence of the gradients is identified and $z_*=\nabla(u_\tau^k)^{(p+m-1)/2}$. 
Thanks to this identification, \eqref{stimagirata} follows by \eqref{eccolastima} by sending $\eps$ to $0$.
\EEE
 \end{proof}

 For $m=m_c$ we obtain improved estimates in the small mass regime. To this purpose, for $p\in(1,+\infty)$ we introduce the $p$-subcritical mass as
\begin{equation}\label{defMscp} M_{sc}(p):=\displaystyle\frac4{p\chi G_p}\quad \text{if }d=2,\qquad\quad
M_{sc}(p):=\displaystyle\left(\frac{4pm_c}{\chi C_{2,d}(p+m_c-1)^2}\right)^{d/2}\;\; \text{if }d\geq3.
\end{equation}
Recalling the definition of $M_{sc}$ from \eqref{defMsc}, for $d\ge 3$ it is clear that \begin{equation}\label{penonp}M_{sc}=\sup_{p>1} M_{sc}(p).\end{equation} The same is true also for $d=2$, as a consequence of Lemma \ref{GpG1}, see Remark \ref{remarchino}.

\RRR

 \begin{proposition} 
 Let $m=m_c$.
 Let $\tau>0$ and $\{u_\tau^k: k=0,1,2,\ldots\}$ be a sequence given by {\rm Proposition \ref{prop:existenceMM}}.
  If  $d=2$ and  $M<M_{sc}(p)$  for some $p\in(1,+\infty)$, then for every $k\ge 1$
\begin{equation}\label{subsub}
	\frac{\mathcal \|u_\tau^{k-1}\|_{L^{p}(\R^d)}^p-\|u_\tau^k\|_{L^{p}(\R^d)}^p}{\tau(p-1)} \ge \left(\frac4{pMG_p}-\chi\right)\,\|u_\tau^k\|_{L^{p+1}(\R^d)}^{p+1}.
\end{equation}
If $d\ge 3$ and  $M<M_{sc}(p)$ for some $p\in(1,+\infty)$, then for every $k\ge 1$
\begin{equation}\begin{aligned}\label{ssd}
&\frac{\mathcal \|u_\tau^{k-1}\|_{L^{p}(\R^d)}^p-\|u_\tau^k\|_{L^{p}(\R^d)}^p}{\tau(p-1)}\ge
 \left(\frac{4pm_c}{C_{2,d}(p+m_c-1)^2}-\chi  M^{2/d}\right)\left(\int_{\R^d}(u_\tau^k)^{\frac{(p+m_c-1)d}{d-2}}\,\d x\right)^{\frac{d-2}{d}}.
\end{aligned}
\end{equation}
\end{proposition}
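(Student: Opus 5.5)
The plan is to start from the discrete Sobolev estimate \eqref{stimagirata} of Theorem \ref{generalp}, specialized to $m=m_c$. Multiplying \eqref{stimagirata} by $(p-1)$ and recalling $\mathcal F_p=\frac1{p-1}\|\cdot\|_p^p$, its left-hand side becomes $\frac{\|u_\tau^{k-1}\|_p^p-\|u_\tau^k\|_p^p}{\tau(p-1)}$ and it reads
\[
\frac{\|u_\tau^{k-1}\|_{L^p}^p-\|u_\tau^k\|_{L^p}^p}{\tau(p-1)}\ge \frac{4m_cp}{(p+m_c-1)^2}\,\|\nabla (u_\tau^k)^{\frac{p+m_c-1}{2}}\|_{L^2}^2-\chi\|u_\tau^k\|_{L^{p+1}}^{p+1}.
\]
The task is then purely functional-inequality bookkeeping: bound the gradient term from below, and the $L^{p+1}$ term from above, by the same quantity.

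For $d=2$ we have $m_c=1$, so the gradient coefficient is $\frac{4p}{p^2}=\frac4p$ and the gradient is that of $(u_\tau^k)^{p/2}$. By \eqref{GNS} with mass $M$,
\[
\|\nabla (u_\tau^k)^{p/2}\|_2^2\ge \frac{1}{MG_p}\|u_\tau^k\|_{p+1}^{p+1},
\]
which is legitimate because Theorem \ref{generalp} guarantees $(u_\tau^k)^{p/2}\in H^1$. This gives directly \eqref{subsub} with constant $\frac4{pMG_p}-\chi$, positive exactly when $M<M_{sc}(p)$.

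For $d\ge3$, set $\lambda=\frac d{d-2}$ and $q=(p+m_c-1)\lambda$. The Sobolev inequality \eqref{sobolev} with exponent $2$ gives
\[
C_{2,d}\|\nabla (u_\tau^k)^{\frac{p+m_c-1}2}\|_2^2\ge \|u_\tau^k\|_{q}^{p+m_c-1},
\]
and the right-hand side is exactly the factor $\bigl(\int (u_\tau^k)^{q}\bigr)^{(d-2)/d}$ appearing in \eqref{ssd}. For the aggregation term I plan to use Hölder interpolation between $L^1$ and $L^q$, claiming
\[
\|u\|_{p+1}^{p+1}\le M^{2/d}\|u\|_q^{p+m_c-1}.
\]
This requires the exponents to match: writing $\frac1{p+1}=\frac{1-\theta}{1}+\frac{\theta}{q}$, one needs $(p+1)(1-\theta)=\frac2d$ and $(p+1)\theta=p+m_c-1=p+1-\frac2d$. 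Both hold, because $\theta(p+1)\bigl(1-\frac1q\bigr)=p$ is equivalent to $\bigl(p+1-\frac2d\bigr)\bigl(1-\frac1q\bigr)=p$, i.e.\ $p+1-\frac2d-\frac{d-2}{d}=p$, which is true. This is precisely the scaling invariance of the fair competition regime. Combining the Sobolev bound with this interpolation yields \eqref{ssd}, and the constant is positive iff $M<M_{sc}(p)$ as defined in \eqref{defMscp}.

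The only genuine step is confirming that this interpolation exponent comes out exactly as $M^{2/d}$ with no extra constant; the computation above settles it. Everything else (the regularity needed to apply GNS and Sobolev, and the case $k\ge1$) is supplied by Theorem \ref{generalp}.
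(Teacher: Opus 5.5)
Your proof is correct and matches the paper's argument: \eqref{stimagirata} with $m=m_c$, combined with \eqref{GNS} when $d=2$, and with \eqref{sobolev} plus the interpolation $\|u\|_{p+1}^{p+1}\le M^{2/d}\|u\|_{q}^{p+m_c-1}$ between $L^1$, $L^{p+1}$ and $L^{\frac{(p+1)d-2}{d-2}}$ when $d\ge3$. One harmless slip: you do not need to multiply by $p-1$, since $\frac{\mathcal F_p(u_\tau^{k-1})-\mathcal F_p(u_\tau^k)}{\tau}$ already equals the left-hand side of \eqref{subsub}, and your displayed inequality is just \eqref{stimagirata} rearranged.
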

\begin{proof}
 Let $d=2$. Then the result follows from \eqref{stimagirata} combined with with the Gagliardo-Nirenberg-Sobolev inequality \eqref{GNS}. 
Let $d\ge 3$. Then the  result follows by combining  the interpolation inequality
\begin{equation*}
\int_{\R^d}(u_\tau^k)^{p+1}\,\d x \le
\left(\int_{\R^d}u_\tau^k\,\d x\right)^{\frac2d}
\left(\int_{\R^d}(u_\tau^k)^{\frac{(p+1)d-2}{d-2}}\,\d x\right)^{\frac{d-2}{d}}= M^{\frac2d}\left(\int_{\R^d}(u_\tau^k)^{\frac{(p+1)d-2}{d-2}}\,\d x\right)^{\frac{d-2}{d}}
\end{equation*}
with \eqref{stimagirata} and \eqref{sobolev}, having noticed that $(p+m_c-1)d=(p+1)d-2.$
\end{proof}
\RRR

%
%
%

\RRR

\subsection{Discrete differential inequalities for $L^p$ norms}

\RRR

In view of the next statement, for every $1\le s<p<+\infty$ (if $m=m_c$ we add the restriction $s>1$) we define the following quantities
\begin{equation}\label{sigmafg}
\begin{aligned}
\sigma(p,s)&:=\frac{d(p-s)+2s+d(m-1)}{d(p-s)},\qquad r(p,s):=\frac{d(p-s)+2s+d(m-1)}{2s+d(m-2)},\\
f(p,s)&:=\frac{2p+d(m-1)}{d(p-s)},\qquad
g(p,s):=\frac{2p+2+d(m-2)}{2s+d(m-2)}.
\end{aligned}
\end{equation}
\begin{proposition}\label{psprop}
Let $\tau>0$ and let $\{u_\tau^k:k=0,1,2,\ldots\}$ be a sequence given by  {\rm Proposition \ref{prop:existenceMM}}, starting from $u_0\in\PP_2^M(\R^d)$. 
Then for every $k=1,2,3,\ldots$ and every $s,p\in[1,+\infty)$ such that $s<p$ (with the further restriction $s>1$ in the case $m=m_c$), there holds
\begin{equation}\label{basicdisc2}\frac{\|u_\tau^k\|_{L^p(\R^d)}^p-\|u_\tau^{k-1}\|_{L^p(\R^d)}^p}{\tau}\le -\bar K(p,s)\,\frac{\left(\|u_\tau^k\|_{L^p(\R^d)}^p\right)^{\sigma(p,s)}}{\left(\|u_\tau^k\|_{L^s(\R^d)}^s\right)^{f(p,s)}}+\chi \,c(p,s) \left(\|u_\tau^k\|_{L^s(\R^d)}^s\right)^{g(p,s)}
\end{equation}
where, recalling the definition of $K_{p,s}$ from \eqref{kps} and of $C_{2,d}$ from \eqref{sobolev}-\eqref{sq}, 
\begin{equation}\label{kappabar}\bar K(p,s):= \frac{2mp(p-1)}{C^*_{p,s}(p+m-1)^2},\qquad\mbox{with}\quad C^*_{p,s}:=\left\{\begin{array}{ll}K_{p,s}\quad\mbox{if $d=2$}\\ C_{2,d}\quad\mbox{if $d\ge 3$,}\end{array}\right.\end{equation}
and
\begin{equation}\label{cipiesse} 
c(p,s):=\frac{p-1}{r(p,s)}\left(\frac{\chi(p-1)}{\bar K(p,s)}\frac{(p-s+1)d}{(p-s)d+2s+d(m-1)}\right)^{r(p,s)-1}.
\end{equation}
These quantities satisfy
\begin{equation}\label{cfrattocbar}
\frac1{\bar K(p,s)}\le \frac{pmC^*_{p,s}}{2(p-1)},\qquad\frac{\chi c(p,s)}{\bar K(p,s)}\le \left(\tfrac12\,\chi C^{*}_{p,s}mp\right)^{r(p,s)}
\end{equation}
\end{proposition}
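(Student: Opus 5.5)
The plan is to start from the Sobolev estimate \eqref{stimagirata} of Theorem \ref{generalp}, multiply it by $(p-1)$ so that the left-hand side of \eqref{basicdisc2} appears, and then absorb the aggregation term $\chi\|u_\tau^k\|_{p+1}^{p+1}$ into the dissipation term. Write $u=u_\tau^k$, $X=\|u\|_p^p$, $Y=\|u\|_s^s$ and $D=\|\nabla u^{\frac{p+m-1}{2}}\|_2^2$. Then \eqref{stimagirata} reads
\[
\frac{\|u_\tau^k\|_p^p-\|u_\tau^{k-1}\|_p^p}{\tau}\le -\frac{4mp(p-1)}{(p+m-1)^2}\,D+\chi(p-1)\|u\|_{p+1}^{p+1}.
\]

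\textbf{Step 1: lower bound on $D$ by a Gagliardo--Nirenberg inequality in terms of $\|u\|_{p+1}$ and $Y$.}

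In $d=2$ I use \eqref{nuovagns}, which gives $\|u\|_{p+1}^{(p+1)b}\le K_{p,s}Y^{b}D$ with $b=\frac{p+m-1}{p-s+1}$.

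In $d\ge3$ I use the Sobolev inequality \eqref{sobolev}: $\|u\|_{(p+m-1)\lambda}^{p+m-1}\le C_{2,d}D$ with $\lambda=\frac d{d-2}$. Then I interpolate $L^{p+1}$ between $L^s$ and $L^{(p+m-1)\lambda}$. This yields an inequality of the form $\|u\|_{p+1}^{(p+1)\beta}\le C_{2,d}Y^{\gamma}D$, with exponents $\beta,\gamma$ determined by scaling. The condition $m\ge m_c$ (and $s>1$ when $m=m_c$) ensures the interpolation exponent lies in $(0,1)$.

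\textbf{Step 2: split $D$ in half and handle each half separately.}

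The first half of $D$ is used to absorb $\chi(p-1)\|u\|_{p+1}^{p+1}$. Call $Z=\|u\|_{p+1}^{p+1}$. Step 1 gives $D\ge Z^{\beta}/(C^*Y^{\gamma})$ with $\beta>1$. By Young's inequality,
\[
\chi(p-1)Z\le \tfrac12\bar K'\,Z^{\beta}Y^{-\gamma}+c\,Y^{\gamma/(\beta-1)}.
\]
Matching constants produces $c(p,s)$ in \eqref{cipiesse}, with $r(p,s)=\beta'$ the conjugate exponent. One checks that $\gamma/(\beta-1)=g(p,s)$, and this is also what scaling predicts.

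The second half of $D$ gives the decay term. I lower-bound $X$ by interpolating $L^p$ between $L^s$ and $L^{p+1}$ (or $L^{(p+m-1)\lambda}$ in $d\ge3$). This gives $X^{\sigma}Y^{-f}\le C^*D$, producing the first term of \eqref{basicdisc2} with coefficient $\bar K(p,s)=\frac{2mp(p-1)}{C^*(p+m-1)^2}$. The exponents $\sigma,f$ are forced by homogeneity under mass-preserving dilations $u\mapsto\lambda^d u(\lambda\cdot)$ together with homogeneity in amplitude. I would verify that they equal \eqref{sigmafg} through these two scaling identities instead of tracking the interpolation exponents directly.

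\textbf{Step 3: the bounds \eqref{cfrattocbar}.}

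These are elementary. For the first, $(p+m-1)^2\le$ a multiple of $p^2m^2$ for $p>1$, $m\ge1$; concretely $(p+m-1)^2\le (pm)^2$, since $pm-(p+m-1)=(p-1)(m-1)\ge0$. For the second, I insert this into \eqref{cipiesse}, using $r\ge1$ and that the fraction $\frac{(p-s+1)d}{(p-s)d+2s+d(m-1)}\le1$.

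The main obstacle is the exponent bookkeeping in Step 2. One must make sure the Young splitting actually produces exactly $r(p,s)$ and $g(p,s)$, and that the relevant exponents exceed $1$ in the borderline $m=m_c$, which is why $s>1$ is needed there. I would settle this with the scaling argument rather than brute-force algebra.
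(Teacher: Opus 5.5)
Your proposal is correct and is essentially the paper's proof. You start from \eqref{stimagirata} and absorb $\chi(p-1)\|u_\tau^k\|_{L^{p+1}(\R^d)}^{p+1}$ into half of the dissipation, using \eqref{nuovagns} when $d=2$, resp.\ Sobolev plus $L^s$--$L^{p+1}$--$L^{(p+m-1)\frac{d}{d-2}}$ interpolation when $d\ge3$, together with Young's inequality with exponent $r(p,s)$; you convert the remaining half into $\bar K(p,s)X^{\sigma}Y^{-f}$ by interpolation and obtain \eqref{cfrattocbar} from $p+m-1\le pm$, $r\ge1$ and the fraction in \eqref{cipiesse} being at most $1$, exactly as the paper does.

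Writing Young in the variable $Z$ and checking $\sigma,f,g$ by scaling (amplitude plus dilation, and for $g$ the PDE-invariant combination) is equivalent to the paper's explicit choice of $\delta$ and direct exponent computation. One correction: in $d\ge3$ the interpolation exponent lies in $(0,1)$ for every $m\ge m_c$, even with $s=1$. The role of $s>1$ at $m=m_c$ is to make $r(p,s)<\infty$, i.e.\ your $\beta>1$, which is what you correctly say in your final paragraph.
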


\begin{remark}\rm We stress that \eqref{basicdisc2} is valid also for $\chi=0$, providing a discrete dissipation estimate of $L^p$ norms for the porous media equation. \end{remark}
\RRR

\begin{proof} 
We preliminarily observe that \eqref{cfrattocbar} is an immediate consequence of the definitions of $c(p,s)$ and $\bar K(p,s)$, by taking into account the simple inequalities $r(p,s)\ge 1$ and $p+m-1\le pm$, along with the fact that $m\ge m_c$ implies $\frac{(p-s+1)d}{(p-s)d+2s+d(m-1)}\le 1$. 
For the proof of \eqref{basicdisc2} we treat separately the case $d\ge 3$ and $d=2$.

{\bf Case \boldmath$d\ge 3$.} 
For notational ease, we let $\lambda:=\frac{d}{d-2}$.
We start from the basic estimate that we have obtained in Theorem \ref{generalp}. Indeed, from \eqref{stimagirata} we make use of the Sobolev inequality \eqref{sobolev} as done in \eqref{fromsobolev}, and we get 
\begin{equation}\label{basicdisc}
\frac{\|u_\tau^k\|_{L^p(\R^d)}^p-\|u_\tau^{k-1}\|_{L^p(\R^d)}^p}{\tau}\le -\frac{4mp(p-1)}{C_{2,d}(p+m-1)^2}\|u_\tau^k\|^{p+m-1}_{L^{(p+m-1)\lambda}(\R^d)}+\chi(p-1)\|u_\tau^k\|_{L^{p+1}(\R^d)}^{p+1}
\end{equation}
for every $k=1,2,3.\ldots$
Notice that $(p+m-1)\lambda>p+1$ since $m\ge m_c$.
For $s\in[1,p)$ we estimate the $L^{p+1}(\R^d)$ norm by using $L^s(\R^d)-L^{p+1}(\R^d)-L^{(p+m-1)\lambda}(\R^d)$ interpolation and the Young inequality $ab\le r^{-1}\delta^ra^r+(r'\delta^{r'})^{-1}b^{r'}$,
 for $\delta>0$ and $r\in(1,+\infty)$, with $r'=\frac r{r-1}$, we have
\begin{equation}\label{sinterp}
\|u_\tau^k\|_{L^{p+1}(\R^d)}^{p+1}\le \frac{\delta^r}r\,\|u_\tau^k\|_{L^s(\R^d)}^{(1-\zeta)(p+1)r}+\frac{1}{r'\delta^{r'}}\,\|u_\tau^k\|_{{L^{(p+m-1)\lambda}(\R^d)}}^{\zeta(p+1)r'},\quad \zeta=\frac{(p+m-1)(p+1-s)\lambda}{(p+1)[p\lambda-s+(m-1)\lambda]}.
\end{equation}
We choose 
$
r=r(p,s)$,
where $r(p,s)$ is defined in \eqref{sigmafg},
implying $\zeta(p+1)r'=p+m-1$. We notice that the denominator in the definition of $r(p,s)$ is positive for every $s\in[1,p)$ if $m>m_c$  and for every $s\in(1,p)$ if $m=m_c$, and moreover  we have  $r(p,s)>1$. 
We also choose
\[
\delta=\left(\frac{\chi C_{2,d}(p+m-1)^2(p+1-s)\lambda}{2mp\,[p\lambda-s+(m-1)\lambda]}\right)^{\frac{(p+1-s)\lambda}{p\lambda-s+(m-1)\lambda}}
\]
so that $$\frac{\chi}{r'\delta^{r'}}=\frac{2mp}{C_{2,d}(p+m-1)^2}.$$
By inserting \eqref{sinterp} in \eqref{basicdisc}, with these choices of $r,\delta$ we get for $k=1,2,3.\ldots$
\begin{equation}\label{basicdisc3}\begin{aligned}
&\frac{\|u_\tau^k\|_{L^{p}(\R^d)}^p-\|u_\tau^{k-1}\|_{L^{p}(\R^d)}^p}{\tau}\le -\frac{2mp(p-1)}{C_{2,d}(p+m-1)^2}\|u_\tau^k\|^{p+m-1}_{L^{(p+m-1)\lambda}(\R^d)}\\&\;\;\;+\frac{\chi(p-1)\delta^r}r\,\|u_\tau^k\|_{L^{s}(\R^d)}^{(1-\zeta)(p+1)r}= -\bar K_{p,s} \,\|u_\tau^k\|^{p+m-1}_{L^{(p+m-1)\lambda}(\R^d)}+\chi\,c(p,s)\,\|u_\tau^k\|_{L^{s}(\R^d)}^{(1-\zeta)(p+1)r},
\end{aligned}\end{equation}
since $c(p,s)=r^{-1}(p-1)\delta^r$, being $c(p,s)$ defined by \eqref{cipiesse}.

By means of the $L^s(\R^d)-L^p(\R^d)-L^{(p+m-1)\lambda}(\R^d)$ interpolation
\begin{equation}\label{eta}
\|u_\tau^k\|_{L^{p}(\R^d)}^{\frac{p+m-1}{\eta}}\le \|u_\tau^k\|_{L^{s}(\R^d)}^{\frac{(p+m-1)\eta}{1-\eta}}\,\|u_\tau^k\|_{L^{(p+m-1)\lambda}(\R^d)}^{p+m-1},\qquad \eta=\frac{(p-s)(p+m-1)\lambda}{p[p\lambda-s+(m-1)\lambda]},
\end{equation}
to be inserted in \eqref{basicdisc3}, we obtain the result, having noticed that 
 \eqref{sigmafg}, \eqref{sinterp} and \eqref{eta} entail
\begin{equation*}\begin{aligned}&\frac{p+m-1}{p\eta}=\frac{p\lambda-s+(m-1)\lambda}{(p-s)\lambda}=\sigma(p,s),\\&\frac{(p+m-1)(1-\eta)}{s\eta}=\frac{p(\lambda-1)+(m-1)\lambda}{(p-s)\lambda}=f(p,s),\\
&\frac{(1-\zeta)(p+1)r}{s}=\frac{p(\lambda-1)+(m-1)\lambda-1}{s(\lambda-1)+(m-2)\lambda}=g(p,s).
\end{aligned}\end{equation*}

{\bf Case \boldmath$d=2$.} 
From  \eqref{stimagirata} 
we have for $k=1,2,3\ldots$
\begin{equation}\label{basicdisc2d}
\frac{\|u_\tau^k\|_{L^{p}(\R^2)}^p-\|u_\tau^{k-1}\|_{L^{p}(\R^2)}^p}{\tau}\le -\frac{4mp(p-1)}{(p+m-1)^2}\int_{\R^2}\left|\nabla(u_\tau^k)^{\frac{p+m-1}{2}}\right|^2\,\d x+\chi(p-1)\int_{\R^2}(u_\tau^k)^{p+1}\,\d x.
\end{equation}
Let $r=r(p,s)$ from \eqref{sigmafg}, thus
 $$ r':=\frac{r}{r-1}=\frac{p+m-1}{p-s+1}.$$
By applying  \eqref{nuovagns} with $s\in[1,p)$ ($s>1$ if $m=1$) and Young inequality we get for $\delta>0$
\[
\|u_\tau^k\|_{L^{p+1}(\R^2)}^{p+1}\le  \frac{\delta^r}{r}\|u_\tau^k\|_{L^{s}(\R^2)}^{sr}+\frac{K_{p,s}}{r'\delta^{r'}}\|\nabla(u_\tau^k)^{\frac{p+m-1}{2}}\|_{L^{2}(\R^2)}^2,
\]
so that from \eqref{basicdisc2d} we obtain
\[\begin{aligned}
\frac{\|u_\tau^k\|_{L^{p}(\R^2)}^p-\|u_\tau^{k-1}\|_{L^{p}(\R^2)}^p}{\tau}&\le\left(\frac{\chi(p-1) K_{p,s}}{r'\delta^{r'}}-\frac{4mp(p-1)}{(p+m-1)^2}\right)\|\nabla(u_\tau^k)^{\frac{p+m-1}{2}}\|_{L^{2}(\R^2)}^2\\&\qquad+\frac{\chi(p-1)\delta^r}{r}\|u_\tau^k\|_{L^{s}(\R^2)}^{sr}.
\end{aligned}\]
If we choose 
\[
\delta=\left(\frac{\chi K_{p,s}(p+m-1)(p-s+1)}{2mp}\right)^{\frac{p-s+1}{p+m-1}},\qquad\mbox{so that}\quad
\frac{\chi K_{p,s}}{r'\delta^{r'}}=\frac{2mp}{(p+m-1)^2},
\]
we get
\[
\frac{\|u_\tau^k\|_{L^{p}(\R^2)}^p-\|u_\tau^{k-1}\|_{L^{p}(\R^2)}^p}{\tau}\le-\frac{2mp(p-1)}{(p+m-1)^2}\|\nabla(u_\tau^k)^{\frac{p+m-1}{2}}\|_{L^{2}(\R^2)}^2+\frac{\chi(p-1)\delta^r}{r}\|u_\tau^k\|_{L^{s}(\R^2)}^{sr},
\]
and a further application of \eqref{nuovagns} entails
\[
\frac{\|u_\tau^k\|_{L^{p}(\R^2)}^p-\|u_\tau^{k-1}\|_{L^{p}(\R^2)}^p}{\tau}\le-\frac{2mp(p-1)}{K_{p,s}(p+m-1)^2}\frac{\left(\|u_\tau^k\|_{L^{p+1}(\R^2)}^{p+1}\right)^{	r'}}{\left(\|u_\tau^k\|_{L^{s}(\R^2)}^s\right)^{r'}}+\frac{\chi(p-1)\delta^r}{r}\|u_\tau^k\|_{L^{s}(\R^2)}^{sr}.
\]
By using $L^s(\R^2)-L^p(\R^2)-L^{p+1}(\R^2)$ interpolation we deduce
\begin{equation*}
\frac{\|u_\tau^k\|_{L^{p}(\R^2)}^p-\|u_\tau^{k-1}\|_{L^{p}(\R^2)}^p}{\tau}\le-\bar K(p,s)\frac{\left(\|u_\tau^k\|_{L^{p}(\R^2)}^{p}\right)^{\textstyle\frac{p+m-1}{p-s}}}{\left(\|u_\tau^k\|_{L^{s}(\R^2)}^s\right)^{\textstyle\frac{p+m-1}{p-s}}}+{\chi\,c(p,s)}\left(\|u_\tau^k\|_{L^{s}(\R^2)}^s\right)^{\textstyle\frac{p+m-1}{m+s-2}},
\end{equation*}
which is \eqref{basicdisc2}.
\end{proof}



We next give a discrete second moment estimate, based on flow interchange with the Fokker-Planck equation.
\begin{proposition}\label{momentgrowth}
Let $\tau>0$ and $\{u_\tau^k: k=0,1,2,\ldots\}$ be the sequence given by {\rm Proposition \ref{prop:existenceMM}}. 
Then for every $k\ge 1$ there holds
\begin{equation}\label{derivativeofm2}
\frac{\mom(u_\tau^k)-\mom(u_\tau^{k-1})}\tau\le 2d\int_{\R^d}(u_\tau^k)^m\,\d x. \end{equation}
Moreover, the piecewise constant interpolation $u_\tau(\cdot)$ satisfies 
\begin{equation}\label{mommom}
\mom(u_\tau(t))\le \mom(u_\tau^0)+2d\int_0^{t+\tau}\int_{\R^d}(u_\tau(s))^m\,\d x\,\d s, \qquad \forall\, t>0. 
\end{equation}

\end{proposition}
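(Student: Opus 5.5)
The plan is to apply the flow interchange of Proposition \ref{prop:FI} with the auxiliary functional $\mathcal V=\frac12\mom+\eps\mathcal F_1$, whose gradient flow is the Fokker--Planck equation studied in Proposition \ref{thenewprop2}. This functional is coercive, lower semicontinuous and convex along generalized geodesics, and $u_\tau^0\in D(\mathcal V)$. For $k\ge1$ let $u_t:=S_t(u_\tau^k)$ be its flow. By Theorem \ref{generalp}, $u_\tau^k\in D(\mathcal F_m)$, so Proposition \ref{thenewprop2} gives that $t\mapsto\mathcal E_m(u_t)$ is continuous up to $t=0$ and smooth for $t>0$. For every $t>0$ it also gives
\[
\frac{d}{dt}\mathcal E_m(u_t)\le d\int_{\R^d}u_t^m\,\d x-\eps m\int_{\R^d}u_t^{m-2}|\nabla u_t|^2\,\d x+\chi\eps\int_{\R^d}u_t^2\,\d x\le d\int_{\R^d}u_t^m\,\d x+\chi\eps\int_{\R^d}u_t^2\,\d x .
\]
Here we drop the nonpositive Fisher-type term. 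For $m=1$ in $d=2$ we use the corresponding formula with $dM$ in place of $d\int u_t^m$.

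Next we bound the right-hand side uniformly for small $t$. The representation formula \eqref{convtrans} gives $u_t(x)=e^{dt}(\Gamma_{\eps\sigma(t)}\ast u_\tau^k)(e^tx)$, and Jensen's inequality \eqref{jens} then yields
\[
\|u_t\|_{L^q}^q\le e^{d(q-1)t}\|u_\tau^k\|_{L^q}^q\quad\text{for } q=m,2 .
\]
Both norms are finite by Theorem \ref{generalp}. Integrating over $(0,t)$ and dividing by $t$ gives
\[
\limsup_{t\downarrow0}\frac{\mathcal E_m(S_t(u_\tau^k))-\mathcal E_m(u_\tau^k)}{t}\le d\int_{\R^d}(u_\tau^k)^m\,\d x+\chi\eps\|u_\tau^k\|_{L^2}^2 .
\]
In particular \eqref{>-inf} holds. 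Proposition \ref{prop:FI} then gives
\[
\frac{\mathcal V(u_\tau^{k-1})-\mathcal V(u_\tau^k)}{\tau}\ge -d\int_{\R^d}(u_\tau^k)^m\,\d x-\chi\eps\|u_\tau^k\|_{L^2}^2 ,
\]
that is,
\[
\frac{\mom(u_\tau^k)-\mom(u_\tau^{k-1})}{2\tau}\le d\int_{\R^d}(u_\tau^k)^m\,\d x+\chi\eps\|u_\tau^k\|_{L^2}^2+\eps\,\frac{\mathcal F_1(u_\tau^{k-1})-\mathcal F_1(u_\tau^k)}{\tau}.
\]
All quantities multiplying $\eps$ are finite and independent of $\eps$, since each $u_\tau^j$ lies in every $L^p$ and has finite second moment, so $\mathcal F_1$ is finite by \eqref{Carleman}. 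Letting $\eps\to0$ gives \eqref{derivativeofm2}. The constant $2d$ in the statement is twice the coefficient $d$ above, consistent with the factor $\frac12$ in $\mathcal V$.

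For \eqref{mommom}, take $t\in((k-1)\tau,k\tau]$ and sum \eqref{derivativeofm2} over $j=1,\dots,k$:
\[
\mom(u_\tau^k)\le\mom(u_\tau^0)+2d\tau\sum_{j=1}^k\|u_\tau^j\|_m^m=\mom(u_\tau^0)+2d\int_0^{k\tau}\int_{\R^d}u_\tau(s)^m\,\d x\,\d s .
\]
The last equality holds because $u_\tau(s)=u_\tau^j$ on $((j-1)\tau,j\tau]$. Since $k\tau\le t+\tau$ and the integrand is nonnegative, the bound with upper limit $t+\tau$ follows.

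The main technical obstacle is the application of the flow interchange: checking condition \eqref{>-inf} and justifying the derivative formulas of Proposition \ref{thenewprop2} at $t=0^+$. This is handled by the explicit Gaussian representation together with the $L^q$ integrability of $u_\tau^k$ from Theorem \ref{generalp}. The case $d=2$, $m=1$ needs only the minor modification noted above.
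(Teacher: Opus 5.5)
Your proposal is correct and follows essentially the same route as the paper: flow interchange with $\mathcal V=\frac12\mom+\eps\mathcal F_1$ (the Fokker--Planck flow), the derivative bounds of Proposition~\ref{thenewprop2}, the limit $\eps\to0$, and then telescoping the one-step inequality over the piecewise constant interpolation. The only difference is cosmetic: the paper uses Lagrange's mean value theorem and identifies the limit of $\int_{\R^d} u_{\theta(t)}^m\,\d x$ through the representation formula \eqref{convtrans}, whereas you integrate the derivative and use the explicit bound $\|u_t\|_{L^q(\R^d)}^q\le e^{d(q-1)t}\|u_\tau^k\|_{L^q(\R^d)}^q$, and the two are equivalent.
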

\begin{proof} Let $\eps>0$, $\mathcal V:=\frac12\mom+\eps\mathcal F_1$ and let $u_t:=S_t(u_\tau^k)$ the gradient flow of $\mathcal V$ starting from $u_\tau^k$, $k\ge 1$.  We notice that $u_\tau^k\in L^p(\R^d)$ for every $p\in[1,+\infty)$ by Theorem \ref{generalp}. Thus by Proposition \ref{thenewprop2} and Lagrange theorem, for every $t>0$ there exists $\theta(t)\in(0,t)$ such that
\[
\frac{\mathcal E_m(u_\tau^k)-\mathcal E_m(S_t(u_\tau^k))}{t}=-\frac{d}{ds}\mathcal E_m(u_s){\Bigg{|}}_{s=\theta(t)}\ge-d\int_{\R^d}u_{\theta(t)}^m\,\d x-\chi\eps\int_{\R^d} u_{\theta(t)}^2\,\d x.
\]
By invoking Proposition \ref{prop:FI} we get
\[
\frac{\frac12 \mom(u_\tau^{k-1})+\eps\mathcal F_1(u_\tau^{k-1})-\frac12\mom(u_\tau^k)-\eps\mathcal F_1(u_\tau^k)}{\tau}\ge \limsup_{t\downarrow 0} \left(-d\int_{\R^d}u_{\theta(t)}^m\,\d x-\chi\eps\int_{\R^d} u_{\theta(t)}^2\,\d x\right).
\]
 We may use the representation formula \eqref{convtrans} for $u_t$ and  Proposition \ref{initialdatum} to see that the limit in the above right hand side is equal to $$-d\int_{\R^d}(u_{\tau}^k)^m\,dx-\chi\eps\int_{\R^d} (u_{\tau}^k)^2\,\d x.$$
The arbitrariness of $\eps>0$ yields \eqref{derivativeofm2}.

Let $t>0$.
By summing \eqref{derivativeofm2} from $k
=1$ to $k=\lceil t/\tau\rceil$ we get, since the sum on the left hand side is telescopic, 
\begin{equation*}
\mom(u_\tau(t))\le \mom(u_\tau^0)+2d\int_0^{\lceil t/\tau\rceil\tau}\int_{\R^d}(u_\tau(s))^m\,\d x\,\d s
\end{equation*}
thus proving \eqref{mommom}.
\end{proof}
\RRR

\subsection{Discrete hypercontractivity estimates for finite $p$}

Here, we show $L^p$ hypercontractivity estimates at the discrete level. 
We preliminarily state some comparison principles and error estimates for the implicit-Euler discretization 
of nonlinear differential equations with gradient flow structure. 

\begin{proposition}\label{prop:DI}
Let $\phi:\R\to\R$ be a convex $C^1$ function. Let $\tau>0$, $a_0$, $b_0\in\R$ and
 $a_k$ and $b_k$ satisfying
$$ a_k-a_{k-1} \leq -\tau\phi'(a_k), \qquad b_k-b_{k-1} = -\tau\phi'(b_k), \quad \forall k\in \N.$$
If $a_0\leq b_0$, then $a_k\leq b_k$ for any $k\in\N$.

Let $b:[0,+\infty)\to\R$ the solution of the Cauchy problem
\begin{equation}\label{CauchyPb}
 b'(t)=-\phi'(b(t)), \qquad b(0)=b_0.
\end{equation}
Then $|b_k-b(k\tau)|\leq \frac{1}{\sqrt{2}}|\phi'(b_0)|\tau$.
\end{proposition}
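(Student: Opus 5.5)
For the comparison part, I would argue by induction on $k$, using the monotonicity of $\phi'$ (a consequence of convexity). Suppose $a_{k-1}\le b_{k-1}$ and, by contradiction, $a_k>b_k$. Since $\phi'$ is nondecreasing, $\phi'(a_k)\ge\phi'(b_k)$. Subtracting the two relations gives
\[
a_k-b_k\le a_{k-1}-b_{k-1}-\tau\bigl(\phi'(a_k)-\phi'(b_k)\bigr)\le a_{k-1}-b_{k-1}\le 0,
\]
which contradicts $a_k>b_k$. Hence $a_k\le b_k$, and induction from $a_0\le b_0$ concludes.

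For the error estimate, I would use the monotone structure of the one-dimensional gradient flow. Both the continuous solution $b(t)$ and the implicit Euler iterates $b_k$ are monotone in the same direction. Indeed, $b_k-b_{k-1}=-\tau\phi'(b_k)$, and the map $x\mapsto x+\tau\phi'(x)$ is strictly increasing, so $b_k$ lies between $b_{k-1}$ and the nearest stationary point. Moreover $|\phi'(b_k)|$ is nonincreasing in $k$: from $\phi'(b_k)(b_k-b_{k-1})\le0$, convexity gives $\phi'(b_k)$ and $\phi'(b_{k-1})$ the same sign with $|\phi'(b_k)|\le|\phi'(b_{k-1})|$. Similarly $t\mapsto|\phi'(b(t))|$ is nonincreasing, being the slope along a convex gradient flow.

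With this monotonicity, the plan is to invoke the standard a priori error estimate for minimizing movements of convex functionals, namely the AGS estimate $|b(k\tau)-b_k|^2\le \tau^2|\partial\phi|^2(b_0)/2$ (Theorem 4.0.9/4.0.10 in the reference cited as AGS) applied on $\R$ with the $\lambda=0$ convex functional $\phi$. Taking square roots yields exactly $\frac1{\sqrt2}|\phi'(b_0)|\tau$. Citing this is the cleanest route, and I would do so.

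If a self-contained argument is preferred instead, I would compare $b$ with the discrete EVI. Differentiate $\frac12|b(t)-b_k|^2$, use the continuous EVI $\frac12\frac{d}{dt}|b(t)-y|^2\le\phi(y)-\phi(b(t))$ together with the discrete EVI for the Euler step, and sum with the slope-monotonicity bounds from the previous paragraph. This telescoping bookkeeping reproduces the constant $1/\sqrt2$ and is the main obstacle. Cruder arguments, such as Lipschitz bounds on $\phi'$, are unavailable since $\phi$ is only $C^1$, so the constant must come from the convexity/EVI structure rather than Gronwall.
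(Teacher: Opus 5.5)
Your proposal is correct and follows the paper's proof: the comparison step is the same induction (the paper phrases it as strict monotonicity of $r\mapsto r+\tau\phi'(r)$, which is exactly what your contradiction argument uses), and the error bound is obtained, as in the paper, by quoting the standard a priori estimate for the implicit Euler/minimizing movement scheme (the paper points to Nochetto--Savar\'e--Verdi and to Theorem 4.0.7 of Ambrosio--Gigli--Savar\'e). The only point worth making explicit is that, by convexity, $b_k$ is the unique minimizer of $x\mapsto\phi(x)+\frac{1}{2\tau}|x-b_{k-1}|^2$ and the slope at $b_0$ is $|\phi'(b_0)|$, so the cited estimate applies verbatim; your slope-monotonicity remarks and the sketched EVI route are not needed.
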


\begin{proof}
By induction, assuming that $a_{k-1}\leq b_{k-1}$ we have that
$$ a_k+\tau\phi'(a_k) \leq a_{k-1}\leq  b_{k-1} = b_k+\tau\phi'(b_k).$$
Since the function $r\mapsto r+\tau\phi'(r)$ is strictly increasing we conclude using its inverse function.

The last inequality is the error estimate for the Euler implicit discretization scheme. 
See for instance the general expression derived by Nochetto-Savar\'e-Verdi \cite{NSV} and 
\cite[Theorem 4.0.7]{AGS}.
\end{proof}

Building on the estimates of Proposition \ref{sscritical}, Theorem \ref{generalp} we may apply the above comparison principles to deduce estimates on the $L^p$ norms of $u_\tau^k$. We start with the small mass case, recalling the definition of $M_{sc}(p)$ from \eqref{defMscp}. 

\begin{theorem}[\bf Decay of $L^p$ norms in the small mass regime]\label{lemmaLp} 

Let $\tau>0$ and $\{u_\tau^k:k=0,1,2,\ldots\}$ be a sequence given by  {\rm Proposition \ref{prop:existenceMM}}.
Let  $m=m_c$,  $p\in(1,+\infty)$ and $M<M_{sc}(p)$.
Then for every integer $k\ge 1$ there holds
\begin{equation}\label{discretesubsub}
    \|u_\tau^k\|^p_{L^p(\R^d)} \leq \min \left\{ \|u_\tau^0\|^p_{L^p(\R^d)},\,M\left({C\, k \tau}\right)^{1-p}\right\}  
    + \frac{\tau(p-1)}{\sqrt{2}}C\,M^{\frac1{1-p}}\,\|u_\tau^0\|^{\frac{p^2}{p-1}}_{L^p(\R^d)}   
\end{equation}
where 
\begin{equation}\label{defCpMd}
	C=C_{\chi,p,M,d}:=\begin{cases}\dfrac{4pm_c}{ C_{2,d}(p+m_c-1)^2M^{2/d}}-\chi &\text{if }d\geq3\vspace{0.2cm}\\
					\dfrac{4}{pMG_p}-\chi &\text{if }d=2.
			\end{cases}
\end{equation}
\end{theorem}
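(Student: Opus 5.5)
Writing $a_k:=\|u_\tau^k\|_{L^p(\R^d)}^p$, I would first turn the subcritical discrete inequalities \eqref{subsub} (for $d=2$) and \eqref{ssd} (for $d\ge3$) into a closed recursive inequality for $a_k$ alone, and then conclude with the comparison principle and error estimate of Proposition \ref{prop:DI}.

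\textbf{Step 1: a closed inequality for $a_k$.} By H\"older's interpolation between $L^1$ and $L^{p+1}$ we have $a_k\le M^{1/p}\big(\|u_\tau^k\|_{p+1}^{p+1}\big)^{(p-1)/p}$, that is, $\|u_\tau^k\|_{p+1}^{p+1}\ge M^{-1/(p-1)}a_k^{p/(p-1)}$. In the case $d=2$ this inequality inserted into \eqref{subsub} gives directly
\[
a_k-a_{k-1}\le -\tau(p-1)\,C\,M^{\frac1{1-p}}\,a_k^{\frac p{p-1}},
\]
where $C$ is the constant in \eqref{defCpMd}, and $C>0$ precisely because $M<M_{sc}(p)$. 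In the case $d\ge3$ I would use the interpolation already written in the preceding proof, $\|u\|_{p+1}^{p+1}\le M^{2/d}\big(\int u^{(p+m_c-1)d/(d-2)}\big)^{(d-2)/d}$. This bounds the right-hand side of \eqref{ssd} from below by $C\,\|u_\tau^k\|_{p+1}^{p+1}$, with the factor $M^{2/d}$ absorbed into the definition of $C$. Combining with the first interpolation gives the same recursion. In both cases $C>0$ exactly encodes $M<M_{sc}(p)$.

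\textbf{Step 2: comparison with the ODE.} Set $\phi(a)=\frac{(p-1)^2}{2p-1}CM^{\frac1{1-p}}(a_+)^{\frac{2p-1}{p-1}}$. This function is convex and $C^1$, and its derivative is $\phi'(a)=(p-1)CM^{1/(1-p)}a_+^{p/(p-1)}$. Let $b_k$ be the implicit Euler sequence with $b_0=a_0=\|u_\tau^0\|_p^p$. Proposition \ref{prop:DI} then gives $a_k\le b_k$ and
\[
|b_k-b(k\tau)|\le \tfrac1{\sqrt2}\,\tau\,\phi'(b_0)=\tfrac{\tau(p-1)}{\sqrt2}\,C\,M^{\frac1{1-p}}\|u_\tau^0\|_p^{\frac{p^2}{p-1}},
\]
which is exactly the error term in \eqref{discretesubsub}.

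\textbf{Step 3: the ODE bound.} The ODE $b'=-(p-1)CM^{1/(1-p)}b^{p/(p-1)}$ can be solved explicitly. One gets $b(t)^{-1/(p-1)}=b_0^{-1/(p-1)}+CM^{1/(1-p)}t$. Hence $b(t)\le b_0$, and also
\[
b(t)\le \big(CM^{1/(1-p)}t\big)^{1-p}=M\,(Ct)^{1-p}.
\]
Taking $t=k\tau$ yields the minimum in \eqref{discretesubsub}.

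The main point of care is Step 1 for $d\ge3$, namely checking that the exponents match. The interpolation exponent $(p+m_c-1)d=(p+1)d-2$ has to be matched correctly with the HLS-type bound, and the powers of $M$ have to combine exactly into $M^{1/(1-p)}$ together with the constant \eqref{defCpMd}. Everything else follows directly from Proposition \ref{prop:DI}.
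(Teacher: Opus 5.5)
Your proposal is correct and follows the paper's proof essentially step by step. It uses the same interpolation $\big(\|u\|_p^p\big)^{p/(p-1)}\le M^{1/(p-1)}\|u\|_{p+1}^{p+1}$ inserted into \eqref{subsub}, the same convex $\phi$ in Proposition \ref{prop:DI}, and the same explicit ODE solution. The only cosmetic difference is for $d\ge3$: the paper uses a single $L^1$--$L^p$--$L^{(p+m_c-1)d/(d-2)}$ interpolation, while you factor it through $L^{p+1}$, which gives exactly the same inequality (that step is plain H\"older, not an HLS-type bound).
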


\begin{proof}
During the proof we use the notation $a_\tau^k:=\|u_\tau^k\|_{L^p(\mathbb{R}^d)}^p$.

Let $d=2$.
 By the  interpolation of $L^p$ norms we obtain \RRR
\begin{equation}\label{1pp+1}
\left(\|u\|_{L^p(\mathbb{R}^2)}^p\right)^{\frac{p}{p-1}}\le \|u\|_{L^1(\mathbb{R}^2)}^{\frac{1}{p-1}}\|u\|_{L^{p+1}(\mathbb{R}^2)}^{p+1}.
\end{equation}
Using \eqref{1pp+1} in \eqref{subsub} we obtain letting $C_{\chi,M,p}:=C_{\chi,M,p,2}$, where $C_{\chi,M,p,2}$ is defined in \eqref{defCpMd},
\begin{equation*}
(p-1)C_{\chi,p,M}M^{\frac1{1-p}} \,(a_\tau^k)^{\frac{p}{p-1}}\le \frac{a_\tau^{k-1}-a_\tau^k}\tau.
\end{equation*}
We apply Proposition \ref{prop:DI} with the choice of the convex function 
\[
	\phi(x):=\frac{(p-1)^2}{2p-1}\,C_{\chi,p,M}\, M^{\frac{1}{1-p}}\,x^{\frac{2p-1}{p-1}},\qquad x\ge 0.
\]
The solution of the Cauchy problem \eqref{CauchyPb} with initial datum $b(0)\ge 0$ is 
$$b(t)=\left(b(0)^{\frac{1}{1-p}}+{C_{\chi,p,M} M^{\frac1{1-p}}t}\right)^{1-p}\le\min\left\{b(0), \left({C_{\chi,p,M} M^{\frac1{1-p}}t}\right)^{1-p}\right\}$$
Choosing $b_\tau^0=a_\tau^0=\|u_\tau^0\|^p_{L^p(\R^d)}$, 
by  Proposition \ref{prop:DI} we obtain
$$a_\tau^k\le b_\tau^k\leq b(k\tau)+|b_\tau^k-b(k\tau)|\leq b(k\tau) + \frac{\tau}{\sqrt{2}}\phi'(b_\tau^0),$$
and \eqref{discretesubsub} follows using the definitions of $b$ and $a_\tau^k$.

If $d\ge 3$, again by interpolation of $L^p$ norms we have
\begin{equation*}\label{interpolazione}
	\left(\int_{\R^d} u^p\,\d x\right)^{\frac{(p+m_c-2)d+2}{d(p-1)}}\le \left(\int_{\R^d} u\,\d x\right)^{\frac{(m_c-1)d+2p}{d(p-1)}}
	\left(\int_{\R^d}u^{\frac{(p+m_c-1)d}{d-2}}\,\d x\right)^{\frac{d-2}d}
\end{equation*} and
computing the exponents $\frac{(p+m_c-2)d+2}{d(p-1)}= \frac{p}{p-1}$ and $\frac{(m_c-1)d+2p}{d(p-1)}=\frac{1}{p-1}+\frac{2}{d}$,
 from \eqref{ssd} we obtain
\begin{equation*}
	(p-1) \left(\frac{4pm_c}{C_{2,d}(p+m_c-1)^2}-\chi  M^{2/d}\right)M^{\frac{1}{1-p}-\frac2d} \,(a_\tau^k)^{\frac{p}{p-1}}\le \frac{a_\tau^{k-1}-a_\tau^k}\tau,
\end{equation*}
that is
\begin{equation*}
	(p-1) C_{\chi,p,M,d} M^{\frac{1}{1-p}} \,(a_\tau^k)^{\frac{p}{p-1}}\le \frac{a_\tau^{k-1}-a_\tau^k}\tau.
\end{equation*}
We conclude exactly as in the case $d=2$.
 \end{proof}

Analogous results in the case $d=2, m=1$ are obtained in the next lemma,  based on Proposition \ref{sscritical}, whose proof includes a simple comparison principle for ODEs that will be used again later on.

\begin{lemma}\label{generalsubcritical} Let $d=2$, $m=1$. Let $\tau>0$ and $\{u_\tau^k:k=0,1,2,\ldots\}$ be a sequence given by  {\rm Proposition \ref{prop:existenceMM}}. Let $p\in[2,+\infty)$.
 For  any  $T\ge 1$ and any integer $k\ge1$ such that $k\tau\le T$, if $K>K_{p,\tau}(T)$, where $K_{p,\tau}(\cdot)$ is defined by \eqref{kappat},   
then
\begin{equation}\label{compara}\begin{aligned}
 \int_{\R^2}(u_\tau^k-K)_+^p\,\d x&\le\min\left\{\int_{\R^2}(u_\tau^0-K)_+^p\,\d x,\,\left(\frac{C_1(K)(k\tau)}{p-1}\right)^{1-p}\right\}+\bar\varphi(K)\\
 &\qquad+\frac\tau{\sqrt2}\left(C_1(K)\Big(\int_{\R^2}(u_\tau^0-K)_+^p\,\d x\Big)^{p/(p-1)}+Y(K)\right),
\end{aligned}\end{equation}
where
\begin{equation}\label{C1K}
	C_1(K):=\frac{p-1}{2}C_*(K)M^{\frac{1}{1-p}}
\end{equation}
and $C_*(K)$ is given by \eqref{CKappa}, while
 $Y(K)$  and $\bar\varphi(K)$ are explicit quantities defined through the proof, respectively in  \eqref{YK}-\eqref{zeta} and in \eqref{barphiK}.
\end{lemma}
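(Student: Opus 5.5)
The plan is to turn the discrete inequality \eqref{extraterms1} from Proposition \ref{sscritical} into a one-dimensional discrete differential inequality for $a_k:=\int_{\R^2}(u_\tau^k-K)_+^p\,\d x$. We then compare it with the implicit Euler scheme of a convex ODE via Proposition \ref{prop:DI}, exactly as in Theorem \ref{lemmaLp}. The difference is that here the lower order terms involving $K$ are not absorbed and must be kept as a forcing term.

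\textbf{Step 1: the superlinear dissipation.} Interpolating $L^1$--$L^p$--$L^{p+1}$ as in \eqref{1pp+1}, applied to $(u_\tau^k-K)_+$, and using $\int(u_\tau^k-K)_+\le M$, gives
\[
a_k^{p/(p-1)}\le M^{1/(p-1)}\int_{\R^2}(u_\tau^k-K)_+^{p+1}\,\d x .
\]
Hence the right-hand side of \eqref{extraterms1} is bounded below by $C_*(K)M^{\frac1{1-p}}a_k^{p/(p-1)}$ minus the two lower order terms.

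\textbf{Step 2: absorbing the lower order terms.} I would bound
\[
\int(u_\tau^k-K)_+^{p-1}\,\d x\le \Big(\int(u_\tau^k-K)_+\,\d x\Big)^{1/(p-1)} a_k^{(p-2)/(p-1)}
\]
by interpolation. Then Young's inequality gives
\[
\tfrac{2p-1}{p-1}\chi K a_k+\tfrac{p}{p-1}\chi K^2\int(u_\tau^k-K)_+^{p-1}\le \tfrac12 C_*(K)M^{\frac1{1-p}}a_k^{p/(p-1)}+Z(K),
\]
where $Z(K)$ is an explicit constant depending only on $K,p,\chi,M$ and $C_*(K)$. This yields
\[
a_k-a_{k-1}\le -\tau\big(C_1(K)a_k^{p/(p-1)}-(p-1)Z(K)\big),
\]
with $C_1(K)$ as in \eqref{C1K}. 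This is the step I expect to be delicate: bookkeeping the Young exponents so that exactly half of the dissipation survives. I will set $Y(K):=(p-1)Z(K)$, modulo the normalization of $\phi'$.

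\textbf{Step 3: comparison.} The function $\phi(x):=\frac{p-1}{2p-1}C_1(K)x^{\frac{2p-1}{p-1}}-Y(K)x$, for $x\ge0$ (extended convexly to $\R$), is convex, and $a_k-a_{k-1}\le-\tau\phi'(a_k)$. Let $b_k$ be the implicit Euler sequence with $b_0=a_0=\int(u_\tau^0-K)_+^p\,\d x$. Proposition \ref{prop:DI} gives
\[
a_k\le b_k\le b(k\tau)+\tfrac{\tau}{\sqrt2}|\phi'(b_0)|,
\]
and $|\phi'(b_0)|\le C_1(K)b_0^{p/(p-1)}+Y(K)$, which produces the last term of \eqref{compara}.

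\textbf{Step 4: the continuous ODE.} For $b'=-C_1(K)b^{p/(p-1)}+Y(K)$, I would prove the elementary comparison claim
\[
b(t)\le \min\{b(0),\,(C_1(K)t/(p-1))^{1-p}\}+\bar\varphi(K),
\qquad \bar\varphi(K):=\big(2Y(K)/C_1(K)\big)^{(p-1)/p}.
\]
Indeed, $b$ can never exceed $\max\{b(0),\bar\varphi(K)\}$, since $b'<0$ above $\bar\varphi$. Whenever $b>\bar\varphi$ one has $b'\le-\frac{C_1}{2}b^{p/(p-1)}$, and integrating this supersolution bound while $b$ stays above $\bar\varphi$ gives the $t^{1-p}$ decay, up to adjusting the constant (possibly absorbing the factor $2$ into the definition of $C_1(K)$, which already carries a $\frac12$). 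Combining Steps 3 and 4 yields \eqref{compara}.
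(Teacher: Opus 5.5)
Your Steps 1--3 are correct and follow the paper's route. The paper first turns $K^2\int_{\R^2}(u_\tau^k-K)_+^{p-1}\,\d x$ into $K^p$ plus a multiple of $K a_k$, and then absorbs the resulting linear term into half of the dissipation. You instead absorb both lower-order terms in a single Young step. Either way one gets $a_k-a_{k-1}\le-\tau\bigl(C_1(K)a_k^{p/(p-1)}-Y(K)\bigr)$ with exactly the $C_1(K)$ of \eqref{C1K}, and a different explicit $Y(K)$ is harmless.

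The gap is in Step 4. Your threshold argument uses $\bar\varphi=(2Y/C_1)^{(p-1)/p}$ and $b'\le-\frac{C_1}{2}b^{p/(p-1)}$ above it. It only yields $b(t)\le\min\{b(0),\,2^{p-1}(C_1(K)t/(p-1))^{1-p}\}+\bar\varphi$, so your decay term is $2^{p-1}$ times the one in \eqref{compara}. You cannot absorb this factor into $C_1(K)$. That constant is fixed by \eqref{C1K}. It also appears as the coefficient of $\bigl(\int_{\R^2}(u_\tau^0-K)_+^p\,\d x\bigr)^{p/(p-1)}$ in the $\tau$-remainder, which comes from $|\phi'(b_0)|$. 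Rebalancing does not help either. To keep that remainder coefficient at $C_1(K)$, the coefficient in the ODE must not exceed $C_1(K)$, and then any threshold argument gives a decay constant strictly smaller than $C_1(K)$. So, as written, you prove a weaker inequality than \eqref{compara}.

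The missing idea is an exact supersolution, which is what the paper uses. Take $\bar\varphi(K)=(Y(K)/C_1(K))^{(p-1)/p}$, the minimum point of $\phi$. Set $w(t)=\tilde b(t)+\bar\varphi(K)$, where $\tilde b(t)=(C_1(K)t/(p-1))^{1-p}$ solves $\tilde b'=-C_1\tilde b^{p/(p-1)}$. Since $p/(p-1)\ge1$, the map $x\mapsto x^{p/(p-1)}$ is superadditive on $[0,+\infty)$. Hence $C_1w^{p/(p-1)}\ge C_1\tilde b^{p/(p-1)}+C_1\bar\varphi^{p/(p-1)}=C_1\tilde b^{p/(p-1)}+Y$, i.e.\ $w'\ge-\phi'(w)$. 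Because $w(0^+)=+\infty$, comparison gives $b\le w$ on $(0,+\infty)$ with no loss in the constant; the paper phrases this through the solution $b_\infty$ issued from $+\infty$. Your monotonicity remark already gives $b\le b(0)+\bar\varphi$. Combining this with $b\le w$ and your Step 3 yields \eqref{compara} exactly.
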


\begin{proof} 
Let us fix $T\ge1$, $\tau>0$ and $k\ge 1$ such that  $k\tau\le T$.

 We preliminary observe that for every $K\ge 0$ there holds
$\int_{\R^2}(u_\tau^k-K)_+\leq M$. Moreover, by $L^1(\R^2)-L^{p-1}(\R^2)-L^{p}(\R^2)$ interpolation  and Young inequality we get
\begin{equation}\begin{aligned}\label{auxp>2}
K^2\int_{\R^2}(u_\tau^k-K)_+^{p-1}\,\d x & \le K^2M^{\frac{1}{p-1}}\left(\int_{\R^2}(u_\tau^k-K)_+^{p}\,\d x\right)^{\frac{p-2}{p-1}}\\
&\le \frac{K^pM^{\frac{1}{p-1}}}{p-1}+\frac{p-2}{p-1}M^{\frac{1}{p-1}}\,K\int_{\R^2}(u_\tau^k-K)_+^p\,\d x,
\end{aligned}\end{equation}
and similarly   $L^1(\R^2)-L^{p}(\R^2)-L^{p+1}(\R^2)$ interpolation  yields 
\begin{equation}\label{auxgen}
\int_{\R^2}(u_\tau^k-K)_+^{p+1}\,\d x \geq M^{-\frac{1}{p-1}}\left(\int_{\R^2}(u_\tau^k-K)_+^p\,\d x\right)^{\frac{p}{p-1}}.
\end{equation}

Let now  $K>K_{p,\tau}(T)$, so that we can take advantage of \eqref{extraterms1}-\eqref{CKappa}.
Inserting the estimates  \eqref{auxp>2} and \eqref{auxgen} in \eqref{extraterms1} we obtain
\begin{equation}\label{record}\begin{aligned}
& \frac1{\tau} \left(\int_{{\R}^2}(u_\tau^{k-1}-K)_+^p\,\d x -\int_{{\R}^2}(u_\tau^k-K)^p_+\,\d x\right)\ge -Z(K)\int_{\R^2}(u_\tau^k-K)_+^p\,\d x
\\&\qquad
-\frac{p}{p-1}\,\chi M^{\frac{1}{ p-1}} K^{ p}
+(p-1)C_*(K)M^{-\frac{1}{p-1}}\left(\int_{{\R}^2}(u_\tau^k-K)_+^{p}\,\d x\right)^{\frac{p}{p-1}},
\end{aligned}\end{equation}
having introduced the shorthand 
\begin{equation}\label{zeta}
Z(K):=(2p-1)\chi K+ \frac{p\chi}{p-1} \,(p-2)\,M^{\frac1{p-1}}\,K.
\end{equation}
By the Young inequality 
\[
	Z(K)\int_{\R^2}(u_\tau^k-K)_+^p\,\d x\le \frac1p \frac{Z^p}{\eps^p}+\frac{p-1}{p}\left(\eps \int_{\R^2}(u_\tau^k-K)_+^p\,\d x\right)^{\frac{p}{p-1}},
\]
and choosing 
$\eps=\left(\frac{p}{2}C_*(K)M^{\frac{1}{1-p}}\right)^{\frac{p-1}{p}},$
we rewrite \eqref{record} as
\begin{equation}\label{record2}\begin{aligned}
& \frac1{\tau} \left(\int_{{\R}^2}(u_\tau^{k-1}-K)_+^p\,\d x-\int_{{\R}^2}(u_\tau^k-K)^p_+\,\d x\right)\\
&\qquad\ge \frac{(p-1)}{2}C_*(K)M^{\frac{1}{1-p}}\left(\int_{{\R}^2}(u_\tau^k-K)_+^{p}\,\d x)\right)^{\frac{p}{p-1}}-Y(K),
\end{aligned}\end{equation}
where 
\begin{equation}\label{YK}\begin{aligned}
	Y(K)&:=\frac1p Z(K)^p \left(\frac{p}{2}C_*(K)M^{\frac{1}{1-p}}\right)^{{1-p}}
	 +\frac{p}{p-1}\,\chi M^{\frac{1}{p-1}} K^{ p}
\end{aligned}\end{equation}

Introducing the notation 
\begin{equation}\label{ataukappa} a_\tau^k:=\|(u_\tau^k-K)_+\|_{L^p(\mathbb{R}^2)}^p, \qquad k=0,1,2,\ldots,\end{equation}
the inequality \eqref{record2} can be written as
\begin{equation}\label{constantstep}
	\frac{a_\tau^k-a_\tau^{k-1}}{\tau} \leq -C_1(K) (a_\tau^k)^{\frac{p}{p-1}}+Y(K),
\end{equation}
where $C_1(K)$ is defined by \eqref{C1K}.
We define the convex function $\varphi:[0,+\infty)\to \R$ by
 \begin{equation}\label{phi4}\begin{aligned}
	\varphi(x):&=\frac{p-1}{2p-1}C_1(K)\, x^{1+\frac{p}{p-1}}-Y(K)\,x.
\end{aligned}\end{equation}
With this notation \eqref{record2} rewrites as
$
	\dfrac{a_\tau^k-a_\tau^{k-1}}{\tau}\le-\varphi'(a_\tau^k).
$
We denote by
\begin{equation}\label{barphiK}
	\bar\varphi(K):=\left(\frac{Y(K)}{C_1(K)}\right)^{\frac{p-1}{p}}
\end{equation} 
the unique minimum point of $\varphi$.

We let $b:[0,+\infty)\to \mathbb R$ the unique solution to the initial value problem
\[
\left\{\begin{array}{ll}
b'(t)=-\varphi'(b(t)), \qquad t>0\vspace{0.08cm}\\
y(0)=a_\tau^0.
\end{array}\right.
\]
It is not difficult to show that
 \begin{equation}\label{finite0}
	 b(t)\le a_\tau^0+\bar\varphi(K), \qquad \forall\,t>0.
 \end{equation}
Let $b_\infty$  the unique solution to the problem
 \[
\left\{\begin{array}{ll}
b_\infty(t)'=-\varphi'(b_\infty(t)), \qquad t>0\vspace{0.08cm}\\
\lim_{t\downarrow0}b_\infty(t)=+\infty.
\end{array}\right.
\]
By comparison it holds that $b(t)\le b_{\infty}(t)$ for every $t> 0$. On the other hand,
the unique solution of the problem 
 \[
\left\{\begin{array}{ll}
\tilde b_\infty'(t)=-\varphi'(\tilde b_\infty(t))-Y(K)=-C_1(K) (\tilde b_\infty(t))^{\frac{p}{p-1}}, \qquad t>0\vspace{0.08cm}\\
\lim_{t\downarrow0}\tilde b_\infty(t)=+\infty,
\end{array}\right.
\]
is given by
\[
\tilde b_\infty(t)=\left(\frac{C_1(K)t}{p-1}\right)^{1-p},\qquad t>0.
\]
Moreover, the function $w(t):=\tilde b_\infty(t)+\bar\varphi(K)$ satisfies
$$ w'(t) \geq -\varphi'(w(t)).$$
and a comparison principle shows that $w(t)\ge b_\infty(t)$ for every $t>0$. Therefore
\begin{equation}\label{infinite0}
	b(t)\le b_\infty(t)\le w(t)=\left(\frac{C_1(K)t}{p-1}\right)^{1-p}+\bar\varphi(K).
\end{equation}
From Proposition \ref{prop:DI} we obtain
\[
a_\tau^k\le b_\tau^k\le b(k\tau)+\frac\tau{\sqrt2}|\varphi'(a_\tau^0)|,
\]
and thus \eqref{finite0} and \eqref{infinite0} yield the result.
\end{proof}

\begin{theorem}\label{theop}  Let $d=2$, $m=1$. 
Let $\{u_\tau^k:k=0,1,2,\ldots\}$ be a sequence given by  {\rm Proposition \ref{prop:existenceMM}}. Let $p\in[2,+\infty)$.
Then there exists a constant $C_{p,M,\chi}$ depending only on $p$, $M$ and $\chi$ such that 
\begin{equation}\label{DLpEst2}
\|u_\tau^k\|_{L^p({\R}^2)}^p \le C_{p,M,\chi} K_{p,\tau}(T)^{2p-2} +2^{p-1}\min\left\{\|u_\tau^0\|_{L^p(\R^2)}^p ,\,M^p\left(\frac{1}{pG_p} k\tau \right)^{1-p}\right\} + \tilde R_\tau
\end{equation}
for any $T\ge 1$ and any integer $k\ge 1$ with $k\tau\leq T$, where $K_{p,\tau}(T)$ is defined in \eqref{kappat} and \begin{equation}\label{tildeerre}
\tilde R_\tau:=2^{p-1}\,\frac\tau{\sqrt2}\left(\frac{p-1}{pG_p}\,M^{\frac{p}{1-p}}\,\Big(\int_{\R^2}(u_\tau^0)^p\,\d x\Big)^{p/(p-1)}+C_{p,M,\chi}K_{p,\tau}(T)^{2p}\right).
\end{equation}
If $u_0\in D(\mathcal E_1)$, we also have  
\begin{equation}\label{kappat4} K_{p,\tau}(T)\le K_p(T):= \exp\left(\frac{1}{4}\chi p G_p C_0 (1+\log^+(8C_3T))\right)\qquad\forall\tau\ge 0,\end{equation} 
where $C_0$ is the constant from {\rm Proposition \ref{logequi}} that does not depend on $\tau$, and in this case the remainder satisfies $\tilde R_\tau\to 0$ as $\tau\to0$. 
\end{theorem}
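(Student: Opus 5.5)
We derive \eqref{DLpEst2} from Lemma \ref{generalsubcritical} with a fixed, $\tau$-dependent truncation level and a splitting of $u_\tau^k$ into a bounded part and a truncated tail. Take $K:=2K_{p,\tau}(T)$. Since $\log K\ge \log 2+\log K_{p,\tau}(T)$ and $\log K_{p,\tau}(T)=\frac14\chi pG_pC_\tau^0(1+\log^+(8C_3T))$, definition \eqref{CKappa} gives
\[
C_*(K)\ge M^{-1}\Big(\frac4{pG_p}-\frac{4}{pG_p}\,\frac{\log K_{p,\tau}(T)}{\log 2+\log K_{p,\tau}(T)}\Big).
\]
This lower bound is positive but degenerates as $K_{p,\tau}(T)$ grows, which is bad. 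I would instead take $K:=K_{p,\tau}(T)^2$. Then $C_*(K)\ge M^{-1}\frac{2}{pG_p}$, so by \eqref{C1K}
\[
C_1(K)\ge \frac{p-1}{pG_p}M^{\frac{p}{1-p}}.
\]
This is exactly the constant that appears in the $\min$ of \eqref{DLpEst2}: indeed $\big(\frac{C_1(K)k\tau}{p-1}\big)^{1-p}\le M^p\big(\frac{k\tau}{pG_p}\big)^{1-p}$. We also have $K>K_{p,\tau}(T)$ because $K_{p,\tau}(T)>1$.

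Next I split pointwise with $u\le K+(u-K)_+$ and use convexity:
\[
\|u_\tau^k\|_p^p\le 2^{p-1}\Big(\int_{\{u_\tau^k\le 2K\}}\!\!\cdots\Big)+2^{p-1}\!\int(u_\tau^k-K)_+^p.
\]
A cleaner route is $u^p\le 2^{p-1}\big(K^{p-1}u+(u-K)_+^p\big)$, which holds by checking the cases $u\le K$ and $u>K$ separately. This gives
\[
\|u_\tau^k\|_p^p\le 2^{p-1}K^{p-1}M+2^{p-1}\int(u_\tau^k-K)_+^p .
\]
The first term is $\lesssim K_{p,\tau}(T)^{2p-2}$. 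Into the second I insert \eqref{compara}, bounding $\int(u_\tau^0-K)_+^p\le\|u_\tau^0\|_p^p$.

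The remaining work is to control $\bar\varphi(K)$ and $Y(K)$ polynomially in $K$. From \eqref{zeta}, $Z(K)\le c_{p,M,\chi}K$. By \eqref{YK}, using the lower bound on $C_*(K)$, we get $Y(K)\le c\,K^p$. Then \eqref{barphiK} gives $\bar\varphi(K)\le (cK^p)^{(p-1)/p}=cK^{p-1}$, which is $cK_{p,\tau}(T)^{2p-2}$. Similarly, the $\tau$-remainder contains $Y(K)\le cK_{p,\tau}(T)^{2p}$. Moreover $C_1(K)\le \frac{p-1}{2}\cdot\frac{4}{pG_pM}M^{\frac{1}{1-p}}$, which yields the first term in \eqref{tildeerre}, up to adjusting constants. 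Collecting terms gives \eqref{DLpEst2}.

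For the final claim, \eqref{kappat4} follows from $C_\tau^0\le C_0$ in \eqref{notau}. To show $\tilde R_\tau\to0$ we need $\tau\|u_\tau^0\|_p^{p^2/(p-1)}\to0$. This holds by \eqref{heatcontraction}, since $\|u_\tau^0\|_p\lesssim\varpi(\tau)^{-\frac{d}{2}(1-\frac1p)}=|\log\tau|^{(1-1/p)}$ grows only logarithmically, so $\tau$ times a power of $|\log\tau|$ tends to zero. Even simpler, $u_0\in D(\mathcal E_1)$ is not needed for this, only the logarithmic bound.

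The main obstacle is getting the exact constant $\frac1{pG_p}$ in the $\min$ term. This is handled by choosing $K$ as a power of $K_{p,\tau}(T)$ large enough that $C_*(K)$ is at least half its ideal value, at the cost of the polynomial powers $2p-2$ and $2p$ of $K_{p,\tau}(T)$. Note that the stated exponents $2p-2$ and $2p$ are exactly what $K=K_{p,\tau}(T)^2$ produces.
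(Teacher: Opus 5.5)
Your proposal is correct and follows the paper's proof: split $\|u_\tau^k\|_{L^p(\R^2)}^p$ into a term of order $K^{p-1}M$ plus the truncated tail, insert \eqref{compara}, and choose $K=K_{p,\tau}(T)^2$. This choice gives $C_*(K)=\frac{2}{pG_pM}$ and $\frac{C_1(K)}{p-1}=\frac{1}{pG_p}M^{\frac{p}{1-p}}$, after which $\bar\varphi(K)\lesssim K_{p,\tau}(T)^{2p-2}$ and $Y(K)\lesssim K_{p,\tau}(T)^{2p}$. The only blemish is in the remainder, where you bound $C_1(K)$ by twice its value and then ``adjust constants''. Since $C_*(K_{p,\tau}(T)^2)=\frac{2}{pG_pM}$ holds with equality, you get exactly the first term of \eqref{tildeerre} directly; no adjustment is needed, and none is available there, because that term depends on $u_\tau^0$.
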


\begin{proof}
First of all we observe that for any $v\in L^1_+\cap L^p(\mathbb{R}^d)$ and $K\ge 0$ it holds
\begin{equation}\label{duep}
	\int_{{\R}^d} v^p \,\d x \le K^{p-1}\int_{{\R}^d} v\,\d x + 2^{p-1}\int_{{\R}^d} (v-K)_+^p\,\d x
	+2^{p-1}K^{p-1}\int_{{\R}^d} v\,\d x.
\end{equation}
Indeed, we have
\begin{equation*}\label{2^p-1}\begin{aligned}
	\int_{{\R}^d} v^p\,\d x &=\int_{\{v\le K\}}v^p\,\d x+\int_{\{v>K\}} v^p\,\d x
	= \int_{\{v\le K\}}v^p\,\d x+\int_{\{v>K\}} (v-K+K)^p\,\d x   \\
	&\le K^{p-1}\int_{{\R}^d} v\,\d x +2^{p-1}\int_{\{v>K\}}(v-K)_+^p\,\d x+2^{p-1}\int_{\{v>K\}} K^p\,\d x\\
	&\le K^{p-1}\int_{{\R}^d} v\,\d x+2^{p-1}\int_{{\R}^d} (v-K)_+^p\,\d x+2^{p-1}K^{p-1}\int_{{\R}^d} v\,\d x.
\end{aligned}\end{equation*}

Let $T\ge1$, $p\in[2,+\infty)$, $K>K_{p,\tau}(T)$.
For any integer $k\ge 1$ such that $k\tau\le T$ we use \eqref{duep} so that
$$\|u_\tau^k\|_{L^p({\R}^2)}^p\le (2^{p-1}+1)MK^{p-1}+2^{p-1}\|(u_\tau^k-K)_+\|_{L^p({\R}^2)}^p,$$ 
 and by \eqref{compara} we obtain
\begin{equation}\label{intermest}
\begin{aligned}
 	\int_{{\R}^2}(u_\tau^k)^p\,\d x &\le (2^{p-1}+1)MK^{p-1} +2^{p-1}\min\left\{\|u_\tau^0\|_{L^p(\R^2)}^p,\,\left(\frac{C_1(K)k\tau}{p-1}\right)^{1-p}\right\}\\
	 &\qquad+2^{p-1}\bar\varphi(K)+2^{p-1}\,\frac\tau{\sqrt2}\left(C_1(K)\Big(\int_{\R^2}(u_\tau^0)^p\,\d x\Big)^{p/(p-1)}+Y(K)\right).
\end{aligned}
\end{equation}
Here, $C_1(K)$, $Y(K)$, $\bar\varphi(K)$ are defined as in Lemma \ref{generalsubcritical}.
Choosing $K=K_{p,\tau}(T)^2$ (observe that $K_{p,\tau}(T)^2>K_{p,\tau}(T)$ because $K_{p,\tau}(T)>1$) we have from \eqref{kappat}, \eqref{CKappa} and \eqref{C1K} that 
\begin{equation}\label{comoda}C_*(K_{p,\tau}(T)^2)=\frac{2}{pG_p}M^{-1}\qquad\mbox{and}\qquad\frac{C_1(K_{p,\tau}(T)^2)}{p-1}=\frac{1}{pG_p}M^{\frac{p}{1-p}},\end{equation}
therefore we obtain from \eqref{intermest}
\[\begin{aligned}
	\int_{\R^2}(u_\tau^k)^p\,\d x &\le (2^{p-1}+1)M K_{p,\tau}(T)^{2p-2} +2^{p-1}\min\left\{\|u^0\|_{L^p(\R^2)}^p ,\,M^p\left(\frac{1}{pG_p} k\tau \right)^{1-p}\right\} \\
	 &\qquad
	+ 2^{p-1}\bar\varphi(K_{p,\tau}(T)^2)+\tilde R_\tau,
\end{aligned}\]
Taking \eqref{comoda} into account, from the definition of $\bar\varphi(\cdot)$ in \eqref{barphiK} and of $Y(\cdot)$ in \eqref{YK}-\eqref{zeta} it is not difficult to check that 
$\bar\varphi(K_{p,\tau}(T)^2)\le D\,K_{p,\tau}(T)^{2p-2}$ and $Y(K_{p,\tau}(T)^2)\le D\,K_{p,\tau}(T)^{2p}$
for a suitable constant $D$ depending only on $M$, $p$, $\chi$.
Therefore we obtain \eqref{DLpEst2}, for a suitable constant $C_{p,M,\chi}$. 

Under the additional assumption $u_0\in D(\mathcal E_1)$, from Proposition \ref{logequi} we may invoke \eqref{notau} and deduce \eqref{kappat4}. Then, the remainder $\tilde R_\tau$ vanishes as $\tau\to0$ due to the at most logarithmic explosion of $\|u_\tau^0\|_ {L^p(\R^2)}$ which is in turn a consequence of the definition of $u_\tau^0$ from \eqref{utau0}, see \eqref{heatcontraction}.
\end{proof}

Of course, if $u_0\in L^p(\R^2)$, $p\ge 2$, the above statement provides a uniform bound for $\|u_\tau^k\|_{L^p(\R^2)}$ in terms of $\|u_0\|_{L^p(\R^2)}$. In order to have a similar property for $1<p<2$, we include the following
\begin{proposition}\label{very} 
Let $d=2$, $m=1$. Let $u_0 \in D(\mathcal E_1)\cap L^p(\R^2)$ for some $p\in (1,2)$ and
 $\{u_\tau^k:k=0,1,2,\ldots\}$ be a sequence given by  {\rm Proposition \ref{prop:existenceMM}}.
 For every $T\ge 1$, there exist $\overline C^*_T$ and $\tau^*$, depending on $T,\chi, p,M, \alpha, \mom(u_0), \mathcal E_1(u_0)$, such that 
\[
\|u_\tau^k\|_{L^p(\R^2)}\le \overline C_T^* \|u_0\|_{L^p(\R^2)}
\]
for every  $\tau<\tau^*$ and every integer $k\ge1$ such that $k\tau\le T$.\RRR
\end{proposition}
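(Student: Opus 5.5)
The plan is to repeat the truncation scheme of Proposition \ref{sscritical} and Lemma \ref{generalsubcritical}, now with $p\in(1,2)$. There the restriction $p\ge 2$ entered only through the entropy $\mathcal F_{p,K,\delta}$ (convexity and the $C^2$ regularity needed for Definition \ref{entr}) and through the interpolation \eqref{auxp>2}. Both points can be handled for $p\in(1,2)$.

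\textbf{First step.} Note that $u_\tau^k\in L^q(\R^2)$ for every $q$ by Proposition \ref{sscritical}. Fix $K>1$ and consider the gradient flow of $\mathcal F_{p,K,\delta}+\eps\mathcal F_1$ starting from $u_\tau^k$.
\begin{itemize}
\item For $p\in(1,2)$, $(x-K)_\delta^p/(p-1)$ is still smooth and convex, because $(x-K)_\delta$ is smooth, convex and positive near the threshold. McCann's condition holds since the conjugate function $L_{p,K,\delta}$ is nonnegative and nondecreasing.
\item Corollary \ref{dissipations} then goes through unchanged and gives \eqref{extraterms1}.
\item In the Gagliardo--Nirenberg step \eqref{startgns}, estimate $\int(u_s-K)_\delta\,\d x$ by $(\log K)^{-1}\int u\log^+u\,\d x$. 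By \eqref{notau} this is uniform in $\tau$ because $u_0\in D(\mathcal E_1)$.
\item Choose $K=K_p(T)^2$ from \eqref{kappat4}. Then $C_*(K)=2/(pMG_p)$, and $K$ depends only on $T,\chi,p,M,\alpha,\mom(u_0),\mathcal E_1(u_0)$.
\end{itemize}

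\textbf{Second step: the lower-order terms.} The troublesome term is $K^2\int(u_\tau^k-K)_+^{p-1}\,\d x$, whose exponent $p-1$ lies in $(0,1)$. I would avoid interpolation. On the set $\{u>K\}$ one has $(u-K)^{p-1}\le K^{p-2}u$, which gives
\[
K^2\int(u-K)_+^{p-1}\,\d x\le MK^{p}.
\]
The term $K\int(u-K)_+^p\,\d x$ is absorbed by Young's inequality against the superlinear term, exactly as in \eqref{record2}. This produces the discrete inequality
\[
\frac{a_\tau^k-a_\tau^{k-1}}{\tau}\le -C_1(K)(a_\tau^k)^{p/(p-1)}+Y(K),
\qquad a_\tau^k=\|(u_\tau^k-K)_+\|_{L^p}^p .
\]

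\textbf{Third step: comparison.} Apply Proposition \ref{prop:DI} with $\varphi$ as in \eqref{phi4}, keeping only the branch \eqref{finite0} and not the hypercontractive one:
\[
a_\tau^k\le a_\tau^0+\bar\varphi(K)+\frac{\tau}{\sqrt2}\,|\varphi'(a_\tau^0)| .
\]
By Jensen's inequality \eqref{jens}, $a_\tau^0\le\|u_\tau^0\|_p^p\le\|u_0\|_p^p$. Hence the error term is bounded by a constant times $\tau(\|u_0\|_p^{p^2/(p-1)}+1)$, and it is $\le 1$ once $\tau<\tau^*$, with $\tau^*$ depending on $\|u_0\|_p$. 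I expect the statement's $\tau^*$ to be allowed to depend on this quantity, or else one can use $\tau\, a^{p/(p-1)}\lesssim a$ after a first crude bound.

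\textbf{Conclusion and obstacle.} Estimate \eqref{duep} then gives
\[
\|u_\tau^k\|_p^p\le C(K)\bigl(M+\|u_0\|_p^p+1\bigr).
\]
To reach the homogeneous form $\overline C_T^*\|u_0\|_p$, use the lower bound $\|u_0\|_p\ge c(M,\mom(u_0))>0$. This follows from $L^1$–$L^p$ interpolation on a ball carrying half the mass, since the second moment controls the tails, and it lets the additive constants be absorbed into $\overline C^*_T\|u_0\|_p$. The main obstacle is the first step: verifying that $\mathcal F_{p,K,\delta}$ with $p<2$ satisfies all the hypotheses of Definition \ref{entr} and Proposition \ref{thenewprop}, together with the bound $\Psi'(x)\ge cx^{p-1}$ required by Corollary \ref{corofp} to obtain continuity up to $t=0$.
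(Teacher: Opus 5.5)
\textbf{There is a genuine gap in your first step, exactly at the point you flag as the main obstacle.}

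The flow interchange (Proposition \ref{prop:FI}) and Proposition \ref{thenewprop} need $\mathcal F_{p,K,\delta}$ to be a displacement convex entropy in the sense of Definition \ref{entr}. Your justification confuses McCann's condition with monotonicity of the conjugate. Writing $L(\rho)=\rho U'(\rho)-U(\rho)$, convexity of $r\mapsto r^dU(r^{-d})$ is equivalent to $\rho L'(\rho)\ge(1-\frac1d)L(\rho)$, not to $L\ge0$, $L'\ge0$.

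Take $d=2$ and $U(x)=\frac1{p-1}(x-K)_+^p$. This coincides with $F_{p,K,\delta}$ for $x\ge K+\delta$, so the regularization does not help. With $y=x/K>1$ the condition becomes
\[
(p-1)\left(p-\tfrac12\right)y^2-\tfrac{2-p}{2}\,y+\tfrac12\ge 0 .
\]
This fails on an interval of $y$ for every $p\in(1,8/7)$; for example $p=1.1$, $y=3$ gives $-0.31$. Adding $\eps\mathcal F_1$ with $\eps\to0$ does not repair it. Moreover, an internal energy violating McCann's condition is not $\lambda$-convex for any $\lambda$: dilate a small blob of density about $3K$, and the convexity defect scales like its area while $W^2$ scales like area times radius squared.

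So for $p$ close to $1$ the EVI \eqref{EVI0} behind the flow interchange is unavailable, and \eqref{extraterms1} cannot be derived your way. Note that the paper only uses truncated entropies for $p\ge2$. Rescuing your route would need a genuinely new ingredient. One option is a truncation designed to satisfy $\rho L'\ge\frac12L$, with Corollary \ref{dissipations} redone. Another is to keep $\eps\gtrsim K^{p-1}$ fixed and absorb $\chi\eps\int u^2$ through the Fisher information and \eqref{L2I}.

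\textbf{How the paper avoids this.} The paper never flows along truncated entropies. It uses flow interchange only with $\mathcal F_p+\eps\mathcal F_1$, which is displacement convex for all $p>1$; this is \eqref{stimagirata} with $m=1$. The truncation is moved into a functional inequality, Proposition \ref{trunc}.

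With the $\tau$-uniform entropy bound $H_T$ from Proposition \ref{logequi} and \eqref{notau}, and with $2\log K_*=p\chi A_pH_T$, one gets
\[
\chi\|u_\tau^k\|_{L^{p+1}(\R^2)}^{p+1}\le 2\chi K_*\|u_\tau^k\|_{L^p(\R^2)}^p+\tfrac2p\|\nabla(u_\tau^k)^{p/2}\|_{L^2(\R^2)}^2 ,
\]
hence
\[
(1-2\chi(p-1)\tau K_*)\|u_\tau^k\|_{L^p(\R^2)}^p\le\|u_\tau^{k-1}\|_{L^p(\R^2)}^p .
\]
Iterating for $\tau<\tau^*$ and using \eqref{jens} gives the multiplicative bound directly, with $\tau^*$ independent of $\|u_0\|_{L^p(\R^2)}$.

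\textbf{Your later steps are sound.}
\begin{itemize}
\item The bound $(u-K)^{p-1}\le K^{p-2}u$ on $\{u>K\}$ holds.
\item The lower bound $\|u_0\|_{L^p(\R^2)}\ge c(M,\mom(u_0))$ does homogenize the estimate.
\item The $\tau$-restriction in your third step is unnecessary. If $a_\tau^k>a_\tau^{k-1}$, then $C_1(K)(a_\tau^k)^{p/(p-1)}<Y(K)$. Hence $a_\tau^k\le\max\{a_\tau^0,\bar\varphi(K)\}$ for every $\tau$.
\end{itemize}
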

\begin{proof}
 For every integer $k\ge1$ such that $k\tau\le T$, Proposition \ref{logequi} gives, as $u_0\in D(\mathcal E_1)$,
\[
\int_{\R^2}u_\tau^k|\log u_\tau^k|\,\d x\le C_\tau^0(1+\log^+(8C_3T))\le C_0(1+\log^+(8C_3T))=:H_T,
\]
where $C_0$ is defined by \eqref{notau}.
  Then, by Proposition \ref{trunc} we get for every $K_*>1$
  \begin{equation*}\label{Kstar}
  \int_{\R^2}(u_\tau^k)^{p+1}\,\d x\le 2K_*\int_{\R^2}(u_\tau^k)^p\,\d x+\frac{A_p H_T}{\log K_*}\int_{\R^2}|\nabla (u_\tau^k)^{p/2}|^2\,\d x,
  \end{equation*}
  and by choosing $K_*=K_*(T)$, where $K_*(T)$ is defined by the relation $2\log K_*(T)=p\chi A_p H_T$,  Theorem \ref{generalp} yields
  $
  0\le  (\mathcal F_p(u_\tau^{k-1})-\mathcal F_p(u_\tau^k))+2\chi \,\tau K_*(T) \|u_\tau^k\|_{L^p(\R^2)}^p,
  $
that is
\begin{equation}\label{itering}
\left(1-2\chi(p-1)\,\tau \,K_*(T)\right)\|u_\tau^k\|_{L^p(\R^2)}^p\le \|u_\tau^{k-1}\|_{L^p(\R^2)}^p.
\end{equation}
Choosing now $\tau$ small enough, such that $2\chi(p-1)\,\tau \,K_*(T)<1/2$, by the elementary inquality $-\log(1-x)\le 2x$ holding for every $x\in[0,1/2]$ we deduce
\[
(1-2\chi(p-1)\,\tau \,K_*(T))^{-k}=e^{-k\log(1-2\chi(p-1)\,\tau \,K_*(T))}\le e^{4k \chi(p-1)\,\tau \,K_*(T)}\le e^{4\chi(p-1)\,T \,K_*(T)},
\]
so that by iterating \eqref{itering} we obtain
\[
\|u_\tau^k\|_{L^p(\R^2)}^p\le (1-2\chi(p-1)\,\tau \,K_*(T))^{-k} \|u_\tau^{0}\|_{L^p(\R^2)}^p\le e^{4\chi(p-1)\,T \,K_*(T)}\,\|u_0\|_{L^p(\R^2)}^p.
\]
where we have also used \eqref{jens} in the last inequality.
\end{proof}
\RRR

\begin{theorem}\label{newlpdge3} Let $\tau>0$ and let $\{u_\tau^k:k=0,1,2,\ldots\}$ be a sequence given by  {\rm Proposition \ref{prop:existenceMM}}.

{\rm\textbf{(I)}}
Let $m>m_c$. Let $p>1$. 
Then, 
for every $k=1,2,3,\ldots$ we have the discrete $L^p(\R^d)$ estimate
\begin{equation}\label{longdiscrete}\begin{aligned}
\|u_\tau^k\|_{L^p(\R^d)}^p&\le \min\left\{\|u_\tau^0\|_{L^p(\R^d)}^p,\,\left(\bar K(p,1)\,M^{-f(p,1)}(\sigma(p,1)-1)\,k\tau\right)^{-\frac{1}{\sigma(p,1)-1}}\right\}\\&+\left(\frac{\chi\, c(p,1)\, M^{g(p,1)}}{\bar K(p,1)\,M^{-f(p,1)}}\right)^{\frac1{\sigma(p,1)}}+\frac{\tau}{\sqrt2}\left|\frac{\bar K(p,1)}{M^{f(p,1)}}\left(\|u_\tau^0\|_{L^p(\R^d)}^p\right)^{\sigma(p,1)}-\chi\, c(p,1)\, M^{g(p,1)}\right|,
\end{aligned}\end{equation}
where $\bar K$, $c$, $\sigma$, $g$, $f$ are defined in \eqref{kappabar}, \eqref{cipiesse},  \eqref{sigmafg}.

{\rm\textbf{(II)}}
Let $d\ge 3$, $m=m_c$. Let $p>m_c$.
For every $k=1,2,3,\ldots$ we have the discrete $L^p$ estimate
\begin{equation*}\begin{aligned}
\|u_\tau^k\|_{L^p(\R^d)}^p&\le \min\left\{\|u_\tau^0\|_{L^p(\R^d)}^p,\,\left(\bar K(p,m_c)\,(C^0_\tau)^{-f(p,m_c)}(\sigma(p,m_c)-1)\,k\tau\right)^{-\frac{1}{\sigma(p,m_c)-1}}\right\}\\&\quad+\left(\frac{\chi\, c(p,m_c)\, (C^0_\tau)^{g(p,m_c)}}{\bar K(p,m_c)\,(C^0_\tau)^{-f(p,m_c)}}\right)^{\frac1{\sigma(p,m_c)}}
\\&\quad+\frac{\tau}{\sqrt2}\left|\frac{\bar K(p,m_c)}{(C^0_\tau)^{f(p,m_c)}}\left(\|u_\tau^0\|_{L^p(\R^d)}^p\right)^{\sigma(p,m_c)}-\chi\, c(p,m_c)\, (C^0_\tau)^{g(p,m_c)}\right|
\end{aligned}\end{equation*}
where $C_\tau^0$ is the constant from {\rm Proposition \ref{equi}}, also depending on $\mathcal E_m(u_\tau^0)$. 
If $u_0\in D(\mathcal E_m)$,  we can replace $C_\tau^0$ with the constant $C_0$ independent of $\tau$, still from {\rm Proposition \ref{equi}}. 
\end{theorem}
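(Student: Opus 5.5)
The plan is to feed the discrete differential inequality \eqref{basicdisc2} of Proposition \ref{psprop} into the ODE comparison of Proposition \ref{prop:DI}, in the same way as in Lemma \ref{generalsubcritical}. The one ingredient to supply is a bound on $\|u_\tau^k\|_{L^s}^s$ that is uniform in $k$.

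For \textbf{(I)} take $s=1$, which Proposition \ref{psprop} allows since $m>m_c$. Then $\|u_\tau^k\|_{L^1}=M$ for all $k$ by mass conservation. Set $a_k:=\|u_\tau^k\|_{L^p}^p$, $A:=\bar K(p,1)M^{-f(p,1)}$, $B:=\chi c(p,1)M^{g(p,1)}$ and $\sigma:=\sigma(p,1)>1$. Inequality \eqref{basicdisc2} becomes
\[
\frac{a_k-a_{k-1}}{\tau}\le -\varphi'(a_k),\qquad \varphi(x):=\frac{A}{\sigma+1}x^{\sigma+1}-Bx .
\]
This $\varphi$ is convex and $C^1$ on $[0,+\infty)$; extend it to $\R$ convexly, for instance linearly with slope $-B$ for $x<0$. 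Let $b_k$ be the implicit Euler iterates with $b_0=a_0$, and let $b(t)$ solve $b'=-\varphi'(b)$, $b(0)=a_0$. Proposition \ref{prop:DI} then gives
\[
a_k\le b_k\le b(k\tau)+\frac{\tau}{\sqrt2}|\varphi'(a_0)|,
\]
and $|\varphi'(a_0)|$ is exactly the last term of \eqref{longdiscrete}.

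It remains to bound the continuous solution $b$, following the argument around \eqref{finite0}--\eqref{infinite0}. Let $\bar\varphi:=(B/A)^{1/\sigma}$ be the minimum point of $\varphi$. First, $b(t)\le \max\{a_0,\bar\varphi\}\le a_0+\bar\varphi$, since $b$ decreases while it lies above $\bar\varphi$ and cannot cross $\bar\varphi$ from below. Second, $\tilde b(t):=(A(\sigma-1)t)^{-1/(\sigma-1)}$ solves $\tilde b'=-A\tilde b^\sigma$ with infinite initial value. The function $w:=\tilde b+\bar\varphi$ satisfies
\[
w'=-A\tilde b^\sigma\ge -Aw^\sigma+B=-\varphi'(w),
\]
because $Aw^\sigma\ge A\tilde b^\sigma+A\bar\varphi^\sigma=A\tilde b^\sigma+B$ by superadditivity of $x\mapsto x^\sigma$. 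Hence $w$ is a supersolution blowing up at $t=0$, and comparison gives $b\le w$. Combining the two bounds yields $b(t)\le \min\{a_0,\tilde b(t)\}+\bar\varphi$, which is \eqref{longdiscrete}.

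For \textbf{(II)} take $s=m_c\in(1,p)$, which is admissible because $p>m_c$. Proposition \ref{equi} gives $\int(u_\tau^k)^{m_c}\,\d x\le C_\tau^0$, or $\le C_0$ when $u_0\in D(\mathcal E_m)$. The exponent $f(p,m_c)$ is positive, so $\|u_\tau^k\|_{L^{m_c}}^{m_c}\le C_\tau^0$ lets us replace that norm by $C_\tau^0$ in \eqref{basicdisc2}. In the first term this only weakens the negative contribution, since the denominator grows; in the second term it only increases the positive contribution, since $g>0$. We obtain the same scalar inequality with $A=\bar K(p,m_c)(C_\tau^0)^{-f(p,m_c)}$ and $B=\chi c(p,m_c)(C_\tau^0)^{g(p,m_c)}$, and the argument of (I) applies verbatim.

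The only delicate step is the comparison with the supersolution that blows up at time zero. It is handled as in Lemma \ref{generalsubcritical}: compare $w$ with the solution started at $a_0$ on intervals $[t_0,+\infty)$ with $w(t_0)\ge b(t_0)$, and use uniqueness for the locally Lipschitz ODE. Extending $\varphi$ to negative arguments is harmless, since all the $a_k$ and $b_k$ are nonnegative.
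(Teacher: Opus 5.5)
Your proposal is correct and follows the paper's proof. You take $s=1$ in \eqref{basicdisc2} for (I), and $s=m_c$ together with the bound $\int_{\R^d}(u_\tau^k)^{m_c}\,\d x\le C_\tau^0$ (or $\le C_0$) from Proposition \ref{equi} for (II). You then combine Proposition \ref{prop:DI} with the supersolution comparison from the proof of Lemma \ref{generalsubcritical}, making explicit a few details the paper leaves implicit: the superadditivity of $x\mapsto x^{\sigma}$ behind $w'\ge-\varphi'(w)$, the convex extension of $\varphi$ to all of $\R$, and a direct comparison of $b$ with $w$.
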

\begin{proof}
Let $m>m_c$ and $p>1$.
If we choose $s=1$ in \eqref{basicdisc2}, which is possible since $m>m_c$, we obtain as a particular instance that for any $k=1,2,3\ldots$
\begin{equation}\label{discrder}
\frac{\|u_\tau^k\|_{L^p(\R^d)}^p-\|u_\tau^{k-1}\|_{L^p(\R^d)}^p}{\tau}\le -\bar K(p,1)\,M^{-f(p,1)}{\left(\|u_\tau^k\|_{L^p(\R^d)}^p\right)^{\sigma(p,1)}}+\chi\, c(p,1)\, M^{g(p,1)},
\end{equation}
showing that the sequence $(\|u_\tau^k\|_{L^p(\R^d)}^p)_{k\in\mathbb N}$
satisfies the discrete version of the differential inequality $$y'\le -\bar K(p,1)\,M^{-f(p,1)}y^{\sigma(p,1)}+\chi\, c(p,1)\, M^{g(p,1)},$$ 
the right hand side being the opposite derivative of a convex function of $y$, since $\sigma(p,1)>1.$  Notice indeed that the right hand side has exactly the same form of the function $\varphi$ in \eqref{phi4} so that we may apply the same argument. Indeed, by the same comparison principle used in the proof of Lemma \ref{generalsubcritical}, and taking advantage of Proposition \ref{prop:DI} as done therein,  we deduce the desired discrete $L^p$ estimate. This proves {\bf{(I)}}.\RRR 

Else if $m=m_c$ and $p>m_c$, we choose $s=m_c$ in \eqref{basicdisc2} and we apply the same argument, also taking Proposition \eqref{equi} into account, to get 
\begin{equation}\label{discrder2}\begin{aligned}
\frac{\|u_\tau^k\|_{L^p(\R^d)}^p-\|u_\tau^{k-1}\|_{L^p(\R^d)}^p}{\tau}&\le -\bar K(p,m_c)\,(C_\tau^0)^{-f(p,m_c)}{\left(\|u_\tau^k\|_{L^p(\R^d)}^p\right)^{\sigma(p,m_c)}}\\&\qquad+\chi\, c(p,m_c)\, (C_\tau^0)^{g(p,m_c)},
\end{aligned}\end{equation}\EEE
and the result follows in the same way. In the case $u_0\in D(\mathcal E_m)$ the latter inequality also holds with $C_0$ in place of $C_\tau^0$, since $C_\tau^0\le C_0$ as seen in Proposition \ref{equi}.
\end{proof}

\RRR

\RRR

\RRR

\subsection{Euler-Lagrange equation for discrete minimizers}

We close this section
by proving a necessary condition on the minimizers, making variations along the flow of a smooth vector field. 
This is a standard argument in order to obtain a discrete formulation of the equation.
\begin{proposition}\label{prop:EL}
Let  $\tau>0$ and $\{u_\tau^k: k=0,1,2,\ldots\}$ be a sequence given by {\rm Proposition \ref{prop:existenceMM}}.
 Then, for any integer $k\geq 1$, $(u_\tau^k)^m\in W^{1,1}(\R^d)$ and
 \begin{equation}\label{eq:ELdeq}
 	\int_{\R^d} (\nabla (u_\tau^k)^m - \chi u_\tau^k \nabla v_\tau^k)\cdot \eta \, \d x = \frac{1}{\tau} \int_{\R^d} (T_{u_\tau^k}^{u_\tau^{k-1}} - I)\cdot \eta u_\tau^k \, \d x ,
 \end{equation}
for any $\eta\in C^1_c(\R^d;\R^d)$, where $ v_\tau^k = B_\alpha *  u_\tau^k$ and $T_{u_\tau^k}^{u_\tau^{k-1}}$ is the optimal transport map from ${u_\tau^k}$ to $u_\tau^{k-1}$, and $I$ denotes the identity map on $\R^d$.
Moreover
\begin{equation}\label{eq:DDeq}
 	\int_{\R^d} \left| \frac{\nabla (u_\tau^k)^m - \chi u_\tau^k \nabla v_\tau^k}{u_\tau^k}\right|^2 u_\tau^k \, \d x = 
	\frac{1}{\tau^2}W^2(u_\tau^k, u_\tau^{k-1}). 
 \end{equation}
\end{proposition}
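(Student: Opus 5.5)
We will derive the Euler--Lagrange equation by the classical Jordan--Kinderlehrer--Otto first-variation argument: perturb the minimizer $u_\tau^k$ by pushing it forward along the flow of a smooth compactly supported vector field, and compare with the minimality in \eqref{minmov1}. Fix $\eta\in C^1_c(\R^d;\R^d)$, let $\Phi_\eps(x)=x+\eps\eta(x)$, which is a diffeomorphism for $|\eps|$ small, and set $u_\eps:=(\Phi_\eps)_\#u_\tau^k$. Minimality gives
\[
\frac{\EE_m(u_\eps)-\EE_m(u_\tau^k)}{\eps}+\frac{W^2(u_\eps,u_\tau^{k-1})-W^2(u_\tau^k,u_\tau^{k-1})}{2\tau\eps}\ge 0 \quad (\eps>0),
\]
with the reverse inequality for $\eps<0$. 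We then compute the three one-sided derivatives at $\eps=0$ and conclude by comparing the two signs of $\eps$.

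For the internal energy, the change of variables $u_\eps(\Phi_\eps(x))\det\nabla\Phi_\eps(x)=u_\tau^k(x)$ gives
\[
\FF_m(u_\eps)=\frac1{m-1}\int (u_\tau^k)^m(\det\nabla\Phi_\eps)^{1-m}\,\d x ,
\]
with the analogous formula for $m=1$. Hence
\[
\frac{\d}{\d\eps}\FF_m(u_\eps)\Big|_{\eps=0}=-\int (u_\tau^k)^m\,\div\eta\,\d x ,
\]
where differentiation under the integral is justified because $u_\tau^k\in L^m$ (indeed in every $L^p$ by Theorem~\ref{generalp}) and $\det\nabla\Phi_\eps$ is uniformly close to $1$.

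For the interaction energy,
\[
\BB_\alpha(u_\eps)=\frac12\iint B_\alpha(\Phi_\eps(x)-\Phi_\eps(y))\,u_\tau^k(x)\,u_\tau^k(y)\,\d x\,\d y ,
\]
so the derivative is
\[
\frac12\iint\nabla B_\alpha(x-y)\cdot(\eta(x)-\eta(y))\,u_\tau^k(x)\,u_\tau^k(y)\,\d x\,\d y=\int u_\tau^k\,\nabla v_\tau^k\cdot\eta\,\d x .
\]
Differentiation under the integral is justified since $|\nabla B_\alpha(z)|\,|z|$ is bounded near $0$ and the difference quotients of $B_\alpha(\Phi_\eps(x)-\Phi_\eps(y))$ are dominated by $C\,|\nabla B_\alpha|$-type bounds times $\|\nabla\eta\|_\infty|x-y|$; alternatively, one can use $\nabla v_\tau^k\in L^\infty$ from Proposition~\ref{loggrowth}(i), since $u_\tau^k\in L^1\cap L^p$ with $p>d$.

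For the Wasserstein term we use the standard estimate: $(\Phi_\eps\circ T^{-1},\ldots)$ — more precisely, the plan $(\Phi_\eps, T)_\#u_\tau^k$ with $T=T_{u_\tau^k}^{u_\tau^{k-1}}$ is admissible between $u_\eps$ and $u_\tau^{k-1}$. Therefore
\[
W^2(u_\eps,u_\tau^{k-1})\le\int|x+\eps\eta(x)-T(x)|^2u_\tau^k\,\d x ,
\]
and consequently
\[
\limsup_{\eps\downarrow0}\frac{W^2(u_\eps,u_\tau^{k-1})-W^2(u_\tau^k,u_\tau^{k-1})}{2\eps}\le\int (I-T)\cdot\eta\,u_\tau^k\,\d x .
\]
Combining the three terms for $\eps>0$ and for $\eps<0$ (equivalently, replacing $\eta$ by $-\eta$) yields
\[
-\int (u_\tau^k)^m\div\eta\,\d x-\chi\int u_\tau^k\nabla v_\tau^k\cdot\eta\,\d x=\frac1\tau\int (T-I)\cdot\eta\,u_\tau^k\,\d x .
\]
This is \eqref{eq:ELdeq}, provided the first term can be written as $\int\nabla(u_\tau^k)^m\cdot\eta\,\d x$. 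That identification holds because Theorem~\ref{generalp}, applied with $p=m+1$, gives $(u_\tau^k)^m\in H^1(\R^d)$. Since $(u_\tau^k)^m\in L^1$ and $\nabla(u_\tau^k)^m=2(u_\tau^k)^{m/2}\nabla(u_\tau^k)^{m/2}\in L^1$ by Cauchy--Schwarz, using $(u_\tau^k)^{m/2}\in H^1$ (Theorem~\ref{generalp} with $p=1$), we also obtain $(u_\tau^k)^m\in W^{1,1}$.

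For \eqref{eq:DDeq}, we read \eqref{eq:ELdeq} as saying that the vector measure $\nabla(u_\tau^k)^m-\chi u_\tau^k\nabla v_\tau^k$ equals $w\,u_\tau^k$ with $w=\tau^{-1}(T-I)$. The identity holds for all $\eta\in C^1_c$, hence by density for all $\eta\in L^2(u_\tau^k)$, and the two integrands then coincide a.e. We conclude that
\[
\int\Big|\frac{\nabla(u_\tau^k)^m-\chi u_\tau^k\nabla v_\tau^k}{u_\tau^k}\Big|^2u_\tau^k\,\d x=\tau^{-2}\int|T-I|^2u_\tau^k\,\d x=\tau^{-2}W^2(u_\tau^k,u_\tau^{k-1}).
\]
The main technical point we expect is the justification of the interaction-energy derivative in the logarithmic case $d=2$, $\alpha=0$, where $B_0$ is unbounded at infinity. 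Here we would use $|\nabla B_0(z)|=(2\pi|z|)^{-1}$ together with the Lipschitz bound $|\eta(x)-\eta(y)|\le\|\nabla\eta\|_\infty|x-y|$, which makes the integrand bounded by $C\,u_\tau^k(x)u_\tau^k(y)$, so that dominated convergence applies.
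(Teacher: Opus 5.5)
Your proof is correct and has the same skeleton as the paper's: the first variation of the JKO functional along $(I+\eps\eta)_\# u_\tau^k$, the replacement $\eta\mapsto-\eta$, integration by parts using $(u_\tau^k)^m\in W^{1,1}(\R^d)$, and identification of the velocity with $\tau^{-1}(T-I)$.

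The two technical steps are handled differently.

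\emph{Wasserstein term.} You use only the admissible plan $(I+\eps\eta,T)_\#u_\tau^k$, which gives a one-sided bound. That is enough, because minimality supplies the opposite inequality. The paper instead quotes the two-sided derivative from the literature, so your version is more self-contained.

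\emph{Interaction term.} You differentiate the double integral directly and then symmetrize. This needs only pointwise kernel bounds and $u_\tau^k\in L^1\cap L^p$ for some $p>d$. The paper instead uses Calder\'on--Zygmund and Morrey to get $v_\eps,v\in C^1$ with $\nabla v_\eps\to\nabla v$ uniformly, and then splits $v_\eps u_\eps-vu$. Your route is more elementary; the paper's additionally yields $C^1$ regularity of $v_\tau^k$.

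One justification needs fixing. The quantity $|\nabla B_\alpha(z)|\,|z|\sim|z|^{2-d}$ is unbounded near $0$ when $d\ge 3$, so it cannot serve as a bounded dominating factor there. Bound $\nabla B_\alpha$ on the segment between $x-y$ and $x-y+\eps(\eta(x)-\eta(y))$ as follows:
\begin{itemize}
\item use $|\nabla B_\alpha(w)|\le|\nabla B_0(w)|=c_d|w|^{1-d}$;
\item use $|w|\ge|x-y|/2$, valid once $|\eps|\,\|\nabla\eta\|_\infty\le 1/2$;
\item do not bound by a multiple of $|\nabla B_\alpha(x-y)|$, which fails at large distance when $\alpha>0$.
\end{itemize}
This gives the dominating function $C|x-y|^{2-d}u_\tau^k(x)u_\tau^k(y)$, or just $C\,u_\tau^k(x)u_\tau^k(y)$ when $d=2$. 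It is integrable because $|\cdot|^{2-d}\ast u_\tau^k\in L^\infty$ for $u_\tau^k\in L^1\cap L^p$ with $p>d/2$.

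Finally, \eqref{eq:DDeq} needs no density argument in $L^2(u_\tau^k)$. Both $\nabla(u_\tau^k)^m-\chi u_\tau^k\nabla v_\tau^k$ and $\tau^{-1}(T-I)u_\tau^k$ lie in $L^1(\R^d)$. Hence \eqref{eq:ELdeq} for all $\eta\in C^1_c$ already gives their a.e.\ equality.
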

\begin{proof}
Let us fix $k\geq 1$ and $\eta\in C^1_c(\R^d;\R^d)$.  We first notice that $(u_\tau^k)^m \in W^{1,1}(\R^d)$ directly follows from Theorem \ref{generalp}\RRR, which also yields $(u_\tau^k)\in L^p(\R^d)$ for every $p\in[1,+\infty)$. For $\eps>0$ sufficiently small, the map $T_\eps:\R^d\to \R^d$ defined by $T_\eps(x)=x+\eps\eta(x)$ is a $C^1$ diffeomorphism.
For notational convenience we set $u:=u_\tau^k$ and $\bar u:=u_\tau^{k-1}$.
Let $u_\eps=(T_\eps)_\#u$.
Since $u$ is a minimizer for \eqref{minmov1}, then
$$ \EE_m(u)+\frac{1}{2\tau}W^2(u,\bar u) \leq \EE_m(u_\eps)+\frac{1}{2\tau}W^2(u_\eps,\bar u),$$
consequently
\begin{equation}\label{eulerineq} \frac{\EE_m(u)-\EE_m(u_\eps)}{\eps} \leq \frac{1}{2\tau} \frac{W^2(u_\eps,\bar u)-W^2(u,\bar u)}{\eps}.\end{equation}
By a standard computation,  see for instance \cite{BCKKLL, HMVV}, \RRR we have
\begin{equation}\label{twolim}\begin{aligned} \lim_{\eps \to 0} \frac{W^2(u_\eps,\bar u)-W^2(u,\bar u)}{\eps} &= 2  \int_{\R^d} (T_{u}^{\bar u} - I)\cdot \eta u \, \d x,\\ \lim_{\eps \to 0} \frac{\FF_m(u)-\FF_m(u_\eps)}{\eps} &=  \int_{\R^d} {u}^m \div\eta \, \d x.\end{aligned}\end{equation}
Denoting by $v=B_\alpha *  u$, we have to prove the following
\begin{equation}\label{Blim} \lim_{\eps \to 0} \frac{\BB_\alpha(u_\eps)-\BB_\alpha(u)}{\eps} =  \int_{\R^d} \nabla v \cdot \eta u \, \d x.\end{equation}
Let $v_\eps=B_\alpha * u_\eps$.
Since $u\in L^p(\R^d)$, for $p\in[1,+\infty)$, and $u_\eps(x)=u(T^{-1}_\eps(x))\det(\nabla(T^{-1}_\eps(x)))$, \RRR
by dominated convergence, it holds that $\lim_{\eps\to 0}\|u_\eps-u\|_{L^p(\R^d)}=0$.
 By the Calder\'on-Zygmund inequality, see for instance \cite[Section 7.2.4]{GM}, we have $D^2v, \ D^2v_\eps \in L^p(\R^d)$ for $1<p<+\infty$.  \RRR By Proposition \ref{loggrowth}, $v_\eps, \ v \in W^{2,p}_{loc}(\R^d)$, and since $p>d$ is admissible, by Morrey's Theorem $v_\eps, v \in C^1(\R^d)$. Furthermore, still invoking Proposition \ref{loggrowth}, we have $\nabla v_\eps,  \nabla v \in L^\infty(\R^d) $ and
 $\lim_{\eps\to 0}\|\nabla (v_\eps-v)\|_{L^{\infty}(\R^d)}=0$. \RRR
Writing
$$ \frac{\BB_\alpha(u_\eps)-\BB_\alpha(u)}{\eps} =  \frac{1}{2\eps} \int_{\R^d} (v_\eps u_\eps - v u ) \, \d x=
	\frac{1}{2\eps}\int_{\R^d} v_\eps (u_\eps -u)  \, \d x +  \frac{1}{2\eps}\int_{\R^d} (v_\eps-v) u  \, \d x,$$
we compute separately the two terms.
The first one can be written as
\begin{equation}\label{vepsueps} \frac{1}{2\eps}\int_{\R^d} v_\eps (u_\eps -u)  \, \d x =  \frac{1}{2\eps}\int_{\R^d} (v_\eps-v) (u_\eps-u)\,\d x + \frac{1}{2\eps}\int_{\R^d} v (u_\eps-u)\,\d x.\end{equation}
The limit of the last term of \eqref{vepsueps} can be computed using the definition of $u_\eps$  as
\begin{equation}\label{firstlim}\lim_{\eps\to 0}
\frac{1}{\eps}\int_{\R^d} v (u_\eps-u)\,\d x=\lim_{\eps\to0}  \frac{1}{\eps}\int_{\R^d} (v\circ T_\eps -v) u\,\d x = \int_{\R^d} \nabla v\cdot\eta\, u\,\d x.
\end{equation}
The last limit holds by dominated convergence, because $v\in C^1(\R^d)$ so that 
$$\frac{1}{\eps}(v(x+\eps\eta(x))- v(x))\to \nabla v(x)\cdot\eta(x)$$ pointwise as $\eps\to0$ and  there holds  $\frac{1}{\eps}\|v\circ T_\eps -v\|_{L^\infty(\R^d)}\leq\|\nabla v\|_{L^\infty(\R^n)}\|\eta\|_{L^\infty(\R^d)}$. 
Similarly, for the first term in the right hand side of \eqref{vepsueps} we have
\[
\frac{1}{2\eps}\int_{\R^d} (v_\eps-v) (u_\eps-u)\,\d x\le \frac1{2\eps}\|\nabla(v_\eps-v)\|_{L^{\infty}(\R^d)}\|\eps\eta\|_{L^\infty(\R^d)}\int_{\R^d} u\,\d x
\]
so that
 $$\lim_{\eps\to 0}\frac{1}{2\eps}\int_{\R^d} (v_\eps-v) (u_\eps-u)\,\d x =0.$$
\RRR
On the other hand, using the definition of convolution, the symmetry $B_\alpha(y-x)=B_\alpha(x-y)$ and Fubini's Theorem, we obtain
\begin{equation*}
\begin{aligned}
	\frac{1}{\eps}\int_{\R^d} (v_\eps-v) u \,\d x & =  \frac{1}{\eps}\int_{\R^d} \int_{\R^d} B_\alpha(x-y)(u_\eps(y)-u(y))\,\d y \, u(x) \,\d x \\
	&=\frac{1}{\eps}\int_{\R^d} \int_{\R^d} (B_\alpha(x-T_\eps(y))- B_\alpha(x-y))u(y)\,\d y \, u(x) \,\d x \\ 
	&=\frac{1}{\eps}\int_{\R^d} \int_{\R^d} (B_\alpha(T_\eps(y)-x)- B_\alpha(y-x))u(x)\,\d x \, u(y) \,\d y \\
	&=\frac{1}{\eps}\int_{\R^d}  (B_\alpha*u(T_\eps(y))- B_\alpha*u(y))\, u(y) \,\d y =  \frac{1}{\eps}\int_{\R^d} (v\circ T_\eps -v) u\,\d x,
\end{aligned}
\end{equation*}
and the limit is thus the same as \eqref{firstlim}. 
Summing up, this proves \eqref{Blim}. By \eqref{eulerineq}, \eqref{twolim}, \eqref{Blim}  and using the definition of the functional $\EE_m$ we obtain
$$   \int_{\R^d} {u}^m \div\eta \, \d x -\chi   \int_{\R^d} \nabla v \cdot \eta u \, \d x \leq    \frac{1}{\tau}\int_{\R^d} (T_{u}^{\bar u} - I)\cdot \eta u \, \d x.$$
Since the same result holds for $-\eta$ instead of $\eta$, 
we obtain
\begin{equation*}\label{eq:wfi}
  \int_{\R^d} {u}^m \div\eta \, \d x -\chi   \int_{\R^d} \nabla v \cdot \eta u \, \d x =  \frac{1}{\tau}\int_{\R^d} (T_{u}^{\bar u} - I)\cdot \eta u \, \d x, 
 \end{equation*}
 and integrating by parts  \eqref{eq:ELdeq} is proved.
Finally, from  \eqref{eq:ELdeq} we obtain that 
$$\frac{\nabla (u_\tau^k)^m - \chi u_\tau^k \nabla v_\tau^k}{u_\tau^k} =\frac{1}{\tau} (T_{u_\tau^k}^{u_\tau^{k-1}} - I), \text{ in }L^2_{u_\tau^k}(\R^d;\R^d)$$
and the equality of the norms yields \eqref{eq:DDeq}.
\end{proof}

\section{Convergence of the JKO scheme}\label{convergingsection}
 This section is devoted to the main convergence properties of the discrete approximation scheme. 
 We shall provide existence of gradient flows of functional $\mathcal E_m$ \EEE also for general initial data in $\mathscr P_2^M(\R^d)$ in the case $m>m_c$. Then we prove that such gradient flows are distributional solutions to \eqref{PE} and that they satisfies an energy inequality. As a first step, we start by proving that discrete  solutions have stronger convergence properties than \eqref{0conv}  under the assumptions $u_0\in D(\mathcal E_m)$, $m=m_c$. \RRR

\subsection{Improved convergence and solution of the Cauchy problem}

\begin{theorem} \label{prop:IC} Let  $m=m_c$ and \EEE
$u_0\in D(\EE_m)$. For every $\tau>0$, let $\{u_\tau^k:k=0,1,2,\ldots\}$ be a sequence from {\rm Proposition \ref{prop:existenceMM}} and
let $u_\tau$ be the piecewise constant curve defined in \eqref{floor}.
 Suppose that the vanishing sequence $(\tau_n)_n$ and the curve  $u\in AC^2_{loc}([0,+\infty);\Space)$ are given by {\rm Theorem \ref{th:convergence1}}, and  thus satisfying \eqref{0conv}. Define 
 $
v_\tau(t):=B_{\alpha}*u_\tau(t)$ and $v(t):=B_{\alpha}*u(t)$, 
 for every $t\ge 0$. Then, 
 for every $t>0$ we have
 \begin{equation}\label{secondasudue}\begin{aligned}
&\mbox{$u_{\tau_n}(t)\rightharpoonup u(t)$ weakly in $L^p(\R^d)$ for  any $p\in[1,+\infty)$, as $n\to\infty$,}\\
&\mbox{
$\nabla v_\tau (t), \nabla v (t) \in W^{1,p}(\mathbb{R}^d)$ for any $p\in (d/(d-1),+\infty)$ and any $\tau>0$,}\\
& \mbox{$\nabla v_{\tau_{n}}(t)\to \nabla v(t)$ strongly in  $L^p_{loc}({\R}^d)$ for any  $p\in [1,+\infty)$,  as $n\to\infty$.}
\end{aligned}\end{equation}
 Moreover, as $n\to \infty$,
\begin{equation}\label{tx}
\begin{aligned}
&\mbox{$u_{\tau_n}\to u$ strongly in $L^p_{loc}((0,+\infty);L^q(\R^d))$ for any $1\le p,q<+\infty$,}\\
&\mbox{$\nabla v_{\tau_n}\to \nabla v$ strongly in $L^p_{loc}((0,+\infty);L^q(\R^d))$ for any $p\in[1,+\infty)$, $q\in(\frac{d}{d-1},+\infty]$}
\end{aligned}
\end{equation}
and
\begin{equation}\label{forpde}
\begin{aligned}
&\mbox{$\nabla u_{\tau_n}^m\to\nabla u^m$ weakly in $L^2_{loc}((0,+\infty);L^2(\R^d))$,}\\
&\mbox{$u_{\tau_n}\nabla v_{\tau_n}\to u \nabla v $ strongly in $L^2_{loc}((0,+\infty);L^2(\R^d))$.}
\end{aligned}
\end{equation}

\EEE
\end{theorem}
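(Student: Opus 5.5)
The plan is to first derive uniform-in-$\tau$ $L^p$ bounds on $u_\tau(t)$ for $t$ in compact subsets of $(0,+\infty)$, then use these to get weak compactness, and finally obtain strong space-time compactness through an Aubin--Lions type argument. That last step combines the Sobolev bound of Theorem \ref{generalp} with the Wasserstein equicontinuity \eqref{equicontinuityofWasserstein}.

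\textbf{Uniform bounds.} Since $u_0\in D(\EE_m)$, the constants $C_\tau^0$ are bounded by $C_0$ (Propositions \ref{logequi}, \ref{equi}). The discrete hypercontractivity estimates then give, for every $p$ and every $0<t_0<T$, a bound $\sup_{t\in[t_0,T]}\|u_\tau(t)\|_{L^p}\le C(p,t_0,T)$ uniformly for $\tau$ small. For $d\ge 3$ this is Theorem \ref{newlpdge3}(II) with $p>m_c$, completed by interpolation with $L^1$. For $d=2$ it is Theorem \ref{theop} with \eqref{kappat4}. In both cases the remainder terms vanish because $\|u_\tau^0\|_p$ diverges only logarithmically, see \eqref{heatcontraction}. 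Next, summing \eqref{stimagirata} over $k$ and using the $L^{p+1}$ bound yields $\int_{t_0}^T\|\nabla u_\tau^{(p+m-1)/2}\|_2^2\,\d t\le C$. Taking $p=m+1$ in particular bounds $\nabla u_\tau^m$ in $L^2((t_0,T)\times\R^d)$.

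\textbf{Pointwise-in-time statements \eqref{secondasudue}.} The $L^p$ bounds together with narrow convergence \eqref{0conv} give $u_{\tau_n}(t)\rightharpoonup u(t)$ weakly in every $L^p$, $p<\infty$, and $u(t)\in L^p$ by lower semicontinuity. The property $\nabla v\in W^{1,p}$ for $p>d/(d-1)$ follows from Calder\'on--Zygmund, which gives $D^2v\in L^p$ for $1<p<\infty$. Integrability of $\nabla v$ itself comes from the HLS/weak-$L^{d/(d-1)}$ bound on $\nabla B_\alpha*u$ for $u\in L^1\cap L^\infty$-like data. For the local strong convergence of $\nabla v_{\tau_n}(t)$, I would use Rellich on balls, since $\nabla v_{\tau_n}(t)$ is bounded in $W^{1,p}_{loc}$. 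Identification of the limit follows from the distributional convergence $\nabla B_\alpha * u_{\tau_n}(t)\to\nabla B_\alpha*u(t)$, obtained by testing against smooth compactly supported fields.

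\textbf{Space-time strong convergence \eqref{tx}, the main obstacle.} Fix $[t_0,T]$. I would apply the extended Aubin--Lions lemma of Rossi--Savar\'e, as in \cite{DMS22} and similar works. The coercive functional is $\mathcal{G}(w)=\|\nabla w^{m}\|_{L^2}^2+\|w\|_{L^{m+1}}^{m+1}+\mom(w)$, whose sublevels are relatively compact in $L^1(\R^d)$: there is local compactness from the gradient bound, and tightness from the moment bound. The compatible pseudo-distance is $W$, for which \eqref{equicontinuityofWasserstein} provides the required approximate equicontinuity. This yields $u_{\tau_n}\to u$ in measure in $(t_0,T)$ with values in $L^1$, hence in $L^p_{loc}((0,\infty);L^1)$. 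Interpolating with the uniform $L^r$ bounds then upgrades this to every $L^q$, $q<\infty$. The main delicacy is checking the lower semicontinuity and compactness hypotheses of the lemma. For $\nabla v_{\tau_n}$ the convergence follows from Proposition \ref{loggrowth}(i) applied to $u_{\tau_n}-u$, which gives the $L^\infty$ case. For finite $q>d/(d-1)$, Hardy--Littlewood--Sobolev applied to $|\nabla B_\alpha|\lesssim|x|^{1-d}$ maps $L^{s}\to L^q$ with $1/q=1/s-1/d$.

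\textbf{Products \eqref{forpde}.} The bound on $\nabla u_\tau^m$ in $L^2_{loc}L^2$ gives a weakly convergent subsequence. Its limit is identified as $\nabla u^m$ because $u_{\tau_n}^m\to u^m$ strongly by \eqref{tx}. Finally, $u_{\tau_n}\nabla v_{\tau_n}\to u\nabla v$ strongly in $L^2_{loc}L^2$: split the difference as $(u_{\tau_n}-u)\nabla v_{\tau_n}+u(\nabla v_{\tau_n}-\nabla v)$, and use the $L^\infty$ bound and $L^\infty$ convergence of the gradients from the previous step.
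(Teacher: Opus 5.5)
Your proposal is correct and follows essentially the same route as the paper. The shared steps are: uniform $L^p$ bounds on $[t_0,T]$ from Theorem~\ref{newlpdge3}(II) and Theorem~\ref{theop}; Calder\'on--Zygmund plus Rellich pointwise in time; a space-time gradient bound from summing \eqref{stimagirata}; the Rossi--Savar\'e compactness theorem with the Wasserstein equicontinuity \eqref{equicontinuityofWasserstein}; interpolation; and then the limits of the products.

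The differences are only cosmetic. The paper takes $p=3-m_c$ to get an $L^2((t_0,T);H^1(\R^d))$ bound, runs the compactness in $L^2$ with a Vitali step, and uses Sobolev embedding rather than HLS for $\nabla v$. Your $L^1$-based functional built on $\nabla w^m$ works equally well.
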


\begin{proof} 

The proof makes us of the estimates on discrete solutions that have been obtained in the previous section. Indeed, 
from Theorem \ref{theop} in the case $d=2$ and with the notation therein,  we obtain for every $p\in[ 2,+\infty)$ that
\begin{equation}\label{0interval}
\|u_\tau(t)\|_{L^p(\R^d)}^p\le (2pG_p)^{p-1}M^p\,t^{1-p}+C_{p,M,\chi}\, K_p(t+1)+R_\tau(t+1)\quad\mbox{$\forall t>0,\,\forall \tau\in(0,1)$, }
\end{equation}
where 
\[
 R_\tau(t):=2^{p-1}\,\frac\tau{\sqrt2}\left(\frac{p-1}{pG_p}\,M^{\frac{p}{1-p}}\,\Big(\int_{\R^2}(u_\tau^0)^p\,\d x\Big)^{p/(p-1)}+C_{p,M,\chi}K_{p}(t)^{2p}\right),
\]
and $\sup _{t\in[0,T]}R_\tau(t)\to 0$ if $\tau\to0$ even if $u_0\notin L^p(\R^d)$ due to the at most logarithmic blow up of $\|u_\tau^0\|_{L^p(\R^d)}$ as $\tau\to0$, see \eqref{heatcontraction}. Similarly, if $d\ge 3$ then point {\bf{(II)}} of Theorem \ref{newlpdge3} shows that there exist $\hat C^*, \hat C_*$, only depending on $p,\chi,M, m,d,\alpha, T, \mathcal E_m(u_0), \mom(u_0)$ such that for every $p\in(m_c,+\infty)$ there holds
\begin{equation}\label{intervaloft}
\|u_\tau(t)\|_{L^{p}(\R^d)}^{p}\le \hat C^* t^{-\textstyle{\frac{d(p-m_c)}{(d+2)m_c-d}}}+\hat C_*+ \hat R_\tau\quad\mbox{$\forall t>0,\,\forall \tau\in(0,1)$},
\end{equation}
where the remainder $\hat R_\tau$ can be read from Theorem \ref{newlpdge3}, it  depends on $p, M,m,d,\chi, \alpha$, $\|u_\tau^0\|_{L^p(\R^d)}$,  and again $\hat R_\tau\to0$ as $\tau\to0$.

\smallskip

{\bf {Step 1.}} \EEE In this step we prove the pointwise estimates \eqref{secondasudue}
Let us fix $t\in (0,+\infty)$. By \eqref{0interval}-\eqref{intervaloft}, we have 
\begin{equation}\label{eq:blp}
	\sup_{n}\| u_{\tau_n}(t) \|_{L^p(\R^d)}<+\infty\qquad\mbox{for every $p\in[1,+\infty)$}
\end{equation}
and since the limit $u(t)$ is already determined by \eqref{0conv}, we obtain indeed  that $ u_{\tau_n}(t)\to u(t)$ weakly in $L^p(\R^d)$ for every $p\in[1,+\infty)$ (for the weak $L^1(\R^d)$ convergence we are also taking into account the tightness of the sequence).
By \eqref{eq:blp} and by the Calder\'on-Zygmund inequality, see for instance \cite[Section 7.2.4]{GM}, we have 
\begin{equation}\label{calderon}\sup_{n}\| D^2 v_{\tau_n}(t) \|_{L^{p}(\R^d)}<+\infty\qquad\mbox{ for every $p\in(1,+\infty).$}\end{equation}
 From \eqref{calderon} and Proposition \ref{loggrowth} we deduce that $\nabla v_{\tau_n}$ is bounded and continuous (by Morrey's theorem), 
and 
 vanishing at infinity as directly obtained  from \eqref{vanishingatinfty}, in particular we deduce that $\nabla v_{\tau_n}$ belongs to the homogeneous Sobolev space $\dot W^{1,q}(\R^d)$ for every $1<q<d$ 
and then by the Sobolev embedding $\dot W^{1,q}(\R^d)\hookrightarrow L^{q^*}(\R^d)$, where $q^*=qd/(d-q)$,  we get\RRR 
\begin{equation*}\label{twosup}
 \sup_n\|\nabla v_{\tau_n}(t)\|_{W^{1,p}(\R^d)}<+\infty\qquad\mbox{ for every $p\in(d/(d-1),+\infty)$}.
\end{equation*}
 Thus,
 by Rellich-Kondrakov compactness theorem and the previous identification of the limit, we obtain that
$ \nabla v_{\tau_n}(t)\to \nabla v(t)$ strongly in $L^p_{loc}(\R^d)$ for and every $p\in [1,+\infty)$, thus proving \eqref{secondasudue}. 
%

\smallskip

{\bf {Step 2.}} In this step we prove the spacetime convergence result \eqref{tx}.
This is done by invoking Theorem \ref{generalp}.   We fix $T\ge 1$ and $t_0\in (0,T)$. We assume that $2\tau<t_0$ and that $\tau<1$. 
By taking the sum in \eqref{stimagirata} for $k$ from $\lceil t_0/\tau\rceil$ to $\lceil T/\tau\rceil$, letting for simplicity $c_{m,p}:=\tfrac{4pm}{(p+m-1)^2}$, and recalling the definition of $u_\tau(\cdot)$ from \eqref{floor}, we have
\begin{equation*}
c_{m,p}\int_{t_0}^T\|\nabla(u_\tau(t))^{\frac{p+m-1}{2}}\|_{L^2(\R^d)}^2\,\d t\le \mathcal F_{p}(u_\tau(t_0-\tau))-\mathcal F_p(u_\tau(T))+\chi\int_{t_0-\tau}^{\lceil\frac T\tau\rceil\,\tau}\|u_\tau(t)\|_{L^{p+1}(\R^d)}^{p+1}\,\d t
\end{equation*}
for every $p\ge 1$. 
Assuming $p>1$, from the latter we obtain by $L^1-L^p-L^{p+1}$ interpolation
\begin{equation}\label{equigradient00}\begin{aligned}
&c_{m,p}\int_{t_0}^T\|\nabla(u_\tau(t))^{\frac{p+m-1}{2}}\|_{L^2(\R^d)}^2\,\d t\\&\qquad\qquad\le  \frac{M^{1/p}}{p-1}\|u_\tau(t_0-\tau)\|_{L^{p+1}(\R^d)}^{(p^2-1)/p}+\chi\int_{t_0/2}^{T+1}\|u_\tau(t)\|_{L^{p+1}(\R^d)}^{p+1}\,\d t,
\end{aligned}
\end{equation}
 By choosing $p=3-m_c$ in \eqref{equigradient00}, and by taking into account \eqref{0interval}-\eqref{intervaloft},
 we obtain
\[\int_{t_0}^T\|\nabla u_\tau(t)\|_{L^2(\R^d)}^2\,\d t\le \hat Q(t_0,T)\]
for a suitable $\hat Q(t_0,T)$, also depending on $\chi,M, m,d,\alpha, T, \mathcal E_m(u_0), \mom(u_0)$, but not on $\tau$. 
By adding the estimate of the $L^2(\R^d)$ norm obtained from \eqref{0interval}-\eqref{intervaloft} with $p=2$, we deduce
\begin{equation}\label{boundL2H1}\int_{t_0}^T\|u_\tau(t)\|_{H^1(\R^d)}^2\,\d t\le \hat Q(t_0,T)\end{equation}
for a suitably updated $\hat Q(t_0,T)$, i.e., the family  $\{u_\tau(t):t\in[t_0,T], 0<\tau<t_0/2\}$ is bounded in $L^2((t_0,T);H^1(\R^d))$. It is also bounded in $L^2((t_0,T);H^1(\R^d)\cap\Space)$, since uniform boundedness of second moments has already been observed through the proof of Theorem \ref{th:convergence1}.
It is easy to check that $H^1(\R^d)\cap \mathscr P_2^{M}(\R^d)$ compactly embeds into $L^2(\R^d)$, i.e., that $\mathcal G:L^2(\R^d)\to[0,+\infty]$ defined by $$\mathcal G(\rho)=\|\rho(t,\cdot)\|^2_{H^1(\R^d)}+\int_{\R^d}(1+|x|^2)|\rho(t,x)|\,\d x$$ has compact sublevels in $L^2(\R^d)$. Indeed, we have the Sobolev embedding $\|\rho\|_{L^q(\R^d)}\le S_q\|\rho\|_{H^1(\R^d)}$  for every $2\le q< 2^*$, where $2^*=\frac{2d}{d-2}$ if $d\ge 3$ and $2^*=+\infty$ if $d=2$; since the embedding of $H^1$ in $L^2$ is compact on balls $B_R$, by choosing $q\in(2,2^*)$ and making use of interpolation with $\vartheta_q=\frac{q-2}{2q-2}$, if $G_K:=\{\rho\in L^2(\R^d);\mathcal G(\rho)\le K\}$ we obtain the following tail estimate that provides the desired compact embedding of $H^1(\R^d)\cap \mathscr P_2^{M}(\R^d)$ into $L^2(\R^d)$
\[
\lim_{R\to+\infty}\sup_{\rho\in G_K}\|\rho\|_{L^2(B_R^c)}\le \lim_{R\to+\infty}\sup_{\rho\in G_K}\|\rho\|_{L^1(B_R^c)}^{\vartheta_q}\|\rho\|_{L^q(B_R^c)}^{1-\vartheta_q}\le \lim_{R\to 0} \frac{S_q^{1-\vartheta_q} K^{(1+\vartheta_q)/2}}{R^{2\vartheta_q}}=0.
\]  

If we go back to considering the vanishing sequence $(\tau_n)_n$ from Theorem \ref{th:convergence1},
we notice from \eqref{boundL2H1} that $(u_{{\tau}_n})_n$ is bounded in $L^2((t_0,T);L^2(\R^d))$. Since we already know from \eqref{secondasudue} that $u_{\tau_n}(t)$ weakly converge to $u(t)$ in $L^2(\R^d)$ for every $t>0$, we conclude that $u\in L^2((t_0,T);L^2(\R^d))$ and that $u_{\tau_n}$ converge to $u$ weakly in $L^2((t_0,T);L^2(\R^d))$. But \eqref{0interval}-\eqref{intervaloft} also yield a bound in $L^p((t_0,T);L^2(\R^d))$ for every $p=(1,+\infty)$ so that $u\in L^p((t_0,T);L^2(\R^d))$ for every $p=(1,+\infty)$. 
 Furthermore, we can take advantage of \eqref{equicontinuityofWasserstein} to get
\begin{equation}\label{integralequicontinuity}
\lim_{h\downarrow0}\limsup_{n\to+\infty}\int_{t_0}^{T-h}W(u_{\tau_n}(t+h),u_{\tau_n}(t))\,\d t=0.
\end{equation}
The latter equicontinuity property, together with the uniform bound for $\mathcal G(u_{\tau_n})$, allows to apply the compactness result from \cite[Theorem 2]{Rossi}, since $\mathcal G$ is a normal coercive integrand as defined therein, and deduce that  $(u_{\tau_n})_n$  converges to $u$ also in $\mathcal M((t_0,T);L^2(\R^d))$, i.e., 
$$\lim_{n\to+\infty}\left|\{t\in(t_0,T):\|u_{\tau_n}(t)-u(t)\|_{L^2(\R^d)}>\eps\}\right|=0\qquad \mbox{for every $\eps>0$}.$$
 By taking $q$ as before and $\vartheta_q=\frac{q-2}{2q-2}$, by the same Sobolev embedding we deduce
\[\begin{aligned}
\int_{t_0}^T\|u_{\tau_n}(t)\|_{L^2(\R^d)}^{2/(1-\vartheta_q)}\,\d t&\le\int_{t_0}^{T}\|u_{\tau_n}(t)\|_{L^1(\R^d)}^{2\vartheta_q/(1-\vartheta_q)}\|u_{\tau_n}(t)\|_{L^q(\R^d)}^{2}\,\d t\\&\le M^{2\vartheta_q/(1-\vartheta_q)}S_q^2\int_{t_0}^T\|u_{\tau_n}(t)\|_{H^1(\R^d)}^2\,\d t
\end{aligned}\]
where the right hand side is uniformly bounded with respect to $n$, thanks to \eqref{boundL2H1}. Since $\vartheta_q\in (0,1)$ the latter shows that $\|u_{\tau_n}\|_{L^2(\R^d)}^2$ is uniformly integrable on $(t_0,T)$. Since $u\in L^{p}((t_0,T);L^2(\R^d))$ for every $p\in (1,+\infty)$, we deduce that $\|u_{\tau_n}-u\|_{L^2(\R^d)}^2$ is uniformly integrable on $(t_0,T)$ as well. Therefore $u_{\tau_n}$ converge to $u$ also strongly in $L^2((t_0,T);L^2(\R^d))$ by Vitali's theorem. Finally,  the uniform $L^p((t_0,T);L^q(\R^d))$ bound (for every $1\le p,q<+\infty$) for the sequence $(u_{\tau_n})_n$ that follows from \eqref{0interval}-\eqref{intervaloft} and the uniform bound on second moments allow to conclude that $u_{\tau_n}\to u$ strongly in $L^p((t_0,T);L^q(\R^d))$, for every  $1\le p,q<+\infty$. The arbitrariness of $t_0,T$ allows to conclude that the strong convergence of $u_{\tau_n}$ to $u$ in \eqref{tx} holds true. By Sobolev inequality \eqref{sobolev} and the Calder\'on-Zygmund inequality as in the previous step we obtain that for a suitable constant $Z_p$
\[
\|\nabla v_{\tau_n}(t)-\nabla v(t)\|_{L^{p^*}(\R^d)}\le Z_p \|u_{\tau_n}(t)-u(t)\|_{L^p(\R^d)}\quad \mbox{for every $t>0$ and every $1<p<d$},
\]
where $p^*=pd/(d-p)$, and by fixing instead $p\in(d,+\infty)$ an application of point {\bf {(i)}} of Proposition \ref{loggrowth} yields
\[
\|\nabla v_{\tau_n}(t)-\nabla v(t)\|_{L^{\infty}(\R^d)}\le c_1 \|u_{\tau_n}(t)-u(t)\|_{L^p(\R^d)}+c_2 \|u_{\tau_n}(t)-u(t)\|_{L^p(\R^d)}\quad \mbox{for every $t>0$}.
\]
Integrating from $t_0$ to $T$ and using the strong convergence of $u_{\tau_n}$ to $u$ we conclude therefore the validity of \eqref{tx}.

\smallskip

{\bf{Step 3.}} We prove \eqref{forpde}.
By the strong convergence from \eqref{tx}  and the canonical identification of $L^2((t_0,T);L^2(\R^d))$ with $L^2((t_0,T)\times\R^d)$, \RRR $u_{\tau_n}$ converge to $u$ pointwise a.e. in $(0,+\infty)\times \R^d$ along a suitable subsequence.
Therefore the weak $L^2_{loc}((0,+\infty);L^2(\R^d))$ limit of $u_{\tau_n}^m$ is $u^m$, by identification of weak and pointwise limit. Since the sequence $(\nabla u_{\tau_n}^m)_n$ is also bounded in $L^2((t_0,T);L^2(\R^d))$, thanks to \eqref{equigradient00} with $p=m+1$ and to \eqref{0interval}-\eqref{intervaloft}, we conclude that $\nabla u_{\tau_n}^m\to\nabla u^m$ weakly in $L^2_{loc}((0,+\infty);L^2(\R^d))$.

Concerning the product $u_{\tau_n}\nabla v_{\tau_n}$, we observe that if we fix $p\in (d,+\infty)$ and apply point {\bf {(i)}} of Proposition \ref{loggrowth}, we have by suitably updating the constants $c_1,c_2$ therein
\[
\int_{t_0}^T\|\nabla(v_{\tau_n}(t)-v(t))\|_{L^\infty(\R^d)}^6\,\d t\le \int_{t_0}^T\left(c_1\|u_{\tau_n}(t)-u(t)\|_{L^p(\R^d)}^6+c_2\|u_{\tau_n}(t)-u(t)\|_{L^1(\R^d)}^6\right)\d t.
\]
The strong convergence of $u_{\tau_n}$ to $u$ in $L^{6}((t_0,T);L^p(\R^d)\cap L^1(\R^d))$ allows to conclude, thanks to the arbitrariness of $0<t_0<T$, that $\nabla v_{\tau_n}\to \nabla v$ strongly in $L^6_{loc}((0,+\infty);L^{\infty}(\R^d))$. This fact and the strong $L^3_{loc}((0,+\infty);L^2(\R^d))$ convergence of $u_{\tau_n}$  to $u$ allow to conclude, by making use of the the H\"older inequality
\[
\int_{t_0}^T\|U(t)V(t)\|_{L^2(\R^d)}^2\,\d t\le \left(\int_{t_0}^T\|U(t)\|_{L^2(\R^d)}^3\,\d t\right)^{2/3}\left(\int_{t_0}^T\|V(t)\|_{L^\infty(\R^d)}^6\,\d t\right)^{1/3},
\]
 that $u_{\tau_n}\nabla v_{\tau_n}\to u \nabla v $ strongly in $L^2_{loc}((0,+\infty);L^2(\R^d))$ thus proving \eqref{forpde}.
%
%
%
%
%
\EEE
\end{proof}

 The following is the most important result of this section.
Using the available $L^p$ estimates, we prove convergence of the JKO scheme  also in the case of a general initial datum $u_0\in \Space$, in the case $m>m_c$,  providing a gradient flow of functional $\mathcal E_m$ starting from $u_0$.
The key point is the proof of the $W$-continuity of any limit curve of the JKO scheme up to $t=0$. \EEE

%
%


\begin{theorem}\label{esistenzagenerale}
 Let $m>m_c$ and $u_0\in \mathscr P_2^M(\R^d)$. For every $\tau>0$, let  $\{u_\tau^k:k=0,1,2,\ldots\}$ be a sequence from {\rm Proposition \ref{prop:existenceMM}} and let $u_\tau$ be the corresponding discrete solution  defined in \eqref{floor}.  \smallskip
  
  {\bf{(I)}} 
For every vanishing sequence of positive numbers $(\tau_n)_n$,  
 there exist a (not relabeled) subsequence and a curve
$u\in AC^2_{loc}((0,+\infty);\Space)$ such that \RRR
\begin{equation}\label{narrowconv00}
u_{\tau_{n}}(t) \to u(t)\quad \mbox{ narrowly as $n\to+\infty$, \quad for any $t\in(0,+\infty)$}.
\end{equation}

{\bf {(II)}}
Suppose that the sequence $(\tau_n)_n$ and the curve $u$ are given by {\bf{(I)}} above, thus satisfying \eqref{narrowconv00}. Then  \eqref{secondasudue} holds for every $t>0$, and \eqref{tx}-\eqref{forpde} hold as well.
Moreover, there exists a constant $\tilde K\ge 0$ only depending on $\chi,m,d,\alpha,M,$ $\mom(u_0)$ such that 
\begin{equation}\label{LAST}
W(u(t),u_0)\le \tilde K\,t^{\textstyle{\frac{1}{2+d(m-1)}}}, \qquad \forall\, t\in(0,1),
\end{equation}
so that defining $u(0)=u_0$, the curve $u$ is continuous in $(\mathscr P_2(\R^d),W)$ up to $t=0$,  and thus it is a gradient flow of functional $\mathcal E_m$, starting from $u_0$, according to {\rm Definition \ref{GFdefinition}}.\RRR
\end{theorem}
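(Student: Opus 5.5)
The plan is to replace the energy bound, which is unavailable when $u_0\notin D(\EE_m)$, by the $\tau$-uniform $L^p$ smoothing estimates of Theorem \ref{newlpdge3}\,\textbf{(I)}. These depend on $u_0$ only through $M$ and through the remainder term. The remainder carries $\|u_\tau^0\|_{L^p}$, which diverges at most logarithmically by \eqref{heatcontraction}, so multiplied by $\tau$ it vanishes. Concretely, for $p>1$, $\tau<1$ and $t>0$ one obtains
\[
\|u_\tau(t)\|_{L^p}^p\le C\,t^{-\frac1{\sigma(p,1)-1}}+C+R_\tau,\qquad R_\tau\to0,
\]
where $1/(\sigma(p,1)-1)=d(p-1)/(2+d(m-1))$. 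Taking $p=m$ gives a bound on $\int_0^{T}\|u_\tau(s)\|_m^m\,\d s$, which is integrable near $0$ because $d(m-1)/(2+d(m-1))<1$. Inserting this into \eqref{mommom} yields second moments bounded uniformly in $\tau$ on $[0,T]$. Consequently $\EE_m(u_\tau(t_0))$ is bounded above for each $t_0>0$ via the $L^m$ bound and HLS, and bounded below by Proposition \ref{F1boundprop}.

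For part \textbf{(I)}, I fix $t_0>0$ and restart the argument of Theorem \ref{th:convergence1} from time $t_0$. The basic JKO estimate \eqref{basicjko}, summed from $\lceil t_0/\tau\rceil$, gives $\frac1{2\tau}\sum W^2(u_\tau^n,u_\tau^{n-1})\le C(t_0,T)$. This yields the approximate equicontinuity \eqref{equicontinuityofWasserstein} on $[t_0,T]$ together with equiboundedness. The generalized Ascoli--Arzel\`a theorem, combined with a diagonal argument over $t_0=1/j$, produces the subsequence and a limit $u\in AC^2_{loc}((0,+\infty);\Space)$.

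For part \textbf{(II)}, the proofs of \eqref{secondasudue}, \eqref{tx} and \eqref{forpde} in Theorem \ref{prop:IC} use only pointwise-in-$t$ $L^p$ bounds for $t>0$, gradient bounds from \eqref{equigradient00}, the moment bounds, and the equicontinuity on $[t_0,T]$. All of these are now available from the estimates above, so those proofs can be repeated verbatim with \eqref{0interval}--\eqref{intervaloft} replaced by the bound from Theorem \ref{newlpdge3}.

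The main obstacle is \eqref{LAST}, that is, continuity at $t=0$ with the exact rate $\frac1{2+d(m-1)}$. My first attempt uses the Euler--Lagrange identity \eqref{eq:DDeq}, namely $W(u_\tau^k,u_\tau^{k-1})=\tau\|(\nabla (u_\tau^k)^m-\chi u_\tau^k\nabla v_\tau^k)/u_\tau^k\|_{L^2(u_\tau^k)}$. I would estimate $W(u_\tau^k,u_\tau^0)\le\sum_k W(u_\tau^k,u_\tau^{k-1})$ and control the right-hand side through a weighted dissipation: Theorem \ref{generalp} with $p=1$ (the entropy case) bounds $\int|\nabla u^{m/2}|^2$ by discrete time-differences of $\FF_1$ plus $\chi\|u\|_2^2$. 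The Fisher-type quantity $\int|\nabla u^m|^2/u=c\int u^{m-1}|\nabla u^{m/2}|^2$ also involves $\|u\|_\infty^{m-1}$, so this route may fail without a sup bound.

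The alternative I prefer is a test-function argument. For $\varphi\in C_c^\infty$ with $\|\nabla\varphi\|_\infty,\|D^2\varphi\|_\infty\le1$, the discrete weak equation \eqref{eq:ELdeq} together with the estimate $|\varphi(T(x))-\varphi(x)-\nabla\varphi\cdot(T-I)|\le\frac12|T-I|^2$ gives
\[
\Bigl|\int\varphi\,\d u_\tau(t)-\int\varphi\,\d u_\tau^0\Bigr|\lesssim \int_0^{t+\tau}\bigl(\|u_\tau(s)\|_m^m+\chi\,\|u_\tau(s)\|_2\|\nabla v_\tau(s)\|_{2}\bigr)\,\d s+\sum_k W^2(u_\tau^k,u_\tau^{k-1}).
\]
The first integral is bounded by $Ct^{1-\frac{d(m-1)}{2+d(m-1)}}=Ct^{\frac2{2+d(m-1)}}$. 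The squared-distance sum would be handled by the energy bound restarted from a small time, with the energy at that time controlled by the smoothing estimate. This gives closeness in a bounded-Lipschitz-type dual norm at rate $t^{2/(2+d(m-1))}$. To upgrade it to $W$, I would use the standard interpolation between a $C^{1,1}$-dual distance and $W$ under uniformly bounded second moments, which loses a square root and produces exactly the exponent $\frac1{2+d(m-1)}$. The rate is then preserved in the narrow limit $\tau_n\to0$ by \eqref{lscWass} and \eqref{0conv}, since $u_{\tau}^0\to u_0$ in $W$. I expect the bookkeeping of this step, namely the squared-distance remainder near $t=0$ and the dual-norm-to-$W$ interpolation with tight constants, to be the main difficulty.
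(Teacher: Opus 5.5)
Your preliminary estimates and part (I) follow the paper: the smoothing bound from Theorem \ref{newlpdge3}\,\textbf{(I)} with $p=m$, the moment bound through \eqref{mommom}, the JKO estimate restarted at $t_0>0$, and Ascoli--Arzel\`a with a diagonal argument. The gap is in \eqref{LAST}. There is no ``standard interpolation'' from your bounded-Lipschitz/$C^{1,1}$ dual distance to $W$ under bounded second moments.

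To see this, take $\nu=\delta_0$ and $\mu_\varepsilon=(1-2\varepsilon)\delta_0+\varepsilon\delta_{x_\varepsilon}+\varepsilon\delta_{-x_\varepsilon}$ with $|x_\varepsilon|^2=1/\varepsilon$. The two measures have equal mass and center of mass, and $\mom(\mu_\varepsilon)=2$. Every $\varphi$ with $\|\nabla\varphi\|_\infty\le1$ gives $|\int\varphi\,\d(\mu_\varepsilon-\nu)|\le2\sqrt\varepsilon$, yet $W^2(\mu_\varepsilon,\nu)=2$. A small mass travelling far is invisible to Lipschitz test functions but has order-one $W$-cost, and bounded second moments do not rule this out.

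So your test-function route gives at best a $W_1$-type bound at rate $t^{1/(2+d(m-1))}$. Together with the one-sided bound $\mom(u(t))\le\mom(u_0)+C(t+t^{1-\theta})$, which follows from \eqref{mommom} and the smoothing bound, and lower semicontinuity of $\mom$, it also gives qualitative $W$-continuity at $t=0$. That is enough for the gradient-flow conclusion but not for the rate \eqref{LAST}. Dropping the gradient constraint to allow quadratic test functions would require a genuine comparison theorem between $W$ and a second-order dual metric, which is not routine.

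The missing idea is to prove \eqref{LAST} directly in $W$ from the JKO dissipation, grouping the steps into dyadic blocks. This refines your first attempt: you apply Cauchy--Schwarz inside each block instead of bounding each step's velocity, so no $L^\infty$ bound is needed. Set $\theta:=\frac{d(m-1)}{2+d(m-1)}$ and let $\tau\le s\le1$. The smoothing bound, the moment bound, \eqref{-mom} and Proposition \ref{F1boundprop} give $\EE_m(u_\tau(s))-\EE_m(u_\tau(2s))\le C s^{-\theta}$. Here $C$ depends only on $\chi,m,d,\alpha,M,\mom(u_0)$, up to a remainder vanishing with $\tau$. Then \eqref{basicjko} and Cauchy--Schwarz over the steps between $s$ and $2s$ yield
\[
W^2(u_\tau(2s),u_\tau(s))\le 2(s+\tau)\bigl(\EE_m(u_\tau(s))-\EE_m(u_\tau(2s))\bigr)\le C\,s^{1-\theta}.
\]
Summing over $s=2^{-j}t$ until $s\in[\tau,2\tau)$ gives a geometric series bounded by $C\,t^{(1-\theta)/2}=C\,t^{1/(2+d(m-1))}$. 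The last block, from $u_\tau^0$ to $u_\tau(s)$, costs at most $6\tau\bigl(\EE_m(u_\tau^0)-\EE_m(u_\tau(s))\bigr)$. This tends to $0$ because $\EE_m(u_\tau^0)$ grows only logarithmically by \eqref{heatcontraction}. Finally, \eqref{lscWass} passes the bound to the limit.

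Two lesser points:
\begin{itemize}
\item Step 2 of Theorem \ref{prop:IC} cannot be repeated verbatim when $m>2$. An $L^2_tH^1_x$ bound on $u_\tau$ from \eqref{equigradient00} would need $p=3-m<1$, and with degenerate diffusion $\nabla u$ need not be in $L^2$. The paper instead bounds $u_\tau^m$ in $L^2(t_0,T;H^1)$ and uses Mironescu's fractional estimate to obtain $W^{1/m,2m}$ compactness.
\item Your drift bound $\|u\|_2\|\nabla v\|_2$ is infinite for $d=2$, $\alpha=0$, where $|\nabla v|\sim M/(2\pi|x|)$ at infinity. Bound $\int u\nabla v\cdot\nabla\varphi$ instead by symmetrizing, using that $\nabla B_\alpha$ is odd.
\end{itemize}
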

\begin{proof}
We  invoke the first part of Theorem \ref{newlpdge3}, where  $p>1$ and  $\sigma(p,1)-1=\frac{2+d(m-1)}{d(p-1)},$ \EEE and get for every $t>0$ and $\tau\in(0,1)$
\begin{equation}\label{rightpowerbis}
\|u_\tau(t)\|_p^p\le \tilde C^* t^{-\textstyle{\frac{d(p-1)}{2+d(m-1)}}}+\tilde C_{*}+ R_\tau, 
\end{equation}
where the constants $\tilde C_*,\tilde C^*$ can be read from Theorem \ref{newlpdge3}, in particular they depend only on $p, M,m,d,\chi$, and the remainder $R_\tau$ is also explicit  and can be deduced from Theorem \ref{newlpdge3}, it depends $p, M,m,d,\chi$, $\|u_\tau^0\|_{L^p(\R^d)}$,  and $R_\tau\to0$ as $\tau\to0$.

\smallskip

{\bf{Step 1}.} In this step we prove {\bf{(I)}}.
 By combining \eqref{rightpowerbis} with $p=m$ and \eqref{mommom} we see that for every $t>0$
\begin{equation}\label{newmomestim}
\mom(u_\tau(t))\le \mom(u_\tau^0)+2d(\tilde C_*+R_\tau)(t+\tau)+2d\tilde C^*\int_0^{t+\tau}s^{-\textstyle{\frac{d(m-1)}{2+d(m-1)}}}\,\d s.
\end{equation}
This shows that for fixed $T>0$, the family $\{u_\tau(t):\; t\in[0,T], \tau\in(0,1)\}$ is bounded in $\mathscr P_2^M(\R^d)$, i.e., there exists
 $Q(T)>0$, only depending on $\chi,M,d,m,T$, such that 
 \begin{equation}\label{boundedP2}\mom(u_\tau(t))\le Q(T)\qquad\mbox{for every $t\in[0,T]$ and every $\tau\in(0,1)$}.
 \end{equation} 
 
Let us denote
\[\xi_\tau^k:= \frac{\nabla (u_\tau^k)^m - \chi u_\tau^k \nabla v_\tau^k}{u_\tau^k},\qquad  k=1,2,3,\ldots \RRR \]
with the convention that $\xi_\tau^k=0$ in the set where $u_\tau^k=0$. 
Let $\xi_\tau:(0,+\infty)\times \R^d\to \R^d$ the corresponding piecewise constant (w.r.t. $t$) vector field, i.e. 
$\xi_\tau(t):=\xi_\tau^{\lceil t/\tau\rceil}.$
 Let us fix $t_0>0$ and $T>t_0$. \EEE 
By using \eqref{eq:DDeq}, we obtain for every $\tau<(t_0/2)\wedge1$,
\begin{equation}\label{sqrtutau}\begin{aligned}
&\frac12\int_{t_0}^T\|\xi_\tau(t)\sqrt{u_\tau(t)}\|_{L^2(\R^d)}^2\,\d t=\frac1{2\tau^2}\int_{t_0}^T W^2(u_\tau(t), u_\tau(t-\tau))\,\d t\\
&\qquad\le \frac1{2\tau^2}\sum_{k=\lceil t_0/\tau\rceil}^{\lceil T/\tau\rceil}\int_{(k-1)\tau}^{k\tau} W^2(u_\tau^k,u_\tau^{k-1})\,\d t\le \sum _{k=\lceil t_0/\tau\rceil}^{\lceil T/\tau\rceil} \frac{W^2(u_\tau^k,u_\tau^{k-1})}{2\tau}\\
&\qquad\le \mathcal E_m(u_\tau(t_0-\tau))-\mathcal E_m(u_\tau(T))\le \mathcal E_m(u_\tau(t_0/2))-\mathcal E_m(u_\tau(T)),
\end{aligned}\end{equation}
where we used the basic estimate of minimizing movements, see \eqref{basicjko}. 
By  Proposition \ref{F1boundprop}, in particular from \eqref{Fmbound12} and \eqref{Fmbound2},
there holds
\begin{equation}\label{dopo5.17}
-\mathcal E_m(u_\tau(T))\le C_2+C_3\mom(u_\tau(T)),
\end{equation}
where $C_2,C_3$ are the constants therein (only depending on $\chi, m,d,M,\alpha$\EEE). 
On the other hand, we notice that $\mathcal B_\alpha\ge 0$ if either $\alpha>0$ or $d\ge 3$, else if $d=2$, $\alpha=0$ we have the straightforward estimate
\eqref{-mom}.
Thus we get from \eqref{sqrtutau}
\begin{equation}\label{sqrtutau2}
\frac12\int_{t_0}^T\|\xi_\tau(t)\sqrt{u_\tau(t)}\|_{L^2(\R^d)}^2\,\d t
\le \FF_m(u_\tau(\tfrac {t_0}2))+\frac{\chi M}\pi\mom(u_\tau(\tfrac{t_0}2))+C_2+C_3\mom(u_\tau(T)).
\end{equation}
 By inserting \eqref{rightpowerbis} and \eqref{newmomestim}
into \eqref{sqrtutau2} we obtain
\begin{equation}\label{boundwitht0}
\frac12\int_{t_0}^T\|\xi_\tau(t)\sqrt{u_\tau(t)}\|_{L^2(\R^d)}^2\,\d t\le \tilde Q(t_0,T)
\end{equation}
for suitable $\tilde Q(t_0,T)>0$ only depending on $\chi,m,\alpha,M,d,t_0,T, \mom(u_0)$. In particular, if $(\tau_n)_n$ is any vanishing sequence of positive numbers, there exists a not relabeled subsequence  and a function $g\in L^2(t_0,T)$ such that $\|\xi_{\tau_n}(\cdot)\sqrt{u_{\tau_n}(\cdot)}\|_{L^2(\R^d)}$ weakly converge to $g$ in $L^2(t_0,T)$ as $n\to+\infty$.
By triangle inequality and \eqref{eq:DDeq} we have, for any $t_1,t_2$ such that $t_0\le t_1<t_2\le T$,
\[\begin{aligned}
W(u_\tau(t_1),u_\tau(t_2))&\le \sum_{k=\lceil t_1/\tau\rceil+1}^{\lceil t_2/\tau\rceil} W(u_\tau^k,u_\tau^{k-1})=\sum_{k=\lceil t_1/\tau\rceil+1}^{\lceil t_2/\tau\rceil}\int_{(k-1)\tau}^{k\tau}\frac{W(u_\tau(t),u_\tau(t-\tau))}{\tau}\d t\\
&\le\int_{t_1}^{t_2+\tau}\|\xi_\tau(t)\sqrt{u_\tau(t)}\|_{L^2(\R^d)}\,\d t.
\end{aligned}
\]
Thus we obtain the equicontinuity estimate
\begin{equation}\label{ecutau}
\limsup_{n\to+\infty}W(u_{\tau_n}(t_1),u_{\tau_n}(t_2))\le \int_{t_1}^{t_2}g(t)\,\d t.
\end{equation}
 By \eqref{ecutau}  and the uniform moment estimate \eqref{boundedP2}, we can apply the generalized Ascoli-Arzel\`a theorem \cite[Proposition 3.3.1]{AGS} to the sequence
$(u_{\tau_n})_n$, where $u_{\tau_n}:[t_0,T]\to \mathscr P_2^M(\R^d)$ for every $n$, 
and conclude that there exists $u\in AC^2([t_0,T];\mathscr P_2^M(\R^d))$ such that (along a not relabeled subsequence) $u_{\tau_n}(t)$ narrowly converges to $u(t)$ for every $t\in[t_0,T]$. 
Since $t_0$ and $T$ are arbitrary, by a standard diagonal argument we obtain a not relabeled subsequence  and $u\in AC^2_{loc}((0,+\infty);\mathscr P_2^M(\R^d))$ such that \eqref{narrowconv00} holds.
Of course, if $u_0\in D(\mathcal E_m)$ then we can take $t_0=0$ in \eqref{sqrtutau} and subsequent formulae, and the above argument allows to obtain a limit curve that is absolutely continuous with values in $\Space$ up to $t=0$, as already stated in Theorem \ref{th:convergence1}.

\EEE

\smallskip
 {\bf Step 2.} 
 Let us now prove \eqref{secondasudue}-\eqref{tx}-\eqref{forpde}.
Let $0<t_0<T$. Let $\tau<(t_0/2)\wedge 1$ and $p>1$. 
 Theorem \ref{generalp} can be invoked as done in the proof of Theorem \ref{prop:IC} to get \eqref{equigradient00}. We  apply \eqref{equigradient00} for $p=m+1$, which together with \eqref{rightpowerbis} entails
\begin{equation}\label{Miro}\int_{t_0}^T\| u_\tau(t)^m\|_{H^1(\R^d)}^2\,dt\le \overline Q(t_0,T)\end{equation}
for suitable $\overline Q(t_0,T)$ also depending on $\chi,M,d,m$, but not on $\tau$.
Since $m>1$, a theorem by Mironescu \cite{M} shows that for every $\rho\ge 0$, if $\rho^m\in H^1(\R^d)$ then $\rho$ belongs to the homogeneous Sobolev space  $\dot W^{1/m,2m}(\R^d)$ along with the estimate
$
\|\rho\|^m_{\dot W^{1/m,2m}(\R^d)}\le c(d,m)\|\nabla \rho^m\|_{L^2(\R^d)},
$
where the left hand side is the Gagliardo seminorm. Since $\|\rho\|^m_{L^{2m}(\R^d)}=\|\rho^m\|_{L^2(\R^d)}$ we deduce that 
$\|\rho\|^m_{W^{1/m,2m}(\R^d)}\le \bar c(d,m)\| \rho^m\|_{H^1(\R^d)}$ for every nonnegative $\rho\in H^1(\R^d)$. 
 Thanks to this fact and to Jensen inequality, from \eqref{Miro} we deduce that
\[
\int_{t_0}^T \|u_\tau(t)\|^2_{W^{1/m,2m}(\R^d)}\,dt\le \overline Q(t_0,T),
\]
for a suitably updated $\overline Q(t_0,T)$. By standard Sobolev embedding theorems, $W^{1/m,2m}(\R^d)$ compactly embeds into $L^2_{loc}(\R^d)$, and then $W^{1/m,2m}(\R^d)\cap \Space$ compactly embeds into $L^2(\R^d)$, the tail control being similar to the one in the proof of Theorem \ref{prop:IC}. 

Therefore, if the vanishing sequence $(\tau_n)_n$ and the curve $u\in AC_{loc}^2((0,+\infty);\Space)$ are constructed as in the previous step, thus satisfying \eqref{narrowconv00}, we can repeat Step 1 of the proof of Theorem \ref{prop:IC} to deduce the validity of \eqref{secondasudue} for every $t>0$, and we can repeat Step 2 of the proof of Theorem \ref{prop:IC}, taking advantage of the results of \cite{Rossi}, to conclude that 
$u_{\tau_n}$ converge to $u$ strongly in $L^p_{loc}((0,+\infty);L^q(\R^d))$ for every $1\le p,q<+\infty$. The convergence properties \eqref{forpde} are also obtained as in the proof of Theorem \ref{prop:IC}.

%

\smallskip

{\bf Step 3.}
In order to  prove \eqref{LAST} we observe that, 
by \eqref{rightpowerbis} with $p=m$, by \eqref{newmomestim} and \eqref{-mom}, we have for every $t>0$ and $\tau\in(0,1)$, letting for simplicity $\theta:=\frac{d(m-1)}{2+d(m-1)}<1$,
\begin{equation}\label{zerolimit1}\begin{aligned}
&\mathcal E_m(u_\tau(t))\le \frac1{m-1}\|u_\tau(t)\|^m_{L^m(\R^d)}+\chi M \mom(u_\tau(t))\\
&\le \frac{\tilde C^*t^{-\theta}+\tilde C_*+R_\tau}{m-1}+\chi M\mom(u_\tau^0)+2d\chi M(\tilde C_*+R_\tau)(t+\tau)+\frac{2d\chi M\tilde C^*}{1-\theta}(t+\tau)^{1-\theta}
\end{aligned}\end{equation}
and similarly from \eqref{dopo5.17} and \eqref{newmomestim} we have
\begin{equation}\label{zerolimit2}
-\mathcal E_m(u_\tau(t))\le C_2+C_3\left[\mom(u_\tau^0)+2d(\tilde C_*+R_\tau)(t+\tau)+2d\tilde C^*(1-\theta)^{-1}(t+\tau)^{1-\theta}\right].
\end{equation}
We apply  \eqref{zerolimit1} for $t=s$ and \eqref{zerolimit2} for $t=2s$: by assuming $\tau\le s\le 1$ we directly deduce  
the existence of constants $\tilde K_1$, $\tilde K_2$, $\tilde K_3$, only depending on $\chi,m,d,\alpha,M$ such that
\begin{equation}\label{doubling}
\mathcal E_m(u_\tau(s))-\mathcal E_m(u_\tau(2s))\le (\tilde K_1+\tilde K_2\mom(u_\tau^0)+\tilde K_3R_\tau)\,s^{-\theta}.
\end{equation}
From the basic minimizing movements scheme estimate \eqref{basicjko} and from \eqref{doubling} we have, still assuming $\tau\le s\le 1$,
\begin{equation}\label{s2s}\begin{aligned}
W^2(u_\tau(2s),u_\tau(s))&\le 2(s+\tau)\left(\mathcal E_m(u_\tau(s))-\mathcal E_m(u_\tau(2s))\right)\\&\le 4 (\tilde K_1+\tilde K_2\mom(u_\tau^0)+\tilde K_3R_\tau)\,s^{1-\theta}.\end{aligned}
\end{equation}

We fix $t\in(0,1)$. We next recursively apply \eqref{s2s}. We assume $\tau<t$ (we will send $\tau$ to zero with fixed $t$), and we let $t_j:=2^{-j}t$ for every integer $j\ge0$. Furthermore, we let $j(\tau):=\max\{j\in\{0,1,2,\ldots\}:2^{-j}t\ge \tau\}$, so that 
\begin{equation}\label{tjtau}
\tau\le t_{j(\tau)}<2\tau.
\end{equation}
Taking into account that $t=t_0\ge t_j\ge \tau$ for every $j=0,1,\ldots j(\tau)$, and that $t_j=2t_{j+1}$, we can apply \eqref{s2s} for $s=t_{j+1}$ and deduce
\begin{equation}\label{firlim}\begin{aligned}
W(u_\tau(t),u_\tau(t_{j(\tau)}))&\le \sum_{j=0}^{j(\tau)-1}W(u_\tau(t_j), u_\tau(t_{j+1}))\\&\le 2\sqrt{ \tilde K_1+\tilde K_2\mom(u_\tau^0)+\tilde K_3R_\tau}\;\sum_{j=0}^{j(\tau)-1}t_{j+1}^{\frac{1-\theta}2}\\&\le 2\Theta_0\sqrt{\tilde K_1+\tilde K_2\mom(u_\tau^0)+\tilde K_3R_\tau}\;t^{\frac{1-\theta}{2}},
\end{aligned}\end{equation}
where $\Theta_0:=\sum_{j=0}^{+\infty}(2^{(1-\theta)/2})^{-j-1}<+\infty$ since $\theta\in(0,1)$.
On the other hand, if we apply \eqref{zerolimit2} with $t=t_{j(\tau)}$, taking \eqref{tjtau} into account we find
\[
-\mathcal E_m(u_\tau(t_{j(\tau)}))\le C_2+C_3\left[\mom(u_\tau^0)+6d(\tilde C_*+R_\tau)\tau+2d\tilde C^*(1-\theta)^{-1}(3\tau)^{1-\theta}\right],
\]
while, as before, \eqref{-mom} yields 
\[\mathcal E_m(u_\tau^0)\le \frac1{m-1}\|u_\tau^0\|^m_{L^m(\R^d)}+\chi M \mom(u_\tau^0),\]
 and since $\|u_\tau^0\|_{L^m(\R^d)}$ grows at most logarithmically as $\tau\to0$, and $\mom(u_\tau^0)\to\mom(u_0)$,    we conclude that 
$$\lim_{\tau\to0}\tau(\mathcal E_m(u_\tau^0)-\mathcal E_m(u_\tau(t_{j(\tau)})))= 0.$$ 
Therefore the basic minimizing movements estimate 
\[W^2(u_\tau(t_{j(\tau)}),u_\tau^0)\le2(t_{j(\tau)}+\tau)(\mathcal E_m(u_\tau^0)-\mathcal E_m(u_\tau(t_{j(\tau)})))\]
and \eqref{tjtau} show that
\begin{equation}\label{seclim}
\lim_{\tau\to0}W(u_\tau(t_{j(\tau)}),u_\tau^0)=0.
\end{equation}
Finally, since $W(u_\tau^0,u^0)\to 0$ as $\tau\to 0$, and since
\[
W(u_{\tau}(t), u_0)\le W(u_\tau(t),u_\tau(t_{j(\tau)}))+W(u_\tau(t_{j(\tau)}),u_\tau^0)+W(u_\tau^0,u_0),
\]
by passing to the limit along a vanishing sequence $(\tau_n)_n$ such that \eqref{narrowconv00} holds, by the narrow lower semicontinuity of the distance $W$, by 
\eqref{firlim} and \eqref{seclim}, we deduce 
\[
W(u(t),u_0)\le 2\Theta_0\sqrt{\tilde K_1+\tilde K_2\mom(u^0)}\;t^{\frac{1-\theta}{2}},
\]
as desired.
\EEE
\end{proof}

Building on the previous results we can easily check that any gradient flow of functional $\mathcal E_m$  is a distributional solution  to the PDE in \eqref{PE}.

\begin{theorem}\label{weaktheorem}
Suppose that $ u\in AC_{loc}^2((0,+\infty);\mathscr P_2^M(\R^d))\EEE$ is a curve obtained  by either {\rm Theorem \ref{esistenzagenerale} or Theorem \ref{th:convergence1}}. Then, it satisfies $$\partial_tu=\Delta u^m-\chi\mathrm{div}(u\nabla B_\alpha\ast u)\quad\mbox{in }\mathcal D'((0,+\infty)\times\R^d).$$ 
\end{theorem}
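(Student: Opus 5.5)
The plan is to pass to the limit in the discrete Euler--Lagrange equation \eqref{eq:ELdeq} of Proposition \ref{prop:EL}, tested against a space--time test function. Fix $\varphi\in C^\infty_c((0,+\infty)\times\R^d)$ with support in $[t_0,T]\times\R^d$, $t_0>0$, and let $(\tau_n)_n$ be the sequence along which the curve $u$ was obtained. For $k\ge1$ we apply \eqref{eq:ELdeq} with $\eta=\nabla\varphi(t,\cdot)$, integrate over $t\in((k-1)\tau,k\tau]$ and sum over $k$. This gives
\[
\int_0^{+\infty}\!\!\int_{\R^d}\big(\nabla u_\tau^m-\chi u_\tau\nabla v_\tau\big)\cdot\nabla\varphi\,\d x\,\d t=\sum_k\frac1\tau\int_{(k-1)\tau}^{k\tau}\!\!\int_{\R^d}(T_{u_\tau^k}^{u_\tau^{k-1}}-I)\cdot\nabla\varphi(t)\,u_\tau^k\,\d x\,\d t .
\]

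The right-hand side is handled with the standard second-order Taylor argument of the JKO literature. We write
\[
\varphi(t,T(x))-\varphi(t,x)=\nabla\varphi(t,x)\cdot(T(x)-x)+O\big(\|D^2\varphi\|_\infty|T(x)-x|^2\big).
\]
Since $T_\#u_\tau^k=u_\tau^{k-1}$, the leading term equals
\[
\frac1\tau\int\varphi(t)\,(u_\tau^{k-1}-u_\tau^k)\,\d x .
\]
The error is bounded by $\frac{C}{\tau}W^2(u_\tau^k,u_\tau^{k-1})$ per unit time. After summation in $k$, the error is controlled by $C\tau\sum_k W^2(u_\tau^k,u_\tau^{k-1})/\tau^2\cdot\tau$. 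This is at most $C\tau$ times the bound \eqref{boundwitht0} (or \eqref{Cest} when $u_0\in D(\mathcal E_m)$), because the sum only involves $k\tau\ge t_0-\tau$. The error therefore vanishes as $\tau\to0$. A discrete summation by parts in time turns the main term into
\[
-\int\!\!\int u_\tau(t)\,\frac{\varphi(t+\tau)-\varphi(t)}{\tau}\,\d x\,\d t,
\]
up to shifting the time grid by one step, which costs $O(\tau)$ since $\varphi$ is smooth and $u_\tau$ has mass $M$. The boundary terms vanish because $\varphi$ has compact support in $(0,+\infty)$. The difference quotient converges uniformly to $\partial_t\varphi$, and $u_{\tau_n}\to u$ narrowly for every $t>0$ with uniform mass, so dominated convergence in $t$ gives convergence of this term to $-\int\!\!\int u\,\partial_t\varphi$.

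For the left-hand side we use \eqref{forpde}, which holds in both situations: Theorem \ref{prop:IC} gives it for $m=m_c$ with $u_0\in D(\mathcal E_m)$, and Step 2 of Theorem \ref{esistenzagenerale} gives it for $m>m_c$. The first statement of \eqref{forpde}, weak $L^2_{loc}L^2$ convergence of $\nabla u_{\tau_n}^m$, suffices because $\nabla\varphi\in L^2((t_0,T)\times\R^d)$. The second, strong convergence of $u_{\tau_n}\nabla v_{\tau_n}$, handles the drift term. Combining the two sides yields the weak formulation with $v=B_\alpha\ast u$, i.e. the claim in $\mathcal D'$.

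The main obstacle is the uniform control of $\sum_k W^2(u_\tau^k,u_\tau^{k-1})/\tau$ on $[t_0-\tau,T]$ for general data, since the energy of $u_\tau^0$ may blow up. This is exactly provided by \eqref{sqrtutau}--\eqref{boundwitht0}, relying on the $L^p$ smoothing \eqref{rightpowerbis}. A secondary technical point is justifying the use of the transport map in \eqref{eq:ELdeq}, which is already settled by Proposition \ref{prop:EL}.
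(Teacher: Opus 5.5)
Your proposal is essentially the paper's proof. You test \eqref{eq:ELdeq} with $\nabla\varphi$, Taylor-expand using $T_\#u_\tau^k=u_\tau^{k-1}$ with the remainder controlled through \eqref{eq:DDeq}, sum by parts in time, and pass to the limit in the flux via \eqref{forpde}. For the remainder you correctly note that \eqref{Cest} is the relevant bound for curves from Theorem \ref{th:convergence1}, in particular for $m=m_c$, where \eqref{boundwitht0} was not derived. Working with a general $\varphi(t,x)$ instead of the paper's products $\zeta(t)\varphi(x)$ changes nothing.

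One sign slip needs fixing. Since $\int\varphi(t)\,u_\tau(t-\tau)\,\d t=\int\varphi(s+\tau)\,u_\tau(s)\,\d s$, the main term equals $+\int\!\!\int u_\tau(t)\,\frac{\varphi(t+\tau)-\varphi(t)}{\tau}\,\d x\,\d t$, which tends to $+\int\!\!\int u\,\partial_t\varphi$. With the minus sign you wrote, combining the two sides would give the weak form of the backward equation rather than \eqref{PE}.
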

\begin{proof}
Let us fix $\zeta\in C^\infty_c((0,+\infty))$ and $\varphi\in C^\infty_c(\R^d)$.
From \eqref{eq:ELdeq}, taking into account the definition of the piecewise constant curve $u_\tau$ and that of $\xi_\tau$ from the proof of Theorem \ref{esistenzagenerale},  we have
\begin{equation}\label{integratedeuler}
\frac1\tau\int_0^{+\infty}\int_{\R^d}\zeta(t)(T_\tau(t)-I)\cdot\nabla \varphi\,u_\tau(t)\,\d x\,\d t=\int_0^{+\infty}\int_{\R^d}\zeta(t)\xi_\tau(t)\cdot\nabla\varphi\,u_\tau(t)\,\d x\,\d t,
\end{equation}
where $T_\tau(t)$ is the optimal transport map from $u_\tau(t)$ to $u_\tau(t-\tau)$, hence $T_\tau(t)=T_{u_\tau^k}^{u_\tau^{k-1}}$ if $t\in((k-1)\tau,k\tau]$.
We pass to the limit as $\tau\to0$ along a sequence $(\tau_n)_n$ provided by  either {\rm Theorem \ref{esistenzagenerale} or Theorem \ref{th:convergence1}}. Along such a vanishing sequence, we have \eqref{narrowconv00} and  we have the space-time distributional convergence of $\xi_\tau u_\tau$ to $\nabla u^m-\chi u \nabla B_\alpha\ast u$, see \eqref{forpde}, so that the right hand side in \eqref{integratedeuler} converges to
\[
\int_0^{+\infty}\int_{\R^d}\zeta(t)\xi(t)\cdot\nabla \varphi \,u(t)\,\d x\,\d t=\int_0^{+\infty}\int_{\R^d}\zeta(t)\left(\nabla u(t)^m-\chi\,u(t)\nabla B_\alpha\ast u(t)\right)\cdot\nabla \varphi\,\d x\,\d t.
\]
The first term in the right hand side of \eqref{integratedeuler} can be treated with a classical JKO scheme argument (see for instance \cite{JKO}).
Since $u_\tau(t-\tau)={T_{\tau}}_{\#} u_\tau(t)$ we have by Taylor expansion
\[
\frac1\tau\int_0^{+\infty}\int_{\R^d}\varphi\,\zeta(t)\, (u_\tau(t)-u_\tau(t-\tau))\,\d x\,\d t=\frac1\tau\int_0^{+\infty}\int_{\R^d}\zeta(t)(I-T_\tau(t))\cdot\nabla\varphi\,u_\tau(t)\,\d x\,\d t+R^*_\tau
\]
where the remainder term $R^*_\tau$ satisfies, setting $0<t_0<T<+\infty$ such that the support of $\zeta$ is contained in $(t_0,T)$,
\[\begin{aligned}
{|R^*_\tau|}&\le\frac1\tau\, \max_{\R^d}|D^2\varphi|\int_0^{+\infty}\int_{\R^d}|\zeta(t)|\,|I-T_\tau(t)|^2\,u_\tau(t)\,\d x\, \d t\\&\le \tau \max_{\R^d}|D^2\varphi|\int_{t_0}^{T}\int_{\R^d}|\zeta(t)|\,|\xi_\tau|^2u_\tau(t)\,\d x\,\d t\le 2\tau \max_{\R^d}|D^2\varphi|\left(\max_{(0,+\infty)}|\zeta|\right)\,\tilde Q(t_0,T),
\end{aligned}\]
where we have used \eqref{boundwitht0} and \eqref{eq:DDeq}. In particular $R^*_\tau$ vanishes as $\tau\to0$. On the other hand,  discrete integration by parts entails
\[\begin{aligned}
&\lim_{\tau\to0}\frac1\tau\int_0^{+\infty}\int_{\R^d}\varphi\,\zeta(t)\, (u_\tau(t)-u_\tau(t-\tau))\,\d x\,\d t\\&\qquad=-\lim_{\tau\to0}\int_0^{+\infty}\int_{\R^d}\varphi\,\frac{\zeta(t+\tau)-\zeta(t)}{\tau}\,u_\tau(t)\,\d x\,\d t=-\int_0^{+\infty}\int_{R^d}\varphi\,\zeta'(t)\, u_\tau(t)\,\d x\,\d t.
\end{aligned}\]
Therefore the result follows by passing to the limit in \eqref{integratedeuler} as $\tau\to0$, along the sequence $(\tau_n)_n$.
\end{proof}

\subsection{Hypercontractivity  estimates for finite $p$}

We next provide  estimates of $L^p(\R^d)$ norms, $1<p<+\infty$, of the constructed limit curve of the scheme, that are immediate consequence of their discrete counterparts obtained in Section \ref{discretemin}.

\begin{proposition}\label{firstcontinuousestimates} Let $m>m_c$, $u_0\in\mathscr P_2^M(\R^d)$ and $p\in(1,+\infty)$. 
Suppose that $ u\in AC_{loc}^2((0,+\infty);\mathscr P_2^M(\R^d))\EEE$ is a curve obtained by {\rm Theorem \ref{esistenzagenerale}}.  We have 
\begin{equation}\label{badp}
\|u(t)\|_{L^p(\R^d)}\le \min\left\{\|u_0\|_{L^p(\R^d)},\, \kappa(p)\,M^{\textstyle\frac{2p+d(m-1)}{p(2+d(m-1))}}\;t^{\textstyle -\frac{p-1}{p}\frac{d}{2+d(m-1)}}\right\}+\kappa_2(p)\,M^{\textstyle\frac{2p+d(m-2)}{p(2+d(m-2))}}
\end{equation}
for every $t>0$.

 Else, let $d\ge 3$, $m= m_c$, $u_0\in D(\mathcal E_{m_c})$ and $p>m_c$. Let $ u\in AC_{loc}^2([0,+\infty);\mathscr P_2^M(\R^d))\EEE$ be a curve obtained by {\rm Theorem \ref{th:convergence1}}. We have
\begin{equation}\label{badpbis}
\|u(t)\|_{L^p(\R^d)}\le \min\left\{\|u_0\|_{L^p(\R^d)},\, \ell(p)\,C_0^{\frac{f(p,m_c)}{p(\sigma(p,m_c)-1)}}\;t^{\textstyle -\frac{(p-m_c)d}{p(2m_c+d(m_c-1))}}\right\}+\ell_2(p)\,C_0^{\textstyle\frac{f(p,m_c)+g(p,m_c)}{p\sigma(p,m_c)}}
\end{equation}
for every $t>0$, where $C_0$ is the constant from {\rm Proposition \ref{equi}}, also depending on $\mathcal E_m(u_0)$.
Here, $\kappa(p)$, $\ell(p)$ are explicit constants depending only on $p,d,m$, and $\kappa_2(p), \ell_2(p)$ are  explicit constants depending only on $p,\chi,d,m$.
\end{proposition}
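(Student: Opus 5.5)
The plan is to pass to the limit $\tau\to0$ in the discrete estimates of Theorem \ref{newlpdge3}, using the weak $L^p$ convergence $u_{\tau_n}(t)\rightharpoonup u(t)$ from \eqref{secondasudue}. Weak lower semicontinuity of the norm gives $\|u(t)\|_{L^p}^p\le\liminf_n\|u_{\tau_n}(t)\|_{L^p}^p$. For the right-hand side we take $k=\lceil t/\tau\rceil$, so that $u_\tau(t)=u_\tau^k$ and $k\tau\to t$, and then evaluate the limit of each term in \eqref{longdiscrete}.

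\emph{Case $m>m_c$.} In \eqref{longdiscrete} the first entry of the minimum is $\|u_\tau^0\|_{L^p}^p$. By \eqref{jens} with $F(x)=x^p$ it is at most $\|u_0\|_{L^p}^p$, which may be infinite; in that case only the second entry matters. The second entry is $\big(\bar K(p,1)M^{-f(p,1)}(\sigma(p,1)-1)k\tau\big)^{-1/(\sigma(p,1)-1)}$. Using $\sigma(p,1)-1=\frac{2+d(m-1)}{d(p-1)}$, it becomes
\[
\kappa'\,M^{f(p,1)/(\sigma(p,1)-1)}\,t^{-\frac{d(p-1)}{2+d(m-1)}},
\]
with $\kappa'$ depending only on $p,d,m$, since $\bar K(p,1)$ involves $C^*_{p,1}$ but not $\chi$. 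The exponent of $M$ is $f/(\sigma-1)=\frac{2p+d(m-1)}{2+d(m-1)}$. Taking $p$-th roots gives the first term of \eqref{badp}.

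The constant term is $\big(\chi c\,M^{g}/(\bar K M^{-f})\big)^{1/\sigma}$. By \eqref{cfrattocbar} it is bounded by a constant depending on $p,\chi,d,m$ times $M^{(f+g)/\sigma}$. Computing from \eqref{sigmafg} with $s=1$ gives
\[
\frac{f+g}{\sigma}=\frac{2p+d(m-2)}{2+d(m-2)}\cdot\frac{\dots}{\dots}.
\]
I will verify this carefully: after taking the $p$-th root it should produce the exponent $\frac{2p+d(m-2)}{p(2+d(m-2))}$. The $\tau$-remainder involves $\|u_\tau^0\|_{L^p}^{p\sigma}$, which grows at most logarithmically in $\tau$ by \eqref{heatcontraction}, so multiplied by $\tau$ it vanishes. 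Finally, $(a+b)^{1/p}\le a^{1/p}+b^{1/p}$ and $\min\{x,y\}^{1/p}=\min\{x^{1/p},y^{1/p}\}$ give \eqref{badp}.

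\emph{Case $d\ge3$, $m=m_c$.} The argument is identical, starting from part (II) of Theorem \ref{newlpdge3} with $C_\tau^0$ replaced by the $\tau$-independent $C_0$, which is allowed since $u_0\in D(\mathcal E_m)$. The limit curve comes from Theorem \ref{th:convergence1}; the weak $L^p$ convergence at fixed $t$ holds by Theorem \ref{prop:IC}. The time exponent is $\frac{1}{p(\sigma(p,m_c)-1)}$, where
\[
\sigma(p,m_c)-1=\frac{2m_c+d(m_c-1)}{d(p-m_c)}.
\]

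The main obstacle is bookkeeping rather than analysis. First, the exponent algebra must match the stated powers of $M$ exactly. Second, one must justify dropping the dependence on $\chi$ in $\kappa$: $\bar K$ does not depend on $\chi$, and all dependence on $\chi$ sits in the additive term. The limit passage itself is immediate.
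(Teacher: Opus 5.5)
Your proposal is correct and follows essentially the same route as the paper. You take $k=\lceil t/\tau\rceil$ and pass to the limit in \eqref{longdiscrete} (respectively in part (II) of Theorem \ref{newlpdge3}, with $C_\tau^0$ replaced by $C_0$), using lower semicontinuity of the $L^p$ norm, the vanishing of the $\tau$-remainder thanks to \eqref{heatcontraction}, and subadditivity of the $1/p$ power. The exponent computation you left unfinished is exactly the identity $(f+g)/\sigma=\beta(p,1)=\frac{2p+d(m-2)}{2+d(m-2)}$ recorded in \eqref{gammabeta}, so the stated power of $M$ follows directly.
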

\begin{remark}\rm In particular, $\ell_2(p)=\kappa_2(p)=0$ if $\chi=0$, thus recovering  estimates for the porous media equation.\EEE
\end{remark}\RRR
\begin{proof}
Let $m>m_c$ and $p>1$.  Let us fix $t>0$. Since,  for $k=\lceil t/\tau\rceil$,  $u_\tau(t)=u_\tau^k$ for $t\in((k-1)\tau,k\tau]$ and $u_\tau(t)$ narrowly converges to $u(t)$ as $\tau_n\to0$ (along a suitable vanishing sequence $(\tau_n)_n$),
since $\|u_\tau^0\|_{L^p(\R^d)}$ converges to $\|u_0\|_{L^p(\R^d)}$ and 
 \eqref{utau0}-\eqref{heatcontraction} show that $\|u_\tau^0\|_{L^p(\R^d)}$ diverges at most logarithmically w.r.t. $\tau$ as $\tau\to0$, we pass to the limit in \eqref{longdiscrete}  obtaining
\[\begin{aligned}
\|u(t)\|_{L^p(\R^d)}^p&\le \min\left\{\|u_0\|_{L^p(\R^d)}^p,\,\left(\bar K(p,1)\,M^{-f(p,1)}(\sigma(p,1)-1)\,t\right)^{-\frac{1}{\sigma(p,1)-1}}\right\}\\&\qquad\qquad\qquad\qquad\qquad\qquad\qquad\qquad+\left(\frac{\chi\, c(p,1)\, M^{g(p,1)}}{\bar K(p,1)\,M^{-f(p,1)}}\right)^{\frac1{\sigma(p,1)}}.\end{aligned}\]
By the concavity of the $1/p$ power and by writing some coefficients more explicitly with \eqref{sigmafg}, \eqref{kappabar} and \eqref{cipiesse}, 
we deduce \eqref{badp},
where
\begin{equation}\label{kappa12}
\kappa(p):=\left(\frac{dC^*_{p,1}(p+m-1)^2}{2mp(2+d(m-1))}\right)^{\textstyle\frac{d(p-1)}{p(2+d(m-1))}} \quad\mbox{and}\quad \kappa_2(p):=\left(\frac{\chi c(p,1)}{\bar K(p,1)}\right)^{\textstyle\frac1{p\,\sigma(p,1)}}.
\end{equation}
The estimate  \eqref{badp} can not be passed to the limit as $p\to+\infty$, since both $\kappa(p)$ and $\kappa_2(p)$ diverge as powers of $p$, as seen by their expressions.

The estimate \eqref{badpbis}  for the case $m=m_c$, $d\ge 3$, $p>m_c$ can be obtained by the same argument, 
starting from the corresponding discrete estimate from Proposition \ref{newlpdge3}, in the case $u_0 \in D(\mathcal E_{m_c})$.
\end{proof}

\begin{proposition}\label{newLpd=2} Let $d=2, m=1$, $u_0\in D(\mathcal E_1)$. Suppose  $ u\in AC_{loc}^2([0,+\infty);\mathscr P_2^M(\R^d))\EEE$ is a curve obtained by {\rm Theorem \ref{th:convergence1}}.
For any $t>0$ and $p\in[2,+\infty)$, there holds 
\[
\|u(t)\|_{L^p({\R}^2)}^p \le C_{p,M,\chi}  K_p(t\vee1)^{2p-2} +2^{p-1}\min\left\{\|u_0\|_{L^p(\R^2)}^p ,\,M^p\left(\frac{1}{pG_p} t \right)^{1-p}\right\},
\]
where 
$K_p(\cdot)$ is defined by \eqref{kappat4}, $C_{p,M,\chi}$ is the constant from {\rm Theorem \ref{theop}}, $G_p$ is the constant from \eqref{GNS}.    
\end{proposition}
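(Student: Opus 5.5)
The plan is to pass to the limit in the discrete estimate of Theorem \ref{theop}, arguing as in the proof of Proposition \ref{firstcontinuousestimates}.

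Fix $t>0$ and $p\in[2,+\infty)$, and set $T:=t\vee 1\ge 1$. Let $(\tau_n)_n$ be the vanishing sequence from Theorem \ref{th:convergence1}, so that $u_{\tau_n}(t)\to u(t)$ narrowly. For $\tau_n<1$ put $k_n:=\lceil t/\tau_n\rceil$. Then $u_{\tau_n}(t)=u_{\tau_n}^{k_n}$ and $k_n\tau_n\in[t,t+\tau_n)$. The constraint in Theorem \ref{theop} is $k\tau\le T$. When $t<1$ this holds for $n$ large. In the borderline case $t\ge1$, where $T=t$ and $k_n\tau_n$ may slightly exceed $t$, I apply the theorem with $T_n:=t+\tau_n$. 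Since $K_p(\cdot)$ from \eqref{kappat4} is continuous and nondecreasing, $K_p(T_n)\to K_p(t)$.

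Because $u_0\in D(\mathcal E_1)$, \eqref{kappat4} gives $K_{p,\tau}(T)\le K_p(T)$ uniformly in $\tau$, and $\tilde R_{\tau_n}\to0$. Moreover, $\min\{\cdot,\cdot\}$ is nondecreasing in its first argument and nonincreasing in $k\tau$ in its second, and $k_n\tau_n\ge t$. Hence the estimate \eqref{DLpEst2} gives
\[
\|u_{\tau_n}(t)\|_{L^p}^p\le C_{p,M,\chi}K_p(T_n)^{2p-2}+2^{p-1}\min\Big\{\|u_{\tau_n}^0\|_{L^p}^p,\,M^p\big(\tfrac{t}{pG_p}\big)^{1-p}\Big\}+\tilde R_{\tau_n}.
\]

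Next I control the initial datum. By Jensen, \eqref{jens}, we have $\|u_{\tau}^0\|_{L^p}\le\|u_0\|_{L^p}$ whenever $u_0\in L^p$; if $u_0\notin L^p$ the corresponding term of the minimum is $+\infty$ and nothing is needed. So the first entry of the minimum can be replaced by $\|u_0\|_{L^p}^p$.

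Finally I let $n\to\infty$. The left-hand side is handled by narrow (equivalently weak $L^p$, from \eqref{secondasudue}) lower semicontinuity of the $L^p$ norm. On the right, $K_p(T_n)\to K_p(t\vee1)$ and $\tilde R_{\tau_n}\to0$. This yields the claimed bound.

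The only delicate point is the vanishing of $\tilde R_\tau$. This holds because of the uniform bound $K_{p,\tau}(T)\le K_p(T)$ together with the at most logarithmic growth of $\|u_\tau^0\|_{L^p}$ in \eqref{heatcontraction}, both of which are already recorded in Theorem \ref{theop}. The adjustment of $T$ at the boundary case is the only other care needed.
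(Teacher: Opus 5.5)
Your proof is correct and follows essentially the same route as the paper: apply \eqref{DLpEst2} with $k=\lceil t/\tau\rceil$ and $K_{p,\tau}\le K_p$, use that $\tilde R_\tau\to0$, and conclude by narrow lower semicontinuity of the $L^p$ norm. The differences are only cosmetic. The paper fixes $T>t$ and lets $T\downarrow t\vee1$ at the end, where you take $T_n=t+\tau_n$. The paper also uses $\|u_\tau^0\|_{L^p}\to\|u_0\|_{L^p}$ and $k\tau\to t$, where you use Jensen and the monotonicity of the minimum in $k\tau$.
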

\begin{proof}
We consider \eqref{DLpEst2}-\eqref{tildeerre}, for given $t\in(0,T)$, $T\ge 1$, and with  $k=\lceil t/\tau\rceil$ (so that $k\tau\le T$ for small enough $\tau$). We apply the inequality $K_{p,\tau}(T)\le K_p(T)$, which holds thanks to the fact that $C_\tau^0\le C_0$ as seen in Proposition \ref{logequi}, taking advantage of the assumption $u_0\in D(\mathcal E_1)$. 
Then we pass to the limit as $\tau\to 0$ (along a sequence $(\tau_n)_n$ such that \eqref{0conv} holds).
The choice of $u_\tau^0$ from \eqref{utau0} implies that
the $L^p$ norms of $u_\tau^0$ diverge at most logarithmically as $\tau\to 0$, see \eqref{heatcontraction},
and consequently the remainder vanishes as $\tau\to 0$ as usual.
By the lower semi continuity of the $L^p$ norm with respect to the narrow convergence and
the limits
\[
\lim_{\tau\to 0}\|u_\tau^0\|_{L^p(\R^2)}^p=\|u_0\|_{L^p(\R^2)}^p, \qquad \lim_{\tau\to 0}k\tau=\lim_{\tau\to0}\lceil t/\tau\rceil\tau=t,
\]
valid also in the case $\|u_0\|_{L^p(\R^2)}=+\infty$,
we can pass to the limit as $\tau\to 0$ along the sequence $(\tau_n)_n$ and get
\[
	\|u(t)\|_{L^p({\R}^2)}^p \le C_{p,M,\chi}  K_p(T)^{2p-2} +2^{p-1}\min\left\{\|u_0\|_{L^p(\R^2)}^p ,\,M^p\left(\frac{1}{pG_p} t \right)^{1-p}\right\}. 
\]
By passing to the limit for $T\to t\vee1$, by the continuity of $T\mapsto  K_p(T)$, we conclude.
\end{proof}

\RRR


%


\begin{theorem}[\bf Decay of $L^p$ norms in the small mass regime for $m=m_c$]\label{subsubtheorem}  
Let $m=m_c$, $u_0\in D(\mathcal E_{m_c})$, $p\in(1,+\infty)$ and $M<M_{sc}(p)$, where $M_{sc}(p)$ is defined by \eqref{defMscp}. 
Suppose that $ u\in AC_{loc}^2((0,+\infty);\mathscr P_2^M(\R^d))\EEE$ is a curve obtained by {\rm Theorem \ref{th:convergence1}}. Then 
\begin{equation*}\begin{aligned}
    \|u(t)\|^p_{L^p(\R^d)} &\leq \min \left\{ \|u_0\|^p_{L^p(\R^d)},\,M\left({C_{\chi,p,M,d}\, t}\right)^{1-p}\right\}  , \qquad \forall\, t>0,
\end{aligned}\end{equation*}
where $C_{\chi,p,M,d}$ is defined in \eqref{defCpMd}.
 \end{theorem}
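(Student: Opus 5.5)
The plan is to pass to the limit $\tau\to0$ in the discrete estimate \eqref{discretesubsub} of Theorem \ref{lemmaLp}, in the same way as in Propositions \ref{firstcontinuousestimates} and \ref{newLpd=2}. Let $(\tau_n)_n$ be the vanishing sequence given by Theorem \ref{th:convergence1}, so that \eqref{0conv} holds. Fix $t>0$ and set $k_n:=\lceil t/\tau_n\rceil$. Then $u_{\tau_n}(t)=u_{\tau_n}^{k_n}$ and $k_n\tau_n\to t$. Since $M<M_{sc}(p)$, the constant $C=C_{\chi,p,M,d}$ in \eqref{defCpMd} is positive, so Theorem \ref{lemmaLp} applies for every $n$ with a constant independent of $\tau$.

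The first step is the limit of the right-hand side of \eqref{discretesubsub}. By \eqref{jens} we have $\|u_{\tau_n}^0\|_{L^p}\le\|u_0\|_{L^p}$, and $\|u^0_{\tau_n}\|_{L^p}\to\|u_0\|_{L^p}$, including the case where the norm is infinite. The term $M(Ck_n\tau_n)^{1-p}$ converges to $M(Ct)^{1-p}$. For the remainder $\frac{\tau(p-1)}{\sqrt2}CM^{1/(1-p)}\|u_\tau^0\|_{L^p}^{p^2/(p-1)}$, I use \eqref{heatcontraction}: $\|u^0_\tau\|_{L^p}$ grows at most like $|\log\tau|^{\frac d2(1-\frac1p)}$, so any power of it multiplied by $\tau$ tends to zero. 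Hence the limsup of the right-hand side is at most $\min\{\|u_0\|_{L^p}^p, M(Ct)^{1-p}\}$. This holds even if $u_0\notin L^p$, because in that case the minimum is simply the second entry.

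The second step is the left-hand side. Since $u_{\tau_n}(t)\to u(t)$ narrowly, the narrow lower semicontinuity of $v\mapsto\int v^p$ (the same convexity argument as in Proposition \ref{prop:FP}) gives $\|u(t)\|_{L^p}^p\le\liminf_n\|u_{\tau_n}^{k_n}\|_{L^p}^p$. The weak $L^p$ convergence in \eqref{secondasudue} gives the same conclusion; this is available in dimension $2$ for $p\ge2$, but lower semicontinuity alone suffices for every $p$. Combining the two steps yields the claim for every $t>0$.

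I do not expect a real obstacle. The only delicate point is the vanishing of the remainder when $u_0\notin L^p$, and the logarithmic bound from \eqref{heatcontraction} together with the choice of $\varpi(\tau)$ handles it.
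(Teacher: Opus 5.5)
Your proposal is correct and follows the paper's own proof. Both pass to the limit in \eqref{discretesubsub} at $k=\lceil t/\tau\rceil$, make the remainder vanish through the at most logarithmic growth of $\|u_\tau^0\|_{L^p}$ from \eqref{heatcontraction}, and conclude by narrow lower semicontinuity of the $L^p$ norm. Your side remark that the weak $L^p$ convergence in \eqref{secondasudue} is available only for $p\ge 2$ when $d=2$ is inaccurate, since Theorem \ref{prop:IC} gives it for every $p\in[1,+\infty)$, but your argument does not use it.
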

\begin{proof} By Lemma \ref{lemmaLp}, we have \eqref{discretesubsub}.
We fix $t\in(0,+\infty)$ and define $k=\lceil t/\tau\rceil$. We pass to  the limit  as $\tau\downarrow 0$ (along a suitable sequence $(\tau_n)_n$, such that there is narrow convergence of $u_\tau^{\lceil t/\tau\rceil}$ to $u(t)$, by {\rm Theorem \ref{th:convergence1}}). 
The remainder term in \eqref{discretesubsub} is vanishing, for the same reasons discussed in the proof of Theorem \ref{newLpd=2}. 
Since $	\|u_\tau^0\|_{L^p(\R^d)}\to \|u^0\|_{L^p(\R^d)}$  as $\tau\to 0$, the narrow lower semicontinuity of the $L^p$ norms yields the result.
\end{proof}

We observe that, since $\lim_{p\to+\infty} M_{sc}(p) =0$, it is not possible to deduce from Theorem \ref{subsubtheorem} an analogous result for the $L^\infty$ norm because the condition on $M$ should be $M=0$.

As a final estimate we obtain the continuous version of Proposition \ref{momentgrowth}.


\begin{proposition}\label{momgrowth}
The following second moment estimates hold.
\begin{itemize}
\item[\textbf{(I)}] Let  $m=m_c$, $u_0\in D(\mathcal E_m)$. Suppose that $ u\in AC_{loc}^2([0,+\infty);\mathscr P_2^M(\R^d))\EEE$ is a curve obtained by {\rm Theorem \ref{th:convergence1}}. We have for every $t>0$
\[\begin{aligned}
\mom(u(t))&\le \mom(u_0)+4Mt,\qquad\mbox{if $d=2$}\\
\mom(u(t))&\le \mom(u_0)+2dC_0t,\qquad\mbox{if $d\ge 3$,}
\end{aligned}\]
where $C_0$ is the constant from {\rm Proposition \ref{equi}}.
\item[\textbf{(II)}] Let  $m>m_c$. Suppose that $ u\in AC_{loc}^2((0,+\infty);\mathscr P_2^M(\R^d))\EEE$ is a curve obtained by {\rm Theorem \ref{esistenzagenerale}}. We have for every $t>0$
\[
\mom(u(t))\le \mom(u_0)+ C_*(1+t),
\]
where $C_*$ is an explicit constant, only depending on $m,d,M,\chi$.
\end{itemize}
\end{proposition}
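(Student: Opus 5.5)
The plan is to pass to the limit in the discrete moment inequality \eqref{mommom} of Proposition \ref{momentgrowth}, after inserting the discrete $L^m$ bounds already available, and then use the narrow lower semicontinuity of the second moment \eqref{lscmoment}. Concretely, I fix $t>0$ and a vanishing sequence $(\tau_n)_n$ along which $u_{\tau_n}(t)\to u(t)$ narrowly. The second moments $\mom(u_{\tau_n}(t))$ are uniformly bounded, by Proposition \ref{logequi}/\ref{equi} or by \eqref{boundedP2}. Hence $\mom(u(t))\le\liminf_n\mom(u_{\tau_n}(t))$. It therefore suffices to bound the right-hand side of \eqref{mommom} uniformly and compute its limit. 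For this I use $\mom(u_\tau^0)=\mom(u_0)+2dM\varpi(\tau)\to\mom(u_0)$ from \eqref{momevolve}.

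\textbf{(I)}, case $d=2$, $m=1$. Here the integrand in \eqref{mommom} is $\int u_\tau^m=M$, so \eqref{mommom} gives directly
\[
\mom(u_\tau(t))\le\mom(u_\tau^0)+4M(t+\tau),
\]
and letting $\tau\to0$ yields $\mom(u(t))\le\mom(u_0)+4Mt$.

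\textbf{(I)}, case $d\ge3$, $m=m_c$. Proposition \ref{equi} gives $\int(u_\tau^k)^{m_c}\le C_\tau^0\le C_0$ for every $k$, uniformly in $\tau$, because $u_0\in D(\mathcal E_m)$. Then \eqref{mommom} gives
\[
\mom(u_\tau(t))\le\mom(u_\tau^0)+2dC_0(t+\tau),
\]
and the limit yields the claim. One check is needed: Proposition \ref{equi} is stated for $k\ge1$, and the $k=0$ term also enters the integral in \eqref{mommom}. This is harmless since $u_\tau(s)=u_\tau^k$ with $k\ge1$ for $s>0$.

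\textbf{(II)}, $m>m_c$. I use \eqref{rightpowerbis} with $p=m$, exactly as in \eqref{newmomestim}:
\[
\mom(u_\tau(t))\le\mom(u_\tau^0)+2d(\tilde C_*+R_\tau)(t+\tau)+\frac{2d\tilde C^*}{1-\theta}(t+\tau)^{1-\theta},\qquad \theta=\frac{d(m-1)}{2+d(m-1)}<1 .
\]
Here $\tilde C_*,\tilde C^*$ depend only on $m,d,M,\chi$, and $R_\tau\to0$. Passing to the limit gives
\[
\mom(u(t))\le\mom(u_0)+2d\tilde C_*t+\frac{2d\tilde C^*}{1-\theta}t^{1-\theta}.
\]
Using $t^{1-\theta}\le1+t$, this becomes $\mom(u_0)+C_*(1+t)$ with $C_*=2d\tilde C_*+2d\tilde C^*/(1-\theta)$.

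The only delicate point is the limit passage, since $u_0$ need not be in $D(\mathcal E_m)$ in (II). There the $L^m$ bound blows up like $t^{-\theta}$ near $0$, but it is integrable because $\theta<1$. The remainder $R_\tau$ vanishes thanks to the logarithmic control \eqref{heatcontraction} on $\|u_\tau^0\|_{L^m}$. Both facts are already built into \eqref{rightpowerbis}, so I expect no genuine obstacle.
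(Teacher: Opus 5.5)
Your proposal is correct and follows essentially the same route as the paper: you insert the available $L^m$ bounds into \eqref{mommom}, namely the trivial $L^1$ bound for $d=2$, Proposition \ref{equi} (with $C_\tau^0\le C_0$) for $d\ge 3$, and \eqref{rightpowerbis} with $p=m$ for $m>m_c$; you then use \eqref{momevolve} to get $\mom(u_\tau^0)\to\mom(u_0)$ and conclude by narrow lower semicontinuity of the second moment along the converging sequence. Your explicit bookkeeping (integrability of $s^{-\theta}$ since $\theta<1$, the bound $t^{1-\theta}\le 1+t$, and $C_*=2d\tilde C_*+2d\tilde C^*/(1-\theta)$) simply fills in what the paper leaves as ``the result follows''.
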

\begin{proof}
Let $t>0$. By taking \eqref{mommom} into account, the proof of  \textbf{(I)} immediately follows from Proposition \ref{initialdatum} and Proposition \ref{prop:FP}, also using Proposition \ref{equi} in the case $d\ge3$. Concerning point \textbf{(II)}, 
by  inserting \eqref{rightpowerbis} in \eqref{mommom}  and by making use of  Proposition \ref{prop:FP} and of Proposition \ref{initialdatum}, letting $\tau\to0$ along a sequence $(\tau_n)_n$ such that \eqref{narrowconv00} holds, we obtain 
\[
\mom(u(t))\le \mom(u^0)+2d\tilde C_*t+2d\tilde C^*\int_0^t s^{-\textstyle{\frac{d(m-1)}{2+d(m-1)}}}\,ds
\]
 and the result follows.
\end{proof}

\subsection{Energy dissipation inequality}

The energy dissipation estimate that we prove here is based on the De Giorgi variational interpolant, see for instance \cite[Section 3.2]{AGS}. Later in Section \ref{lastsection}, we will improve this result showing that the energy dissipation inequality is in fact an equality.
For $\tau>0$ the De Giorgi interpolant $\tilde u_\tau:[0,+\infty)\to \PP_2^M(\R^d)$ is defined by $\tilde u_\tau(k\tau)=u_\tau^k$ for any $k=0,1,2,\ldots$ and
\begin{equation*}
\tilde u_\tau(t) \in  {\rm Argmin}\left\{\frac{1}{2(t-(k-1)\tau)}W^2(u,u_\tau^{k-1}) +\EE_m(u)\right\}, \qquad \text{for } t\in ((k-1)\tau,k\tau).
\end{equation*}

\begin{lemma}
For any $t>0$, we have $\tilde u_\tau(t)\in L^p(\R^d)$ for any $p\in [1,+\infty)$ and $(\tilde u_\tau(t))^m\in W^{1,1}(\R^d)$. Denoting by  $\tilde v_\tau(t) = B_\alpha * \tilde u_\tau(t)$,
 the following discrete energy inequality holds for every integers $0\le N_1<N_2$:
\begin{equation}\label{discreteEI}
\begin{aligned}
 &\frac{1}{2}\int_{N_1\tau}^{N_2\tau}\int_{\R^d}\left|\frac{\nabla (u_\tau)^m-\chi u_\tau \nabla v_\tau }{u_\tau}\right|^2 u_\tau \,\d x\,\d t
\\&\qquad+ \frac{1}{2}\int_{N_1\tau}^{N_2\tau}\int_{\R^d}\left|\frac{\nabla (\tilde u_\tau)^m-\chi \tilde u_\tau \nabla \tilde v_\tau }{\tilde u_\tau}\right|^2 \tilde u_\tau \,\d x\,\d t  +\EE_m (u_\tau^{N_2}) \leq \EE_m(u_\tau^{N_1}).
\end{aligned}
\end{equation}
Moreover for any $T>0$ there exists a constant $\hat C(T)$ such that
\begin{equation}\label{DGversusPC}
W^2(\tilde u_\tau(t),u_\tau(t))\leq \hat C(T) \tau, \qquad \forall t\in[0,T].
\end{equation}
\end{lemma}

\begin{proof} For any $t>0$, we have from Theorem \ref{generalp} that $ u_\tau(t)\in L^p(\R^d)$ for any $p\in [1,+\infty)$ and $( u_\tau(t))^m\in W^{1,1}(\R^d)$. The same then holds for $\tilde u_\tau(t)$, since it is still defined through the JKO scheme for functional $\mathcal E_m$, applied with a different time step.
As in the proof of \cite[Lemma~3.2.2]{AGS}, for every integer $k\ge 1$, we have the one step energy estimate 
\begin{equation}\label{eq:DIed}
\begin{aligned}
 &\frac{1}{2}\frac{W^2(u_\tau^k,u_\tau^{k-1})}{\tau}
+ \frac{1}{2}\int_{(k-1)\tau}^{k\tau}\frac{W^2(\tilde u_\tau(t),u_\tau^{k-1})}{(t-(k-1)\tau)^2} \d t
 +\EE_m(u_\tau^k) \le \EE_m(u_\tau^{k-1}).
\end{aligned}
\end{equation}
For $t\in  ((k-1)\tau,k\tau)$, by the same argument of the proof of Proposition \ref{prop:EL}, we obtain the analogous of \eqref{eq:DDeq}, i.e.,
$$\frac{W^2(\tilde u_\tau(t),u_\tau^{k-1})}{(t-(k-1)\tau)^2} = \int_{\R^d}\left|\frac{\nabla (\tilde u_\tau(t))^m-\chi \tilde u_\tau(t) \nabla \tilde v_\tau(t) }{\tilde u_\tau(t)}\right|^2 \tilde u_\tau(t) \,\d x.$$
Using \eqref{eq:DDeq} and the previous identity, \eqref{eq:DIed} can be rewritten as
\begin{equation}\label{eq:DIed2}
\begin{aligned}
 &\frac{1}{2}\tau \int_{\R^d} \left| \frac{\nabla (u_\tau^k)^m - \chi u_\tau^k \nabla v_\tau^k}{u_\tau^k}\right|^2 u_\tau^k \, \d x\\&\quad
+ \frac{1}{2}\int_{(k-1)\tau}^{k\tau}\int_{\R^d}\left|\frac{\nabla (\tilde u_\tau(t))^m-\chi \tilde u_\tau(t) \nabla \tilde v_\tau(t) }{\tilde u_\tau(t)}\right|^2 \tilde u_\tau(t) \,\d x\, \d t +\EE_m(u_\tau^k) \leq \EE_m(u_\tau^{k-1}).
\end{aligned}
\end{equation}
Summing the identity \eqref{eq:DIed2} for $k$ from $N_1+1$ to $N_2$ and recalling the definition of $u_\tau$ from \eqref{floor} we obtain \eqref{discreteEI}.
Finally, the estimate \eqref{DGversusPC} follows from the same argument of \cite[Lemma~3.2.2]{AGS}.
\end{proof}

\begin{proposition}[{\bf Energy dissipation inequality}]\label{gfede}
Let $u_0\in\Space$. If $m=m_c$, assume in addition $u_0\in D(\mathcal E_m).$ Let $ u\in AC_{loc}^2((0,+\infty);\mathscr P_2^M(\R^d))\cap C([0,+\infty);\mathscr P_2^M(\R^d))\EEE$ a curve obtained by either  {\rm Theorem \ref{th:convergence1}} or {\rm Theorem \ref{esistenzagenerale}}. 
Then, for $t_0=0$ and for a.e. $t_0\in(0,+\infty)$, there holds 
\begin{equation}\label{EDI2}
\EE_m(u(T))+\int_{t_0}^T\int_{\mathbb{R}^d}\left | \frac{\nabla u^{m}(t) - \chi u(t) \nabla B_\alpha\ast u(t)} {u(t)}\right|^2u(t)\,\d x\,\d t \le \EE_m(u(t_0)), 
\quad \forall \,T > t_0.
\end{equation}
\end{proposition}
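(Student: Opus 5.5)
We pass to the limit in the discrete energy inequality \eqref{discreteEI}, choosing $N_1,N_2$ depending on $\tau$, and use lower semicontinuity of both the energy and the dissipation.

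First we identify the initial times. For $t_0=0$ we take $N_1=0$. Then $\EE_m(u_\tau^0)\to\EE_m(u_0)$ by Proposition \ref{initialdatum}, and this holds even when $\EE_m(u_0)=+\infty$, where the claim is trivial. For $t_0>0$ we take $N_1=\lceil t_0/\tau\rceil$, so that $u_\tau^{N_1}=u_\tau(t_0)$. We need $\limsup_n \EE_m(u_{\tau_n}(t_0))\le\EE_m(u(t_0))$ for a.e. $t_0$. By \eqref{tx}, along a further subsequence $u_{\tau_n}(t)\to u(t)$ strongly in $L^m\cap L^1$ for a.e. $t$. At such $t$, $\FF_m(u_{\tau_n}(t))\to\FF_m(u(t))$: for $m>1$ this is strong $L^m$ convergence. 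For $m=1$, $d=2$, we use strong $L^q$ convergence together with tightness and the uniform second-moment bound to control $u\log u$ near $0$ and at infinity. Moreover $\BB_\alpha(u_{\tau_n}(t))\to\BB_\alpha(u(t))$ by Proposition \ref{prop:FP}, whose hypotheses follow from \eqref{0interval}--\eqref{intervaloft}, \eqref{rightpowerbis} and the moment bounds. Hence $\EE_m(u_{\tau_n}(t_0))\to\EE_m(u(t_0))$ for a.e. $t_0$. At the final time $T$, with $N_2=\lceil T/\tau\rceil$, narrow convergence and \eqref{lscEm} give $\liminf\EE_m(u_{\tau_n}(T))\ge\EE_m(u(T))$.

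Next we handle the dissipation term. On $(t_0,T)$ with $t_0>0$, \eqref{forpde} gives $\nabla u_{\tau_n}^m-\chi u_{\tau_n}\nabla v_{\tau_n}\rightharpoonup \nabla u^m-\chi u\nabla v$ weakly in $L^2_{loc}((0,\infty);L^2)$; call this limit $w$. We also have $u_{\tau_n}\to u$ strongly in $L^1_{loc}((0,\infty);L^1)$. The functional $(w,u)\mapsto\iint |w|^2/u$ is jointly convex and lower semicontinuous under weak convergence of $w$ and $u$ (cf. \cite[Thm. 2.34]{AFP} or the Benamou–Brenier argument in \cite{AGS}). This yields $\liminf\int_{t_0}^T\!\int |\xi_{\tau_n}|^2u_{\tau_n}\ge\int_{t_0}^T\!\int|w/u|^2u$. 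The lower limit of integration is harmless because the piecewise-constant curve equals $u_\tau(t_0)$ on the first cell.

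For the De Giorgi interpolant we argue the same way. By \eqref{DGversusPC}, $\tilde u_{\tau_n}(t)\to u(t)$ narrowly as well. The discrete inequality bounds its dissipation, and the estimates of Section \ref{discretemin} apply to $\tilde u_\tau$, since $\tilde u_\tau$ is itself a JKO step. Repeating the compactness arguments of Theorem \ref{prop:IC} for $\tilde u_\tau$ gives the same lower bound. Adding the two halves, each carrying the factor $\frac12$, produces the full dissipation with coefficient $1$ in \eqref{EDI2}.

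For $t_0=0$ we let $t_0'\downarrow0$ and use monotone convergence on the dissipation. The main obstacle is the step for the interpolant: uniform $L^p$ and $H^1$-type bounds for $\tilde u_\tau$ near the lower time are needed to identify its flux limit. If that proves awkward, an alternative is to use the pointwise bound $\tfrac12\int|\xi_\tau|^2u_\tau$ only on the interval $((k-1)\tau,k\tau]$ and to exploit that the piecewise-constant and interpolant dissipations have the same weak limit, by a Cauchy–Schwarz argument combining $W^2(\tilde u_\tau,u_\tau)\le\hat C\tau$ with the metric-slope lower semicontinuity of \cite[Lemma 3.2.2]{AGS}.
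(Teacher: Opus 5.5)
Your overall architecture matches the paper's: passing to the limit in \eqref{discreteEI}, lower semicontinuity of $\EE_m$ at $T$, convergence of $\EE_m(u_{\tau_n}(t_0))$ for a.e.\ $t_0$ via \eqref{tx}, and joint lower semicontinuity of $(w,u)\mapsto\iint|w|^2/u$. Your choice $N_1=\lceil t_0/\tau\rceil$ is even slightly simpler than the paper's $N_1=\lceil t_0/\tau\rceil-1$, which forces the paper to prove $u_{\tau_n}(\cdot-\tau_n)\to u$. The cell $(t_0,N_1\tau)$ that you lose is recovered by working on $(t_0+\delta,T)$ and letting $\delta\downarrow0$, not for the reason you give.

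The genuine gap is the half of the dissipation carried by the De Giorgi interpolant. This is exactly what produces the coefficient $1$ in \eqref{EDI2}. To invoke lower semicontinuity there, you must know that $\tilde\xi_{\tau_n}\tilde u_{\tau_n}=\nabla\tilde u_{\tau_n}^m-\chi\tilde u_{\tau_n}\nabla\tilde v_{\tau_n}$ converges weakly to $\nabla u^m-\chi u\nabla v$. Passing to the limit in the nonlinear terms $\tilde u_{\tau_n}^m$ and $\tilde u_{\tau_n}\nabla\tilde v_{\tau_n}$ needs \emph{strong} (a.e.) convergence of $\tilde u_{\tau_n}$. Narrow convergence from \eqref{DGversusPC} together with $L^p$ bounds only gives weak convergence.

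``Repeating the compactness arguments of Theorem \ref{prop:IC}'' does not supply this. There, the $L^2_tH^1_x$ bound comes from summing \eqref{stimagirata} over $k$, so that $\FF_p$ telescopes. For $\tilde u_\tau(t)$ with $t\in((k-1)\tau,k\tau)$, the analogous inequality instead contains $\bigl(\FF_p(u_\tau^{k-1})-\FF_p(\tilde u_\tau(t))\bigr)/(t-(k-1)\tau)$. This has a variable step, does not telescope, and the estimates of Section \ref{discretemin} only give a lower bound on it. Your fallback does not close the gap either. The bound $W^2(\tilde u_\tau,u_\tau)\le\hat C\tau$ gives no control on the difference of the two fluxes. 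Lower semicontinuity of the slope of the non-convex functional $\EE_m$ is precisely the point at stake, and \cite[Lemma 3.2.2]{AGS} is the discrete energy estimate you already used, not a semicontinuity result.

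The missing idea is to use the interpolant's own dissipation bound as the source of spatial regularity. The paper first gets uniform $L^p$ bounds on $\tilde u_{\tau_n}(t)$ from the one-step discrete $L^p$ inequalities (step at most $\tau$). Then, for $1<p<2$, Hölder's inequality gives
\[
\int_{t_0}^T\|\tilde\xi_{\tau_n}\tilde u_{\tau_n}\|_{L^p(\R^d)}^p\,\d t\le\Bigl(\int_{t_0}^T\!\!\int_{\R^d}|\tilde\xi_{\tau_n}|^2\tilde u_{\tau_n}\,\d x\,\d t\Bigr)^{\frac p2}\Bigl(\int_{t_0}^T\!\!\int_{\R^d}\tilde u_{\tau_n}^{\frac p{2-p}}\,\d x\,\d t\Bigr)^{1-\frac p2}.
\]
Together with Proposition \ref{loggrowth}(i) for the drift, this bounds $\nabla\tilde u_{\tau_n}^m$ in $L^p((t_0,T)\times\R^d)$. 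The paper then proceeds in three steps:
\begin{enumerate}
\item Mironescu's inequality \cite{M} converts this into an $L^p_tW^{1/m,pm}_x$ bound.
\item Time equicontinuity is inherited from $u_{\tau_n}$ through \eqref{DGversusPC}.
\item \cite[Theorem 2]{Rossi} then yields strong convergence of $\tilde u_{\tau_n}$ to $u$.
\end{enumerate}
This identifies the flux limit and allows \cite[Theorem 5.4.4]{AGS} to be applied to the interpolant half.
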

\begin{proof}
{\bf{Step 1}.}
For $t\in (0,+\infty)$, we denote by $\tilde u_{\tau}(t)$ the De Giorgi interpolant. 
We shall take advantage of the fact that $\tilde u_\tau(t)$ is defined by the JKO scheme for functional $\mathcal E_m$, 
thus it satisfies similar estimates as $u_\tau(t)$.
For instance in the case $m>m_c$,  $u_\tau(t)$ satisfies from \eqref{discrder} the one step estimate
\[
\|u_\tau(t)\|_{L^p(\R^d)}^p\le \|u_\tau(t-\tau)\|_{L^p(\R^d)}^p+\tau\,\chi\, c(p,1)\,M^{g(p,1)},
\]
due to the fact that $u_\tau(t)$ is the solution to the one step JKO minimization starting from $u_\tau(t-\tau)$ with time step $\tau$ (here, for definiteness, $u_\tau(t-\tau)=u_\tau^0$ if $0<t\le \tau$\RRR). Then in the same way, since $\tilde u_\tau(t)$ is the solution to the one step minimization from the JKO scheme starting from $u_\tau(t-\tau)$ with step $t-(\lceil t/\tau\rceil-1)\tau$ (or step $\tau$, in case $t$ is an integer multiple of $\tau$\RRR), we get
\[\begin{aligned}
\|\tilde u_\tau(t)\|_{L^p(\R^d)}^p&\le \|u_\tau(t-\tau)\|_{L^p(\R^d)}^p+\left[\tau\vee (t-(\lceil t/\tau\rceil-1)\tau)\right]\RRR\, \chi\, c(p,1)\, M^{g(p,1)}\\&\le \|u_\tau(t-\tau)\|_{L^p(\R^d)}^p+\tau\, \chi\, c(p,1)\, M^{g(p,1)}.
\end{aligned}\]
Similarly for $d=2, m=1$,
 from \eqref{ataukappa}-\eqref{constantstep} we deduce that for $p\in[2,+\infty)$
\[
\|(\tilde u_\tau(t)-K)_+\|_{L^p(\R^2)}^p\le \|(u_\tau(t-\tau)-K)_+\|_{L^p(\R^2)}^p+\tau\, Y(K)
\]
as soon we choose $T$ such  that $T>t$ and $K$ such that $K>K_p(T)$. Else if $m=m_c$, $d\ge 3$, $p>m_c$, we may apply \eqref{discrder2} (in the version with $C_\tau^0$ replaced by the constant $C_0$ which does not depend on $\tau$) and deduce
\[
\|\tilde u_\tau(t)\|_{L^p(\R^d)}^p\le \|u_\tau(t-\tau)\|_{L^p(\R^d)}^p+\tau\, \chi\, c(p,m_c)\, C_0^{g(p,m_c)}.
\]
Therefore,
supposing now that $(\tau_n)_n$ is a vanishing sequence such that \eqref{narrowconv00} holds (as provided by either Theorem \ref{th:convergence1} or Theorem \ref{esistenzagenerale})
 by invoking  \eqref{0interval}, \eqref{intervaloft} and \eqref{rightpowerbis}  to get uniform bounds on  $\|u_{\tau_n}(t-\tau_n)\|_{L^p(\R^d)}$, 
we deduce that for every $p\in[1,+\infty)$ we have
 \begin{equation*}\label{eq:blpDG}
	\sup_{n}\|\tilde u_{\tau_n}(t) \|_{L^p(\R^d)}<+\infty,\qquad \mbox{for every $t>0$.}
\end{equation*}
 This implies together with \eqref{DGversusPC} that $\tilde u_{\tau_n}(t)\rightharpoonup u(t)$ narrowly and weakly in $L^p(\R^d)$ for every $t>0$ and every $p\in[1,+\infty)$, in particular the limit of $\tilde u_{\tau_n}$ identifies with the limit $u$ of $u_{\tau_n}.$
The identification of the limits is due to the fact that for every $t>0$ and every $\varphi\in C^\infty_c(\R^d)$
 \[\begin{aligned}
 \left|\int_{\R^d}\varphi\,u_{\tau_n}(t)\,\d x-\int_{\R^d}\varphi \tilde u_{\tau_n}(t)\, \d x\right|&\le \mathrm{Lip}(\varphi)\, W_1(u_{\tau_n}(t),\tilde u_{\tau_n}(t))\\&\le \mathrm{Lip}(\varphi)\,\sqrt M\, W(u_{\tau_n}(t),\tilde u_{\tau_n}(t)),
 \end{aligned}\]
having used the dual formulation of the Wasserstein distance $W_1$ of order $1$, and then we may notice that the above right hand side vanishes as $n\to\infty$ thanks to \eqref{DGversusPC}.\RRR

  Similarly we obtain uniform space time bounds: for every $0<t_0<T$ and every $1\le p<+\infty$, we have indeed 
 \begin{equation}\label{pqbound}
 \int_{t_0}^T\|\tilde u_{\tau_n}(t)\|^p_{L^p(\R^d)}\,\d t\le Q^*(t_0,T),
 \end{equation}
 where $Q^*(t_0,T)$ depends only on $p, \chi, d, m, M, \alpha, \mom(u_0)$ (and also on $\mathcal E_m(u_0)$ in the case $m=m_c$).
 
  \EEE

{\bf{Step 2}.}
For every $t\ge 0$ we let $\tilde v_\tau(t) = B_\alpha * \tilde u_\tau(t)$ and
$$
\tilde \xi_{\tau_n}(t):= \frac{\nabla (\tilde u_{\tau_n}(t))^m-\chi \tilde u_{\tau_n}(t) \nabla \tilde v_{\tau_n}(t) }{\tilde u_{\tau_n}(t)}
$$
with the convention that $\tilde \xi_{\tau_n}(t)=0$ in the set where $\tilde u_{\tau_n}(t)=0$.
Let us fix $T>t_0>0$ and $N_2=\lceil T/\tau_n\rceil$. Let $N_1=\lceil t_0/\tau_n\rceil-1$. 
By \eqref{discreteEI} we have that 
\begin{equation}\label{perdopo}\begin{aligned}&\frac12\int_{t_0}^{T}\int_{\R^d} | \xi_{\tau_n}(t)|^2 u_{\tau_n}(t) \,\d x\,\d t +\frac12\int_{t_0}^{T}\int_{\R^d} |\tilde \xi_{\tau_n}(t)|^2\tilde u_{\tau_n}(t) \,\d x\,\d t 
\\&\qquad\qquad\leq \mathcal E_{m}(u_{\tau_n}(t_0-\tau_n))- \mathcal E_m(u_{\tau_n}(T))
\le \mathcal E_{m}(u_{\tau_n}(t_0/2))- \mathcal E_m(u_{\tau_n}(T)), \end{aligned} \end{equation}
having assumed wlog that $\tau_n<(t_0/2)\wedge1$ if $t_0>0$. We estimate the above right hand side, in the case $m>m_c$,  as done in \eqref{sqrtutau}-\eqref{dopo5.17}-\eqref{sqrtutau2}, i.e., we observe that 
\begin{equation}\label{t-tau}\frac12\int_{t_0}^{T}\int_{\R^d} |\tilde \xi_{\tau_n}(t)|^2\tilde u_{\tau_n}(t) \,\d x\,\d t
\le \tilde Q(t_0,T),\end{equation}
where $\tilde Q(t_0,T)$ is the same quantity appearing in \eqref{boundwitht0}, only depending on $\chi, d, m, M,\alpha$, $\mom(u_0)$, bu not on $\tau_n$. Else if $m=m_c$ and $u_0\in  D(\mathcal E_m)$, then we estimate the right hand side of \eqref{perdopo} by $\mathcal E_{m}(u_0)- \mathcal E_m(u_{\tau_n}(T))$, which in turn gets uniformly estimated by $C(T)$, also depending on $\mathcal E_m(u_0)$ from the proof of Proposition \ref{th:convergence1}, thus we still have the uniform bound \eqref{t-tau} up to changing the quantity in the right hand side. 

Suppose now that $1< p<2$. By H\"older inequality we have
\[
\int_{t_0}^T\|\tilde \xi_{\tau_n}(t)\tilde u_{\tau_n}(t)\|_{L^p(\R^d)}^p\,\d t\le\left(\int_{t_0}^T\int_{\R^d}|\tilde \xi_{\tau_n}|^2\tilde u_{\tau_n}\, \d x\,\d t \right)^{\frac p2}\left(\int_{t_0}^T\int_{\R^d}\tilde u_{\tau_n}^{\frac{p}{2-p}}\,\d x\,\d t\right)^{1-\frac{p}{2}}
\]
providing a uniform bound in $L^p((t_0,T)\times \R^d)$ for $\tilde \xi_{\tau_n}\tilde u_{\tau_n}$, thanks to \eqref{pqbound} and \eqref{t-tau}. We claim that the same bound holds for $\tilde u_{\tau_n}\nabla  \tilde v_{\tau_n}$. Indeed,  point {\bf{(i)}} of Proposition \ref{loggrowth} implies by suitably updating the constants therein that
\[
\int_{t_0}^T\int_{\R^d}|\tilde u_{\tau_n}\nabla  v_{\tau_n}|^p\,\d x\,\d t\le \int_{t_0}^T\left(c_1\|\tilde u_{\tau_n}\|_{L^p(\R^d)}^{2p}+c_2M^p\|\tilde u_{\tau_n}\|_{L^p(\R^d)}^p\right)\,\d t,
\]
so that the claim follows from \eqref{pqbound}, and it implies,  since $$\tilde \xi_{\tau_n}\tilde u_{\tau_n}=\nabla\tilde u_{\tau_n}^m-\chi\tilde u_{\tau_ n}\nabla \tilde v_{\tau_n},$$ that also the sequence $(\nabla\tilde u_{\tau_n}^m)_n$ is uniformly bounded in $L^p((t_0,T)\times \R^d)$. 
 That is, by taking advantage once more of \eqref{pqbound},
\[
\int_{t_0}^T\|\tilde u_{\tau_n}(t)^m\|_{W^{1,p}(\R^d)}^p\,\d t\le Q_*(t_0,T)
\] 
for a suitable $Q_*(t_0,T)$ depending on $p,\chi,m,d, \alpha, M, \mom(u_0)$ (and also $\mathcal E_m(u_0)$ if $m=m_c$).
Now since $m\ge 1$, by the inequality $\|u_{\tau_n}\|^m_{W^{1/m,mp}(\R^d)}\le \bar c(p,m,d)\|u_{\tau_n}^m\|_{W^{1,p}(\R^d)}$ from \cite{M} and by Jensen inequality we get
\[
\int_{t_0}^T\|\tilde u_{\tau_n}(t)\|_{{W^{1/m,pm}}(\R^d)}^p\,\d t\le Q_*(t_0,T),
\]
having suitably updated the quantity $Q_*(t_0,T)$. Then we may reason as done in the proof of Theorem \ref{esistenzagenerale}: from the one hand, $W^{1/m,pm}(\R^d)\cap \Space$ compactly embeds into $L^p(\R^d)$; on the other hand, the time equicontinuity property \eqref{integralequicontinuity} holding for $u_{\tau_n}$ as seen in \eqref{equicontinuityofWasserstein} (or by \eqref{ecutau} in the case $u_0\notin D(\mathcal E_m)$, $m>m_c$), immediately extends to $\tilde u_{\tau_n}$ in view of \eqref{DGversusPC}.
Therefore we obtain strong $L^p((t_0,T)\times \R^d)$ convergence by \cite[Theorem 2]{Rossi}, thus pointwise a.e. $(t_0,T)\times\R^d$ convergence, up to subsequences, of $\tilde u_{\tau_n}$ to $u$. This pointwise convergence allows to identify the weak $L^p((t_0,T)\times\R^d)$ limit of $\nabla \tilde u_{\tau_n}^m$ with $\nabla u^m$. All in all, we have that $\tilde \xi_{\tau_n}\tilde u_{\tau_n}$ weakly converge in $L^p_{loc}((0,+\infty);L^p(\R^d))$ to $\nabla u^m-\chi u\nabla  v$, along the original sequence $(\tau_n)_n$.
By \eqref{tx} and \eqref{forpde}, $ \xi_{\tau_n} u_{\tau_n}$ weakly converge in $L^p_{loc}((0,+\infty);L^p(\R^d))$ to $\nabla u^m-\chi u\nabla  v$ as well, where $\xi_\tau$ is defined as in the proof of Theorem \ref{esistenzagenerale}. Moreover, \eqref{tx} implies that for every $1\le p<+\infty$
\[\begin{aligned}
&\int_{t_0}^T\|u_{\tau_n}(t-\tau_n)-u(t)\|_{L^p(\R^d)}^p\,\d t \le
\\&\qquad2^{p-1} \int_{t_0}^T\|u_{\tau_n}(t-\tau_n)-u(t-\tau_n)\|_{L^p(\R^d)}^p\,\d t +2^{p-1} \int_{t_0}^T\|u(t-\tau_n)-u(t)\|_{L^p(\R^d)}^p\,\d t \le
\\&\qquad 2^{p-1} \int_{t_0/2}^T\|u_{\tau_n}(t)-u(t)\|_{L^p(\R^d)}^p\,\d t +2^{p-1} \int_{t_0}^T\|u(t-\tau_n)-u(t)\|_{L^p(\R^d)}^p\,\d t
\end{aligned}\]
where the first term in the right hand side vanishes as $n\to+\infty$ by the convergence of $u_{\tau_n}$ to $u$ in $L^p_{loc}((0,+\infty);L^p(\R^d))$ and the second one vanishes as well by the continuity of the $L^p$ norm under translations. We deduce that also $ u_{\tau_n}(\cdot-\tau_n)$ converge to $u$ in $L^p_{loc}((0,+\infty);L^p(\R^d))$, by the arbitrariness of $0<t_0<T$, hence there exists  $I\subset(0,+\infty)$ such that $(0,+\infty)\setminus I$ is a null set and  $u_{\tau_n}(t-\tau_n)$ strongly converge in $L^{p}(\R^d)$ to $u(t)$ for every $t\in I$ and for every $1\le p<+\infty$.


{\bf {Step 3}.}
We conclude the proof. Suppose first that $m>m_c$ and $u_0\notin D(\mathcal E_m)$. If $t_0=0$ there is nothing to prove. Else,
we pass to the limit in the first inequality of \eqref{perdopo}, choosing $t_0\in I$, where $I$ is the set defined in the previous step, so that $u_{\tau_n}(t_0-\tau_n)$ converge to $u(t_0)$ in $L^m(\R^d)$ on a suitable subsequence and thus $\mathcal E_m(u_{\tau_n}(t_0-\tau_n))\to \mathcal E_m(u(t))$ in view of Proposition \ref{prop:FP}, which implies on the other hand that $\mathcal E_m(u(T))\le \liminf_{n\to\infty}\mathcal E_m(u_{\tau_n}(T))$ for every $T>0$. 
  Recalling the uniform bounds \eqref{t-tau} and \eqref{boundwitht0},
  and recalling that both $\xi_{\tau_n}u_{\tau_n}$ and $\tilde \xi_{\tau_n}\tilde u_{\tau_n}$ converge to $\xi u$ in $\mathcal D'((0,+\infty)\times\R^d)$ as seen in the previous step,
   where $\xi=0$ in the set $\{(t,x)\in(t_0,T)\times \R^d: u(t,x)=0\}$ and
\begin{equation}\label{defxi}
\xi:=\frac{\nabla u^m-\chi u\nabla v}{u}
\end{equation}
otherwise, an application of  \cite[Theorem 5.4.4]{AGS} ensures the semicontinuity properties
\begin{equation*}\int_{t_0}^T\int_{\R^d} |\xi|^2u \,\d x\,\d t \leq \liminf_{n\to+\infty}\int_{t_0}^T\int_{\R^d} |\xi_{\tau_n}|^2u_{\tau_n} \,\d x\,\d t,\end{equation*}
\begin{equation*}\int_{t_0}^T\int_{\R^d} |\xi|^2u \,\d x\,\d t \leq \liminf_{n\to+\infty}\int_{t_0}^T\int_{\R^d} |\tilde \xi_{\tau_n}|^2\tilde u_{\tau_n} \,\d x\,\d t\end{equation*}
  which allow to pass to the limit in the first inequality of \eqref{perdopo} and conclude.
  
 Suppose now that $m\ge m_c$ and $u_0\in D(\mathcal E_m)$. If $t_0>0$ the proof is the same as above. Else if $t_0=0$, we may take  $N_2=\lceil T/\tau_n\rceil$ and $N_1=0$   in
 \eqref{discreteEI} and deduce that that 
\begin{equation*}\frac12\int_{0}^{T}\int_{\R^d} | \xi_{\tau_n}(t)|^2 u_{\tau_n}(t) \,\d x\,\d t +\frac12\int_{0}^{T}\int_{\R^d} |\tilde \xi_{\tau_n}(t)|^2\tilde u_{\tau_n}(t) \,\d x\,\d t \le
\mathcal E_{m}(u_{\tau_n}^0)- \mathcal E_m(u_{\tau_n}(T)). \end{equation*}
The latter passes to the limit in the same way,
also taking
\eqref{energytau} into account.
\EEE
\end{proof}

\section{Smoothing effect}\label{hypersection}

In this section we prove a key result of this paper: the sharp $L^\infty(\R^d)$ hypercontractivity  for  gradient flows\RRR. The main argument is given in Theorem  \ref{Linftydge3}, and the necessary variants are discussed later on.   Here we briefly comment on the main steps of the proof of Theorem \ref{Linftydge3}. In the following discussion we refer to the case $m>m_c$, but the strategy will be similar in the case $m=m_c$. 
The starting point is the estimate of discrete derivatives of $L^p(\R^d)$ norms provided by Proposition \ref{psprop},
where $1\le s<p<+\infty$. 
Thanks to the convergence properties of the scheme that we proved in Section \ref{convergingsection}, we may pass to the limit as $\tau\to0$ to see that for a curve  $u \in AC^2_{loc}((0,+\infty);\Space)$ provided by Theorem \ref{esistenzagenerale}, the $L^p(\R^d)$ norm  satisfies a corresponding differential inequality, for $1<p<+\infty$.
This has already been done in Proposition  \ref{firstcontinuousestimates} in the case $s=1$, obtaining a hypercontractivity estimate for the $L^p(\R^d)$ norm in the case of finite $p$. The exponent in the estimate \eqref{badp}  is sharp but the coefficients $\kappa(p)$, $\kappa_2(p)$ do not stay bounded as $p\to+\infty$, and,
as classically happens for the porous medium equation, an $L^\infty$ estimate 
 cannot be obtained directly passing to the limit as $p\to+\infty$ in the $L^p$ estimates.  
 Therefore, we need to apply a Moser-Alikakos iteration method by setting $p=p_j=2^j, s=2^{j-1}$, and $Q_j=\|u(t)\|_{L^{p_j}(\R^d)}^{p_j}$, 
 ending up with a recursive estimate of the form
 \begin{equation}\label{recursivintro}
Q_j\le \left(\bar G^j t^{-Z_j}\,Q_{j-1}^{\gamma_j}\right)\vee \left(\bar B(\chi)^j\,Q_{j-1}^{\beta_j}\right)\qquad \mbox{for every  $j=1,2,3,\ldots$}
 \end{equation}
 where 
 \[
 Z_j:=\frac{2^{j-1}d}{2^{j-1}+d(m-1)},\quad\gamma_j:=\frac{2^j+d(m-1)}{2^{j-1}+d(m-1)},\quad\beta_j:=\frac{2^{j}+d(m-2)}{2^{j-1}+d(m-2)},
 \]
 and where the  coefficients $\bar G, \bar B(\chi)$ are explicit and will be given through the proof of Theorem \ref{Linftydge3}. 
  The computation works also for the simpler case $\chi=0$, where  $\bar B(\chi)=0$, 
  and the recursive relation allows to easily deduce the $L^\infty$ estimate \eqref{smoothingporous} for the solutions of the porous medium equation. In the case $\chi>0$, considered in this paper, \EEE unfortunately we have $\gamma_j<\beta_j$ so that the second term in \eqref{recursivintro} is not immediately negligible, even for small $t$. Still by carefully taking into account some recursive relations that are satisfied by the exponents $Z_j,\gamma_j,\beta_j$ (see Lemma \ref{lemmata} below) we shall deduce that the  
  $L^\infty$ hypercontractivity rate
  is the same obtained for the porous medium equation. \EEE

\subsection{The general $L^\infty$ hypercontractivity result}

Let $p>1$, $s\in[1,p)$,  $m\ge m_c$, and assume in addition $s>1$ if $m=m_c$.    Letting $\sigma(p,s), f(p,s), g(p,s)$ be defined by \eqref{sigmafg}, we define
the following quantities that will play a key role in the iterative argument of  Theorem \ref{Linftydge3}:
\begin{equation}\begin{aligned}\label{gammabeta}
Z(p,s)&:=\frac{1}{\sigma(p,s)-1}=
\frac{d(p-s)}{2s+d(m-1)},\\
\gamma(p,s)&:=Z(p,s)\,f(p,s)
=\frac{2p+d(m-1)}{2s+d(m-1)},\\
\beta(p,s)&:=\frac{f(p,s)+g(p,s)}{\sigma(p,s)}=\frac{2p+d(m-2)}{2s+d(m-2)}
\end{aligned}\end{equation}
where the last equality follows after an elementary computation. Here, the condition $m\ge m_c$, with $s>1$ if $m=m_c$,  
ensures that all the denominators in \eqref{gammabeta} are positive. It is immediate to verify that \begin{equation}\label{g<b}\gamma(p,s)< \beta(p,s)\end{equation} and that the following identities hold
\[
\gamma(p,s)\gamma(s,1)=\gamma(p,1),\qquad  Z(p,s)+\gamma(p,s)Z(s,1)=Z(p,1), \qquad \text{for } m\ge m_c,
\]
\[
\beta(p,s)\beta(s,1)=\beta(p,1), \qquad \text{for } m>m_c.
\]
These identities can be iterated to produce the following
\begin{lemma}\label{lemmata}
Let $m\ge m_c$  and let $(p_j)_{j=0,1,2,\ldots}\subset\R$ be a strictly increasing sequence such that $p_0=1$. 
Defining $\gamma$, $\beta$, $Z$ in \eqref{gammabeta}, let 
\[
\gamma_j:=\gamma(p_j,p_{j-1}), \quad \beta_j:=\beta(p_j,p_{j-1}), \quad Z_j:=Z(p_j,p_{j-1}), \quad \bar Z_j:=Z(p_j,1), \quad  j=1,2,3,\ldots
\]
with $\beta_1$ not defined in the case $m=m_c$.
Then
\begin{equation}\label{gjbj}
\gamma_j\gamma_{j-1}\ldots\gamma_{j-k}=\frac{2p_j+d(m-1)}{2p_{j-k-1}+d(m-1)},\qquad \beta_j\beta_{j-1}\ldots\beta_{j-k}=\frac{2p_j+d(m-2)}{2p_{j-k-1}+d(m-2)}
\end{equation}
for every $j=1,2,3,\ldots$ and $k=0,1,2,\ldots,j-1$ (with the further restriction $k\neq j-1$ in the second formula if $m=m_c$).

There also holds
\begin{equation}\label{Zgamma}\begin{aligned}
\bar Z_j&=Z_j+Z_{j-1}\gamma_j+Z_{j-2}\gamma_{j-1}\gamma_j+Z_{j-3}\gamma_{j-2}\gamma_{j-1}\gamma_j+\ldots+Z_1\gamma_2\gamma_3\ldots\gamma_j
\end{aligned}
\end{equation}
for every $j=1,2,3,\ldots$ 
(reduced to $Z_1=\bar Z_1$ if $j=1$).

Furthermore,  there holds
\begin{equation}\label{crucial}
\beta_j\le \gamma_j+\frac{Z_j}{\bar Z_{j-1}}\qquad\mbox{for every $j=2,3,4,\ldots$}
\end{equation}
\end{lemma}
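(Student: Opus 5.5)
The identities \eqref{gjbj} and \eqref{Zgamma} should follow by telescoping, and \eqref{crucial} will reduce to an explicit inequality between rational functions of $p_j,p_{j-1}$. Throughout I write $A:=d(m-1)$ and $B:=d(m-2)=A-d$. Then $\gamma(p,s)=\frac{2p+A}{2s+A}$ and $\beta(p,s)=\frac{2p+B}{2s+B}$, where $2s+B>0$ because $m>m_c$, or because $s>1$ when $m=m_c$.

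For \eqref{gjbj}, the product $\gamma_j\gamma_{j-1}\cdots\gamma_{j-k}$ telescopes:
\[
\frac{2p_j+A}{2p_{j-1}+A}\cdot\frac{2p_{j-1}+A}{2p_{j-2}+A}\cdots\frac{2p_{j-k}+A}{2p_{j-k-1}+A}=\frac{2p_j+A}{2p_{j-k-1}+A}.
\]
The same computation works with $B$ in place of $A$, as long as every factor is defined. This is why $k=j-1$ is excluded when $m=m_c$: that case would require $\beta_1=\beta(p_1,1)$, whose denominator $2+B=2-d(2-m_c)\cdot\ldots$ vanishes.

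For \eqref{Zgamma}, I argue by induction on $j$ using the identity $Z(p,s)+\gamma(p,s)Z(s,1)=Z(p,1)$ with $p=p_j$, $s=p_{j-1}$. This gives $\bar Z_j=Z_j+\gamma_j\bar Z_{j-1}$. Substituting the inductive expression for $\bar Z_{j-1}$ and distributing $\gamma_j$ yields the claimed sum. As an alternative check, \eqref{gjbj} gives $Z_{j-k}\gamma_{j-k+1}\cdots\gamma_j=\frac{d(p_{j-k}-p_{j-k-1})}{2p_{j-k}+A}\cdot\frac{2p_j+A}{2p_{j-k}+A}$. That expression does not visibly telescope, so I will rely on the induction.

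For \eqref{crucial}, note that $\bar Z_{j-1}=\frac{d(p_{j-1}-1)}{2+A}$. Writing $p=p_j$ and $s=p_{j-1}>1$, the inequality becomes
\[
\frac{2p+B}{2s+B}-\frac{2p+A}{2s+A}\le\frac{(p-s)(2+A)}{(2s+A)(s-1)}.
\]
The left-hand side equals $\frac{2(p-s)(A-B)}{(2s+A)(2s+B)}=\frac{2d(p-s)}{(2s+A)(2s+B)}$. After cancelling the positive factor $(p-s)/(2s+A)$, the inequality is equivalent to
\[
2d(s-1)\le(2+A)(2s+B).
\]
This is the main point to verify. Since $B=A-d$, the right-hand side is $(2+A)(2s+A-d)$. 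The difference between the two sides is
\[
(2+A)(2s+A-d)-2d(s-1)=2s(2+A-d)+(2+A)(A-d)+2d.
\]
Using $2+A-d=2+B$, this equals $2s(2+B)+(2+A)(A-d)+2d$. Now $2+B=d(m-m_c)\ge 0$, since $2+d(m-2)=d(m-2+2/d)$. Hence $2s(2+B)\ge 2(2+B)$ because $s>1$. It follows that the difference is at least
\[
2(2+B)+(2+A)(A-d)+2d=4+2A-2d+(2+A)(A-d)+2d=(2+A)(2+A-d)=(2+A)(2+B)\ge0.
\]
This holds for every $m\ge m_c$ and closes the argument.
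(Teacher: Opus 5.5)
Your proof is correct and follows the paper's route: telescoping for \eqref{gjbj}, induction via the recursion $\bar Z_j=Z_j+\gamma_j\bar Z_{j-1}$ for \eqref{Zgamma}, and a direct algebraic reduction of \eqref{crucial}. In that last step your quantity $2s(2+B)+(2+A)(A-d)+2d$ is in fact exactly $(2s+A)(2+B)$, which is the paper's final expression $(2p_{j-1}+d(m-1))(2+d(m-2))\ge 0$, so the lower bound using $s>1$ is unnecessary; separately, your discarded aside uses the wrong denominator for $Z_{j-k}$ (it is $2p_{j-k-1}+A$), and with that correction the terms do telescope by partial fractions, but this does not affect the proof since you rely on the induction.
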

\begin{proof} The telescopic identities \eqref{gjbj} are straightforward. We prove  \eqref{Zgamma} by induction. We have $Z_1=\bar Z_1$. If \eqref{Zgamma} holds with given $j\ge 1$, then
\begin{equation*}\begin{aligned}
&Z_{j+1}+Z_{j}\gamma_{j+1}+Z_{j-1}\gamma_j\gamma_{j+1}+\ldots+(Z_1\gamma_2\gamma_3\ldots\gamma_{j+1})=Z_{j+1}+\gamma_{j+1}\bar Z_j\\&\qquad=\frac{d(p_{j+1}-p_j)}{2p_j+d(m-1)}+\frac{[2p_{j+1}+d(m-1)][d(p_j-1)]}{[2p_j+d(m-1)][2+d(m-1)]}\\
&\qquad=\frac{2dp_j(p_{j+1}-1)+d^2(m-1)(p_{j+1}-1)}{[2p_j+d(m-1)][2+d(m-1)]}=\frac{d(p_{j+1}-1)}{2+d(m-1)}=\bar Z_{j+1}
\end{aligned}
\end{equation*}
so that \eqref{Zgamma} holds with $j+1$. Let us now check the crucial estimate \eqref{crucial}, which explicitly reads
\[
\frac{2p_j+d(m-2)}{2p_{j-1}+d(m-2)}\le \frac{[2p_j+d(m-1)][p_{j-1}-1]+(p_{j}-p_{j-1})[2+d(m-1)]}{[2p_{j-1}+d(m-1)](p_{j-1}-1)}
\]
The previous inequality is equivalent to
\[\begin{aligned}
&(p_{j-1}-1)(2p_{j-1}+d(m-1))(2p_j+d(m-2))\\&\qquad\qquad\le (2p_{j-1}+d(m-2))(2p_j+d(m-1))(p_{j-1}-1)\\
&\qquad\qquad\quad+(p_{j}-p_{j-1})(2p_{j-1}+d(m-2))(2+d(m-1)).
\end{aligned}\]
By expanding the products, after a computation the latter proves to be equivalent to
\[
(p_j-p_{j-1})(2dp_{j-1}-2d)\le (p_j-p_{j-1})(4p_{j-1}+2d(m-2)+2dp_{j-1}(m-1)+d^2(m-1)(m-2))
\]
and hence to $0\le (2p_{j-1}+d(m-1))(2+d(m-2))$, which holds true since $m\ge m_c$. 
\end{proof}

Taking advantage of  Lemma \ref{lemmata} we can prove the following crucial result. \EEE

\begin{theorem}\label{Linftydge3}

{\rm\textbf{(I)}}
Let  $m>m_c$, $u_0\in \PP^M_2({\R}^d)$ and
 $u\in AC_{loc}^2((0,+\infty);\PP^M_2({\R}^d))$ a  curve given by {\rm Theorem \ref{esistenzagenerale}}.
Then there exists a constant $C_{\chi,d,m}$ depending only on $\chi$, $d$ and $m$ such that
\[
\|u(t)\|_{L^\infty({\R}^d)} \le C_{\chi,d,m} \,(1\vee M\RRR)^{\textstyle\frac{2}{2+d(m-2)}}\,\left(\frac{1}{t}\right)^{\textstyle\frac{d}{2+d(m-1)}}+C_{\chi,d,m} \,(1\vee M\RRR)^{\textstyle\frac{2}{2+d(m-2)}}\qquad\mbox{ $\forall \ t>0$.}
\]

{\rm\textbf{(II)}}
Let $m=m_c$, $u_0\in D(\mathcal E_{m_c})$ and $u\in AC_{loc}^2([0,+\infty);\PP^M_2({\R}^d))$ a  curve given by {\rm Theorem \ref{th:convergence1}}.  If $d\ge 3$, then there exists a constant $\hat C$,  depending only on $\chi,$ $d$, $M$, $\|u_0\|_{L^{m_c}({\R}^d)}$ and $\mom(u_0)$, \EEE
 such that
\[
\|u(t)\|_{L^\infty({\R}^d)}\le \hat C\left(\frac1t\right)^{\textstyle\frac {d^2}{d(d+2)-4}}+\hat C\qquad\mbox{for every $t>0$}.
\]
If $d=2$, then  
 there exist  constants $\bar{\mathcal C}_{M,\chi},\mathcal C^*_\chi$  such that
\begin{equation*}\label{kappa2t}
\|u(t)\|_{L^\infty({\R}^2)}\le \bar{\mathcal C}_{M,\chi}\left(K_{2}(2t+1)\right)^{2}+\mathcal C^*_\chi\frac{(4MG_2)\vee1}{t}\qquad\mbox{for every $t>0$},
\end{equation*}
where $K_2(\cdot)$ is given by \eqref{kappat4} 
and $G_2$ is from \eqref{GNS}.

\end{theorem}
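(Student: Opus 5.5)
The plan is to run a Moser--Alikakos iteration on the continuous-time curve, with dyadic exponents $p_j=2^j$ and $s=p_{j-1}$. The input at each level is the discrete differential inequality of Proposition \ref{psprop}.

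\textbf{Part (I): the one-step estimate.} Fix $p>s\ge1$. By Proposition \ref{psprop}, $y_k:=\|u_\tau^k\|_p^p$ satisfies
\[
\frac{y_k-y_{k-1}}{\tau}\le -\bar K\,S^{-f}y_k^{\sigma}+\chi c\,S^{g},\qquad S:=\sup_{k}\|u_\tau^k\|_s^s,
\]
where the supremum is taken over the relevant time window. This is exactly the structure of \eqref{constantstep}. The comparison argument of Lemma \ref{generalsubcritical}, together with Proposition \ref{prop:DI}, then bounds $y_k$ by $(\bar K S^{-f}(\sigma-1)\,k\tau)^{-Z}+(\chi c S^{f+g}/\bar K)^{1/\sigma}$ plus a remainder $O(\tau)$.

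To keep the $s$-norm controlled I restart the scheme at time $t/2$. On $[t/2,t]$ I use $S\le\sup_{r\ge t/2}\|u_\tau(r)\|_s^s$ and measure elapsed time as $t/2$. Passing to the limit $\tau\to0$ as in Proposition \ref{firstcontinuousestimates}, and using narrow lower semicontinuity, gives
\[
\|u(t)\|_p^p\le A\, t^{-Z(p,s)}\Big(\sup_{r\ge t/2}\|u(r)\|_s^s\Big)^{\gamma(p,s)}+B\Big(\sup_{r\ge t/2}\|u(r)\|_s^s\Big)^{\beta(p,s)}.
\]
Here $\gamma=Zf$ and $\beta=(f+g)/\sigma$ as in \eqref{gammabeta}. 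By \eqref{cfrattocbar}, $A\le \bar G^{\,j}$ and $B\le \bar B(\chi)^j$ when $p=2^j$, since all constants grow at most like powers of $p$. To make this rigorous, the remainder must be uniform for times $\ge t/2$; this follows from \eqref{rightpowerbis}.

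\textbf{Part (I): iteration.} Set $t_j:=t(1-2^{-j})$, or equivalently restart at dyadic times, and let $Q_j:=\sup_{r\ge t_j}\|u(r)\|_{p_j}^{p_j}$. Monotone-in-time versions of the bounds give \eqref{recursivintro} with $t$ replaced by $t2^{-j}$, which only modifies $\bar G$. Start from $Q_0=M$.

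I then prove by induction the ansatz
\[
Q_j\le \Lambda^{\,\sum_i i\,\Pi_i}\Big(t^{-\bar Z_j}M^{\gamma_1\cdots\gamma_j}\vee (1\vee M)^{\beta_1\cdots\beta_j}\Big),
\]
with $t\le1$ handled first and $t\ge1$ reduced to $t=1$ by monotonicity.

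The key issue is the mixed terms. When one inserts $t^{-\bar Z_{j-1}}$ into the $\beta_j$ branch, one gets $t^{-\bar Z_{j-1}\beta_j}$. The crucial inequality \eqref{crucial}, $\bar Z_{j-1}\beta_j\le \bar Z_{j-1}\gamma_j+Z_j=\bar Z_j$ by \eqref{Zgamma}, shows this is dominated by $t^{-\bar Z_j}$ for $t\le1$. The mass exponents produced in such mixed terms are at most those of the pure branches, after absorbing everything into $(1\vee M)^{\prod\beta}$ using \eqref{g<b}.

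Taking $p_j$-th roots and letting $j\to\infty$:
\begin{itemize}
\item $\bar Z_j/p_j\to d/(2+d(m-1))$;
\item $\prod\gamma/p_j\to 2/(2+d(m-1))$, and $\prod\beta/p_j\to2/(2+d(m-2))$ by \eqref{gjbj};
\item the constants $\Lambda^{\sum i\Pi_i/p_j}$ stay bounded, since $\sum_i i\,2^{-i}<\infty$.
\end{itemize}
This yields the stated bound.

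\textbf{Part (II).} For $d\ge3$, start the iteration at $p_0=m_c$ (with $s>1$), using Proposition \ref{equi} to bound $\|u\|_{m_c}^{m_c}\le C_0$. The same lemma applies with $p_0=m_c$ in place of $1$, and the limiting exponent becomes $d/(2m_c+d(m_c-1))\cdot m_c$-type, i.e.\ $\frac{d^2}{d(d+2)-4}$. For $d=2$, start at $p=2$ from Proposition \ref{newLpd=2}: on $[t/2,t]$ this gives $\|u\|_2^2\lesssim K_2(2t+1)^2+M^2G_2/t$, with $s=2$. Then iterate with $m=1$ and $s\ge2>1$.

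\textbf{Main obstacle.} The hardest step is the bookkeeping in the induction: showing that the cross terms generated by $\gamma_j<\beta_j$ never worsen the time exponent. This is exactly where \eqref{crucial} is used, and I would verify the induction carefully for the case $t\le1$.
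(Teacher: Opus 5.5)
Your architecture matches the paper's: Proposition \ref{psprop} as the one-step input, a dyadic Moser--Alikakos iteration with the exponents \eqref{gammabeta}, \eqref{crucial} for the cross terms, and starting levels $p=m_c$ resp.\ $p=2$ in (II). However, the passage to the limit in your one-step estimate fails as written. After the discrete comparison, the right-hand side contains $S_\tau=\sup_r\|u_\tau(r)\|_{L^s}^s$ over the window. Narrow lower semicontinuity only gives $\|u(t)\|_{L^p}^p\le\liminf_\tau\|u_\tau(t)\|_{L^p}^p$. It gives no bound $\limsup_\tau S_\tau\le\sup_r\|u(r)\|_{L^s}^s$, which is exactly what you need to iterate on quantities $Q_j$ of the limit curve. (In Proposition \ref{firstcontinuousestimates} the issue is absent only because $s=1$, so the right-hand side is $M$.) The paper avoids this in Step 1 as follows.
\begin{itemize}
\item It sums \eqref{basicdisc2} between times $t_1<t_2$ of a full-measure set $I$ on which $u_{\tau_n}(t)\to u(t)$ strongly in every $L^q$; this comes from the space-time strong convergence of Theorem \ref{esistenzagenerale}.
\item It passes to the limit by Fatou and dominated convergence to get \eqref{continuousps}.
\item It replaces $\|u(t)\|_{L^p}^p$ by its right-continuous envelope $y_p$, an absolutely continuous function plus a nonincreasing one.
\item Only then does it compare, in continuous time, with the solution blowing up at $\bar t_0$.
\end{itemize}
Alternatively, you can stay discrete. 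For fixed $j$, iterate on $Q^\tau_i=\sup_{[t_i,T]}\|u_\tau\|_{L^{p_i}}^{p_i}$, $i\le j$; the comparison remainders vanish as $\tau\to0$ thanks to \eqref{rightpowerbis} and the moment bound \eqref{boundedP2}. Take $\limsup_{\tau\to0}$ level by level, using that the recursion maps are continuous and increasing, and use lower semicontinuity only on the left at the very end.

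Second, your induction does not prove the ansatz you state. Handling cross terms as you describe (time exponent via \eqref{crucial}, masses absorbed into $(1\vee M)^{\beta_1\cdots\beta_j}$) yields only the product bound $Q_j\le\Lambda_j(t\wedge1)^{-\bar Z_j}(1\vee M)^{\beta_1\cdots\beta_j}$. For example, the $\beta_j$-branch applied to $t^{-\bar Z_{j-1}}M^{\gamma_1\cdots\gamma_{j-1}}$ is then bounded by $t^{-\bar Z_j}(1\vee M)^{\beta_1\cdots\beta_j}$, not by your maximum. For (I), and for (II) with $d\ge3$ (where the base $\|u\|_{L^{m_c}}^{m_c}\le C_0$ is time independent), this product form closes by a one-step induction and suffices; it is even a leaner organisation of the paper's Step 3.

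For $d=2$, however, the base $Q_1\lesssim K_2^2+1/t$ itself blows up. For $t\le1$ the product form, with $\bar Z$ counted from $p=2$, only gives $\|u(t)\|_{L^\infty}\lesssim t^{-1/2}(K_2^2+1/t)$, i.e.\ the rate $t^{-3/2}$ instead of $1/t$. What is missing is the coupling, branch by branch, between the exponent $a$ of $1/(t\wedge1)$ and the exponent $b$ of the base. If the iteration starts at level $q$, both branches of \eqref{recursiv2} preserve $2a+(2q+d(m-2))\,b=2p_j+d(m-2)$. This holds because $2Z(p,s)=2p+d(m-2)-(2s+d(m-2))\gamma(p,s)$ and $\beta(p,s)=\frac{2p+d(m-2)}{2s+d(m-2)}$. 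Hence every branch is a weighted geometric mean of the two pure ones. This implies \eqref{crucial}, and it makes your max-ansatz close if you keep $M^{\beta_1\cdots\beta_j}$ instead of $(1\vee M)^{\beta_1\cdots\beta_j}$. For $d=2$, $q=2$ the invariant reads $a+b=p_j-1$. So bounding both $Q_1$ and $1/(t\wedge1)$ by a single quantity $A$ gives every branch $\le A^{p_j-1}$ up to constants, hence the $1/t$ rate. The paper extracts the same coupling from its branch reduction by taking $W_1:=Z_1$ in \eqref{WZ}.

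Finally, the case $t\ge1$ must be handled by replacing $1/t$ with $1/(t\wedge1)$ in $G_j$. It cannot be handled by monotonicity of $\|u(t)\|_{L^\infty}$, which is not available.
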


\begin{proof} 

{\bf Step 1.} Throughout this step, we let $1\le s<p<+\infty$. If $m=m_c$  we also let $s>1$, as in Proposition \ref{psprop}.

 From Theorem \ref{prop:IC} and Theorem \ref{esistenzagenerale}, we have $u_{\tau_n}\to u$ strongly in $L^p_{loc}((0,+\infty);L^p(\R^d))$ for every $1\le p<+\infty$, thus there exists a set $I\subset (0,+\infty)$ such that $(0,+\infty)\setminus I$ is a null set ad such that $u_{\tau_n}(t)\to u(t)$ strongly in $L^p(\R^d)$ for every $t\in I$ and every $1\le p<+\infty$, possibly along a not relabeled subsequence. \RRR

We fix $t_1,t_2 \in I$, $t_1<t_2$. From \eqref{basicdisc2} by summing from $k=\lceil t_1/\tau\rceil+1$ to $k=\lceil t_2/\tau\rceil$, we obtain
 \[\begin{aligned}
& \|u_\tau^{\lceil t_2/\tau\rceil}\|_{L^p({\R}^d)}^p-\|u_\tau^{\lceil t_1/\tau\rceil}\|_{L^p({\R}^d)}^p\\&\qquad\le \tau \sum_{k=\lceil t_1/\tau\rceil-1}^{\lceil t_2/\tau\rceil}
 \left(-\bar K(p,s) \frac{(\|u_\tau^k\|_{L^p({\R}^d)}^p)^{\sigma(p,s)}}{(\|u_\tau^k\|_{L^s({\R}^d)}^s)^{f(p,s)}}+\chi\,c(p,s)(\|u_\tau^k\|_{L^s({\R}^d)}^s)^{g(p,s)}\right).
 \end{aligned}\]
Since $u_\tau(t)=u_\tau^k$ for $t\in((k-1)\tau,k\tau]$, we have
\begin{equation}\label{integralversion}\begin{aligned}
&\|u_\tau(t_2)\|_{L^p({\R}^d)}^p-\|u_\tau(t_1)\|_{L^p({\R}^d)}^p\le -\bar K(p,s) \int_{t_1}^{t_2}\frac{(\|u_\tau(t)\|_{L^p({\R}^d)}^p)^{\sigma(p,s)}}{(\|u_\tau(t)\|_{L^s({\R}^d)}^s)^{f(p,s)}}\,\d t\\&\qquad+\chi\,c(p,s)\int_{t_1}^{t_2}(\|u_\tau(t)\|_{L^s({\R}^d)}^s)^{g(p,s)}\,\d t+ \varepsilon_\tau(t_1,t_2),
\end{aligned}\end{equation}
where the remainder $\varepsilon_\tau(t_1,t_2)$ is 
 \[\begin{aligned}
 \varepsilon_\tau(t_1,t_2)&=\bar K(p,s) \int_{t_1}^{\lceil t_1/\tau\rceil\tau}\frac{(\|u_\tau(t)\|_{L^p({\R}^d)}^p)^{\sigma(p,s)}}{(\|u_\tau(t)\|_{L^s({\R}^d)}^s)^{f(p,s)}}\,\d t+\chi\,c(p,s)\int_{t_2}^{\lceil t_2/\tau\rceil\tau}(\|u_\tau(t)\|_{L^s({\R}^d)}^s)^{g(p,s)}\,\d t\\&
 =\tau\bar K(p,s)\, \frac{(\|u_\tau(t_1)\|_{L^p({\R}^d)}^p)^{\sigma(p,s)}}{(\|u_\tau(t_1)\|_{L^s({\R}^d)}^s)^{f(p,s)}}+\tau \chi\,c(p,s)(\|u_\tau(t_2)\|_{L^s({\R}^d)}^s)^{g(p,s)}.
 \end{aligned}
 \]
 Since $t_1,t_2\in I$ we see that $\eps_{\tau_n}(t_1,t_2)\to0$ as $n\to\infty$, 
 and moreover we may pass to the limit as $\tau\to 0$ in \eqref{integralversion} along the above sequence $(\tau_n)_n$. 
 For the integral terms we make use of Fatou Lemma and dominated convergence, recalling that Theorem \ref{newlpdge3} and 
 Theorem \ref{theop} directly imply that $t\mapsto \sup_{\tau\in(0,1)}\|u_\tau(t)\|_{L^s({\R}^d)}$ belongs to $L^\infty(t_1,t_2)$, 
 since the last term in the estimates therein vanishes as usual as $\tau\to0$. 
 We get
\begin{equation}\label{continuousps}\begin{aligned}
&\|u(t_2)\|_{L^p({\R}^d)}^p-\|u(t_1)\|_{L^p({\R}^d)}^p\le -\bar K(p,s) \int_{t_1}^{t_2}\frac{(\|u(t)\|_{L^p({\R}^d)}^p)^{\sigma(p,s)}}{(\|u(t)\|_{L^s({\R}^d)}^s)^{f(p,s)}}\,\d t\\&\qquad+\chi\,c(p,s)\int_{t_1}^{t_2}(\|u(t)\|_{L^s({\R}^d)}^s)^{g(p,s)}\,\d t.
\end{aligned}
\end{equation}
We introduce the function $H\in AC_{loc}((0,+\infty);\R)$ defined by
\[
H(t):=\int_1^t\left(-\bar K(p,s) \frac{(\|u(\bar t)\|_{L^p({\R}^d)}^p)^{\sigma(p,s)}}{(\|u(\bar t)\|_{L^s({\R}^d)}^s)^{f(p,s)}}+\chi\,c(p,s)(\|u(\bar t)\|_{L^s({\R}^d)}^s)^{g(p,s)}\right)\,\d\bar t.
\]
Since in \eqref{continuousps} $t_1<t_2$ are arbitrary,
then the function $G:I \to \R$ defined by $G(t)= \|u(t)\|_{L^p({\R}^d)}^p-H(t)$ is nonincreasing. 
We define   
\[
y_p(t):=
\left\{\begin{array}{ll}\|u(t)\|_{L^p({\R}^d)}^p\qquad&\mbox{if $t\in I$}\vspace{0.2cm}\\
\displaystyle\lim_{{\bar t\to t^+\!,\;\bar t\in I}}\|u(\bar t)\|_{L^p({\R}^d)}^p\qquad&\mbox{if $t\in (0,+\infty)\setminus I$}
\end{array}
\right.
\]
and $y_1(t):=\|u(t)\|_{L^1({\R}^d)}=M$ for every $t\ge 0$.
Since $t\mapsto u(t)$ is narrowly continuous, by Theorem \ref{th:convergence1} and Theorem \ref{esistenzagenerale}, and 
its $L^p(\R^d)$ norm is bounded on any compact interval of $(0,+\infty)$, by Proposition \ref{firstcontinuousestimates} and Proposition \ref{newLpd=2},
then the map $(0,+\infty)\ni t\mapsto \|u(t)\|_{L^p({\R}^d)}^p$ is lower semicontinuous.
By the definition of $y_p$ and the previous observation we have
\begin{equation}\label{jump?}
y_p(t)\ge \|u(t)\|_{L^p({\R}^d)}^p\qquad\mbox{for every $t>0$},
\end{equation}
and the strict inequality can  occur only for $t\in (0,+\infty)\setminus I$.
Furthermore, using the definition of $y_p$, we can prove that the function $\tilde G:(0,+\infty) \to \R$ defined by $\tilde G(t)= y_p(t)-H(t)$ is nondecreasing.
Thus  for $t_1,t_2\in(0,+\infty)$, $t_1<t_2$, we have
\begin{equation*}
y_p(t_2)-y_p(t_1)\le -\bar K(p,s) \int_{t_1}^{t_2}\frac{(y_p(t))^{\sigma(p,s)}}{(y_s(t))^{f(p,s)}}\,\d t+\chi\,c(p,s)\int_{t_1}^{t_2}(y_s(t))^{g(p,s)}\,\d t.
\end{equation*}
In particular, $y_p(t)=H(t)+\tilde G(t)$ for any $t\in(0,+\infty)$, where $H\in AC_{loc}(0,+\infty)$ and $\tilde G$ is nonincreasing on $(0,+\infty)$.
Then $y_p$ is differentiable almost everywhere in $(0,+\infty)$.
Dividing by $t_2-t_1$ the previous inequality and passing to the limit as $t_2-t_1\to0$ we get, 
\[
y_p'(t)\le -\bar K(p,s)\,\frac{(y_p(t))^{\sigma(p,s)}}{\left(y_s(t)\right)^{f(p,s)}}+\chi\,c(p,s)\left(y_s(t)\right)^{g(p,s)}, \qquad \text{for a.e. } t\in (0,+\infty).
\]
\EEE
 If we fix  $\bar t_0\in (0,T)$, we deduce that
\begin{equation}\label{tconzero}
\begin{aligned}
y_p'(t)&\le -\bar K(p,s)\,\frac{(y_p(t))^{\sigma(p,s)}}{\sup_{\bar t\in(\bar t_0,T)}\left(y_s(\bar t)\right)^{f(p,s)}}\\&\qquad\qquad\qquad\qquad+\chi\,c(p,s)\, \sup_{\bar t\in (\bar t_0,T)}\left(y_s(\bar t)\right)^{g(p,s)}\qquad \mbox{for a.e. $t\in(\bar t_0,T)$}.
\end{aligned}
\end{equation}
Notice that the supremum in \eqref{tconzero} is finite, since $\bar t_0>0$ and since $y_p(\cdot)$, by its definition, 
satisfies the same estimates as $\|u(\cdot)\|_{L^p({\R}^d)}$ from Proposition \ref{newLpd=2}  and from Proposition \ref{firstcontinuousestimates}. 
By comparison with the unique solution of the corresponding differential equality coupled with the initial condition $y(\bar t_0)=+\infty$ (similarly as the comparison principle used in the proof of Proposition \ref{newlpdge3}), taking into account that the distributional time derivative of $y_p$ might have a singular part, which is however a nonpositive measure, we deduce that 
\[
y_p(t)\le  \left(\frac{(\sigma(p,s)-1)\,(t-\bar t_0)\,\bar K(p,s)}{\sup_{\bar t\in(\bar t_0,T)}\left(y_s(\bar t)\right)^{f(p,s)}} \right)^{-\frac{1}{\sigma(p,s)-1}}+\left(\frac{\chi c(p,s)}{\bar K(p,s)}\right)^{\frac{1}{\sigma(p,s)}}\left(\sup_{\bar t\in (\bar t_0,T)}\left(y_s(\bar t)\right)\right)^{\frac{f(p,s)+g(p,s)}{\sigma(p,s)}}
\]
for every $t\in (\bar t_0,T)$, which we rewrite with the notation \eqref{gammabeta} as
\begin{equation}\label{foriterationbasic}\begin{aligned}
y_p(t)&\le \left(\frac{Z(p,s)}{\bar K(p,s)\,(t-\bar t_0)}\right)^{Z(p,s)}\left(\sup_{\bar t\in(\bar t_0,T)}\left(y_s(\bar t)\right)\right)^{\gamma(p,s)}\\&\qquad+\left(\frac{\chi c(p,s)}{\bar K(p,s)}\right)^{\frac{1}{\sigma(p,s)}} \left(\sup_{\bar t\in(\bar t_0,T)}\left(y_s(\bar t)\right)\right)^{\beta(p,s)}
\qquad \forall\,t\in(\bar t_0,T).\end{aligned}
\end{equation}

{\bf Step 2.}
In order to apply an iteration method, we fix $t_*>0$, we let $t_j:=(1-2^{-j-1})t_*$, we let $(p_j)_{j=0,1,2,\ldots}\subset(0,+\infty)$ be a strictly increasing diverging sequence with $p_0=1$, and we apply \eqref{foriterationbasic} with $p=p_j$, $s=p_{j-1}$, $T=2t_*$ and $\bar t_0=t_{j-1}$, for $j=1,2,3,\ldots$. Since $p_0=1$, we have to start from $j=2$ in the case that $m=m_c$, in order to avoid $s=1$ (recalling that $\beta(p,s)$ is undefined if $m=m_c$ and $s=1$). Noticing that $$\inf_{\bar t\in (t_{j},T)}(\bar t-t_{j-1})=t_j-t_{j-1},$$ taking the supremum on $(t_j,T)$ we get
\begin{equation}\label{pjpj-1}\begin{aligned}
\sup_{\bar t\in (t_j,T)}\, (y_{p_j}(\bar t))&\le \left(\frac{Z(p_j,p_{j-1})}{\bar K(p_j,p_{j-1})\,(t_j-t_{j-1})}\right)^{Z(p_j,p_{j-1})}\,\left(\sup_{\bar t\in(t_{j-1},T)}\left(y_{p_{j-1}}(\bar t)\right)\right)^{\gamma(p_j,p_{j-1})}\\&\qquad+\left(\frac{\chi c(p_j,p_{j-1})}{\bar K(p_j,p_{j-1})}\right)^{\frac{1}{\sigma(p_j,p_{j-1})}} \left(\sup_{\bar t\in(t_{j-1},T)}\left(y_{p_{j-1}}(\bar t)\right)\right)^{\beta(p_j,p_{j-1})}
\end{aligned}\end{equation}
It is natural to let $p_j=R^j$ for some fixed $R>1$, for every $j=0,1,2,3,\ldots$. Therefore, \eqref{pjpj-1} holds for $j=1,2,3,\ldots$ if $m>m_c$,
while it holds for $j=2,3,4,\ldots$ if $m=m_c$. We next estimate the constants in both terms of \eqref{pjpj-1}. Concerning the first one, it is easy to check from \eqref{gammabeta} that for every $j=1,2,3,\ldots$ there holds
\begin{equation}\label{stimadiz}
\frac{d(R-1)}{2+d(m-1)}\le Z(R^j,R^{j-1})\le \frac{d(R-1)}2. 
\end{equation}
 Since $t_j-t_{j-1}=2^{-j-1}t_*$, we get
\begin{equation}\label{maincoeff}
2\left(\frac{Z(R^j,R^{j-1})}{(t_j-t_{j-1})\bar K(R^j,R^{j-1})}\right)^{Z(R^j,R^{j-1})}\le G_j:=\left(\frac{2^j\tilde G}{t_*}\right)^{Z(R^{j},R^{j-1})} 
\end{equation}
for every $j=1,2,3,\ldots$,
where we have set
\[
\tilde G=\tilde G(R,d,m):=\tfrac12(dmRC_{R}^*)\,2^{\frac{2+d(m-1)}{d(R-1)}},
\]
having used the first estimate of \eqref{cfrattocbar}, the elementary estimate $\frac{R^j}{R^j-1}\le \frac{R}{R-1}$ that holds for every $j=1,2,3,\ldots,$ and having  used \eqref{kappabar} which in turn yields  
\begin{equation}\label{Cstarerre}C^*_{R^j,R^{j-1}}\le C^*_R:=\left\{\begin{array}{ll}C_{2,d}\quad&\mbox{if $d\ge 3$}\\C_{1,2}R/(R-1)\quad&\mbox{if $d=2$}\end{array}\right.\qquad \mbox{for every $j=1,2,3,\ldots$}\end{equation}
Similarly, for the second term, recalling the definitions of $r(\cdot,\cdot)$ and $\sigma(\cdot,\cdot)$ from  \eqref{sigmafg},  we make use of the elementary estimates
\[
\frac{(R-1)d}{2+d\,(m-2)_+}\le \frac{r(R^j,R^{j-1})}{\sigma(R^j,R^{j-1})}\le \lambda(R),\qquad \mbox{ $j=1,2,3$},  
\]
where $j$ starts from $2$ and not from $1$ in the case $m=m_c$, and
with  the notation  $x_+:=x\vee0$, $x_-:=-(x\wedge0)$, and
\[
\lambda(R):=\left\{\begin{array}{ll}\vspace{0.2 cm}\frac{(R-1)d}{2-d\,(m-2)_-}\quad&\mbox{if $m>m_c$}\\
\frac{R(R-1)d}{2R-d\,(m-2)_-}\quad&\mbox{if $m=m_c$}.
\end{array}\right.
\]  
Therefore, from the second estimate in \eqref{cfrattocbar} and from \eqref{Cstarerre} we deduce
\begin{equation}\label{nextcoeff}\begin{aligned}
2\left(\frac{\chi c(R^j,R^{j-1})}{\bar K{(R^j,R^{j-1)}}}\right)^{\frac{1}{\sigma(R^j,R^{j-1})}}\le B_j:&= 2b_\chi \tilde B^j \quad \mbox{for every $j=1,2,\ldots$},
\end{aligned}\end{equation}
where $j$ starts from $2$ if $m=m_c$, and where
 \begin{equation}\label{Btilde} b_\chi=b_{\chi}(R,d,m):=\left(\tfrac12\chi m C_{R}^*\right)^{\lambda(R)}\vee \left(\tfrac12\chi m C^*_{R}\right)^{\frac{(R-1)d}{2+d\,(m-2)_+}},\;\;\tilde B=\tilde B (R,d,m):=R^{\lambda(R)}.\end{equation}

Next,   we define
\begin{equation}\label{Qj}
Q_j:=\sup_{\bar t\in(t_j,T)}\, (y_{p_j}(\bar t))\qquad \mbox{for  $j=0,1,2,3,\ldots\quad$ (note that $Q_0=M$)}.
\end{equation}
In this way, taking into account \eqref{nextcoeff} and \eqref{maincoeff}, and since $p_j=R^j$,  \eqref{pjpj-1} rewrites
\begin{equation}\label{recursiv2}
Q_j\le \left(G_j\,Q_{j-1}^{\gamma_j}\right)\vee \left(B_j\,Q_{j-1}^{\beta_j}\right)\qquad \mbox{for every  $j=1,2,3,\ldots,\quad $ ($j\ge 2$ if $m=m_c$), }
\end{equation}
where  $\gamma_j:=\gamma(R^{j},R^{j-1})$, $\beta_j:=\beta(R^{j},R^{j-1})$, $Z_j:=Z(R^{j},R^{j-1})$.

(Incidentally
we notice that if we had $\chi=0$ then we would get $B_j=0$ for every $j$ and the following argument would be much simpler and it would give the standard estimate \eqref{smoothingporous} of the porous media equation).
\EEE

{\bf Step 3.} Here we let $m>m_c$.
By applying \eqref{recursiv2} recursively we obtain that for every  $j=1,2,3,\ldots$ there holds
\begin{equation}\label{badrecursiv}
Q_j\le \bigvee_{2^j} \left(A_jA_{j-1}^{\eta_j}A_{j-2}^{\eta_j\eta_{j-1}}A_{j-3}^{\eta_j\eta_{j-1}\eta_{j-2}}\ldots A_{2}^{\eta_j\eta_{j-1}\ldots\eta_{3}}A_1^{\eta_j\eta_{j-1}\ldots \eta_2}\right) \, Q_0^{\eta_j\eta_{j-1}\eta_{j-2}\ldots\eta_2\eta_1}
\end{equation}
where, for each $k=1,2,\ldots, j$, $\eta_k$   is equal either to $\gamma_k$ or to $\beta_k$, and accordingly the maximum (denoted $\vee_{2^j}$) runs over all the $2^j$ possible values of the vector $(\eta_j,\eta_{j-1},\eta_{j-2},\ldots,\eta_2,\eta_1)$. Moreover, in each of the $2^j$ terms among which the maximum is taken, the $A_k$'s are defined by the following rule: $A_{k}=G_k$ if $\eta_k=\gamma_k$, see \eqref{maincoeff}, while $A_k=B_k$ if $\eta_k=\beta_k$, see \eqref{nextcoeff}, for any $k=1,2,\ldots,j.$
Indeed, \eqref{badrecursiv} is proved by induction: given that in this notation the one step iteration \eqref{recursiv2} is equivalent to $Q_j\le\vee_{\eta_j\in\{\gamma_j,\beta_j\}} A_jQ_{j-1}^{\eta_j}$, assuming that the $k$ steps inequality
\begin{equation}\label{krecursiv}
Q_j\le \bigvee_{2^k} A_jA_{j-1}^{\eta_j}A_{j-2}^{\eta_j\eta_{j-1}}\ldots A_{j-k+1}^{\eta_j\eta_{j-1}\ldots\eta_{j-k+2}}Q_{j-k}^{\eta_j\eta_{j-1}\ldots\eta_{j-k+1}}
\end{equation}
 holds true,  by an application \eqref{recursiv2} we obtain
\[
\begin{aligned}
Q_j
&\le \bigvee_{2^k} A_jA_{j-1}^{\eta_j}A_{j-2}^{\eta_j\eta_{j-1}}\ldots A_{j-k+1}^{\eta_j\eta_{j-1}\ldots\eta_{j-k+2}}\left((G_{j-k}Q_{j-k-1}^{\gamma_{j-k}})\vee (B_{j-k}Q_{j-k-1}^{\beta_{j-k}})\right)^{\eta_j\eta_{j-1}\ldots\eta_{j-k+1}}\\ 
&=\bigvee_{2^k}(A_j\ldots A_{j-k+1}^{\eta_j\ldots\eta_{j-k+2}}G_{j-k}^{\eta_j\ldots\eta_{j-k+1}}Q_{j-k-1}^{\eta_j\ldots\eta_{j-k+1}\gamma_{j-k}})\vee(A_j\ldots A_{j-k+1}^{\eta_j\ldots\eta_{j-k+2}}B_{j-k}^{\eta_j\ldots\eta_{j-k+1}}Q_{j-k-1}^{\eta_j\ldots\eta_{j-k+1}\beta_{j-k}})\\
&=\bigvee_{2^{k+1}}A_jA_{j-1}^{\eta_j}A_{j-2}^{\eta_j\eta_{j-1}}\ldots A_{j-k}^{\eta_j\eta_{j-1}\ldots\eta_{j-k+1}}Q_{j-k-1}^{\eta_j\eta_{j-1}\ldots\eta_{j-k}},
\end{aligned}
\]
which is the $k+1$ steps version of \eqref{krecursiv}. Therefore \eqref{krecursiv} holds true for every $k=1,2,\ldots, j$, and thus \eqref{badrecursiv} is obtained by letting $k=j$ in \eqref{krecursiv}.

We notice that for every $k=1,\ldots, j$ we have by \eqref{maincoeff} and \eqref{nextcoeff}
\begin{equation}\label{t*}
A_k\le D_k\left(\frac{1}{t_*\wedge1}\right)^{W_k}\qquad\mbox{where}\quad 
W_k:=\left\{\begin{array}{ll}Z_k\quad&\mbox{if $\eta_k=\gamma_k$}\\0\quad&\mbox{if $\eta_k=\beta_k$}\end{array}\right.
\end{equation}
and where \begin{equation}\label{Dk}D_k:=1\vee(2^k\tilde G)^{Z_k}\vee(2b_\chi\tilde B^{k}).\end{equation} Indeed, $A_k$ depends on $t_*$ if and only if $A_k=G_k$, i.e., $\eta_k=\gamma_k$. 
From  \eqref{t*} we obtain
\begin{equation}\label{Westimate}\begin{aligned}
& A_jA_{j-1}^{\eta_j}A_{j-2}^{\eta_j\eta_{j-1}}A_{j-3}^{\eta_j\eta_{j-1}\eta_{j-2}}\ldots A_{2}^{\eta_j\eta_{j-1}\ldots\eta_{3}}A_1^{\eta_j\eta_{j-1}\ldots \eta_2}\\&\quad\le \left(\frac{1}{t_*\wedge1}\right)^{W_j+W_{j-1}\eta_j+W_{j-2}\eta_{j-1}\eta_j+\ldots+(W_1\eta_2\ldots\eta_j)} D_jD_{j-1}^{\beta_j}D_{j-2}^{\beta_j\beta_{j-1}}\ldots 
D_1^{\beta_j\beta_{j-1}\ldots \beta_2}
\end{aligned}\end{equation}
where since $D_k\ge 1$ we have used the inequality $\gamma_k\le \beta_k$, see \eqref{g<b}.

We claim that  for any of the $2^j$ possible values of the vector $(\eta_j,\eta_{j-1},\ldots\eta_1)$ the following estimate holds true
\begin{equation}\label{WZ}
{W_j\!+W_{j-1}\eta_j+W_{j-2}\eta_{j-1}\eta_j+W_{j-3}\eta_{j-2}\eta_{j-1}\eta_j+\!\ldots\!+(W_1\eta_2\ldots\eta_j)}\le \frac{d(R^j-1)}{2+\!d(m-1)}.
\end{equation}
Indeed, if $\eta_1=\beta_1$, the quantity $W_j+W_{j-1}\eta_j+W_{j-2}\eta_{j-1}\eta_j+\ldots+(W_1\eta_2\ldots\eta_j)$
has $W_1=0$ and thus increases by replacing the value of $\eta_1$ with $\gamma_1$. 
More generally, let $j\ge 2$ and let $k=\max\{i\in\{0,1,2,\ldots j-2\}: \eta_{j-i}=\beta_{j-i}\}$.  Since $\eta_{j-k}=\beta_{j-k}$ so that $W_{j-k}=0$, and since $\eta_{j-i}=\gamma_{j-i}$ so that $W_{j-i}=Z_{j-i}$ for every $i\in\{k+1,k+2,\ldots j-1\}\neq\emptyset$,  we have
\[
\begin{aligned}
&{W_j+W_{j-1}\eta_j+W_{j-2}\eta_{j-1}\eta_j+\ldots+(W_1\eta_2\ldots\eta_j)}\\&\qquad
=W_j+W_{j-1}\eta_j+\ldots+ (W_{j-k+1}\eta_{j-k+2}\ldots\eta_j)+(Z_{j-k-1}\beta_{j-k}\eta_{j-k+1}\eta_{j-k+2}\ldots\eta_j)\\
&\qquad\quad+ (Z_{j-k-2}\gamma_{j-k-1}\beta_{j-k}\eta_{j-k+1}\eta_{j-k+2}\ldots\eta_j)+\ldots+(Z_1\gamma_2\ldots\gamma_{j-k-1}\beta_{j-k}\eta_{j-k+1}\ldots\eta_j)\\
&\qquad=W_j+W_{j-1}\eta_j+W_{j-2}\eta_{j-1}\eta_j+\ldots+(W_{j-k+1}\eta_{j-k+2}\ldots\eta_j)\\
&\qquad\quad+ \beta_{j-k}\left(\eta_{j-k+1}\eta_{j-k+2}\ldots\eta_j\right)\left[Z_{j-k-1}+Z_{j-k-2}\gamma_{j-k-1}+\ldots+(Z_1\gamma_2\gamma_3\ldots\gamma_{j-k-1})\right].
\end{aligned}
\]
By introducing the estimate
\[
\beta_{j-k}\le \gamma_{j-k}+\frac{Z_{j-k}}{Z_{j-k-1}+Z_{j-k-2}\gamma_{j-k-1}+Z_{j-k-3}\gamma_{j-k-2}\gamma_{j-k-1}+\ldots+(Z_1\gamma_2\gamma_3\ldots\gamma_{j-k-1})},
\]
i.e., by using \eqref{crucial}, we thus deduce
\[
\begin{aligned}
&W_j+W_{j-1}\eta_j+W_{j-2}\eta_{j-1}\eta_j+W_{j-3}\eta_{j-2}\eta_{j-1}\eta_j\ldots+(W_1\eta_2\ldots\eta_j)\\
&\;\;\le W_j+W_{j-1}\eta_j+W_{j-2}\eta_{j-1}\eta_j+\ldots+(W_{j-k+1}\eta_{j-k+2}\ldots\eta_j)+ (Z_{j-k}\eta_{j-k+1}\eta_{j-k+2}\ldots\eta_j)\\
&\qquad+(Z_{j-k-1}\gamma_{j-k}\eta_{j-k+2}\eta_{j-k+2}\ldots\eta_j)+\ldots+ (Z_1\gamma_2\ldots\gamma_{j-k-1}\gamma_{j-k}\eta_{j-k+1}\eta_{j-k+2}\ldots\eta_j)
\end{aligned} 
\]
where the right hand side is precisely  $W_j+W_{j-1}\eta_j+W_{j-2}\eta_{j-1}\eta_j+\ldots+(W_1\eta_2\ldots\eta_j)$ in  case  $\beta_{j-k}$ were replaced by $\gamma_{j-k}$ in the vector $(\eta_j,\eta_{j-1},\ldots,\eta_1)$. That is, we can reduce the number of $\beta$'s in such a vector without decreasing the exponent of $(1\wedge t_*)^{-1}$ that appears in \eqref{Westimate}. By applying this procedure recursively, 
we obtain that for any of  the $2^j$ possible values of the vector $(\eta_j,\eta_{j-1},\ldots\eta_1)$ there holds
\[\begin{aligned}
&W_j+W_{j-1}\eta_j+W_{j-2}\eta_{j-1}\eta_j+\ldots+(W_1\eta_2\ldots\eta_j)\\&\qquad\le Z_j+Z_{j-1}\gamma_j+Z_{j-2}\gamma_{j-1}\gamma_j+\ldots+(Z_2\gamma_3\ldots\gamma_j)+(Z_1\gamma_2\ldots\gamma_j),
\end{aligned}\]
where the right hand side is equal to $\frac{d(R^j-1)}{2+d(m-1)}$ by \eqref{Zgamma}. The claim is proven and it implies 
\begin{equation}\label{poweroft}
\begin{aligned}
& A_jA_{j-1}^{\eta_j}A_{j-2}^{\eta_j\eta_{j-1}}A_{j-3}^{\eta_j\eta_{j-1}\eta_{j-2}}\ldots A_{2}^{\eta_j\eta_{j-1}\ldots\eta_{3}}A_1^{\eta_j\eta_{j-1}\ldots \eta_2}\\&\qquad\le \left(\frac{1}{t_*\wedge1}\right)^{\textstyle\frac{d(R^j-1)}{2+d(m-1)}} D_jD_{j-1}^{\beta_j}D_{j-2}^{\beta_j\beta_{j-1}}\ldots D_{2}^{\beta_j\beta_{j-1}\ldots\beta_{3}}D_1^{\beta_j\beta_{j-1}\ldots \beta_2}
\end{aligned}
\end{equation}

By inserting \eqref{poweroft} into \eqref{badrecursiv}, we get for every $j=1,2,3,\ldots$
\begin{equation}\label{lastwitht}
Q_j\le  \left(\frac{1}{t_*\wedge1}\right)^{\textstyle\frac{d(R^j-1)}{2+d(m-1)}}  D_jD_{j-1}^{\beta_j}D_{j-2}^{\beta_j\beta_{j-1}}\ldots D_{2}^{\beta_j\beta_{j-1}\ldots\beta_{3}}D_1^{\beta_j\beta_{j-1}\ldots \beta_2}\;\bigvee_{2^j} Q_0^{{\eta_j}\eta_{j-1}\ldots\eta_1}.
\end{equation}

Making use of \eqref{stimadiz} and \eqref{Btilde}, we estimate the $D_k$'s defined by \eqref{Dk}: we get
\[
D_k:=1\vee(2^k\tilde G)^{Z_k}\vee(2b_\chi\tilde B^{k})=1\vee (2^k)^{d(R-1)/2}(\tilde G\vee 1)^{d(R-1)/2}\vee 2b_\chi \,R^{\lambda(R)k}\le a L^k,
\] 
having defined 
\begin{equation}\label{aL}
a=a_{\chi}(R,d,m):=(2b_\chi)\vee (1\vee\tilde G)^{d(R-1)/2},\qquad L=L(R,d,m):=(2\vee R)^{\lambda(R)}.
\end{equation}
Therefore 
\begin{equation}\label{iterativeD}
\begin{aligned}
&D_jD_{j-1}^{\beta_j}D_{j-2}^{\beta_j\beta_{j-1}}\ldots D_{2}^{\beta_j\beta_{j-1}\ldots\beta_{3}}D_1^{\beta_j\beta_{j-1}\ldots \beta_2}\\&\quad\le
a^{1+\beta_j+\beta_j\beta_{j-1}+\ldots+(\beta_j\beta_{j-1}\ldots\beta_2)}\;L^{j+(j-1)\beta_j+(j-2)\beta_j\beta_{j-1}+(j-3)\beta_j\beta_{j-1}\beta_{j-2}+\ldots+(\beta_j\beta_{j-1}\ldots\beta_2)}.
\end{aligned}\end{equation}
Making use of the telescopic properties \eqref{gjbj} of the $\beta_k$'s, an easy estimate shows that 
\[
\beta_{j}\beta_{j-1}\ldots\beta_{j-k}=\frac{2R^j+d(m-2)}{2R^{j-k-1}+d(m-2)}\le \frac{2R^j}{2R^{j-k-1}-d(m-2)_-}
\]
for every $k=0,1,\ldots, j-2$. Hence, letting
\begin{equation}\label{S1S2}
S_i=S_i(R,d,m):= \sum_{k=1}^{\infty}\frac{2k^i}{2R^k-d(m-2)_-},\qquad i=0,1,
\end{equation}
we obtain
\[
{j+(j-1)\beta_j+(j-2)\beta_j\beta_{j-1}+\ldots+(\beta_j\beta_{j-1}\ldots\beta_2)}\le j+S_1R^j
\]
and similarly
\[
{1+\beta_j+\beta_j\beta_{j-1}+\ldots+(\beta_j\beta_{j-1}\ldots\beta_2)}\le 1+S_0R^j, 
\]
which can be inserted into \eqref{iterativeD} to get
\begin{equation}\label{iterativeD2}
D_jD_{j-1}^{\beta_j}D_{j-2}^{\beta_j\beta_{j-1}}\ldots D_{2}^{\beta_j\beta_{j-1}\ldots\beta_{3}}D_1^{\beta_j\beta_{j-1}\ldots \beta_2}\le a^{1+S_0R^j}\,L^{j+S_1R^j}.
\end{equation}

Eventually, by recalling the definition of $Q_j$ from \eqref{Qj} and since $Q_0=M$, by using that $T>t_*>t_j$ for every $j$,  and by combining \eqref{iterativeD2} with \eqref{lastwitht} and \eqref{jump?}, we obtain
\[
\|u(t_*)\|_{L^{R^j}(\R^d)}^{R^j}\le a^{1+S_0R^j}\,L^{j+S_1R^j}\left(\frac{1}{t_*\wedge1}\right)^{\textstyle\frac{d(R^j-1)}{2+d(m-1)}}\,(1\vee M)^{\textstyle\frac{2R^j+d(m-2)}{2+d(m-2)}},
\]
having exploited \eqref{g<b} and \eqref{gjbj} as well.
Taking the $1/R^j$ power and passing to the limit as $j\to+\infty$ we infer
\[
\|u(t_*)\|_{L^\infty(\R^d)}
\le \left(a^{S_0}\,L^{S_1}\left(\frac{1}{t_*}\right)^{\textstyle\frac{d}{2+d(m-1)}}+a^{S_0}\,L^{S_1}\right)(1\vee M)^{\textstyle\frac{2}{2+d(m-2)}}.
\]
By arbitrariness of $t_*>0$, we obtain the desired inequality with $C_{\chi,d,m}:=\inf_{R>1} \left(a^{S_0}\,L^{S_1}\right)$ only depending on $\chi, d,m$, and  obtained by recalling the definition of $S_0,S_1$ from \eqref{S1S2} and the definition of $a, L$ from \eqref{aL}.

{\textbf{Step 4.}}
%
%
Here we let $m=m_c$ and $u_0\in D(\mathcal E_m)$.
We repeat the iteration argument of the previous step iterating down to $Q_1$ instead of $Q_0$. In fact, we recall that for $m=m_c$, in \eqref{recursiv2} we need $j\ge 2$. Therefore, we make use of \eqref{krecursiv} with $k=j-1$ instead of $k=j$. In this case we let $W_1$ be defined as $W_1:=Z_1$, and thus we have 
from \eqref{WZ}
\begin{equation*}
{W_j\!+W_{j-1}\eta_j+W_{j-2}\eta_{j-1}\eta_j+\ldots+(W_{2}\eta_{3}\eta_{4}\ldots\eta_j)\le \frac{d(R^j-1)}{2+\!d(m-1)}-(Z_1\eta_2\ldots\eta_j)}.
\end{equation*}
In this way we obtain  the following estimate which replaces \eqref{lastwitht}
\begin{equation}	\label{recursivto1}
Q_j\le    D_jD_{j-1}^{\beta_j}D_{j-2}^{\beta_j\beta_{j-1}}\ldots D_{2}^{\beta_j\beta_{j-1}\ldots\beta_{3}}\;\bigvee_{2^{j-1}} \left(\frac{1}{t_*\wedge1}\right)^{{\textstyle\frac{d(R^j-1)}{2+d(m-1)}}-Z_1\eta_2\eta_3\ldots\eta_j}\,Q_1^{{\eta_j}\eta_{j-1}\ldots\eta_2},
\end{equation}
%
 and by estimating the $D_k$'s with \eqref{iterativeD2}, and since $\gamma_k\le \eta_k\le \beta_k$ for every $k\ge2$, from \eqref{recursivto1} we also deduce 
 \begin{equation}\label{maxQ1}
 \|u(t_*)\|_{L^{R^j}(\R^d)}^{R^j}\le Q_j\le a^{1+S_0R^j}\,L^{j+S_1R^j}(1\vee Q_1)^{\beta_j\beta_{j-1}\ldots\beta_2}\left(\frac{1}{t_*\wedge1}\right)^{{\textstyle{\frac{d(R^j-1)}{2+d(m-1)}}}-Z_1\gamma_2\gamma_3\ldots\gamma_j}.
 \end{equation}
 
 Let $d\ge 3$.
 We  take the $1/R^j$ power in \eqref{maxQ1} and send $j$ to $+\infty$,
 by taking \eqref{gjbj} into account so that
 \[
 \beta_j\beta_{j-1}\ldots\beta_2=\frac{2R^j+d(m-2)}{2R+d(m-2)},\qquad Z_1\gamma_2\gamma_3\ldots\gamma_j=\frac{d(R-1)(2R^j+d(m-1))}{(2+d(m-1))(2R+d(m-1))},
 \]
  by choosing $R=m_c$ and by using  Proposition \ref{equi}, 
  which together with the lower semicontinuity of the $L^{m_c}$ norm with respect to the narrow convergence, \EEE
  entails $Q_1\le C_0$ (where $C_0$ is the constant depending on $\mathcal E_{m_c}(u_0)$ from Proposition \ref{equi}):  we get for arbitrary $t_*>0$
 \[
 \|u(t_*)\|_{L^{\infty}(\R^d)}\le a^{S_0}L^{S_1}\left(1\vee C_0\right)^{\frac{2}{2m_c+d(m_c-2)}}
 \left(\frac{1}{t_*\wedge1}\right)^{\textstyle\frac{d}{2+d(m_c-1)}(1-\frac{2(m_c-1)}{2m_c+d(m_c-1)})},
 \]
and the desired estimate follows after having noticed that $$\frac{d}{2+d(m_c-1)}=1,\qquad1-\frac{2(m_c-1)}{2m_c+d(m_c-1)}=\frac{d^2}{d(d+2)-4}.$$

Else if $d=2$, $m=1$, we make use of 
   Proposition \ref{newLpd=2} with $p=2$ and the notation therein, and  recalling that $T=2t_*$ and that $t_1=3t_*/4$ we get
\[
Q_1=\sup_{\bar t\in (t_1,T)}y_2(\bar t)\le C_{2,M,\chi}K_2{(\tilde t_*)}^{2}+2M\left(\frac{1}{2G_2} t_* \right)^{-1}\le C_{2,M,\chi}K_2{(\tilde t_*)}^{2}+\frac{(4MG_2)\vee 1}{t_*\wedge 1},
\]
where $\tilde t_*:=(2t_*)\vee1,$
and where $K_2(\cdot)$ is defined by \eqref{kappat4} and it also depends on $\mathcal E_1(u_0)$ and $\mom(u_0)$.
This can be inserted into \eqref{recursivto1} along with the estimate of the $D_k$'s from \eqref{iterativeD2} to obtain
 \[Q_j\le a^{1+S_0R^j}\,L^{j+S_1R^j} \;\bigvee_{2^{j-1}} \left(C_{2,M,\chi}K_2{(\tilde t_*)}^2+\frac{(4G_2M)\vee1}{t_*\wedge1}\right)^{{\textstyle\frac{d(R^j-1)}{2+d(m-1)}}-Z_1\eta_2\eta_3\ldots\eta_j+{\eta_j}\eta_{j-1}\ldots\eta_2}.
\]
Since in this case $Z_1=1$ and $\tfrac{d}{2+d(m-1)}=1$, we deduce
\[\|u(t_*)\|_{L^{R^j}(\R^2)}^{R^j}\le Q_j\le a^{1+S_0R^j}\,L^{j+S_1R^j}\left(C_{2,M,\chi}(K_2{((2t_*)\vee1)})^2+\frac{(4G_2M)\vee1}{t_*\wedge1}\right)^{R^j-1}.
\] Dividing by $R^j$ and passing to the limit as $j\to+\infty$ the result follows.
\end{proof}

\subsection{$L^\infty$ hypercontractivity under stronger assumptions on the initial data}

The same argument in the last step of the proof of Theorem \ref{Linftydge3} can be used in the case of more integrability assumptions on the initial datum. 
We have the following
\begin{corollary}\label{moreintegrability} 
{\rm\textbf{(I)}}
Let $m>m_c$. Suppose that $u_0\in L^p(\R^d)$ for some $p\in(1,+\infty).$ Then there exists a constant $\tilde C(p,\chi,d,m)$ such that 
 \[
 \|u(t)\|_{L^{\infty}(\R^d)}\le \tilde C(p,\chi,d,m)\left(1+\|u_0\|_{L^{p}(\R^d)}^{\textstyle\frac{2p}{2p+d(m-2)}}+M^{\textstyle\frac{2}{2+d(m-2)}}\right)
 \left(\frac{1}{t\wedge1}\right)^{\textstyle\frac{d}{2p+d(m-1)}}\;\forall \,\,t>0.
 \]
 {\rm\textbf{(II)}}
 Let $m=m_c$. If $d\ge 3$ and $u_0\in L^p(\R^d)$ for some $p\in(m_c,+\infty)$, then there exists a constant 
 $\tilde{\mathcal C}=\tilde{\mathcal C}(p,d,\chi,M, \|u_0\|_{L^{p}(\R^d)} , \mom(u_0))$ such that
  \[
 \|u(t)\|_{L^{\infty}(\R^d)}\le \tilde {\mathcal C}
 \left(\frac{1}{t\wedge1}\right)^{\textstyle\frac{d}{2(p-1)+d}}\qquad{\forall \ t>0}.
 \]
If $d=2$ and $u_0\in L^p(\R^2)$ for some $p\in[2,+\infty)$, there exists $\tilde{\mathsf C}=\tilde{\mathsf C}(p,M,\chi)$ such that, denoting with $K_p(\cdot)$ the quantity defined by \eqref{kappat4}, there holds
 \[
  \|u(t)\|_{L^{\infty}(\R^2)}\le \tilde {\mathsf C}\,(K_p(t\vee1)+\|u_0\|_{L^{p}(\R^2)}^p)^{\textstyle\frac1{p-1}}
 \left(\frac{1}{t\wedge1}\right)^{\textstyle\frac1p}\qquad\forall \ t>0,
 \]
 else if $d=2$ and $p\in(1,2)$, there exists $\tilde{\mathsf C}=\tilde{\mathsf C}(p,M,\chi)$ such that,   \[
  \|u(t)\|_{L^{\infty}(\R^2)}\le \tilde {\mathsf C} \left((\overline C^*_{t\vee 1})^p\|u_0\|_{L^{p}(\R^2)}^p\right)^{\textstyle\frac1{p-1}}
 \left(\frac{1}{t\wedge1}\right)^{\textstyle\frac1p}\qquad\forall \ t>0,
 \]
 where  $\overline C^*_T$ is  the quantity defined in {\rm Proposition \ref{very}}.

 \RRR
\end{corollary}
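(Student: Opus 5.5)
The plan is to rerun the Moser–Alikakos iteration of Theorem \ref{Linftydge3}, Step 4, with a new starting exponent. Fix $t_*>0$, $T=2t_*$, and take the sequence $p_0=1$, $p_1=p$ (the exponent in the assumption), $p_j=pR^{j-1}$ for $j\ge1$, with $R>1$ fixed (for instance $R=2$). The recursive inequality \eqref{pjpj-1}, i.e.\ \eqref{recursiv2}, is valid for $j\ge2$, since then $s=p_{j-1}\ge p>1$. This holds also when $m=m_c$. We iterate down to $Q_1=\sup_{(t_1,T)}y_p$ rather than to $Q_0$, exactly as in \eqref{recursivto1}. The coefficient bounds \eqref{maincoeff}, \eqref{nextcoeff} and $D_k\le aL^k$ should carry over after shifting indices, with constants now depending on $p$ through $R$ and the ratio $p_j/p_{j-1}$. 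The exponent bookkeeping uses Lemma \ref{lemmata} applied to the sequence $(p_j)$: by \eqref{crucial} every mixed exponent is dominated by the pure-$\gamma$ one, and \eqref{Zgamma} gives
\[
\sum_{i\ge2} Z_i\gamma_{i+1}\cdots\gamma_j=\bar Z_j-Z_1\gamma_2\cdots\gamma_j .
\]
Dividing by $p_j$ and letting $j\to\infty$, and using \eqref{gjbj}, the time exponent becomes
\[
\frac{d}{2+d(m-1)}\Bigl(1-\frac{2(p-1)}{2p+d(m-1)}\Bigr)=\frac{d}{2p+d(m-1)} .
\]
The power of $Q_1$ tends to $\lim_j\beta_j\cdots\beta_2/p_j=2/(2p+d(m-2))$. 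For $m=m_c$ these give $d/(2(p-1)+d)$ and $1/p$ when $d=2$, matching every case of the statement.

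It remains to bound $Q_1$ uniformly in $t$, using the $L^p$ estimates of the limit curve. For \textbf{(I)} use \eqref{badp}, taking the first term of the minimum. This gives $Q_1\le\bigl(\|u_0\|_{L^p}+\kappa_2(p)M^{\frac{2p+d(m-2)}{p(2+d(m-2))}}\bigr)^p$. Raising to $2/(2p+d(m-2))$ and using subadditivity produces the terms $\|u_0\|_{L^p}^{2p/(2p+d(m-2))}$ and $M^{2/(2+d(m-2))}$ of the statement. The factor $(1\vee\cdot)$ coming from $\beta_j\ge\gamma_j$ accounts for the additive $1$.

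For \textbf{(II)} with $d\ge3$ use \eqref{badpbis}, whose constant $C_0$ depends on $\mathcal E_{m_c}(u_0)$. I expect the main obstacle here: the statement lets the constant depend only on $\|u_0\|_{L^p}$, $M$ and $\mom(u_0)$. I would bound $\mathcal E_{m_c}(u_0)\le\mathcal F_{m_c}(u_0)$, which is allowed since $\mathcal B_\alpha\ge0$. Then I would interpolate $\|u_0\|_{m_c}$ between $L^1$ and $L^p$, which makes $C_0$ a function of $M$, $\|u_0\|_{L^p}$ and $\mom(u_0)$.

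For $d=2$ and $p\ge2$, Proposition \ref{newLpd=2} gives $Q_1\le C K_p(t\vee1)^{2p-2}+2^{p-1}\|u_0\|_p^p$. Then $(\cdot)^{2/(2p)}=(\cdot)^{1/p}$, and this is bounded by the displayed $(K_p+\|u_0\|^p)^{1/(p-1)}$ after absorbing powers into $\tilde{\mathsf C}$ and changing the meaning of $K_p$ if necessary (this matching of exponents is the delicate part to check). For $p\in(1,2)$, Proposition \ref{very} is used instead, but only for the discrete scheme. To make the iteration valid for $j\ge2$ there, I would start the iteration at $p_1=p$ with $s=p>1$. The differential inequality \eqref{continuousps} is already justified for any $1<s<p$, so no $L^2$ bound is needed.
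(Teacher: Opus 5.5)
Your argument is essentially the paper's: iterate as in Step 4 of Theorem \ref{Linftydge3} down to $Q_1=\sup_{(t_1,T)}y_p$, then bound $Q_1$ through \eqref{badp}, \eqref{badpbis}, Proposition \ref{newLpd=2} or Proposition \ref{very}. The paper simply takes $R=p$, i.e.\ $p_j=p^j$ (your sequence with $R=p$), so all coefficient bounds of Theorem \ref{Linftydge3} apply verbatim and nothing needs re-checking.

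There is one slip to fix. For $d=2$, $m=1$ the power of $Q_1$ is $2/(2p+d(m-2))=1/(p-1)$, not $2/(2p)$ (you inserted $d(m-1)$ in place of $d(m-2)$). So the exponent $\frac1{p-1}$ of the statement comes out directly and no matching of exponents is needed. The term inside the bracket is $K_p(\cdot)^{2p-2}$ from Proposition \ref{newLpd=2}, exactly as in the paper's own proof.
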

\begin{proof}
Let $m>m_c$.
We start from \eqref{maxQ1}.
An application of \eqref{badp} with $p=R$ entails 
\begin{equation*}\label{Q_1}Q_1=\sup_{\bar t\in(t_*/2,T)}\|u(\bar t)\|_{L^{R}(\R^d)}^R\le  \|u_0\|_{L^{R}(\R^d)}^R+\kappa_2(R)^R\,M^{\textstyle\frac{2R+d(m-2)}{2+d(m-2)}}=:\bar\kappa,
\end{equation*}
where $\kappa_2(\cdot)$ is defined by \eqref{kappa12}. 
Thanks to the latter estimate, 
 we take the $1/R^j$ power in \eqref{maxQ1} and send $j$ to $+\infty$.
  We get for arbitrary $t_*>0$
 \[
 \|u(t_*)\|_{L^{\infty}(\R^d)}\le a^{S_0}L^{S_1}\left(1\vee \bar\kappa\right)^{\frac{2}{2R+d(m-2)}}
 \left(\frac{1}{t_*\wedge1}\right)^{\textstyle\frac{d}{2+d(m-1)}(1-\frac{2(R-1)}{2R+d(m-1)})},
 \]
and since $R=p$  the desired estimate follows. 

Else suppose that $m=m_c$. If $d\ge 3$, we apply \eqref{badpbis} and insert it into \eqref{maxQ1}, then repeat the same argument. If $d=2$ and $p\ge 2$, we take Proposition \ref{newLpd=2} into account, entailing $\|u(t)\|_{L^{p}(\R^d)}^p\le C_{p,M,\chi} K_p(t\vee1)^{2p-2}+2^{p-1}\|u_0\|_{L^{p}(\R^d)}^p$ for every $t>0$, which can be inserted into \eqref{maxQ1} to conclude in the same way.
 Else if $d=2$ and $1<p<2$, by the narrow lower semicontinuity of $L^{p}$ norms and Proposition \ref{very}, we have  $\|u(t)\|_{L^{p}(\R^d)}^p\le \overline C^*_{1\vee t}\,\|u_0\|_{L^p(\R^2)}^p$ for every $t>0$, which can also be inserted into \eqref{maxQ1} to conclude.   \RRR
\end{proof}

Uniform bounds on the solution are granted by bounded initial data, as shown in the next
\begin{theorem}\label{boundedsolution} 
Let $u_{0}\in L^\infty(\mathbb R^d)$  and let $u\in AC_{loc}^2([0,+\infty);(\PP^M_2({\R}^2),W))$ a  curve given by {\rm Theorem \ref{th:convergence1}}. \EEE\\
 If $m>m_c$, then there exists a  constant $\bar C_{\chi,d,m}$ depending only on $\chi$, $d$ and $m$ such that
\begin{equation}\label{boundeddata}
\|u(t)\|_{L^\infty({\R}^d)} \le \bar C_{\chi,d,m}\left(1\vee M^{\textstyle\frac{2}{2+d(m-2)}}\right)(1\vee \|u_0\|_{L^\infty(\R^d)}) \qquad\mbox{for every $t>0$.}
\end{equation}
If $m=m_c$ and $d\ge 3$, then there exists a constant $\mathcal A=\mathcal A(\chi, M,d, \|u_0\|_{L^\infty({\R}^d)}, \mom(u_0))$ such that 
\[
\|u(t)\|_{L^\infty({\R}^d)}\le\mathcal A \qquad \mbox{for every $t>0$}.
\]
If $d=2$, and $m=1$, then there exists a constant $\bar{\mathcal A}=\bar{\mathcal A}(\chi, M,\|u_0\|_{L^\infty({\R}^2)}, \mom(u_0))$ such that
\[
\|u(t)\|_{L^\infty({\R}^2)}\le \bar{\mathcal A}\,(K_2(t\vee1))^2+\bar{\mathcal A}\qquad\mbox{for every $t>0$},
\]
where $K_2(\cdot)$ is defined by \eqref{kappat}, and in particular it grows as a power of $t$ for large $t$. 
\end{theorem}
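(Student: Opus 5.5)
The plan is to rerun the Moser--Alikakos iteration from the proof of Theorem \ref{Linftydge3}, but with time-independent coefficients. We start from the continuous differential inequality for $y_p(t)$ obtained in Step 1 of that proof, with $p=R^j$ and $s=R^{j-1}$. Instead of comparing with the solution that blows up at $\bar t_0$, we use the simple ODE comparison: if
\[
y'\le -\bar K\,y^{\sigma}/Y_s^{f}+\chi c\,Y_s^{g},
\]
with $Y_s:=\sup_{t>0} y_s(t)$, then $y_p$ cannot exceed the larger of its initial value $\|u_0\|_{L^p}^p$ and the equilibrium level
\[
\Big(\tfrac{\chi c(p,s)}{\bar K(p,s)}\Big)^{1/\sigma}\,Y_s^{\beta(p,s)}.
\]
This is the same mechanism that produces the $\min\{\|u_\tau^0\|^p,\dots\}+(\ldots)^{1/\sigma}$ form in \eqref{longdiscrete}. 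To justify using the initial value, we pass to the limit in the discrete inequality \eqref{basicdisc2} directly from $k=0$. The discrete comparison of Proposition \ref{prop:DI} with $a_0=\|u_\tau^0\|_p^p\le \|u_0\|_p^p$, which holds by Jensen \eqref{jens}, gives
\[
\|u_\tau^k\|_p^p\le \max\{\|u_0\|_p^p,\ \text{equilibrium}\}+O(\tau).
\]
We then pass to the limit by narrow lower semicontinuity. For $d=2$, $m=1$ we work with the truncated quantities of Lemma \ref{generalsubcritical} instead.

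Setting $Q_j:=\sup_{t>0}\|u(t)\|_{L^{R^j}}^{R^j}$ (or the sup over $t\in(0,T)$ when $m=m_c$), we get the recursion
\[
Q_j\le \|u_0\|_\infty^{R^j-1}M\ \vee\ B_j\,Q_{j-1}^{\beta_j},
\]
where $B_j=2b_\chi\tilde B^j$ as in \eqref{nextcoeff}. Here we used $\|u_0\|_{R^j}^{R^j}\le \|u_0\|_\infty^{R^j-1}M$. We iterate this down to $Q_0=M$, exactly as in Step 3 but without the $t_*$-dependent factors, and use the telescoping identities \eqref{gjbj} for the $\beta_k$. Each branch of the maximum is then bounded by
\[
a^{1+S_0R^j}L^{j+S_1R^j}\,(1\vee\|u_0\|_\infty)^{R^j}\,(1\vee M)^{\frac{2R^j+d(m-2)}{2+d(m-2)}}.
\]
The point is that once the $\|u_0\|_\infty$ branch is selected at step $k$, later steps only raise it to powers $\beta_j\cdots\beta_{k+1}$. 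Their product with $R^k$ is controlled by $R^j$ times a bounded factor, using $\beta_j\cdots\beta_{k+1}=\frac{2R^j+d(m-2)}{2R^k+d(m-2)}$. Taking the $R^{-j}$ power and letting $j\to\infty$ gives \eqref{boundeddata} when $m>m_c$.

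For $m=m_c$ we cannot start the iteration at $s=1$. We iterate down to $Q_1$ instead, as in Step 4, with $R=m_c$, and bound $Q_1$ via Proposition \ref{equi}: $\|u(t)\|_{m_c}^{m_c}\le C_0$, uniformly in $t$, with $C_0$ depending on $\mathcal E_{m_c}(u_0)$. This energy is in turn bounded by $\|u_0\|_\infty$ and $\mom(u_0)$ through Proposition \ref{initialdatum}. For $d\ge3$ this gives $\mathcal A$. For $d=2$ we use Proposition \ref{newLpd=2} with $p=2$ and the bound $\|u_0\|_2^2\le M\|u_0\|_\infty$. This gives $Q_1\lesssim K_2(t\vee1)^2+M\|u_0\|_\infty$ on $(0,2t)$. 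In this case the coefficients of the inequalities depend on $T$ through $K_p(T)$, so the estimate is only uniform on bounded intervals, which explains the factor $K_2(t\vee1)^2$.

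The main obstacle is the first step: making the comparison-with-initial-datum rigorous at the level of $L^p$ norms of the limit curve up to $t=0$. Step 1 of Theorem \ref{Linftydge3} works only on $(\bar t_0,T)$ with $\bar t_0>0$. I expect to bypass this by doing the comparison entirely at the discrete level, uniformly in $\tau$. The sup of $y_s$ over $(0,T)$ appearing in the coefficients is then itself a discrete quantity bounded by the previous iteration step, with errors that vanish as $\tau\to0$.
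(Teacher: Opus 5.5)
Your proposal is correct and is essentially the paper's proof. Both arguments compare with the initial datum instead of the blow-up solution, use the recursion $\bar Q_j\le Y_j\vee(B_j\bar Q_{j-1}^{\beta_j})$ with $Y_j\le M\|u_0\|_\infty^{R^j-1}$, and iterate down to $\bar Q_0=M$ (or to $\bar Q_1$, taking $R=m_c$ with Proposition~\ref{equi} when $d\ge3$, and $R=2$ with Proposition~\ref{newLpd=2} when $d=2$), where the exact inequality $(R^{k}-1)\beta_j\cdots\beta_{k+1}\le R^j-1$, and not just a bound up to a factor, is what yields the linear dependence on $\|u_0\|_\infty$.

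The only difference is that the paper does the comparison at the continuous level, asserting that \eqref{tconzero} extends to $\bar t_0=0$, while you run it on the discrete inequality \eqref{basicdisc2}; this is a valid and slightly cleaner way to handle $t=0$, provided you keep the whole recursion in terms of the discrete suprema $\sup_k\|u_\tau^k\|_{L^{R^j}}^{R^j}$ and let $\tau\to0$ only at the end, because lower semicontinuity cannot bound a discrete supremum by the supremum of the limit.
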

\begin{proof}
 Let $1\le s<p$ (with $s>1$ if $m=m_c$). In the assumption $u_0\in L^\infty(\R^d)$, from Step 1 in the proof  Theorem \ref{Linftydge3} we may  notice that for fixed $T>0$, \eqref{tconzero}  also holds for $\bar t_0=0$ since the supremum therein is finite thanks to Proposition \ref{firstcontinuousestimates} and Proposition \ref{newLpd=2}. 
 By comparison with the solution of the corresponding differential equality coupled with the initial datum $y_p(0):=\|u_0\|_{L^p({\R}^d)}^p$,  we deduce that
\begin{equation*}\label{foriteration}
y_p(t)\le y_p(0)\vee\left(\left(\frac{\chi c(p,s)}{\bar K(p,s)}\right)^{\frac{1}{\sigma(p,s)}} \left(\sup_{\bar t\in(0,T)}\left(y_s(\bar t)\right)\right)^{\beta(p,s)}\right)\qquad\mbox{for every $t\in(0,T)$}.
\end{equation*}
Therefore, letting  $R>1$, we define for $j=0,1,2,\ldots$ 
$$Y_j:=y_{R^j}(0)=\|u_0\|_{L^{R^j}({\R}^d)}^{R^j}, \qquad
\bar Q_j:=\sup_{\bar t \in(0,T)} y_{R^j}(\bar t)=\sup_{\bar t\in(0,T)}\|u(\bar t)\|_{L^{R^j}({\R}^d)}^{R^j},$$
noticing that $Y_0=\bar Q_0=M$. Letting $B_j$ be defined by \eqref{nextcoeff} and $\beta_j:=\beta(R^{j},R^{j-1})$, 
 we get
\[
\bar Q_j\le Y_j\vee(B_j \bar Q_{j-1}^{\beta_j})\qquad \mbox{ for every $j=1,2,3,\ldots$}\quad\mbox{($j\ge 2$ if $m=m_c$).}
\]

Let $m>m_c$ and let us  prove \eqref{boundeddata}.
Iterating the latter, a simple induction argument shows that for every $j=1,2,3,\ldots$
\begin{equation}\label{simpleiteration}\begin{aligned}
\bar Q_j&\le Y_j\vee (B_j Y_{j-1}^{\beta_j})\vee (B_jB_{j-1}^{\beta_j}Y_{j-2}^{\beta_j\beta_{j-1}})\vee (B_jB_{j-1}^{\beta_j}B_{j-2}^{\beta_j\beta_{j-1}}Y_{j-3}^{\beta_j\beta_{j-1}\beta_{j-2}})\vee\ldots\\
& \vee (B_jB_{j-1}^{\beta_j}B_{j-2}^{\beta_j\beta_{j-1}}\ldots B_2^{\beta_j\beta_{j-1}\ldots\beta_3}\,Y_1^{\beta_j\ldots\beta_2})\vee(B_jB_{j-1}^{\beta_j}\ldots B_1^{\beta_j\beta_{j-1}\ldots\beta_2}\bar Q_0^{\beta_j\beta_{j-1}\ldots\beta_1}),
\end{aligned}
\end{equation}
where the right hand side is the maximum among $j+1$ numbers, the generic one being
\begin{equation}\label{gene}
B_jB_{j-1}^{\beta_j}B_{j-2}^{\beta_j\beta_{j-1}}B_{j-3}^{\beta_j\beta_{j-1}\beta_{j-2}}\ldots B_{j-k+1}^{\beta_j\beta_{j-1}
\ldots\beta_{j-k+2}}Y_{j-k}^{\beta_j\beta_{j-1}\ldots\beta_{j-k+1}},
\end{equation}
$k=0,1,\ldots,j$ (understood to be reduce to $Y_j$ if $k=0$). We next give an estimate for \eqref{gene}.  
For every $k=0,1,2,\ldots, j,$ there holds by interpolation 
$$Y_{j-k}=y_{R^{j-k}}(0)=\|u_0\|_{L^{R^{j-k}}({\R}^d)}^{R^{j-k}}\le M\|u_0\|_{L^\infty(\R^d)}^{R^{j-k}-1},$$ and it is not difficult to verify that  there holds using \eqref{gjbj}
$${(R^{j-k}-1)\beta_j\beta_{j-1}\ldots\beta_{j-k+1}}=(R^{j-k}-1)\frac{2R^j+d(m-2)}{2R^{j-k}+d(m-2)}\le R^j-1,$$ 
so that 
\begin{equation}\label{Yj-k}
Y_{j-k}^{\beta_j\beta_{j-1}\ldots\beta_{j-k+1}}\le M^{\beta_j\beta_{j-1}\ldots\beta_{j-k+1}}\|u_0\|_{L^\infty(\R^d)}^{R^{j-k}-1}.
\end{equation}
Moreover,  for every $k=1,\ldots, j,$ 
there holds by \eqref{nextcoeff}
\begin{equation}\label{nuovasomma}\begin{aligned}
&B_jB_{j-1}^{\beta_j}B_{j-2}^{\beta_j\beta_{j-1}}B_{j-3}^{\beta_j\beta_{j-1}\beta_{j-2}}\ldots B_{j-k+1}^{\beta_j\beta_{j-1}
\ldots\beta_{j-k+2}}\\
&\qquad\le (1\vee 2b_\chi)^{1+\beta_j+\beta_{j}\beta_{j-1}+\ldots+(\beta_j\beta_{j-1}\ldots\beta_2)}\,\tilde B^{j+(j-1)\beta_j+(j-2)\beta_j\beta_{j-1}+\ldots+(\beta_j\beta_{j-1}\ldots\beta_2)}\\
&\qquad\le (1\vee 2b_{\chi})^{1+S_0R^j}\,\tilde B^{j+S_1R^j},
\end{aligned}\end{equation}
where $S_0$ and $S_1$ are defined by \eqref{S1S2}. Thanks to \eqref{Yj-k} and \eqref{nuovasomma} we see that   \[\begin{aligned}&B_jB_{j-1}^{\beta_j}B_{j-2}^{\beta_j\beta_{j-1}}B_{j-3}^{\beta_j\beta_{j-1}\beta_{j-2}}\ldots B_{j-k+1}^{\beta_j\beta_{j-1}
\ldots\beta_{j-k+2}}Y_{j-k}^{\beta_j\beta_{j-1}\ldots\beta_{j-k+1}}\\&\qquad\le(1\vee 2b_{\chi})^{1+S_0R^j}\,\tilde B^{j+S_1R^j}\, M^{\beta_j\beta_{j-1}\ldots\beta_{j-k+1}}\|u_0\|_\infty^{R^{j-k}-1}\\&\qquad\le (1\vee 2b_{\chi})^{1+S_0R^j}\,\tilde B^{j+S_1R^j}\, (1\vee M)^{\beta_j\beta_{j-1}\ldots\beta_1}(1\vee\|u_0\|_{L^\infty(\R^d)})^{R^{j}-1}
\end{aligned}\]
for every $k=0,1,2,\ldots,j$. Therefore from \eqref{simpleiteration} we get
\[
\bar Q_j=\sup_{ t\in(0,T)}\|u(t)\|_{L^{R^j}(\R^d)}^{R^j}\le (1\vee 2b_{\chi})^{1+S_0R^j}\,\tilde B^{j+S_1R^j}\,(1\vee M)^{\beta_j\beta_{j-1}\ldots\beta_1}(1\vee\|u_0\|_{L^\infty(\R^d)})^{R^{j}-1}
\]
{for every $j=1,2,\ldots$}
Taking the $1/R^j$ power and passing to the limit as $j\to+\infty$ we deduce
\[
\|u(t)\|_{L^\infty(\R^d)}\le   (1\vee 2b_{\chi})^{S_0}\,\tilde B^{S_1}\,(1\vee M)^{\textstyle\frac{2}{2+d(m-2)}}(1\vee\|u_0\|_{L^\infty(\R^d)})\qquad\mbox{for every $t>0$}
\]
so that \eqref{boundeddata} holds with $\bar C_{\chi,d,m}=\inf_{R>1} (1\vee 2b_{\chi})^{S_0}\,\tilde B^{S_1}.$

Let $m=m_c$. In this case we iterate down to $\bar Q_1$ instead of $\bar Q_0$, that is we replace \eqref{simpleiteration} with
\begin{equation*}\begin{aligned}
\bar Q_j&\le Y_j\vee (B_j Y_{j-1}^{\beta_j})\vee (B_jB_{j-1}^{\beta_j}Y_{j-2}^{\beta_j\beta_{j-1}})\vee (B_jB_{j-1}^{\beta_j}B_{j-2}^{\beta_j\beta_{j-1}}Y_{j-3}^{\beta_j\beta_{j-1}\beta_{j-2}})\vee\ldots\\
& \vee (B_jB_{j-1}^{\beta_j}B_{j-2}^{\beta_j\beta_{j-1}}\ldots B_2^{\beta_j\beta_{j-1}\ldots\beta_3}\,\bar Q_1^{\beta_j\ldots\beta_2}),\qquad \mbox{ for $j=2,3,\ldots$}
\end{aligned}
\end{equation*}
Exploiting \eqref{Yj-k} and \eqref{nuovasomma} with $k=0,1,\ldots, j-1$,  we then deduce
\[
\bar Q_j\le(1\vee 2b_{\chi})^{1+S_0R^j}\,\tilde B^{j+S_1R^j}\left((1\vee M)^{\beta_j\beta_{j-1}\ldots\beta_3}(1\vee\|u_0\|_{L^\infty(\R^d)})^{R^j-1}\vee \bar Q_1^{\beta_j\beta_{j-1}\ldots\beta_2}\right). 
\]
If $d\ge 3$
we choose $R=m_c$ we estimate $\bar Q_1$ with the constant $C_0$ from Proposition \ref{equi}.  Therefore, dividing by $R^{j}$ and letting $j\to+\infty$ we get
\[
\|u(t)\|_{L^\infty(\R^d)}\le (1\vee 2b_{\chi})^{S_0}\,\tilde B^{S_1}\left((1\vee M)^{\frac1{m_c^2-1}}(1\vee\|u_0\|_{L^\infty(\R^d)})\,C_0^{\frac1{m_c-1}}\right)\quad\mbox{for every $t>0$}
\]
and the desired result follows, noticing that $C_0$ itself can be estimated in terms of $M,\chi$, $\mom(u_0)$ and $\|u_0\|_{L^\infty(\R^d)}$. \\
Else if $d=2$ we choose $R=2$ and estimate $\bar Q_1$ with the estimate of Proposition \ref{newLpd=2} and we get
\[
\|u(t)\|_{L^\infty(\R^2)}\le (1\vee 2b_{\chi})^{S_0}\,\tilde B^{S_1}\left((1\vee M)^{1/3}(1\vee\|u_0\|_{L^\infty(\R^2)})\,\left(C_{2,M,\chi}K_{2}(t\vee1)^{2}+2\|u_0\|^2_{L^2(\R^2)}\right)\right)
\]
for every $t>0$, thus concluding the proof.
\end{proof}

Eventually, we may check that there is uniform convergence to $0$ of the solution as $t\to+\infty$, in the small mass case for $m=m_c$. In the following estimate the rate of extinction depends on the smallness of the mass, but later we shall improve this in Section \ref{sectionproof} by means of a simple scaling argument.
\begin{proposition}\label{extinct}
Let $m=m_c$, let $u_0\in D(\mathcal E_{m_c})$ and let $u\in AC_{loc}^2([0,+\infty);(\PP^M_2({\R}^2),W))$ a  curve given by {\rm Theorem \ref{th:convergence1}}. Let $M<M_{sc}(p)$ for some $p\in (1,+\infty)$, where $M_{sc}(p)$ is defined in \eqref{defMscp}. Then there exists a constant $\mathcal C^*=\mathcal C^*_{\chi,d,p,M}$ such that 
\[
\|u(t)\|_{L^\infty(\R^d)}\le \mathcal C^*_{\chi,d,p,M}\,\left(\frac1t\right)^{\textstyle\frac{2(p-1)}{2(p-1)+d}}\qquad\mbox{ for every $t\ge1$.}
\]
\end{proposition}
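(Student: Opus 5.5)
The plan is to combine the small-mass $L^p$ decay of Theorem \ref{subsubtheorem} with the iteration machinery of Step 4 in the proof of Theorem \ref{Linftydge3}, started from $L^p$ instead of $L^{m_c}$ or $L^2$, exactly as in Corollary \ref{moreintegrability}. Fix $t\ge 1$ and set $t_*=t$, $T=2t_*$. Theorem \ref{subsubtheorem} gives, since $M<M_{sc}(p)$,
\[
\|u(\bar t)\|_{L^p(\R^d)}^p\le M\,(C_{\chi,p,M,d}\,\bar t)^{1-p}\qquad\mbox{for every $\bar t>0$},
\]
so that the quantity playing the role of $Q_1$, namely $\sup_{\bar t\in(t_*/2,2t_*)}y_p(\bar t)$ (recalling \eqref{jump?} and the definition of $y_p$), is bounded by $M(C_{\chi,p,M,d}\,t_*/2)^{1-p}$, which decays like $t_*^{1-p}$.

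Next I run the recursion \eqref{recursiv2} with $R=p$, i.e.\ $p_j=p^j$, starting from $j=2$ (so $s=p_{j-1}\ge p>1$ and all exponents are defined also for $m=m_c$). Iterating down to level $1$ as in \eqref{recursivto1}--\eqref{maxQ1}, but now keeping track of the $t_*$-dependence for $t_*\ge1$: each factor $G_k$ contributes $t_*^{-Z_k}$ (decaying for large $t_*$), each $B_k$ is $t_*$-independent, and $Q_1\lesssim t_*^{1-p}$. For each of the $2^{j-1}$ choices of $(\eta_j,\dots,\eta_2)$, the resulting bound has the form
\[
a^{1+S_0p^j}L^{j+S_1p^j}\,t_*^{-\left(W_j+W_{j-1}\eta_j+\dots+W_2\eta_3\cdots\eta_j\right)}\bigl(C\,t_*^{1-p}\bigr)^{\eta_j\cdots\eta_2}.
\]

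The key step, and the main obstacle, is to show that the worst total power of $1/t_*$ over all $2^{j-1}$ branches, divided by $p^j$, converges to $\frac{2(p-1)}{2(p-1)+d}$. In the all-$\gamma$ branch, $Z_j+\dots+Z_2\gamma_3\cdots\gamma_j=\bar Z_j-Z_1\gamma_2\cdots\gamma_j$ by \eqref{Zgamma}, and adding $(p-1)\gamma_2\cdots\gamma_j$ gives, via \eqref{gjbj} with $p_1=p$,
\[
\frac{d(p^j-p)}{2p+d(m-1)}+(p-1)\frac{2p^j+d(m-1)}{2p+d(m-1)},
\]
whose ratio to $p^j$ tends to $\frac{d+2(p-1)}{2p+d(m_c-1)}$. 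This is not the claimed exponent, so the normalisation must be tracked more carefully. Presumably the right bookkeeping is in terms of $\|u\|_\infty\sim Q_j^{1/p^j}$ against the mass-scaling invariance. I would first double-check the expected answer via scaling: the porous-medium-type rate from $L^p$ data of size $t^{(1-p)/p}$ is $t^{-\frac{d}{2p+d(m-1)}}\cdot t^{\frac{(1-p)}{p}\cdot\frac{2p}{2p+d(m-1)}}$. With $m=m_c$ this should reduce to $\frac{2(p-1)+d}{\dots}$-type expressions, and I would then fix the comparison.

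For the mixed branches, where some $\eta_k=\beta_k$ so that $W_k=0$ but the power on $Q_1$ increases, I would use the exchange argument based on \eqref{crucial}. Replacing a $\beta$ by $\gamma$ trades a factor $t_*^{-Z_k}$ against a larger power of $Q_1\lesssim t_*^{1-p}$; for $t_*\ge1$ both factors are $\le1$. The issue is then to show the all-$\gamma$ branch gives the weakest decay. Since $\beta$ exponents increase the power of the decaying $Q_1$ factor, mixed branches decay at least as fast, which needs \eqref{g<b} together with the fact that the loss of $Z_k$ is compensated by $(p-1)(\beta_k-\gamma_k)\prod\eta$. If this compensation fails for some branches, I would restrict $t\ge1$ and absorb the difference into the constant $\mathcal C^*$. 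Finally, take the $p^{-j}$ power, let $j\to\infty$ using $S_0,S_1<\infty$, and conclude.
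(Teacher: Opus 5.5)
Your proposal has a genuine gap at the step you yourself flag as the main obstacle, and part of it is self-inflicted. Your all-$\gamma$ computation is correct, but its limit is $\frac{d+2(p-1)}{2p+d(m_c-1)}=1$, because $d(m_c-1)=d-2$. Since $t\ge1$ and $1\ge\frac{2(p-1)}{2(p-1)+d}$, this branch already gives more than what is claimed. You instead conclude that the bookkeeping is off. The rest of that paragraph (the scaling check, ``I would then fix the comparison'') is not an argument.

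The real gap is the mixed branches. Let $X_1=p-1$ be the exponent of $1/t_*$ carried by $Q_1$. The exponent at level $k$ is then $X_k=Z_k+\gamma_kX_{k-1}$ if $\eta_k=\gamma_k$, and $X_k=\beta_kX_{k-1}$ if $\eta_k=\beta_k$. So the relevant comparison is $(\beta_k-\gamma_k)X_{k-1}$ versus $Z_k$; your $(p-1)(\beta_k-\gamma_k)\prod\eta$ omits the $W$-terms below level $k$.

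Inequality \eqref{crucial} states $(\beta_k-\gamma_k)\bar Z_{k-1}\le Z_k$. So when $X_{k-1}=\bar Z_{k-1}$, switching $\eta_k$ from $\gamma_k$ to $\beta_k$ can only \emph{lower} the power of $1/t_*$. That is the direction used for small times in Theorem \ref{Linftydge3}, and the opposite of your claim that mixed branches decay at least as fast. For large times you need a lower bound on the minimal exponent over branches. Your fallback of absorbing a deficit into $\mathcal C^*$ cannot work, since a missing power of $t$ is not bounded as $t\to\infty$.

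Your route can in fact be rescued for $m=m_c$. There $2+d(m_c-2)=0$, so \eqref{crucial} is an equality, and $Z(p,1)=p-1=\bar Z_1$. By induction every branch then has $X_j=\bar Z_j=p^j-1$, which would even give $\|u(t)\|_{L^\infty(\R^d)}\lesssim t^{-1}$. None of this is in the proposal, however.

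The paper avoids any branch comparison. For $t_*\ge1$ it discards the $t_*$-dependence of the $G_k$, so that $A_k\le D_k$. Since $t_*^{1-p}\le1$ and $\eta_k\ge\gamma_k$, it bounds $Q_1^{\eta_j\cdots\eta_2}\le (C\vee1)^{\beta_j\cdots\beta_2}(t_*^{1-p})^{\gamma_j\cdots\gamma_2}$ uniformly over all branches. Then $\gamma_j\cdots\gamma_2/p^j\to\frac{2}{2(p-1)+d}$ yields exactly the stated exponent.
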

\begin{proof}
From \eqref{recursivto1}-\eqref{maxQ1}, with the notation therein,
since for $t_*\ge 1$ we have $1\wedge t_*=1$, we take $R=p$ and we have
\begin{equation}\label{maxmax}
\|u(t_*)\|_{L^{R^j}(\R^d)}^{R^j}\le Q_j\le   a^{1+S_0R^j}\,L^{j+S_1R^j}\;\bigvee_{2^{j-1}} Q_1^{{\eta_j}\eta_{j-1}\ldots\eta_2}.
\end{equation}
We estimate $Q_1$ recalling that $T=2t_*$ and that $t_1=3t_*/4$, and taking advantage of Theorem \ref{subsubtheorem}, so that
\[
Q_1=\sup_{\bar t\in(t_1,T)}y_p(t)\le \breve C t_*^{1-p}
\]
for every large enough $t_*$, where $\breve C$ is a constant depending on $M,\chi,d,p$ and obtained by the application of Theorem \ref{subsubtheorem}. Taking into account that $\eta_k\ge \gamma_k$,  we get therefore for  $t_*\ge 1$
\[
Q_1\le (\breve C\vee1)^{\beta_j\beta_{j-1}\ldots\beta_2}\,(t_*^{1-p})^{\gamma_j\gamma_{j-1}\ldots\gamma_{2}}.
\]
and thus from  \eqref{maxmax} we infer
\[
\|u(t_*)\|_{L^{R^j}(\R^d)}^{R^j}\le a^{1+S_0R^j}\,L^{j+S_1R^j} (\breve C\vee 1)^{\beta_j\beta_{j-1}\ldots\beta_2} \left( t_*^{1-p}\right)^{\gamma_j\gamma_{j-1}\ldots\gamma2}.
\]
Taking the $1/R^j$ power, we conclude by noticing that $R=p$ and \eqref{gjbj} imply
\[
\lim_{j\to+\infty}\frac{\gamma_j\gamma_{j-1}\ldots\gamma_2}{R^j}=\frac2{2p+d(m_c-1)}=\frac{2}{2(p-1)+d},\]\[\lim_{j\to+\infty}\frac{\beta_j\beta_{j-1}\ldots\beta_2}{R^j}=\frac{1}{p-1}.
\] 
thus getting the desired power.
\end{proof}

\section{Uniqueness of gradient flows} \label{lastsection}

\subsection{Evolution variational inequalities}

In this section we will check that our constructed gradient flows satisfy a family of evolution variational inequalities introduced in \cite{CLM} and featuring a log-Lipschitz modulus of continuity.
By recalling point {\bf (iv)} of Proposition \ref{loggrowth},  for every $u\in L^1(\R^d)\cap L^\infty(\R^d)$ we have that $v=B_\alpha\ast u$ satisfies
\begin{equation}\label{loglipschitz}
|\nabla v(x)-\nabla v(y)|^2\le C_d^2 (M+\|u\|_\infty)^2\varphi(|x-y|^2)\qquad\mbox{for every $x,y\in \R^d$}
\end{equation}
for some new positive constant $C_d$, where  $\varphi:[0,\infty)\to[0,\infty)$ is the
concave function defined by
\begin{equation*}\label{varphi}
\varphi(r):=\left\{
\begin{array}{ll}
0\quad&\mbox{if }r=0,\\
r\log^2r\quad&\mbox{if }r\in(0,e^{-1-\sqrt{2}}],\\
r+2(1+\sqrt2)e^{-1-\sqrt2}\quad&\mbox{if } r\in(e^{-1-\sqrt{2}},+\infty).
\end{array}
\right.
\end{equation*}
We also define, following \cite{CLM}, the strictly increasing surjective function $\omega:[0,+\infty)\to [0,+\infty)$
by \begin{equation}\label{omegadef}\omega(r)=\sqrt{M r\varphi(M^{-1}r)},\end{equation} 
which behaves like $r|\log r|$ for small $r>0$ and linearly for large $r$. 
Furthermore, we let $G_\omega:[0,+\infty)\to[-\infty,+\infty)$ be the strictly increasing surjective function defined as
\[
G_\omega(t):=\int_1^t\frac{1}{\omega(s)}\,\d s.
\]

Some properties of every curve $u\in AC^2_{loc}((0,+\infty);\Space)$ from Theorem \ref{th:convergence1} and Theorem \ref{esistenzagenerale} that will be useful in the next statement are the following:

 \begin{equation}\label{perevi1}u(t)^m\in W^{1,1}_{loc}(\mathbb{R}^d)  \mbox{ for
a.e. $t\in (0,+\infty)$},\qquad \int_0^T\mom(u(t))\,\d t<+\infty \mbox{ for every $T>0$,}  
\end{equation}
\begin{equation}\label{perevi2}
 \int_{t_1}^{t_2}\int_{\mathbb R^d}\left|\frac{\nabla u(t)^m}{u(t)}\right|^2u(t)\,\d x\,\d t<+\infty\qquad\mbox{for every $0<t_1<t_2<+\infty$}.
\end{equation}
\begin{equation}\label{perevi3}
\mbox{  the first equation of
\eqref{PE} holds in the sense of distributions on $(0,+\infty)\times
\mathbb{R}^d$}.\end{equation}
Indeed, \eqref{perevi1} follows from Proposition \ref{momgrowth}, from Theorem \ref{prop:IC} and from Theorem \ref{esistenzagenerale}. 
   By taking advantage of Proposition \ref{loggrowth} and letting $c_1,c_2$ be the constants therein, we see that
   \[\begin{aligned}
\int_{t_1}^{t_2}\int_{\mathbb R^d}\left| \nabla B_\alpha\ast u(t)\right|^2u(t)\,\d x\,\d t&\le M(t_2-t_1)\sup_{t\in(t_1,t_2)}\|\nabla B_\alpha\ast u(t)\|_{L^\infty(\R^d)}^2\\&\le M(t_2-t_1)\sup_{t\in(t_1,t_2)}\left( c_1\|u\|_{L^\infty(\R^d)}+c_2M\right)^2
\end{aligned}
\]
where the right hand side is finite for every $0<t_1<t_2<+\infty$ thanks to Theorem \ref{Linftydge3}. By combining this with Proposition \ref{gfede}, we get \eqref{perevi2}. Finally \eqref{perevi3} is shown in Theorem \ref{weaktheorem}.



\RRR

The key step towards uniqueness of  solutions is the derivation of the evolution variational inequality formulation, which is done in the next proposition. 
This result  is borrowed from \cite{CLM}, only minor modifications are needed as detailed through the  proof.
\begin{proposition}[{\bf EVI}]\label{thebasicevi} Suppose that $u\in AC^2_{loc}((0,+\infty);\Space)$ is a curve given either by {\rm Theorem \ref{th:convergence1}} or by {\rm Theorem \ref{esistenzagenerale}}.
 Then $u$ satisfies the following evolution variational inequalities:
 for every $\bar u\in\PP_2^M(\R^d)$ there holds  
\begin{equation}\label{EVI}\begin{aligned}
   & \frac{1}2\frac{d}{dt} W_2^2(u(t),\bar u) \le\,
\EE_m(\bar u) - \EE_m(u(t))\\&\qquad+ \chi C_d(M+\|u(t)\|_{L^\infty(\R^d)}+\|\bar u\|_{L^\infty(\R^d)})\,
\omega(W_2^2(u(t),\bar u))  \quad \text{for a.e. }
t>0,\end{aligned}
\end{equation}
where $C_d>0$ is a constant only depending on $d$.
\end{proposition}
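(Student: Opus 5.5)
The plan is to reproduce the argument of \cite{CLM}, which derives \eqref{EVI} for any curve with the regularity \eqref{perevi1}--\eqref{perevi3} together with a local $L^\infty$ bound. We may assume $\bar u\in D(\mathcal E_m)\cap L^\infty(\R^d)$, since otherwise the right hand side is $+\infty$ and there is nothing to prove.

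The first step is the derivative of the squared distance. Since $u\in AC^2_{loc}((0,+\infty);\Space)$ solves the continuity equation $\partial_t u+\mathrm{div}(u\,w)=0$ with velocity $w=-\xi$, where $\xi$ is the field defined in \eqref{defxi}, and since $\int_{t_1}^{t_2}\int|\xi|^2u<+\infty$ by \eqref{perevi2} and Theorem \ref{Linftydge3}, \cite[Theorem 8.4.7]{AGS} gives
\[
\frac12\frac{d}{dt}W^2(u(t),\bar u)=\int_{\R^d}\langle x-T(x),\xi(t,x)\rangle\,u(t,x)\,\d x\quad\mbox{for a.e. }t>0,
\]
with $T$ the optimal map from $u(t)$ to $\bar u$. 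Here $u(t)\in L^\infty$ for every $t>0$, by Theorem \ref{Linftydge3}.

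The second step is to split $\xi=\nabla u^m/u-\chi\nabla v$ and treat the two pieces separately. For the diffusion part, the displacement convexity of $\mathcal F_m$, which holds since $m\ge m_c\ge 1-1/d$, gives the subdifferential inequality $\int\langle x-T,\nabla u^m/u\rangle u\le \mathcal F_m(\bar u)-\mathcal F_m(u(t))$. This follows from \cite[Theorem 10.4.6]{AGS} once we know $u^m\in W^{1,1}_{loc}$, which is \eqref{perevi1}, and that the slope is finite, which is \eqref{perevi2}. For the interaction part, along the geodesic $u_s=((1-s)I+sT)_\#u(t)$ one has $\frac{d}{ds}\mathcal B_\alpha(u_s)|_{s=0}=\int\langle\nabla v,T-x\rangle u$. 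The quantity $\mathcal B_\alpha(\bar u)-\mathcal B_\alpha(u)-\int\langle \nabla v,T-x\rangle u$ is then written as a double integral of differences $\nabla v$ evaluated along transported pairs. It is estimated with the log-Lipschitz bound \eqref{loglipschitz} for $\nabla v$ and $\nabla\bar v$, followed by Jensen's inequality applied to the concave $\varphi$. This yields the error $C_d(M+\|u\|_\infty+\|\bar u\|_\infty)\,\omega(W^2)$, exactly as in \cite[Section 3]{CLM}, and the definition \eqref{omegadef} is tailored to this estimate. Adding the two pieces gives \eqref{EVI}.

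The main obstacle is justifying these computations without the global $L^\infty$ bound on the initial datum assumed in \cite{CLM}. The plan is to work on compact subintervals $[t_1,t_2]\subset(0,+\infty)$. There the smoothing estimate of Theorem \ref{Linftydge3}, or its $m=m_c$ variant, gives $\sup_{[t_1,t_2]}\|u(t)\|_\infty<\infty$, so \cite{CLM}'s argument applies verbatim for a.e. $t$. The other delicate point is the finiteness of $\mathcal E_m(u(t))$ for a.e. $t$, and this is provided by Proposition \ref{gfede}.
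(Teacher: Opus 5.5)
You have the same architecture as the paper, which reruns \cite{CLM} (Thm.~3.1, Cor.~6.1) with the bound of Theorem \ref{Linftydge3}: a derivative formula for $W^2$, the subdifferential inequality for $\mathcal F_m$, and a separate interaction estimate. Three smaller remarks first. With velocity $-\xi$ one has $\frac12\frac{d}{dt}W^2(u(t),\bar u)=\int\langle T(x)-x,\xi\rangle u\,\d x$, and convexity gives $\int\langle T-x,\nabla u^m/u\rangle u\le\mathcal F_m(\bar u)-\mathcal F_m(u(t))$. Your two sign slips cancel, and your interaction quantity is the one needed with the correct signs. Finally, running \cite{CLM} ``verbatim'' on $[t_1,t_2]$ only yields $\sup_{[t_1,t_2]}\|u\|_{L^\infty}$ in the coefficient. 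To get $\|u(t)\|_{L^\infty}$ as stated, you must note, as the paper does, that \eqref{loglipschitz} is applied at fixed $t$.

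The genuine gap is the interaction step. Since $T_\#u=\bar u$ and $B_\alpha$ is even, there is the exact identity
\[
\mathcal B_\alpha(\bar u)-\mathcal B_\alpha(u)-\int\langle\nabla v,T-x\rangle u\,\d x=\int\big[v(T(x))-v(x)-\langle\nabla v(x),T(x)-x\rangle\big]u(x)\,\d x+\frac12\int(\bar u-u)(\bar v-v)\,\d x .
\]
\begin{itemize}
\item The first term is a Taylor remainder of a single potential. \eqref{loglipschitz} for $v$, Cauchy--Schwarz and Jensen for the concave $\varphi$ bound it by $C_d(M+\|u(t)\|_{L^\infty})\,\omega(W^2)$; this is Step 1 in the paper.
\item The second term equals $\frac12\|\nabla(\bar v-v)\|_{L^2}^2+\frac\alpha2\|\bar v-v\|_{L^2}^2\ge 0$ and must be bounded from above. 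It compares two different potentials at the same point, so no log-Lipschitz modulus of $\nabla v$ or $\nabla\bar v$ controls it. Pointwise you only get $\|\nabla(\bar v-v)\|_{L^\infty}|T-x|$, i.e.\ $O(W)$, whereas $O(\omega(W^2))$ is needed.
\end{itemize}
In the paper this term is handled by Steps 2--3 of \cite{CLM}. A Loeper-type $\dot H^{-1}$--Wasserstein estimate bounds it by $\chi(\|u(t)\|_{L^\infty}+\|\bar u\|_{L^\infty})W^2$. That estimate, not a log-Lipschitz bound on $\nabla\bar v$, is where $\|\bar u\|_{L^\infty}$ enters.

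Your ingredients can be rescued by an absorption argument. Set $w=\bar v-v$ and $X=\int(\bar u-u)w\ge\|\nabla w\|_{L^2}^2$, and write $X=\int\langle\nabla w,T-x\rangle u+\int R_w u$, where $R_w$ is the Taylor remainder of $w$ along $x\mapsto T(x)$. Here \eqref{loglipschitz} for both $v$ and $\bar v$, plus Jensen, controls $\int R_w u$. Since $|\int\langle\nabla w,T-x\rangle u|\le\|u(t)\|_{L^\infty}^{1/2}X^{1/2}W$, you get $X\le\|u(t)\|_{L^\infty}W^2+2|\int R_w u|$. Conclude with $W^2\le\omega(W^2)$, which holds because $\varphi(s)\ge s$. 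Without some such $\dot H^{-1}$ control, the step as you describe it does not close.
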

\begin{proof}
 In order to obtain \eqref{EVI}, we need to carefully revisit the proof of \cite[Theorem 3.1]{CLM} and \cite[Corollary 6.1]{CLM}, which treat the case $m=1$ and the case $m>1$, respectively (the basic assumptions for applying such results are the validity of \eqref{perevi1}-\eqref{perevi2}-\eqref{perevi3}, which indeed hold true in our case).  The parameter $\eps$ that appears therein is $0$ in our case, since we are considering the parabolic-elliptic case, while \cite{CLM} is more generally including the analysis for the so called parabolic-parabolic problem as well.  Moreover,  it was assumed in \cite{CLM} that 
 $u$ is a bounded solution, i.e., that the map $t\mapsto\|u(t)\|_{L^\infty(\R^d)}$ is in $L^\infty(0,T)$ for every $T>0$ (incidentally, we notice that this last property is indeed a consequence of the boundedness of the initial datum for the case of  solutions constructed by means of JKO scheme, as we have proved in Theorem \ref{boundedsolution}). 
 
 Let us check the validity of \eqref{EVI} under the sole assumption that $u(t)\in L^\infty(\R^d)$ for every $t>0$ and $\bar u\in L^\infty(\R^d)$ (otherwise the result is trivial). 
Let $\bar v:=B_{\alpha}\ast \bar u$ and similarly we denote by $v(t)=B_{\alpha}\ast u(t)$, for every $t>0$. We have
\begin{equation*}\label{evi2}\begin{aligned}
\frac12\frac{d}{dt}W_2^2(u(t),\bar u)&\le
\mathcal F_m(\bar u)-2\chi\mathcal B_\alpha(\bar u)-\mathcal F_m(u(t))+2\chi\mathcal B_\alpha(u(t))\\&\qquad+\chi\int_{\mathbb{R}^d}
(\bar v-v(t))\bar u\,\d x +\chi C_d(M+\|u(t)\|_{L^\infty(\R^d)})\,
\omega(W_2^2(u(t),\bar u))\end{aligned}
\end{equation*}
for a.e. $t\in (0,T)$.
Indeed, this is obtained from Step 1 of the proof of \cite[Theorem 3.1]{CLM}, by making a single modification to that step (besides the fact that here we are not fixing $\chi=1$): in formula (3.8) of \cite{CLM} the constant $C$ is generically depending on the  $L^\infty((0,T);L^\infty(\R^d))$ norm of $u$, while by applying  \eqref{loglipschitz} we get a pointwise in time estimate with $C$ more precisely expressed as $C=C_d(M+\|u(t)\|_{L^\infty(\R^d)})$.  
 Then, by  following Step 2 of the proof of \cite[Theorem 3.1]{CLM} we get 
 \begin{equation}\label{ev0}
\begin{aligned}
&\frac12\frac{d}{dt}W_2^2(u(t),\bar u)\le
\EE_{m}(\bar u)-\EE_{m}(u(t))+  \chi C_d(M+\|u(t)\|_{L^\infty(\R^d)})\,
\omega(W_2^2(u(t),\bar u))
\\&\quad\quad+ \chi\int_{\mathbb{R}^d} (\bar u -u(t))(\bar v-v(t))\,\d x-\frac\chi2 \|\nabla(v(t)-\bar v)\|_{L^2(\R^d)}^{2}-\frac{\chi\alpha}2\|v(t)-\bar v\|_{L^2(\R^d)}^{2}
\end{aligned}
\end{equation}
for a.e. $t>0$.
Indeed, the latter corresponds to formula (3.13) from \cite{CLM}, again up to having written the quantity $C$ therein by  $C_d(M+\|u(t)\|_{L^\infty(\R^d)})$, and having included  the parameter $\chi$ which was set to $1$ in \cite{CLM}.
Finally, by repeating Step 3 in the proof of \cite[Theorem 3.1]{CLM} we can bound the last line of \eqref{ev0} in terms of $\chi(\|u(t)\|_{L^\infty(\R^d)}+\|\bar u\|_{L^\infty(\R^d)})W_2^2(u(t),\bar u)$ and thus obtain the desired estimate \eqref{EVI}, up to uploading the constant $C_d$.
\end{proof}

%
%

\begin{theorem}[{\bf Stability and uniqueness for \boldmath$m>m_c$}] \label{th:unique} Assume  $m>m_c$.
 Let $u_0^i\in \PP_2^M(\mathbb R^d)$, $i=1,2$, and let $ u^i \in AC^2_{loc}((0,+\infty);\Space)\cap  C([0,+\infty);\Space)\EEE$ corresponding curves 
 given by {\rm Theorem \ref{esistenzagenerale}}.
 Then 
 \begin{equation}\label{contractivity}
W_2^2(u^1(t),u^2(t))\le G_\omega^{-1}(G_\omega(W_2^2(u_0^1,u_0^2))+QF(t)) \qquad \mbox{for every $t\ge 0$},
\end{equation}
where $F:[0,+\infty)\to[0,+\infty)$ is defined by $$\displaystyle F(t)=\int_0^t \left(1+s^{-\frac{d}{2+d(m-1)}}\right)\,\d s$$
 and $Q\ge 0$ is a constant only depending on $M,\chi,m,d$. 
In particular, for every $u_0\in\PP_2^M(\R^d)$, there exists a unique gradient flow of functional $\mathcal E_m$ starting from $u_0$. \EEE
\end{theorem}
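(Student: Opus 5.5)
The plan is to apply the EVI of Proposition \ref{thebasicevi} twice, once to each curve with the other curve frozen, and combine the two inequalities into a differential inequality for $w(t):=W_2^2(u^1(t),u^2(t))$ whose coefficient is controlled by the sharp $L^\infty$ bound of Theorem \ref{Linftydge3}.

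\textbf{Step 1: the doubling-of-variables inequality.} Fix $0<s<t$. By Theorem \ref{Linftydge3}(I), $u^1(r),u^2(r)\in L^\infty(\R^d)$ for every $r>0$. Apply \eqref{EVI} to $u^1$ with $\bar u=u^2(r)$, and to $u^2$ with $\bar u=u^1(r)$. The energy terms $\EE_m(u^2(r))-\EE_m(u^1(r))$ and $\EE_m(u^1(r))-\EE_m(u^2(r))$ cancel when the two inequalities are added. Both curves lie in $AC^2_{loc}((0,+\infty);\Space)$, so I will use the standard argument of \cite[Lemma 4.3.4]{AGS}: the map $r\mapsto W_2^2(u^1(r),u^2(r))$ is locally absolutely continuous, and its derivative is a.e.\ bounded by the sum of the partial derivatives taken with the other argument frozen. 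This gives, for a.e.\ $r>0$,
\[
\tfrac12 w'(r)\le 2\chi C_d\bigl(M+\|u^1(r)\|_{L^\infty(\R^d)}+\|u^2(r)\|_{L^\infty(\R^d)}\bigr)\,\omega(w(r)).
\]
This step is the main technical obstacle. The EVI only holds for a.e.\ time with a frozen reference measure, so the diagonal argument must be done carefully: first fix a countable dense set of reference times, then use the continuity of $W_2$ along each curve together with local integrability of the right-hand side. Local integrability holds because $\|u^i(r)\|_\infty\in L^1_{loc}([0,+\infty))$: the exponent $\frac{d}{2+d(m-1)}$ is less than $1$ when $m>m_c$.

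\textbf{Step 2: integrate the bound.} Insert the estimate $\|u^i(r)\|_\infty\le C_{\chi,d,m}(1\vee M)^{2/(2+d(m-2))}(1+r^{-\frac{d}{2+d(m-1)}})$ from Theorem \ref{Linftydge3}(I). This yields
\[
w'(r)\le Q\bigl(1+r^{-\frac{d}{2+d(m-1)}}\bigr)\,\omega(w(r))
\]
with $Q$ depending only on $M,\chi,m,d$. Where $w>0$, divide by $\omega(w)$; since $\frac{d}{dr}G_\omega(w(r))=w'(r)/\omega(w(r))$, integrating from $s$ to $t$ gives $G_\omega(w(t))\le G_\omega(w(s))+Q(F(t)-F(s))$. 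The set where $w=0$ needs no separate treatment, because $G_\omega$ is monotone with value $-\infty$ at $0$. Now let $s\downarrow0$. Both curves are $W$-continuous up to $t=0$ by Theorem \ref{esistenzagenerale}(II), and $G_\omega$ is continuous and increasing, so $w(s)\to W_2^2(u_0^1,u_0^2)$. Applying the increasing map $G_\omega^{-1}$ then yields \eqref{contractivity}.

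\textbf{Step 3: uniqueness.} If $u_0^1=u_0^2$, then $G_\omega(0)=-\infty$, and since $\omega(r)\sim r|\log r|$ near $0$ is an Osgood modulus, $\int_0\frac{\d s}{\omega(s)}=+\infty$. Hence the right side of \eqref{contractivity} is $G_\omega^{-1}(-\infty)=0$, so any two limit curves from Theorem \ref{esistenzagenerale} coincide. Every gradient flow in the sense of Definition \ref{GFdefinition} is such a limit curve. Therefore the gradient flow starting from $u_0$ is unique, and in particular the whole family $u_\tau$ converges to it.
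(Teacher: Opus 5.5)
Your proposal is correct and follows essentially the same route as the paper. You apply the EVI of Proposition \ref{thebasicevi} to each curve with the other frozen, combine the two by the doubling-of-variables argument of \cite{AGS} (the paper cites Theorem 11.1.4 there, which rests on the same two-variable differentiation lemma you invoke), insert the $L^\infty$ bound of Theorem \ref{Linftydge3} to obtain $w'\le Q\bigl(1+t^{-\frac{d}{2+d(m-1)}}\bigr)\,\omega(w)$, whose coefficient is integrable near $0$ precisely because $m>m_c$, and conclude by separation of variables and the Osgood property of $\omega$. Your remarks on the zero set of $w$, on letting $s\downarrow 0$ via the $W$-continuity from Theorem \ref{esistenzagenerale}, and on identifying every gradient flow of Definition \ref{GFdefinition} with such a limit curve are accurate; they make explicit what the paper leaves implicit.
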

\begin{proof}
Let  $u_0^1, u_0^2 \in \Space$, and
and $u^1$, $u^2$ be two corresponding curves given by Theorem \ref{esistenzagenerale}, so that
$u^1$, $u^2$ are indeed continuous in $\PP_2^M(\R^d)$ up to $t=0$ and are therefore gradient flows of functional $\mathcal E_m$ respectively starting from $u_0^1$ and $u_0^2$. \EEE
 By combining \eqref{EVI} and Theorem \ref{Linftydge3}  we get the existence of a constant $Q=Q(\chi,M,m,d)$ such that for every $\bar u\in \PP_2^M(\mathbb R^d)\cap L^\infty(\mathbb R^d)$ and for $i=1,2$ there holds
\begin{equation*}\begin{aligned}
   & \frac{1}2\frac{d}{dt} W_2^2(u^i(t),\bar u) \le\,
\EE_m(\bar u) - \EE_m(u^i(t))\\&\qquad+ Q\left(1+\left(\frac 1t\right)^{\frac d{2+d(m-1)}}+\|\bar u\|_\infty\right)\,
\omega(W_2^2(u^i(t),\bar u))  \quad \text{for a.e. } t>0, \quad i=1,2.
\end{aligned}
\end{equation*}
Using the previous inequality and a standard duplication of variables argument for which we refer to \cite[Theorem 11.1.4]{AGS}, we  obtain
\begin{equation*}\begin{aligned}\label{contr1}
    \frac{1}2\frac{d}{ds} W^2_2(u^1(s),u^2(s)){\Big{|}}_{s=t} &\le\,
    \frac{1}2\frac{d}{ds} W^2_2(u^1(s),u^2(t)){\Big{|}}_{s=t} + \frac{1}2\frac{d}{ds} W^2_2(u^1(t),u^2(s)){\Big{|}}_{s=t}
  \\&\le\, \frac12f(t)\,\omega(W^2_2(u^1(t),u^2(t))) \quad \text{for a.e. } t>0,
\end{aligned}\end{equation*}
where $f(t):=Q(1+t^{-\frac{d}{2+d(m-1)}})$ and $Q$ a suitably updated constant only depending on $M,\chi,m,d$. 
Therefore $y(t):=W^2_2(u^1(t),u^2(t))$, which is continuous on $[0,+\infty)$ with $y(0)=W_2^2(u^1_0,u^2_0)$, 
satisfies a differential inequality of the form $y'(t)\le f(t)\omega(y(t))$, $t>0$, where, by the (log)Lipschitz property of $\omega$, 
the Osgood condition $\int_0^1\frac{1}{\omega(r)}\,\d r =+\infty$ holds and $\int_0^1f(t)\,\d t <+\infty$ as a consequence of $m>m_c$. 
These properties imply uniqueness of the solution of the initial value problem for the differential equation $y'(t)= f(t)\omega(y(t))$, obtained by separation of variables. 
In fact, from the differential inequality $y'(t)\le f(t)\omega(y(t))$ we get
\[
y(t)=W_2^2(u_1(t),u_2(t))\le G_\omega^{-1}(G_\omega(y(0))+QF(t)) \qquad \mbox{for every $t\ge 0$},
\]
 where $F$ is continuous, increasing on $[0,+\infty)$ and $F(0)=0$. This shows the validity of \eqref{contractivity}.
 In particular, if $u^1_0=u_0^2$ we get $y\equiv 0$, i.e. $u^1(t)=u^2(t)$ for every $t\ge 0$ (recalling that $F(0)=0$, $G_\omega(0)=-\infty$, $G_\omega^{-1}(-\infty)=0$\EEE).
\end{proof}

\begin{remark}\rm
Thanks to Theorem \ref{th:unique}, in the case $m>m_c$ the solution map $S_t$ associating to every $u_0\in\Space$ the unique gradient flow $S_t(u_0)\in \Space$ of functional $\mathcal E_m$ starting from $u_0$ at time $t\ge 0$ is a semigroup over $\Space$, i.e., $S_t(S_s(u_0))=S_{t+s}(u_0)$, for every $s\ge 0$ and every $t\ge 0$, $S_0$ being the identity map. This is a consequence of the EVI, since if a curve $u(t)$ satisfies \eqref{EVI} then  $u_h(t):=u(t+h)$ satisfies \eqref{EVI} as well, for any $h>0$, with initial datum $u(h)$, and then it is the unique gradient flow of functional $\mathcal E_m$ starting from $u(h)$. 
\end{remark}
\EEE

\begin{theorem}[{\bf Stability and uniqueness for \boldmath$m=m_c$ and \boldmath$u_0\in  D(\mathcal E_m)$}]\label{uniquebis} 
Assume  $m=m_c$.   Let $u_0^i\in D(\mathcal E_m)$, $i=1,2$, and let   $u^i \in AC^2_{loc}([0,+\infty);\Space)$ corresponding curves 
 given by {\rm Theorem \ref{th:convergence1}}. 
If  $d=2$,  assume in addition that $u_0^i\in L^p(\R^2)$ for some $p>1$, \RRR $i=1,2$.
Then, \eqref{contractivity} holds with $Q$ also depending on $\mathcal E_m(u_0)$ and $\mom(u_0)$,
where $$\displaystyle F(t):=\int_0^t (1+s^{-\frac1p})\,\d s\quad \mbox{if $d=2$},\qquad \mbox{and}\quad \displaystyle F(t):=\int_0^t (1+s^{-\frac{d}{2+d(m-1)}})\,\d s\quad
\mbox{if $d\ge 3$}.$$ 
In particular,  for every $u_0\in D(\mathcal E_m)$ \EEE there exists a unique gradient flow  of functional $\mathcal E_m$ starting from $u_0$\EEE.
\end{theorem}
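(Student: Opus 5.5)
The argument will copy the proof of Theorem \ref{th:unique}; the one real change is the source of the time-dependent $L^\infty$ bound that feeds into the EVI \eqref{EVI}. I take two curves $u^1,u^2$ produced by Theorem \ref{th:convergence1}. By that theorem they lie in $AC^2_{loc}([0,+\infty);\Space)$, so they are $W$-continuous up to $t=0$. The properties \eqref{perevi1}--\eqref{perevi3} hold for them, so Proposition \ref{thebasicevi} applies, once I know that $u^i(t)\in L^\infty(\R^d)$ for every $t>0$ with a bound of the form $\|u^i(t)\|_{L^\infty}\le Q(1+t^{-a})$ for some $a<1$. That bound gives $f(t)=Q(1+t^{-a})\in L^1_{loc}([0,+\infty))$. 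The doubling-of-variables argument (\cite[Theorem 11.1.4]{AGS}) then gives $y'\le f\,\omega(y)$ for $y(t)=W_2^2(u^1(t),u^2(t))$. Integrating with $G_\omega$, using the Osgood property of $\omega$, yields \eqref{contractivity}, and uniqueness follows when $u^1_0=u^2_0$.

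For $d\ge3$ I use part (II) of Theorem \ref{Linftydge3}, which gives $\|u(t)\|_{L^\infty}\le \hat C t^{-d^2/(d(d+2)-4)}+\hat C$, with $\hat C$ depending on $\mathcal E_m(u_0)$ and $\mom(u_0)$ through $C_0$ of Proposition \ref{equi}. The exponent $d^2/(d^2+2d-4)$ is less than $1$ because $2d-4>0$. The statement instead uses $F$ with exponent $d/(2+d(m_c-1))=1$, which is not integrable at $0$. I would therefore replace it with the sharper exponent from Theorem \ref{Linftydge3}, or alternatively interpolate using Corollary \ref{moreintegrability}(II). I read the exponent written in the statement as a typo and would state $F$ with the integrable exponent that Theorem \ref{Linftydge3} actually provides. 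For $d=2$, part (II) of Theorem \ref{Linftydge3} gives only a $1/t$ rate, which is not integrable. This is why the extra hypothesis $u_0^i\in L^p(\R^2)$ is assumed. Corollary \ref{moreintegrability}(II) then gives $\|u(t)\|_{L^\infty}\le \tilde{\mathsf C}(\cdots)(1/(t\wedge1))^{1/p}$, with constants that grow at most polynomially in $t$ through $K_p(t\vee1)$ or $\overline C^*_{t\vee1}$. This fits the choice $F(t)=\int_0^t(1+s^{-1/p})\,ds$ after absorbing the locally bounded growth into $Q$ on finite intervals.

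The main obstacle is the $d=2$ case, specifically checking that the constants in Corollary \ref{moreintegrability} are finite and locally bounded in $t$. For $p\in(1,2)$ this requires Proposition \ref{very}, whose $\overline C^*_T$ grows in $T$. As a result $Q$ is uniform only on bounded time intervals, so I would prove \eqref{contractivity} on each $[0,T]$ with $Q=Q_T$, which is enough for uniqueness. I also need to check that the EVI holds for a.e. $t>0$ and that $y$ is continuous at $0$, both of which come from the $AC^2$ regularity up to $t=0$ in Theorem \ref{th:convergence1}. With these in place, $G_\omega(0)=-\infty$ forces $y\equiv0$ when the initial data coincide.
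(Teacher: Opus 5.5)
Your proposal is correct and is the paper's argument: rerun the doubling-of-variables/Osgood step of Theorem \ref{th:unique}, feeding the EVI \eqref{EVI} with the $L^\infty$ bound of Theorem \ref{Linftydge3}(II) when $d\ge3$ and of Corollary \ref{moreintegrability}(II) when $d=2$ and $u_0^i\in L^p(\R^2)$. You are also right on two points the paper glosses over: the exponent as written is a slip, since $\frac{d}{2+d(m_c-1)}\equiv1$ and the integrable exponent actually available for $d\ge3$ is $\frac{d^2}{d(d+2)-4}<1$; and in $d=2$ the time-growing constants ($K_p(t\vee1)$, and $\overline C^*_{t\vee1}$, which in fact grows faster than polynomially) only give $Q=Q_T$ on bounded intervals, which is all that uniqueness needs.
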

\begin{proof}
We just repeat the first step of the proof of Theorem \ref{th:unique}. The only difference is in the application of Theorem \ref{Linftydge3}, where in the case $m=m_c$ it is required that $u_0\in D(\mathcal E_m)$. In particular,  the constant $Q$ depends also on $\mathcal E_m(u_0)$ and $\mom(u_0)$. We notice that the exponent $\tfrac{d}{2+d(m_c-1)}$ is larger than $1$ if and only if $d\ge 3$ allowing to deduce uniqueness only for $d\ge 3$. In the case $d=2$ we need to assume $u_0\in L^p(\R^2)$ for some $p>1$ in order to invoke Corollary \ref{moreintegrability}  and improve such exponent to $1/p<1$.
\end{proof}

\subsection{Energy dissipation equality}
We conclude by improving the result of Proposition \ref{gfede}, showing that the energy dissipation inequality therein is indeed an equality.

\begin{theorem}[{\bf EDE}]\label{ede}
Let $u_0\in\Space$. If $m=m_c$, assume in addition that $u_0\in D(\EE_m)$. Let $u\in C([0,+\infty);\Space)$ the unique (by {\rm Theorem \ref{th:unique}} and {\rm Theorem \ref{uniquebis}}\EEE) gradient flow of functional $\mathcal E_m$, starting from $u_0$. Then 
the map $ t\mapsto \mathcal E_m(u(t))$ is locally absolutely continuous on $(0,+\infty)$
and the following energy dissipation equality
\begin{equation}\label{EDE}
\EE_m(u(t_2))+\int_{t_1}^{t_2}\int_{\mathbb{R}^d}\left | \frac{\nabla u^{m}(t) - \chi u(t) \nabla B_\alpha\ast u(t)} {u(t)}\right|^2u(t)\,\d x\,\d t = \EE_m(u(t_1))
\end{equation}
holds for every 
$0\le t_1<t_2<+\infty$.
\end{theorem}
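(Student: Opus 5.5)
The inequality ``$\le$'' in \eqref{EDE} is the content of Proposition \ref{gfede}. It holds for $t_1=0$ and for a.e.\ $t_1>0$, and uniqueness ensures it applies to the gradient flow $u$. So we only need the reverse inequality, together with absolute continuity of $t\mapsto\mathcal E_m(u(t))$. The plan is the standard chain-rule argument for gradient flows: show that the slope $|\xi(t)|_{L^2_{u(t)}}$, with $\xi$ as in \eqref{defxi}, is an upper gradient for $\mathcal E_m$ along $u$. Combined with the energy dissipation inequality, this upgrades the inequality to an equality.

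\textbf{Step 1 (upper gradient via EVI).} Fix $0<s<t$. At a.e.\ time $u$ is bounded by Theorem \ref{Linftydge3}, and $\|u(r)\|_{L^\infty(\R^d)}$ is bounded on $[s,t]$. We use \eqref{EVI} with $\bar u=u(s)$, which is admissible since $u(s)\in L^\infty\cap D(\mathcal E_m)$, and integrate between $s$ and $t$:
\[
\mathcal E_m(u(s))-\frac{1}{t-s}\int_s^t\mathcal E_m(u(r))\,\d r\ge \frac{W^2(u(t),u(s))}{2(t-s)}-\frac{C}{t-s}\int_s^t\omega(W^2(u(r),u(s)))\,\d r .
\]
Since $u\in AC^2_{loc}$, we have $W^2(u(r),u(s))\le (r-s)\int_s^r|u'|^2$. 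Because $\omega(x)\sim x|\log x|$, the error term is $o(1)$ as $t\downarrow s$. Together with lower semicontinuity of $\mathcal E_m$, this gives $\limsup$-type bounds on the difference quotients of $\mathcal E_m(u(\cdot))$.

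The cleaner route is the standard one from \cite[Theorem 11.1.3 / 11.2.1]{AGS}, adapted to the $\omega$-convexity of \cite{CLM}. One checks that the EVI implies the descending-slope/upper-gradient inequality
\[
\mathcal E_m(u(s))-\mathcal E_m(u(t))\le \int_s^t|\xi(r)|_{L^2_{u(r)}}\,|u'|(r)\,\d r .
\]
The key input is the first-variation/subdifferential inequality behind \eqref{EVI}, from Step 1 of \cite[Theorem 3.1]{CLM}:
\[
\mathcal E_m(\bar u)\ge \mathcal E_m(w)+\int\xi_w\cdot(T_w^{\bar u}-I)\,w\,\d x- C(M+\|w\|_\infty+\|\bar u\|_\infty)\,\omega(W^2(w,\bar u)).
\]
We apply it with $w=u(r)$ and $\bar u=u(r\pm h)$, use Cauchy--Schwarz, and let $h\to0$. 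Since $\omega(W^2)/h=o(1)$ at points of metric differentiability, we get the one-sided derivative bound $|\tfrac{d}{dr}\mathcal E_m(u(r))|\le|\xi(r)|_{L^2_{u(r)}}|u'|(r)$ wherever the derivative exists.

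\textbf{Step 2 (absolute continuity and equality).} By \eqref{EDI2} and the a.e.\ identity $|u'|(r)=|\xi(r)|_{L^2_{u(r)}}$, the product $|\xi||u'|=|\xi|^2$ is in $L^1_{loc}(0,\infty)$. The function $r\mapsto\mathcal E_m(u(r))$ is nonincreasing by the EDI, and the one-sided bound holds at every point. A standard lemma (\cite[Lemma 1.2.6]{AGS}) then shows that $\mathcal E_m\circ u$ is locally absolutely continuous, with derivative $\ge-|\xi|^2$ a.e. The identification $|u'|=|\xi|_{L^2_u}$ comes from the continuity equation \eqref{perevi3} with velocity $-\xi$ ($\xi$ is tangent, being a gradient-type field limit) together with \cite[Theorem 8.3.1]{AGS}. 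Integrating from $t_1$ to $t_2$ gives ``$\ge$'' in \eqref{EDE} for all $0<t_1<t_2$.

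For $t_1=0$ we let $t_1\downarrow0$. If $u_0\in D(\mathcal E_m)$, we use lower semicontinuity and the $t_1=0$ version of \eqref{EDI2}. If $u_0\notin D(\mathcal E_m)$ (possible only when $m>m_c$), both sides are $+\infty$, since $\mathcal E_m(u(t_1))\to+\infty$ by lower semicontinuity.

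The main obstacle is Step 1. Making the slope/chain-rule inequality rigorous requires tracking the $\omega$-remainder and the time-dependent $L^\infty$ bounds, and justifying the first-variation inequality for $\mathcal E_m$ along the given curve.
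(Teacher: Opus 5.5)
Your overall strategy is sound, but you choose a different upper gradient from the paper. The strategy is: prove a finite-increment (upper-gradient) inequality along $u$, deduce local absolute continuity of $\mathcal E_m\circ u$, then combine $-\frac{d}{dt}\mathcal E_m(u(t))\le\|\xi(t)\|_{L^2_{u(t)}}|u'|(t)\le\int_{\R^d}|\xi(t)|^2u(t)\,\d x$ with \eqref{EDI2}.

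The paper works with the truncated descending slope $\mathcal G_m(u(t))$ over the $L^\infty$-bounded class $\mathscr U_{M,R}$. It bounds $\mathcal G_m(u(t))\le|u'|(t)$ through the EVI plus a separability argument, and it obtains the Taylor-type inequality \eqref{nomax} from the $\omega$-convexity of \cite[Theorem 4.1]{CLM}.

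You instead take $g(t):=\|\xi(t)\|_{L^2_{u(t)}}$ and the static first-variation inequality hidden in the derivation of \eqref{EVI}. This route works. At every $r$ with $u(r)^m\in W^{1,1}_{loc}$ and $g(r)<+\infty$ (so for a.e.\ $r$), two ingredients give \eqref{nomax} with $g(r)$ in place of $\mathcal G_m(u(r))$: the subdifferential characterization of $\mathcal F_m$ in \cite[Section 10.4]{AGS}, and the log-Lipschitz bound \eqref{loglipschitz} together with the $\|\nabla(v-\bar v)\|_{L^2}^2$ estimate from \cite{CLM}. Integrability follows from $\int g\,|u'|\le\int g^2<+\infty$, which uses \eqref{EDI2} and $|u'|\le g$ from \cite[Theorem 8.3.1]{AGS}. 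Your route avoids both separability and $\omega$-convexity. The price is that you must extract an inequality that neither \cite{CLM} nor the paper states on its own, and it holds only at a.e.\ base time. You also do not need $|u'|=g$ or any tangency argument; the inequality from \cite[Theorem 8.3.1]{AGS} is enough.

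The gap is in Step 2. Once you let $h\to0$, you keep only a bound on $\frac{d}{dr}\mathcal E_m(u(r))$ at points where the derivative exists. That bound, even with monotonicity, does not give absolute continuity: a reflected Cantor function is monotone and has zero derivative wherever it is differentiable. The bound also does not hold ``at every point'', since $g=+\infty$ on a null set. Moreover, \cite[Lemma 1.2.6]{AGS} requires a two-point estimate $|\phi(s)-\phi(t)|\le(g(s)+g(t))|s-t|$ for a.e.\ pairs, and it yields only $\phi\in W^{1,1}$.

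The repair is to keep $h$ finite.
\begin{itemize}
\item For good $r_1,r_2$ you get $|\mathcal E_m(u(r_1))-\mathcal E_m(u(r_2))|\le\max\{g(r_1),g(r_2)\}\,W(u(r_1),u(r_2))+C\,\omega(W^2(u(r_1),u(r_2)))$.
\item Reparametrize by arc length, so that $W\le|s_1-s_2|$ and $g\circ\hat u\in L^1$.
\item Apply the lemma, checking that the extra term $\omega(|s_1-s_2|^2)$ is harmless.
\item Show that $\mathcal E_m\circ\hat u$ coincides with its continuous representative at every $s$. Lower semicontinuity gives one inequality. For the other, average $\mathcal E_m(\hat u(s+r))-\mathcal E_m(\hat u(s))\le g(s+r)|r|+C\omega(r^2)$ over $r\in(-\eps,\eps)$.
\end{itemize}
This is exactly Step 2 of the paper with $g$ in place of $\mathcal G_m$. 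Finally, monotonicity at every time does not follow directly from \eqref{EDI2}, which holds only for $t_0=0$ and a.e.\ $t_0$. The repaired argument does not need it.
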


\begin{proof}

\RRR

Suppose  that $t\mapsto u(t)\in\Space$ is a gradient flow of functional $\mathcal E_m$ starting from $u_0$. By Proposition \ref{thebasicevi} and Theorem \ref{Linftydge3}, we have \eqref{EVI} and moreover by fixing $0<T_1<T_2<+\infty$ we have 
$$R:=\sup_{t\in[T_1,T_2]}\|u(t)\|_{L^\infty(\R^d)}<+\infty,$$  
where $R$ depends only on $\chi,M,d,m,T_1,T_2$ (also on $\mathcal E_m(u_0)$ and $\mom(u_0)$ if $m=m_c$).

{\textbf {Step 1.}}
Let $\bar u\in \mathscr P_2^M(\R^N)\cap L^\infty(\R^N)$.  For a.e. $t\in(T_1,T_2)$, we have by triangle inequality 
\[
\frac12\frac{d}{dt}W^2(u(t),\bar u)\ge -|u'|(t)\,W(u(t),\bar u)
\]
so that \eqref{EVI} implies
\[
(\mathcal E_m(u(t))-\mathcal E_m(\bar u))_+ \le |u'|(t)W(u(t),\bar u)+\chi C_d(M+\|u(t)\|_{L^\infty(\R^d)}+\|\bar u\|_{L^\infty(\R^d)})\, \omega(W^2(u(t),\bar u)),
\]
where $\omega$ is the function defined in \eqref{omegadef}.

 Let now $\mathscr U_{M,R}:=\{u\in\mathscr P_2^M(\R^d):\|u\|_\infty\le R\}$. If $d_1$ denotes the $L^1(\R^d)$ distance, $\mathscr U_{M,R}$ is separable with respect to the distance $W+d_1$, and  given a countable dense subset $D_0$ we find therefore a null set of times $N$ such that 
\[
(\mathcal E_m(u(t))-\mathcal E(\bar u))_+ \le |u'|(t)W(u(t),\bar u)+ \chi C_d(M+2R)\, \omega(W^2(u(t),\bar u)),
\]
for every $t\in(T_1,T_2)\setminus N$ and every $\bar u \in D_0$. Since  $D_0$ is dense in $\mathscr U_{M,R}$ with respect to the distance $W+d_1$, the latter extends to every $t\in (T_1,T_2)\setminus N$ and every $\bar u\in\mathscr U_{M,R}$. Indeed, if $\bar u\in\mathscr U_{M,R}$, and a sequence $(\bar u_n)_{n}\subset \mathscr U_{M,R}$ satisfies $W(\bar u_n,\bar u)\to 0$ and  $\|\bar u_n-\bar u\|_{L^1(\R^d)}\to0$ as $n\to\infty$, then $\mathcal E_m(\bar u_n)\to\mathcal E_m(\bar u)$ as well.
As a consequence,
\begin{equation}\label{truncatedslope}
\mathcal G_m(u(t)):=\limsup_{W(\bar u, u(t))\to0\atop{\bar u \in \mathscr U_{M,R}}}\frac{(\mathcal E_m(u(t))-\mathcal E(\bar u))_+}{W(u(t),\bar u)}\le |u'|(t)
\end{equation}
for a.e. $t\in(T_1,T_2)$, in particular $\mathcal G_m(u(\cdot))\in L^2(T_1,T_2)$.

{\textbf{Step 2.}}
An estimate from below for $\mathcal G_m(u(\cdot))$ can be obtained by taking into account that the energy functional $\mathcal E_m$ is $\omega$-convex along Wasserstein geodesics connecting $L^\infty$ densities. This property is itself a consequence of the EVI \eqref{EVI}, and has been proved in \cite[Theorem 4.1]{CLM}. Indeed, for $u,\bar u\in \mathscr P_2^M(\R^d)\cap L^\infty(\R^d)$, we denote by $\gamma_t$, $t\in[0,1]$, the Wasserstein geodesic connecting $u$ to $\bar u$, i.e., the curve $\gamma_t:=(T_t)_{\#} u$, where $T_t(x):=(1-t)x+tT(x)$ and $T:\R^d\to\R^d$ is the optimal transport map from $u$ to $\bar u$. In this way, we have \begin{equation}\label{geodesic}W(\gamma_t,u)=t\,W(u,\bar u).\end{equation} 
Then, it is shown in \cite[Theorem 4.1]{CLM} that
\begin{equation}\label{omegaconv}
\mathcal E_m(\gamma_t)\le (1-t)\mathcal E_m(u)+t\mathcal E_m(\bar u)+\widetilde C\left[(1-t)\omega(t^2\,W^2(u,\bar u))+t\omega((1-t)^2\,W^2(u,\bar u))\right],
\end{equation}
 and $\widetilde C$ is a constant depending on $\|u\|_{L^\infty(\R^d)}$ and $\|\bar u\|_{L^\infty(\R^d)}$. We also recall that as a consequence of the geodesical convexity of $L^p(\R^d)$ norms, the set $\mathscr U_{M,R}$ is geodesically convex, i.e.,  if $u,\bar u\in \mathscr U_{M,R}$ then along the geodesic $\gamma_t$ connecting $u$ to $\bar u$, we have $\gamma_t\in \mathscr U_{M,R}$ for every $t\in[0,1]$.
We deduce existence of a constant $\overline C$ (only depending on $\chi,M, d, R$) such that 
\[\begin{aligned}
\mathcal G_m(u(t))\ge \limsup_{s\to 0}\frac{\mathcal E_m(u(t))-\mathcal E_m(\gamma_s^t)}{W(u(t),\gamma_s^t)}\ge \frac{\mathcal E_m(u(t))-\mathcal E_m(\bar u)}{W(u(t),\bar u)}-\overline C\,\frac{\omega(W^2(u(t),\bar u))}{W(u(t),\bar u)}.
\end{aligned}
\]
for every $t\in [T_1,T_2]$ and every $\bar u\in \mathscr U_{M,R}$, having denoted by $\gamma_s^t$, $s\in[0,1]$, the geodesic from $u(t)$ to $\bar u$.
We notice that the last equality is a consequence of \eqref{geodesic}, \eqref{omegaconv} and of the fact that $\omega(x)$ behaves as $x|\log x|$ as $x\to 0$.

Thus there exists a constant $\overline C$ (only depending on $\chi,M,d,R$) such that
\begin{equation}\label{nomax}
\mathcal E_m(u(t))-\mathcal E_m(\bar u)\le \mathcal G_m(u(t))\,W(u(t),\bar u)+\overline C\omega(W^2(u(t),\bar u))
\end{equation}
for every $t\in[T_1,T_2]$ and every $\bar u \in \mathscr U_{M,R}$. We can take $t_1,t_2\in [T_1,T_2]$ and apply \eqref{nomax} with $t=t_1, \bar u=u(t_2)$, and then with $t=t_2, \bar u=u(t_1)$, and taking the maximum of the two resulting inequalities we obtain
\begin{equation}\label{metrictaylor}\begin{aligned}
|\mathcal E_m(u(t_1))-\mathcal E_m(u(t_2))|&\le\max\{\mathcal G_m(u(t_1)),\mathcal G_m(u(t_2))\}\,W(u(t_1),u(t_2))\\&\qquad+\overline C\omega(W^2(u(t_1),u(t_2))).
\end{aligned}\end{equation} 
We introduce the arc-length parametrization $s\mapsto \hat u(s)$ of the curve $t\mapsto u(t)$ on the interval $[T_1,T_2]$, see \cite[Lemma 1.1.4]{AGS}.
It satisfies $u=\hat u\circ \ell$, where $\ell(t):=\int_{T_1}^t|u'|(r)\,\d r$. We have $|\hat u'|=1$ a.e. in $(0,\ell(T_2))$ and given $s_1,s_2\in[0,\ell(T_2)]$ we find $t_1,t_2\in[T_1,T_2]$ such that $u(t_1)=\hat u(s_1)$, $u(t_2)=\hat u(s_2)$,  and
\[
W(\hat u(s_1), \hat u({s_2}))=W(u(t_1), u(t_2))\le \int_{t_1\wedge t_2}^{t_1\vee t_2}|u'|(r)\,\d r=\ell(t_1\vee t_2)-\ell(t_1\wedge t_2)\le |s_2-s_1|.
\] 
Therefore,  we may apply \eqref{metrictaylor} and get
\[\begin{aligned}
|\mathcal E_m(\hat u(s_1))-\mathcal E_m(\hat u(s_2))|&\le \max\{\mathcal G_m(\hat u(s_1)),\,\mathcal G_m(\hat u(s_2))\}\,|s_2-s_1|+\overline C\,\omega(|s_2-s_1|^2)
\end{aligned}\]
for every $s_1,s_2\in[0,\ell(T_2)]$.
The map $s\mapsto \mathcal G_m(\hat u(s))$ belongs to $L^1(0,\ell(T_2))$. Indeed, taking into account \eqref{truncatedslope}, changing variable in the integral we have
\begin{equation}\label{GL1}
\int_0^{\ell(T_2)}  \mathcal G_m(\hat u(s))\,\d s = \int_{T_1}^{T_2}  \mathcal G_m(u(t))|u'|(t)\,\d t \leq \int_{T_1}^{T_2} |u'|^2(t)\,\d t <+\infty.
\end{equation}
\RRR
An application of \cite[Lemma 1.2.6]{AGS} shows that the map $s\mapsto \mathcal E_m(\hat u(s))$ belongs to $W^{1,1}(0,\ell(T_2))$: the argument therein can be indeed repeated even with the additional term $\overline C\,\omega(|s_2-s_1|^2)$, taking advantage of the fact that $\omega(x)$ behaves like $x|\log x|$ for small $x$. 
 We have to prove that $s\mapsto \mathcal E_m(\hat u(s))$ coincides with his continuous representative on $(0,\ell(T_2))$. 
Since this map is already lower semicontinuous, it is sufficient to prove that for any $s\in (0,\ell(T_2))$
$$\limsup_{\eps\to 0}\frac1{2\eps}\int_{-\eps}^{\eps}(\mathcal E_m(\hat u(s+r))-\mathcal E_m(\hat u(s)))\, \d r \le 0. $$
Fixing $s\in (0,\ell(T_2))$, for any $r$ sufficiently small, by  \eqref{nomax} we have
\[
\mathcal E_m(\hat u(s+r))-\mathcal E_m(\hat u(s))\le \mathcal G_m(\hat u(s+r))|r|+\overline C\omega(|r|^2),
\]
therefore
\[\begin{aligned}
&\limsup_{\eps\to 0}\frac1{2\eps}\int_{-\eps}^{\eps}(\mathcal E_m(\hat u(s+r))-\mathcal E_m(\hat u(s)))\, \d r
\le\limsup_{\eps\to0}\frac1{2\eps}\int_{-\eps}^{\eps}\left(\mathcal G_m(\hat u(s+r))\,|r|+\overline C\omega(r^2)\right)\,\d r
\\&\qquad
\le\limsup_{\eps\to0}\frac1{2\eps}\int_{-\eps}^{\eps}\mathcal G_m(\hat u(s+r))\,|r|\,\d r 
\le \limsup_{\eps\to0}\frac1{2}\int_{-\eps}^{\eps}\mathcal G_m(\hat u(s+r))\,\d r =0,
\end{aligned}\]
where in the last equality we used the property \eqref{GL1}.
Thus the map $s\mapsto \mathcal E_m(\hat u(s))$ is absolutely continuous. 
Hence, the map $t\mapsto \mathcal E_m(u(t))$ belongs to $AC([T_1,T_2])$ as the composition of an absolutely continuous function with an absolutely continuous increasing function.
\EEE

{\textbf {Step 3.}}
By making use of \eqref{truncatedslope}, we get
\begin{equation*}\begin{aligned}
	-\frac{d}{dt}\mathcal E_m(u(t))&=\lim_{h\to 0^+}\frac{\mathcal E_m(u(t))-\mathcal E_m(u(t+h))}{h} \\
	&= \lim_{h\to 0^+}\frac{\mathcal E_m(u(t))-\mathcal E_m(u(t+h))}{W(u(t),u(t+h))}\frac{W(u(t),u(t+h))}{h}\\&\le\mathcal G_m(u(t))\,|u'|(t)\le |u'|^2(t)\qquad\quad\mbox{ for a.e. }t\in(T_1,T_2).\end{aligned}
\end{equation*}
Since $u$ satisfies the continuity equation $\partial_tu=\mathrm{div}(\xi u)$ for $\xi$ defined in \eqref{defxi} (thanks Theorem \ref{weaktheorem}), 
by \cite[Theorem 8.3.1]{AGS} we have
 \[|u'|^2(t)\le \int_{\R^d}|\xi(t)|^2u(t)\,\d x\qquad\mbox{ for a.e. }t\in(T_1,T_2).\]
 By combining the latter inequalities, we obtain that
\begin{equation*}\label{diqua}
	-\frac{d}{dt}\mathcal E_m(u(t) \leq  \int_{\R^d}|\xi(t)|^2u(t)\,\d x\qquad\mbox{ for a.e. } t\in(T_1,T_2).
\end{equation*} 
On the other hand, from the energy dissipation inequality \eqref{EDI2} we directly obtain
\begin{equation*}\label{dila}
\frac{d}{dt}\mathcal E_m(u(t))=\lim_{h\to 0^+}\frac{\mathcal E_m(u(t+h))-\mathcal E_m(u(t))}{h} \leq  -\int_{\R^d}|\xi(t)|^2u(t)\,\d x \qquad\mbox{ for a.e. } t\in(T_1,T_2).
\end{equation*}
Therefore, we conclude that
\begin{equation*}
\frac{d}{dt}\mathcal E_m(u(t)) =  -\int_{\R^d}|\xi(t)|^2u(t)\,\d x \qquad\mbox{ for a.e. } t\in(T_1,T_2).
\end{equation*}
By integrating from $T_1$ to $T_2$ and by the arbitrariness of $T_1,T_2$,
we obtain the validity of  \eqref{EDE} for $t_1>0$.

In the case $t_1=0$, by making use of the already proven equality with $0<t_1<t_2$ and passing to the limit as $t_1\to 0^+$, we have
  \begin{equation*}\begin{aligned}
&\mathcal E_m(u(t_2))+\int_{0}^{t_2}\int_{\R^d}\left|\xi(t)\right|^2 u(t)\,\d x\,\d t\\
&\quad=\mathcal E_m(u(t_2))+\lim_{t_1\to 0}\int_{t_1}^{t_2}\int_{\R^d}\left|\xi(t)\right|^2 u(t)\,\d x\,\d t\\&\quad\ge
 \liminf_{t_1\to0}\mathcal E_m(u(t_1))\ge\mathcal E_m(u_0).\end{aligned}
\end{equation*}
If $u_0\in D(\mathcal E_m)$, invoking again Proposition \ref{gfede}, the proof is concluded. Else the equality holds, both terms being $+\infty$.

\EEE

\end{proof}

\RRR


\section{Proof of the main theorems}\label{sectionproof}

By collecting the previous results, we can conclude with the proof of the main theorems.

\begin{proofad1}
Let $m>m_c$.

 Concerning point {\bf {(I)}}, existence of discrete minimizers is proven in Proposition \ref{prop:existenceMM}. \RRR
We notice that  $(u_\tau^k)^{\frac{p+m-1}{2}}\in H^1(\R^d)$  for every $p\in[1,+\infty)$ is proved in Theorem \ref{generalp}, and the property about $B_\alpha\ast u_\tau^k$ is shown in Theorem \ref{esistenzagenerale}. Concerning point {\textbf{(II)}}, the $AC^2$ property is the basic one from Theorem \ref{esistenzagenerale}. The property $u\in C((0,+\infty);L^p(\R^d))$ follows from the weak $L^p(\R^d)$ lower semicontinuity along with the energy dissipation equality from Theorem \ref{ede}, that implies strong continuity in $L^1(\R^d)$, and then the uniform bounds in $L^p(\R^d)$ for every $p\in[1,+\infty)$ yield the desired conclusion. This also implies the last property of point {\textbf{(II)}}, since $u_n\to u$ in $L^p(\R^d)$ implies $B_\alpha\ast u_n\to B_\alpha\ast u$ in $W^{2,p}_{loc}(\R^d)$.  The property $u^{m/2}\in L^2_{loc}((0,+\infty);H^1(\R^d))$ follows from Theorem \ref{generalp}, by integrating on $(t_1,t_2)$, for every $0<t_1<t_2<+\infty$, and passing to the limit,  using the weak lower semicontinuity of the $L^2$ norm,  and the identification of the weak limit of $\nabla u_{\tau_n}^{m/2}$ with $\nabla u^{m/2}$ is done as in the proof of Theorem \ref{esistenzagenerale}, and the same holds for $u^q$ for every $q\in[m/2,+\infty)$. \RRR
The validity of the PDE in weak form is proven in Theorem \ref{weaktheorem}, energy dissipation equality is found in Theorem \ref{ede}, thus proving points {\textbf{(IV), (V)}}. Existence and uniqueness of the gradient flow as stated in point {\textbf{(II)}} are given by Theorem \ref{esistenzagenerale} and Theorem \ref{th:unique}. For every family $(u_\tau)_{\tau>0}$ of discrete solutions of the JKO scheme, the convergence \eqref{wholetau}  of the whole family $(u_\tau)_{\tau>0}$,  is a consequence of the uniqueness of $u$, and the same for the convergence properties of point {\bf{(III)}} that are found in Theorem \ref{prop:IC} and Theorem \ref{esistenzagenerale}.

We are left to prove point {\textbf{(VI)}}.
We notice that the PDE in \eqref{PE} has the following scaling property. If $u(x,t)$ solves 
\begin{equation*}\label{pde1}
\partial_t u_t=\Delta u^m-\chi\mathrm{div}(u\nabla B_\alpha\ast u),
\end{equation*}
then for every $\lambda >0$ we have that $u_\lambda(x,t):=\lambda u(\lambda^{1-\tfrac m2}x,\lambda t)$ solves
\begin{equation}\label{pdelambda}
\partial_t u_t=\Delta u^m-\chi\mathrm{div}(u\nabla B_{\alpha_\lambda}\ast u),
\end{equation}
where $\alpha_\lambda:=\lambda^{2-m}\alpha$. Moreover, if $\int_{\R^d}u=M$, then $u_\lambda$ has mass $1$ if $\lambda=\lambda_M:=M^{-\textstyle\frac{2}{2+d(m-2)}}$.
By taking into account the uniqueness of gradient flow solutions proved in Theorem \ref{th:unique},
if the initial datum $u_0$ has mass $M$ and
 $u$ is the unique gradient flow solution  for \eqref{PE}, then $u_{\lambda_M}$  is the unique gradient flow solution to \eqref{pdelambda} with initial datum $u_{0,\lambda}(x)=\lambda u_0(\lambda^{1-\tfrac m2}x)$, and it has mass $1$. By invoking Theorem \ref{Linftydge3} and applying it to the unit mass solution $u_{\lambda_M}$, we deduce
 \[\begin{aligned}
\|u(t)\|_{\infty}&=\lambda_M^{-1}\|u_{\lambda_M}(\lambda_M^{-1}t)\|_\infty \le C_{\chi,d,m} \,M^{\textstyle\frac{2}{2+d(m-2)}}\,\left(\left(t^{-1}\,{M^{-\textstyle\frac{2}{2+d(m-2)}}}\right)^{\textstyle\frac{d}{2+d(m-1)}}+1\right)
\end{aligned}\]
for every $t>0$,
and we directly infer
\[
\|u(t)\|_{L^\infty({\R}^d)} \le C_{\chi,d,m} \,M^{\textstyle\frac{2}{2+d(m-1)}}\,\left(\frac{1}{t}\right)^{\textstyle\frac{d}{2+d(m-1)}}+C_{\chi,d,m} \,M^{\textstyle\frac{2}{2+d(m-2)}}\qquad\mbox{for every $t>0$.}
\]
In the case  $u_0\in L^\infty(\R^d)$, in the same way we deduce
from Theorem \ref{boundedsolution}, applied to the unit mass solution $u_{\lambda_M}$, 
\[\begin{aligned}
\|u(t)\|_\infty&\le \lambda_M^{-1}\|u_{\lambda_M}(\lambda_M^{-1}t)\|_\infty\le \bar C_{\chi,d,m}\,\lambda_M^{-1}(1\vee \|u_{0,\lambda_M}\|_\infty)\le \bar C_{\chi,d,m}\left(\lambda_M^{-1}\vee \|u_0\|_\infty\right)
\end{aligned}\]
so that
\[
\|u(t)\|_\infty\le \bar C_{\chi,d,m}\left(M^{\textstyle\frac{2}{2+d(m-1)}}\vee\|u_0\|_\infty\right)\qquad\mbox{for every $t>0$}
\]
thus concluding the proof.
\end{proofad1}

\begin{proofad2}
Points {\textbf{(I), (II), (III), (IV), (V)}} are proven in the same way as in Theorem \ref{th:main} and by invoking the same results.
The only difference is the use of Theorem \ref{uniquebis} instead of Theorem \ref{th:unique} for proving uniqueness.  In the case $d=2$, Theorem \ref{uniquebis} requires the further assumption $u_0\in L^p(\R^d)$ for some $p>1$. However, if $d=2$, uniqueness holds true only with the assumption $u_0\in D(\mathcal E_1)$ and can be deduced from the result of \cite[Theorem 1.3]{EM16}. Indeed, the properties of our constructed gradient flow $u$ (in particular the validity of the PDE in distributional sense from Theorem \ref{weaktheorem} and the energy dissipation inequality from Proposition \ref{gfede}) fit the uniqueness theory of \cite{EM16}.\RRR
  
 Point {\textbf{(VI)}} follows from Theorem \ref{Linftydge3} and Theorem \ref{boundedsolution}. Let us finally consider point {\textbf{(VII)}}. Since $M<M_{sc}=\sup_{p>1}M_{sc}(p)$, see \eqref{penonp}, there exists $p_*\in(1,+\infty)$ such that $M<M_{sc}(p_*)$. Fixing such $p_*$, Proposition \ref{extinct} implies that there exists a constant $ C^*$ depending only on $M,\chi,d$, such that for every $t\ge 1$
\begin{equation}\label{badexp}
\|u(t)\|_\infty\le  C^*{t^{-\sigma}},\qquad\mbox{where $\sigma=\dfrac{2(p_*-1)}{2(p_*-1)+d},\;$ so that $0<\sigma<1$}. 
\end{equation}
If we consider the mass invariant scaling $u_\lambda(x,t):=\lambda^d(\lambda x,\lambda^dt)$, $\lambda>0$, we notice that it preserves the equation since $m=m_c$, up to rescaling $\alpha$. Therefore, for every $\lambda>0$, $u_\lambda$ has mass $M$  and it is the unique gradient flow solution of the PDE, with $\alpha$ replaced by $\tilde\alpha_\lambda:=\lambda^2\alpha$, with initial datum $u_{0,\lambda}(x):=\lambda^d u_0(\lambda x)\in D(\mathcal E_m)$. Since the estimate \eqref{badexp} has a constant $C^*$ that depends only on $M,\chi,d$, then $u_\lambda$ satisfies the same estimate for every $\lambda>0$, i.e.,
\[
\lambda^d\|u(\lambda^d t)\|_\infty=\|u_\lambda(t)\|_\infty\le C^*t^{-\sigma}\qquad\mbox{ for every $t\ge 1$}.
\]
By setting $\tilde t:=\lambda^dt$ we deduce
\[
\|u(\tilde t)\|_\infty\le C^*\lambda^{-d+\sigma d}\,\tilde t^{-\sigma}\qquad\mbox{ for every $\tilde t\ge\lambda^d>0$}.
\]
Choosing $\lambda=\tilde t^{\,1/d}$ we deduce $\|u(\tilde t)\|_\infty\le C^*\,\tilde t^{-1}$ for every $\tilde t>0$ thus concluding the proof.
\end{proofad2}


\RRR

\RRR

\subsection*{Acknowledgements} 
 The authors wish to thank Jos\'e Antonio Carrillo for some fruitful discussions on the topics of this paper. \RRR
The authors are members of the GNAMPA group of the Istituto Nazionale di Alta Matematica (INdAM). They are  partially supported by the 2026 INdAM-GNAMPA project `Modelli aggregazione-diffusione con mobilit\`a non lineare: buona positura, comportamento asintotico',
 n. CUP E53C25002010001.
They have been also partially supported  the MUR - PRIN project 202244A7YL. 

\end{document}